\newtheorem{theorem}{Theorem}[section]
\newtheorem{lemma}[theorem]{Lemma}
\newtheorem{corollary}[theorem]{Corollary}
\newtheorem{proposition}[theorem]{Proposition}
\newtheorem{conjecture}[theorem]{Conjecture}
\theoremstyle{definition}
\newtheorem{definition}[theorem]{Definition}
\theoremstyle{remark}
\newtheorem{remark}[theorem]{Remark}
\newtheorem{notation}[theorem]{Notation}
\newtheorem*{proof-thm-prod-coefficients}{Proof of Theorem \ref{prod-coefficients}}
\numberwithin{equation}{section}
\newcommand{\N}{\mathbb{N}}
\newcommand{\R}{\mathbb{R}}
\newcommand{\C}{\mathbb{C}}
\newcommand{\K}{\mathfrak{K}}
\newcommand{\ep}{\varepsilon}
\newcommand{\XA}{\ell^2(Z_X)\otimes H \otimes H_A}
\newcommand{\YA}{\ell^2(Z_Y)\otimes H \otimes H_A}
\newcommand{\XH}{\ell^2(Z_X)\otimes H}
\newcommand{\YH}{\ell^2(Z_Y)\otimes H}
\newcommand{\U}{\mathcal{U}}
\DeclareMathOperator{\supp}{supp}
\DeclareMathOperator{\prop}{prop}
\title{On the quantitative coarse Baum-Connes conjecture with coefficients}
\begin{document}

\author{Jianguo Zhang}
\address{School of Mathematics and Statistics, Shaanxi Normal University}
\email{jgzhang@snnu.edu.cn}

\thanks{The author was supported by NSFC12171156, 12271165, 12301154.}
\date{\today}
	
\begin{abstract}
	In this paper, we introduce the quantitative coarse Baum-Connes conjecture with coefficients (or QCBC, for short) for proper metric spaces which refines the coarse Baum-Connes conjecture. And we prove that QCBC is derived by the coarse Baum-Connes conjecture with coefficients which provides many examples satisfying QCBC. In the end, we show QCBC can be reduced to the uniformly quantitative coarse Baum-Connes conjecture with coefficients of a sequence of bounded metric spaces.
\end{abstract}
\pagestyle{plain}
\maketitle

	
\section{Introduction}
For a proper metric space $X$ with a locally finite net $N_X$, the \textit{Roe algebra} of $X$ is defined to be the norm closure of the $\ast$-algebra consisting of all $(N_X)\times (N_X)$-matrices $T$ with coefficients in $\K(H)$ satisfying that $T_{x,y}=0$ when $d(x, y)>R$ for some $R>0$, where $\K(H)$ is the algebra of compact operators on a Hilbert space $H$ (see also Definition \ref{Def-Roe-algebra}). For an elliptic differential operator $D$ on a Riemannian manifold $M$, Roe defined a higher index for $D$ in the $K$-theory of the Roe algebra of $M$ which generalizes the Fredholm index (cf. \cite{Roe1988}\cite{Roe1993}). The coarse Baum-Connes conjecture furnish a formula to compute the higher indexes, namely, the conjecture assert that the assembly map from the coarse $K$-homology of $M$ to the $K$-theory of the Roe algebra of $M$ is an isomorphism (cf. \cite{HigsonRoe-CBC}\cite{Yu-CBC}, see also Conjecture \ref{CBC}). Moreover, the conjecture also play a key role in the Novikov conjecture and the existence of metrics with positive scalar curvature on a manifold \cite{HigsonRoe-Book}\cite{WillettYu-Book}. The coarse Baum-Connes conjecture has been confirmed for many metric spaces (cf. \cite{Yu1998}\cite{Yu2000}\cite{HigsonRoe-CBC}\cite{Fukaya-Oguni-2020}\cite{Deng-Wang-Yu-2023}) and also disproved for some examples (cf. \cite{Higson-Lafforgue-Skandalis}\cite{Khukhro-Li-Vigolo-Zhang}).

The quantitative $K$-theory is a refinement of the $K$-theory and was first introduced by Yu in \cite{Yu1998} for Roe algebras in order to consider the coarse Baum-Connes conjecture for metric spaces with finite asymptotic dimension. Then Oyono-Oyono and Yu extended the quantitative $K$-theory for all filtered $C^{\ast}$-algebras (cf. \cite{OyonoYu2015}, see also Definition \ref{Def-quan-K}) and applied it to the K$\ddot{\text{u}}$nneth formula for $C^{\ast}$-algebras \cite{OyonoYu2019}. Later on, Chung generalized the quantitative $K$-theory for filtered Banach algebras and used it to consider the $\ell^p$ Baum-Connes conjecture (cf. \cite{Chung-quan-K}\cite{Chung2021}). The quantitative $K$-theory is also applied to the $K$-theory of groupoid $C^{\ast}$-algebras (cf. \cite{Clement-2018}\cite{Oyono-Oyono-2023}), the $\ell^p$ (coarse) Baum-Connes conjecture (cf. \cite{Zhang-Zhou-2021}\cite{Wang-Wang-Zhang-Zhou-2022}), the universal coefficient theorem for $KK$-theory (\cite{Willett-Yu-2024}) and the scalar curvature on a manifold (cf. \cite{Guo-Yu-2023}\cite{Wang-Xie-Yu-2024}). 

In \cite{OyonoYu2015}, Oyono-Oyono and Yu formulated the quantitative Baum-Connes conjecture to compute the quantitative $K$-theory of group $C^{\ast}$-algebras and proved that this conjecture is equivalent to the Baum-Connes conjecture with coefficients. Inspired by this and motivated by computing the quantitative $K$-theory of Roe algebras, in this paper, we introduce the quantitative coarse Baum-Connes conjecture with coefficients (see Conjecture \ref{quan-CBC}) by using of the $K$-theory of localization algebras with coefficients. And we show that the quantitative coarse Baum-Connes conjecture with coefficients implies the original conjecture, namely, we have the following theorem (see also Theorem \ref{quan-CBC-CBC}).

\begin{theorem}\label{relation-1}
	Let $A$ be a $C^{\ast}$-algebra, the quantitative coarse Baum-Connes conjecture with coefficients in $A$ implies the coarse Baum-Connes conjecture with coefficients in $A$.
\end{theorem}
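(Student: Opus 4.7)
The plan is to follow the strategy of Oyono-Oyono and Yu from the group setting in \cite{OyonoYu2015}, transposed to the coarse/Roe algebra context. The key structural point is that the classical coarse assembly factors through its quantitative refinements: for every Rips scale $d>0$ and every pair $(\varepsilon,r)$ with $0<\varepsilon<1/4$, $r>0$, there is a quantitative assembly map
\[
\mu_{X,A}^{\varepsilon,r,d}\colon K_\ast(P_d(X);A)\longrightarrow K_\ast^{\varepsilon,r}\bigl(C^\ast(P_d(X);A)\bigr),
\]
whose composition with the canonical comparison $\iota^{\varepsilon,r}\colon K_\ast^{\varepsilon,r}\to K_\ast$ recovers the usual assembly $\mu_{X,A}^{d}$. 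QCBC in $A$ gives quantitative surjectivity and quantitative injectivity for these maps with controlled loss of parameters, and the goal is to use these to deduce the corresponding classical statements after passing to an inductive limit in $d$.

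First I would record two standard facts about the quantitative $K$-theory of a filtered $C^\ast$-algebra $B$, both immediate from the construction in \cite{OyonoYu2015}: (a) every class in $K_\ast(B)$ lies in the image of $\iota^{\varepsilon,r}$ for some controlled $(\varepsilon,r)$; and (b) a quantitative class mapping to zero in $K_\ast(B)$ already vanishes in $K_\ast^{\varepsilon',r'}$ for slightly relaxed parameters $(\varepsilon',r')\geq(\varepsilon,r)$. These form the bridge between the quantitative and classical pictures.

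For surjectivity of $\mu_{X,A}$, take $y\in K_\ast(C^\ast(P_d(X);A))$. By (a), write $y=\iota^{\varepsilon,r}(\tilde y)$ for some $\tilde y\in K_\ast^{\varepsilon,r}$; the quantitative surjectivity from QCBC yields a class $x\in K_\ast(P_{d'}(X);A)$ for some $d'\geq d$ whose quantitative assembly equals $\tilde y$ after possibly enlarging parameters, and then applying $\iota^{\varepsilon,r}$ gives $\mu_{X,A}^{d'}(x)=y$ in the limit over $d$. For injectivity, suppose $x\in K_\ast(P_d(X);A)$ satisfies $\mu_{X,A}^{d}(x)=0$; then $\iota^{\varepsilon,r}\mu_{X,A}^{\varepsilon,r,d}(x)=0$, so by (b) the class $\mu_{X,A}^{\varepsilon',r',d}(x)$ vanishes in $K_\ast^{\varepsilon',r'}$, and the quantitative injectivity from QCBC then forces $x$ to vanish in $K_\ast(P_{d''}(X);A)$ for some $d''\geq d$, hence in $\varinjlim_d K_\ast(P_d(X);A)$.

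The main obstacle will be the bookkeeping of interlocking parameters: the propagation $r$ must be allowed to grow with the Rips scale $d$; the shifts $(\varepsilon,r)\mapsto(\varepsilon',r')$ produced by the quantitative $K$-theoretic manipulations must be compatible with those produced by QCBC itself; and everything must survive the triple inductive limit $\varepsilon\to 0$, $r\to\infty$, $d\to\infty$. Once this parameter compatibility is checked, essentially an exercise in chasing estimates through the commutative square linking the classical and quantitative assembly maps, the conclusion follows.
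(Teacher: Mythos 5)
Your proposal follows essentially the same approach as the paper's proof of Theorem \ref{quan-CBC-CBC}: your facts (a) and (b) are Lemma \ref{quan-K-and-K}, the commutative square you invoke is Lemma \ref{quan-CBC-and-CBC}, and the surjectivity/injectivity deductions from QCBC match the paper's argument step for step. The "bookkeeping" you defer (notably, in the injectivity direction, first pushing $x$ forward along $Ad_{i_{kk'}}$ to a Rips scale where $e_{A,\ast}(x)$ actually vanishes before applying the quantitative comparison) is exactly what the paper carries out explicitly, and fills no essential gap in your outline.
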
   

In order to explore the reverse direction of the above theorem, we first introduce the following \textit{uniform coarse Baum-Connes conjecture with coefficients} (see also Definition \ref{Def-uniform-CBC}):

\begin{definition}
	Call $X$ satisfies the \textit{uniform coarse Baum-Connes conjecture with coefficients} in $A$, if the coarse Baum-Connes conjecture with coefficients in $A$ holds for $\sqcup_{\N} X$, where $\sqcup_{\N} X$ is the separated coarse union of $X$.
\end{definition}

Then by computing the $K$-theory for localization algebras with coefficients of a sequence of metric spaces, we show the following main theorem of this paper (see also Theorem \ref{uniformCBC=quanCBC}):

\begin{theorem}\label{main-thm}
	Let $X$ be a proper metric space and $A$ be a $C^{\ast}$-algebra. Then the followings are equivalent:
	\begin{enumerate}
		\item $X$ satisfies the uniform coarse Baum-Connes conjecture with coefficients in $A$;
		\item $X$ satisfies the quantitative coarse Baum-Connes conjecture with coefficients in $A$. 
	\end{enumerate}
\end{theorem}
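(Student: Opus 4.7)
The plan is to build a dictionary between the classical $K$-theory of the Roe and localization algebras of $\sqcup_{\N} X$ with coefficients in $A$ and the quantitative $K$-theory of the corresponding algebras of $X$, and then transfer the assembly map through this dictionary.

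First I would exploit the filtered structure of $\sqcup_{\N} X$: since the distance between distinct copies of $X$ tends to infinity, for any propagation $R > 0$ only finitely many copies can interact through a propagation-$R$ operator. Hence every propagation-$R$ element of $C^{\ast}(\sqcup_{\N} X) \otimes A$ decomposes, modulo a finite correction supported on those finitely many ``bad'' copies, into a sequence $(T_n)$ with each $T_n$ of propagation $\leq R$ on the $n$-th copy of $X$. I would promote this to natural homomorphisms between the relevant filtered $C^{\ast}$-algebras on $\sqcup_{\N} X$ and a quotient of the form $\prod_n (\cdot) / \bigoplus_n (\cdot)$ of filtered algebras on $X$, then show at the level of $K$-theory that these maps identify $K_{\ast}$ of the Roe algebra of $\sqcup_{\N} X$ with a limit built from the quantitative $K$-groups of $C^{\ast}(X) \otimes A$. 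The same reasoning, carried out with the extra localization parameter taken into account, yields a parallel identification on the source of the assembly map, namely for the $K$-theory of the localization algebra.

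Next, for the implication $(2) \Rightarrow (1)$, I would argue as follows. A class in $K_{\ast}$ of the localization (or Roe) algebra of $\sqcup_{\N} X$ is represented, via the dictionary, by a sequence of quantitative classes for $X$ at uniformly controlled parameters $(\varepsilon, R, N)$; applying QCBC for $X$ supplies inverse images, respectively trivializations of kernel classes, at some controlled $(\varepsilon', R', N')$, which then reassemble into the isomorphism witnessing CBC for $\sqcup_{\N} X$. Conversely, for $(1) \Rightarrow (2)$, a failure of QCBC for $X$ at arbitrarily fine tolerances produces a diverging sequence of quantitative counterexamples; these assemble through the dictionary into a class in the $K$-theory of the Roe or localization algebra of $\sqcup_{\N} X$ that either lies outside the image of the ordinary assembly map or contributes nontrivially to its kernel, contradicting uniform CBC. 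In both directions Theorem~\ref{relation-1} (QCBC implies CBC) plays an auxiliary role in ensuring compatibility of the various assembly maps.

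The main obstacle will be setting up the dictionary rigorously. One must verify that the quotient $\prod_n / \bigoplus_n$ of filtered algebras on $X$ genuinely computes the quantitative $K$-groups of $C^{\ast}(X) \otimes A$, and analogously for the localization algebra, that this description is compatible with the assembly maps on both sides, and that the control parameters $(\varepsilon, R, N)$ can be chosen uniformly across all copies. This is the coarse-geometric analogue of the persistence approximation arguments of Oyono-Oyono and Yu for group $C^{\ast}$-algebras; handling the propagation parameter together with the localization parameter in parallel, while keeping track of all quantitative constants, is the technical heart of the proof.
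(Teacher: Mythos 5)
Your overall architecture matches the paper: identify the $K$-theory of the Roe and localization algebras of $\sqcup_{\N} X$ with products of (quantitative) $K$-groups for $X$, carry the assembly map across, and close $(1)\Rightarrow(2)$ by a diagonalization/contradiction argument. This is exactly how the paper organizes the proof, using the commutative square built from Corollary~\ref{uniform-product-Roealg} and Corollary~\ref{K-homology-directproduct}.

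There are two genuine gaps, however. First, the ``$\prod_n/\bigoplus_n$ modulo finite correction'' picture you describe does not apply to the separated coarse union $\sqcup_{\N} X$ used here: the metric \eqref{productmetric} puts \emph{infinite} distance between copies, so every finite-propagation operator decomposes \emph{exactly} as a block-diagonal family, and one gets the literal equality $C^{\ast}(\sqcup_{\N} X, A) = \prod^{uf}_{\N} C^{\ast}(X, A)$ at the algebra level with no $\bigoplus$-quotient. Your description fits a coarse union $\square X_i$ (Definition~\ref{Def-coarse-union}), which is a different object used elsewhere (Section~\ref{Sec-Reduction}), not the one appearing in the definition of the uniform CBC. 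The consequence is that the quantitative-$K$-theory statement you need is not a ``limit'' computation but the isomorphism $K^{\ep,r}_{\ast}(\prod^{uf} C^{\ast}(X,A)) \cong \prod_{\N} K^{\ep,r}_{\ast}(C^{\ast}(X,A))$, which requires uniform quasi-stability of the family (Lemma~\ref{Roequasi-stable}) and the Lipschitz-homotopy control of Lemma~\ref{Lip-Homotopy}.

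Second, and more seriously, the sentence ``the same reasoning, carried out with the extra localization parameter taken into account'' hides the technical heart of the theorem. The localization algebra $C^{\ast}_L(\sqcup_{\N} X, A)$ is strictly smaller than $\prod_{\N} C^{\ast}_L(X, A)$: an element of the former must be a single uniformly continuous path whose propagation decays to $0$ \emph{uniformly in the copy index}, whereas an element of the product has no such uniformity. Showing that the inclusion still induces an isomorphism on $K$-theory is Proposition~\ref{directproduct}, which needs an intermediate uniformly equicontinuous product, two separate Eilenberg-swindle arguments (Lemma~\ref{locprodlemma1} for the propagation issue, Lemma~\ref{locprodlemma2} for the equicontinuity issue via a Mayer--Vietoris on $[0,\infty)=E\cup F$ and the quantitative small-homotopy Lemma~\ref{smallhomotopy}), plus quasi-stability and the pullback six-term sequence. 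Without this ingredient the dictionary you need on the source of the assembly map does not exist, and the proposal cannot be completed. The contradiction argument you sketch for $(1)\Rightarrow(2)$ is correct in spirit and matches the paper once this dictionary is in place.
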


In \cite[Corollary 4.12]{Zhang-CBCFC}, we proved that the uniform coarse Baum-Connes conjecture with coefficients in $A$ is derived by the coarse Baum-Connes conjecture with coefficients in $\ell^{\infty}(\N, A\otimes \K(H))$, thus we have the following corollary (see also Corollary \ref{Cor-CBCFC-QCBC}) which provides many examples satisfying the quantitative coarse Baum-Connes conjecture with coefficients.

\begin{corollary}\label{main-Cor}
	Let $X$ be a proper metric space and $A$ be a $C^{\ast}$-algebra. If $X$ satisfies the coarse Baum-Connes conjecture with coefficients in $\ell^{\infty}(\N, A\otimes \K(H))$, then $X$ satisfies the quantitative coarse Baum-Connes conjecture with coefficients in $A$.
\end{corollary}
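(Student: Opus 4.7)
The plan is to chain together two results that do all the real work. First I would invoke \cite[Corollary 4.12]{Zhang-CBCFC}, which is cited in the paragraph immediately preceding the corollary: it says exactly that if $X$ satisfies the coarse Baum-Connes conjecture with coefficients in $\ell^{\infty}(\N, A\otimes \K(H))$, then the coarse Baum-Connes conjecture with coefficients in $A$ holds for the separated coarse union $\sqcup_{\N} X$, i.e.\ $X$ satisfies the uniform coarse Baum-Connes conjecture with coefficients in $A$. Intuitively, an $\ell^{\infty}$-valued coefficient algebra simultaneously encodes countably many coefficient situations, and this bookkeeping is what converts a single CBC statement for $X$ into a uniform statement for $\sqcup_{\N} X$.

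Second, I would apply the implication $(1)\Rightarrow(2)$ of Theorem \ref{main-thm} to the conclusion of the previous step, obtaining that $X$ satisfies the quantitative coarse Baum-Connes conjecture with coefficients in $A$. The derivation is thus a formal composition of two ``black boxes'', and no further argument is needed.

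There is essentially no obstacle inside the derivation itself; the substance lies entirely in the two inputs. The first input, Corollary 4.12 of \cite{Zhang-CBCFC}, encapsulates how Roe algebras of $\sqcup_{\N} X$ interact with $\ell^{\infty}$-coefficient algebras of $X$. The second, and more delicate, input is the direction $(1)\Rightarrow(2)$ of Theorem \ref{main-thm}, whose heart is the $K$-theory computation for localization algebras with coefficients of a sequence of bounded metric spaces: it is there that one produces quantitative inverses to the assembly map from the qualitative hypothesis on $\sqcup_{\N} X$. Accordingly, if anything in this corollary deserves scrutiny it is the quantitative step, but since that is the content of Theorem \ref{main-thm} itself, nothing remains to be done here beyond writing the two-line composition.
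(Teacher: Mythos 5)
Your proposal is exactly the paper's argument: combine Lemma \ref{Lem-CBCC-UCBC} (which is \cite[Corollary 4.12]{Zhang-CBCFC}) to pass from CBC with coefficients in $\ell^{\infty}(\N, A\otimes\K(H))$ to the uniform CBC with coefficients in $A$, then apply the implication $(1)\Rightarrow(2)$ of Theorem \ref{uniformCBC=quanCBC}. This matches the paper's one-line derivation, so nothing further is needed.
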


In the end, we simplify the quantitative coarse Baum-Connes conjecture to a uniform version of the conjecture for a sequence of bounded subspaces. For a proper metric space $X$, fixed a point $x_0\in X$, let $X_n=\{x\in X: n^3-n\leq d(x, x_0) \leq (n+1)^3+(n+1)\}$ which is a bounded subspace of $X$ for every $n\geq 0$. And let $Z=\sqcup_{n\: odd} X_n$. Then we have the following theorem (see also Theorem \ref{redution-bounded-case}). 

\begin{theorem}\label{redu-theorem}
	If the uniformly quantitative coarse Baum-Connes conjecture with coefficients in $A$ holds for $(X_n)_{n\:even}$, $(X_n)_{n\:odd}$ and $(X_n\cap Z)_{n\:even}$, then the quantitative coarse Baum-Connes conjecture with coefficients in $A$ holds for $X$.
\end{theorem}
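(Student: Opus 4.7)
The plan is a controlled Mayer--Vietoris argument based on the even/odd decomposition of the annuli $X_n$. First, I would unpack the geometry of the cover. Set $X_{\mathrm{ev}} = \bigcup_{n\text{ even}} X_n$ and $X_{\mathrm{od}} = \bigcup_{n\text{ odd}} X_n$, viewed as subspaces of $X$, so that $X = X_{\mathrm{ev}} \cup X_{\mathrm{od}}$. A direct calculation with the definition of $X_n$ shows that $X_n \cap X_{n+1}$ is the collar $\{(n+1)^3 - (n+1) \le d(x, x_0) \le (n+1)^3 + (n+1)\}$ of width $2(n+1)$, while same-parity annuli $X_n$ and $X_{n+2}$ are separated by a gap of size $3n^2 + 7n + 4$. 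Consequently $X_{\mathrm{ev}}$ and $X_{\mathrm{od}}$, with the subspace metric, are coarsely equivalent to the separated coarse unions $\sqcup_{n\text{ even}} X_n$ and $\sqcup_{n\text{ odd}} X_n$ respectively, while $X_{\mathrm{ev}} \cap X_{\mathrm{od}} = \bigsqcup_n (X_n \cap X_{n+1})$ is coarsely equivalent to $\sqcup_{n\text{ even}} (X_n \cap Z)$.

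Second, I would translate the hypotheses using Theorem \ref{main-thm} (or, more precisely, its sequence version, which should follow from the same computation of $K$-theory of localization algebras used there). That equivalence identifies the uniformly quantitative coarse Baum--Connes conjecture with coefficients in $A$ for a family of bounded metric spaces with the ordinary quantitative coarse Baum--Connes conjecture with coefficients in $A$ for their separated coarse union. Applying this to the three families $(X_n)_{n\text{ even}}$, $(X_n)_{n\text{ odd}}$, and $(X_n \cap Z)_{n\text{ even}}$, and invoking the coarse invariance of the quantitative assembly map, I obtain the quantitative coarse Baum--Connes conjecture with coefficients in $A$ for each of the three subspaces $X_{\mathrm{ev}}$, $X_{\mathrm{od}}$, and $X_{\mathrm{ev}} \cap X_{\mathrm{od}}$.

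The core step is then a controlled Mayer--Vietoris argument applied to $X = X_{\mathrm{ev}} \cup X_{\mathrm{od}}$. Fix a tolerance $\ep \in (0, 1/4)$ and a propagation scale $r > 0$, and pick $N$ with $2(N+1) > r$. For indices $n \ge N$ the collars $X_n \cap X_{n+1}$ are wider than $r$, which is precisely what enables a controlled partition-of-unity argument on both the Roe algebra side and the localization algebra side with coefficients in $A$. This produces a controlled six-term exact sequence in quantitative $K$-theory on each side, linking the quantitative $K$-groups of $X$, $X_{\mathrm{ev}}$, $X_{\mathrm{od}}$ and $X_{\mathrm{ev}} \cap X_{\mathrm{od}}$, together with a ladder morphism induced by the quantitative assembly maps. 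Since the previous step makes three out of four vertical maps controlled isomorphisms, a controlled five lemma in the spirit of \cite{OyonoYu2015} forces the quantitative assembly map for $X$ to be an isomorphism as well, establishing QCBC for $X$.

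The hard part will be the controlled Mayer--Vietoris step. Quantitative $K$-theory does not admit honest long exact sequences; one must accept exactness with a controlled loss in both propagation and norm tolerance. The growing collar widths are what absorb this loss: for each pair $(\ep, r)$ there is a uniform threshold beyond which the excision is effective, and the required cutoffs can be chosen with propagation bounded by $r$ independently of the family index. Executing this simultaneously on the Roe algebra side and the localization side, while tracking the coefficient $C^{\ast}$-algebra $A$ and matching the assembly maps, is the technical heart of the argument; the rest is bookkeeping.
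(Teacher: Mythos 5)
Your global strategy — split $X$ into the even and odd sub-unions, reduce to those pieces, and glue with a controlled Mayer--Vietoris argument — matches the paper's, which proves this as the combination of Lemma \ref{quan-CBC-union} and Lemma \ref{Reduction-coarse-union}. However, there is a genuine error in your translation step: $X_{\mathrm{ev}}$ with the subspace metric is \emph{not} coarsely equivalent to the separated coarse union $\sqcup_{n\text{ even}} X_n$. In the subspace metric the gaps between consecutive even annuli are finite (growing like $3n^2+7n+4$, as you computed), whereas in the separated coarse union they are infinite. No map $g:X_{\mathrm{ev}}\to \sqcup_{n\text{ even}} X_n$ that is uniformly close to the identity can be coarse: it would have to send each $X_n$ (for large $n$) into $X_n$, and then pairs $x\in X_0$, $y\in X_2$ with finite $d_{X_{\mathrm{ev}}}(x,y)$ get sent to points at infinite distance. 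So the claim of Lemma \ref{Lem-coarse-equi-QCBC} cannot be invoked here, and step (b) of your translation fails. The same issue affects the claimed coarse equivalence for the intersection. The paper avoids this by proving Lemma \ref{quan-CBC-union} directly for \emph{coarse unions} $\square X_i$ (finite, growing gaps): using Lemma \ref{equa-quan-CBC} it reformulates QCBC as the vanishing of $\iota\circ Ad_{i_{kk'}}$ on $K^{\ep,r}_{*}(C^{*}_{L,0})$, then for each fixed $(k,r)$ chooses thresholds $k''$, $k'''$ past which the gaps exceed the propagation and the quantitative $K$-group splits (via Corollaries \ref{uniform-product-Roealg} and \ref{K-homology-directproduct}), while the finitely many small-index annuli are absorbed as a single bounded piece. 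This is a propagation-by-propagation argument and does not rely on a coarse equivalence to the separated union.

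Your Mayer--Vietoris step also diverges from the paper's. You propose two parallel controlled MV sequences (on the Roe and localization sides), a ladder of quantitative assembly maps, and a controlled five lemma. The paper instead applies the controlled Mayer--Vietoris sequence of Lemma \ref{Control-MV} once, only to the kernel algebra $C^{*}_{L,0}$, using the weak MV pair $(\triangle_{\mathrm{even}}, \triangle_{\mathrm{odd}}, C^{*}_{L,0}(Y), C^{*}_{L,0}(Z))$ with coercitivity $1$, and combines it with the reformulation Lemma \ref{equa-quan-CBC}. This sidesteps the need to match boundary maps between two six-term sequences and avoids formulating a controlled five lemma, which you correctly flag as the ``technical heart'' but which is not actually available in the literature in the form you need. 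Your intuition about the collars widening beyond any fixed propagation scale is correct and is precisely what makes the MV pair work at every order $s$, but the bookkeeping in the paper is organized quite differently to absorb the losses in $(\ep,r)$. In short: the geometric picture and the overall reduction strategy are right, but the coarse-equivalence shortcut is wrong and the five-lemma approach would need substantial new work to be made precise.
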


The paper is organized as follows. In Section \ref{revist-quan-K}, we recall the quantitative $K$-theory and its properties. In Section \ref{Sec-quanCBC}, we introduce the Roe algebras with coefficients, localization algebras with coefficients, discuss their quantitative $K$-theory and formulate the quantitative coarse Baum-Connes conjecture with coefficients. In Section \ref{Sec-Relations}, we study the $K$-theory for localization algebras with coefficients for a sequence of metric spaces and prove Theorem \ref{main-thm}. In Section \ref{Sec-Reduction}, we reduce the quantitative coarse Baum-Connes conjecture with coefficients to a sequence of bounded metric spaces.

\section{A revisit to the quantitative $K$-theory}\label{revist-quan-K}
In this section, we will recall some concepts and properties for the quantitative $K$-theory based on \cite{OyonoYu2015}.
\subsection{Basic concepts}\label{Cept-quan-K}
\begin{definition}(\cite[Definition 1.1]{OyonoYu2015})\label{Def-filtration}
	Let $A$ be a $C^{\ast}$-algebra. A \textit{filtration} of $A$ is a family of closed linear subspaces $(A_r)_{r>0}$ such that
	\begin{itemize}
		\item $A_r\subseteq A_{r'}$, if $r\leq r'$;
		\item $A_r$ is closed by involution;
		\item $A_rA_{r'}\subseteq A_{r+r'}$;
		\item $\cup_{r>0}A_r$ is dense in $A$. 
	\end{itemize}
	If $A$ is unital, we also require the unit of $A$ belongs to $A_r$ for any $r>0$. A $C^{\ast}$-algebra equipped with a filtration is called a \textit{filtered $C^{\ast}$-algebra}.
\end{definition} 

\begin{definition}(\cite{OyonoYu2015})\label{Def-quasielement}
	Let $A$ be a unital filtered $C^{\ast}$-algebra with the unit $I$ and let $0<\ep<1/4$, $r>0$.
	\begin{enumerate}
		\item An element $p$ in $A$ is called an \textit{$(\ep, r)$-projection}, if $p\in A_r$, $p=p^{\ast}$ and $\|p^2-p\|<\ep$. Denote $P^{\ep, r}(A)$ be the set of all $(\ep, r)$-projections in $A$.
		\item An element $u$ in $A$ is called an \textit{$(\ep, r)$-unitary}, if $u\in A_r$ and $\max\{\|uu^{\ast}-I\|, \|u^{\ast}u-I\|\}<\ep$. Denote $U^{\ep, r}(A)$ be the set of all $(\ep, r)$-unitaries in $A$.
	\end{enumerate}
\end{definition}

\begin{remark}\label{Rmek-almost-proj-proj}
	\begin{enumerate}
		\item For an $(\ep, r)$-projection $p$ in a filtered $C^{\ast}$-algebra $A$, then the spectrum of $p$ is contained in $(\frac{1-\sqrt{1+4\ep}}{2}, \frac{1-\sqrt{1-4\ep}}{2})\cup (\frac{1+\sqrt{1-4\ep}}{2}, \frac{1+\sqrt{1+4\ep}}{2})$. Let $\chi_0$ be a real-valued continuous function on $\R$ which satisfies $\chi_0(x)=0$ if $x\leq \frac{1-\sqrt{1-4\ep}}{2}$ and $\chi_0(x)=1$ if $x\geq \frac{1+\sqrt{1-4\ep}}{2}$. Then by the continuous functional calculus, $\chi_0(p)$ is a projection in $A$ and $\|\chi_0(p)-p\|<2\ep$. 
		\item Any $(\ep, r)$-unitary $u$ in $A$ is invertible, and set $\chi_1(u)=u(u^{\ast}u)^{-1/2}$, then $\chi_1(u)$ is a unitary such that $\|\chi_1(u)-u\|<\ep$.
	\end{enumerate}
\end{remark}

For a positive integer $n$, the matrix algebra $M_n(A)$ of a unital filtered $C^{\ast}$-algebra $A$ is a filtered $C^{\ast}$-algebra equipped with a filtration $(M_n(A_r))_{r>0}$. For any $0<\ep<1/4$ and $r>0$, we denote $P^{\ep, r}_n(A)$ and $U^{\ep, r}_n(A)$ to be $P^{\ep, r}(M_n(A))$ and $U^{\ep, r}(M_n(A))$, respectively. Now we consider the following inclusions:
$$P^{\ep, r}_n(A)\hookrightarrow P^{\ep, r}_{n+1}(A)\:\:\textrm{by}\:\:p \mapsto \begin{pmatrix}
	p & 0\\
	0 & 0
\end{pmatrix}.$$
and 
$$U^{\ep, r}_n(A)\hookrightarrow U^{\ep, r}_{n+1}(A)\:\:\textrm{by}\:\:u \mapsto \begin{pmatrix}
	u & 0\\
	0 & I
\end{pmatrix}.$$
Taking the direct limit under the above inclusions, we define
$$P^{\ep, r}_{\infty}(A):=\lim_{n\rightarrow\infty}P^{\ep, r}_n(A);$$
$$U^{\ep, r}_{\infty}(A):=\lim_{n\rightarrow\infty}U^{\ep, r}_n(A).$$\par

For a filtered $C^{\ast}$-algebra $A$, let $C([0,1],A)$ be the $C^{\ast}$-algebra of all continuous functions from $[0,1]$ to $A$. Then $C([0,1],A)$ is a filtered $C^{\ast}$-algebra equipped with a filtration $(C([0,1],A_r))_{r>0}$.

\begin{definition}(\cite[Definition 1.5]{OyonoYu2015})\label{Def-homotopy}
	Let $A$ be a unital filtered $C^{\ast}$-algebra and $0<\ep<1/4$, $r>0$.
	\begin{enumerate}
		\item Two elements $p$ and $p'$ in $P^{\ep, r}_{\infty}(A)$ are called to be \textit{$(\ep, r)$-homotopic}, if there exists an element $p$ in $P^{\ep, r}_{\infty}(C([0,1],A))$ such that $p_0=p$ and $p_1=p'$, where $p=(p_t)_{t\in[0,1]}$ is called an \textit{$(\ep, r)$-homotopy} connecting $p$ and $p'$.
		\item Two elements $u$ and $u'$ in $U^{\ep, r}_{\infty}(A)$ are called to be \textit{$(\ep, r)$-homotopic}, if there exists an element $u$ in $U^{\ep, r}_{\infty}(C([0,1],A))$ such that $u_0=u$ and $u_1=u'$, where $u=(u_t)_{t\in[0,1]}$ is called an \textit{$(\ep, r)$-homotopy} connecting $u$ and $u'$.    
	\end{enumerate}
\end{definition}

\begin{lemma}\label{orth-homotopy}
	Let $A$ be a unital filtered $C^{\ast}$-algebra and $0<\ep<1/4$, $r>0$. Then for any $(\ep, r)$-projection $p$ in $M_n(A)$, we have $p\oplus (I_n-p)$ is $(\ep, r)$-homotopic to $I_n\oplus 0$.
\end{lemma}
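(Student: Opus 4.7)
The plan is to exhibit an explicit homotopy built from a scalar rotation and reduce the verification to a single cancellation calculation. Define the scalar unitary
$$R_t = \begin{pmatrix} \cos(\pi t/2)\cdot I_n & -\sin(\pi t/2)\cdot I_n \\ \sin(\pi t/2)\cdot I_n & \cos(\pi t/2)\cdot I_n \end{pmatrix} \in M_{2n}(A)$$
and set
$$p_t \;=\; \begin{pmatrix} p & 0 \\ 0 & 0 \end{pmatrix} + R_t \begin{pmatrix} 0 & 0 \\ 0 & I_n - p \end{pmatrix} R_t^{\ast} \qquad (t\in[0,1]).$$
Direct evaluation at the endpoints gives $p_0 = p \oplus (I_n-p)$ and $p_1 = I_n \oplus 0$; the map $t \mapsto p_t$ is continuous and each $p_t$ is self-adjoint. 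Because the entries of $R_t$ lie in $\C\cdot I \subseteq A_r$ for every $r>0$ and $A_r$ is a closed linear subspace, multiplying by $R_t$ or $R_t^{\ast}$ does not enlarge the filtration degree, so $p_t \in M_{2n}(A_r)$ for all $t$.

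The crux is to verify that each $p_t$ is an $(\ep,r)$-projection, i.e.\ $\|p_t^2 - p_t\| < \ep$ uniformly in $t$. Writing $p_t = A + B$ with $A = p \oplus 0$ and $B = R_t(0 \oplus (I_n-p))R_t^{\ast}$, and setting $\delta = p - p^2$, I would expand
$$p_t^2 - p_t \;=\; (A^2 - A) + (B^2 - B) + AB + BA$$
and substitute via the identities $p(I_n - p) = (I_n - p)p = \delta$ and $(I_n-p)^2 - (I_n-p) = -\delta$. A direct computation will then show that the off-diagonal $\sin(\pi t/2)\cos(\pi t/2)$ contributions from $AB + BA$ exactly cancel those coming from $B^2 - B$, while the diagonal contributions collapse to a multiple of the identity, yielding
$$p_t^2 - p_t \;=\; -\cos^2(\pi t/2)\,\delta \otimes I_2.$$
Consequently $\|p_t^2 - p_t\| = \cos^2(\pi t/2)\,\|\delta\| < \ep$, giving the required $(\ep,r)$-homotopy in $M_{2n}(A)$.

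The main obstacle is engineering this cancellation: a blind triangle-inequality bound on the four summands only yields $\|p_t^2 - p_t\| \leq 4\ep$, which is too weak. The rotation $R_t$ is chosen so that conjugation by $R_t$ couples to $A$ only through the approximate orthogonality $p(I_n - p) \approx 0$; the interaction terms $AB$, $BA$ and the defect $B^2 - B$ are all proportional to the single element $\delta$, and the numerical $2\times 2$ matrices multiplying $\delta$ combine to $-\cos^2(\pi t/2)\,I_2$. Once this calculation is done, the homotopy property, self-adjointness, continuity in $t$ and control of the filtration are all immediate.
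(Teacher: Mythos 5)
Your homotopy is, up to reversing the time parameter and flipping the sign of the off-diagonal blocks (which is a conjugation by $\operatorname{diag}(I_n,-I_n)$ and hence harmless), exactly the path written down in the paper's proof; you simply express it as a conjugation by a scalar rotation rather than writing the $2\times 2$ block matrix directly. The explicit cancellation you carry out, showing $p_t^2 - p_t = -\cos^2(\pi t/2)\,(p^2-p)\otimes I_2$, is correct and fills in the verification that the paper leaves implicit.
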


\begin{proof}
	For $t\in [0,1]$, let 
	$$p_t=
	\begin{pmatrix}
		p & 0\\
		0 & 0
	\end{pmatrix}
	+
	\begin{pmatrix}
		(\cos\frac{\pi t}{2})^2(I_n-p)                & \cos\frac{\pi t}{2}\sin\frac{\pi t}{2}(I_n-p) \\
		\cos\frac{\pi t}{2}\sin\frac{\pi t}{2}(I_n-p) & (\sin\frac{\pi t}{2})^2(I_n-p) 
	\end{pmatrix},$$
	then $(p_t)_{t\in [0,1]}$ is an $(\ep, r)$-homotopy connecting $I_n\oplus 0$ and $p\oplus (I_n-p)$ in $P^{\ep, r}_{2n}(A)$.
\end{proof}

\begin{lemma}\label{uni-homotopy}
	Let $A$ be a unital filtered $C^{\ast}$-algebra and $0<\ep<1/4$, $r>0$. Then $u\oplus u^{\ast}$ is $(3\ep, 2r)$-homotopic to $I_{2n}$ for any $u\in U^{\ep, r}_{n}(A)$.
\end{lemma}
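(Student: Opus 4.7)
The plan is to produce a $(3\ep, 2r)$-homotopy from $I_{2n}$ to $u\oplus u^{\ast}$ by concatenating two elementary paths: the classical \emph{rotation trick}, which swaps the two blocks up to a multiplicative error, followed by a short linear path that absorbs the residual error at the junction. The scalar rotation is a true unitary, so its only contribution to the unitarity defect comes through the approximate relations $u^{\ast}u \approx I_n \approx uu^{\ast}$, and the arithmetic is just tight enough to stay within the $3\ep$ budget.

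The first ingredient I would introduce is the scalar rotation
\[
R_t = \begin{pmatrix} \cos(\pi t/2)\, I_n & -\sin(\pi t/2)\, I_n \\ \sin(\pi t/2)\, I_n & \cos(\pi t/2)\, I_n \end{pmatrix} \in M_{2n}(A),
\]
which is an exact unitary whose entries are scalar multiples of $I_n$ and thus belong to $A_s$ for every $s>0$. Then I would consider the rotation path
\[
v_t = (u \oplus I_n)\, R_t\, (u^{\ast} \oplus I_n)\, R_t^{\ast},\qquad t\in[0,1].
\]
Since $R_1$ realizes the $2\times 2$ swap, one checks directly that $v_0 = uu^{\ast}\oplus I_n$ and $v_1 = u\oplus u^{\ast}$. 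The four factors defining $v_t$ have filtration at most $r,0,r,0$, hence $v_t \in M_{2n}(A_{2r})$. Writing $u^{\ast}u = I_n + e_1$ and $uu^{\ast} = I_n + e_2$ with $\|e_i\|<\ep$, a routine simplification that uses $R_t^{\ast}R_t = I_{2n}$ gives
\[
v_t v_t^{\ast} - I_{2n} = (e_2\oplus 0) + (u\oplus I_n)\, R_t\, (e_1\oplus 0)\, R_t^{\ast}\, (u^{\ast}\oplus I_n),
\]
of norm at most $\ep + \|u\|^2\,\ep \le \ep + (1+\ep)\ep < 3\ep$, since $\|u\|^2 = \|uu^{\ast}\| \le 1+\ep$ and $\ep < 1/4$. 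A symmetric computation produces the same bound for $\|v_t^{\ast}v_t - I_{2n}\|$, so $(v_t)$ is a $(3\ep,2r)$-unitary path from $uu^{\ast}\oplus I_n$ to $u\oplus u^{\ast}$.

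To finish, I would bridge $I_{2n}$ and $uu^{\ast}\oplus I_n$ by the linear path $w_s = I_{2n} + s\,(e_2\oplus 0)$ for $s\in[0,1]$, which is self-adjoint, lies in $M_{2n}(A_{2r})$, and satisfies $\|w_s^2 - I_{2n}\| \le 2\ep + \ep^2 < 3\ep$. Reparametrizing and concatenating $(w_s)$ with $(v_t)$ then yields the asserted $(3\ep,2r)$-homotopy in $U^{3\ep,2r}_{2n}(A)$ connecting $I_{2n}$ to $u\oplus u^{\ast}$.

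There is no conceptual obstacle; the only delicate point is the numerical bookkeeping. Both the factor $\|u\|^2 \le 1+\ep$ appearing in the rotation estimate and the cross-term factor $2$ appearing in the linear estimate push the defect close to $3\ep$, so the argument leaves essentially no slack. This is why the constants $(3\ep, 2r)$ in the statement are sharp for this construction, and why one needs the standing hypothesis $\ep<1/4$ rather than just $\ep<1$.
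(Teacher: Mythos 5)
Your proposal is correct and follows essentially the same route as the paper: both proofs concatenate a short linear bridge accounting for the unitarity defect (the paper uses $u^{\ast}u-I$, you use $uu^{\ast}-I$) with the Whitehead rotation $R_t(\cdot)R_t^{\ast}$ acting on one block, and both verify the $(3\ep, 2r)$ bounds by the same filtration count and norm arithmetic. The only substantive deviation is cosmetic — the order of the factors in the rotation leg and the choice of which of $u^{\ast}u$, $uu^{\ast}$ appears at the junction.
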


\begin{proof}
	Let $$u_t=
	\begin{cases}
		\begin{pmatrix}
			I_n+2t(u^{\ast}u-I) & 0\\
			0           & I_n
		\end{pmatrix}          &\quad\text{for $t\in[0,\frac{1}{2}]$},\\
		\begin{pmatrix}
			\sin\pi t & -\cos\pi t\\
			\cos\pi t & \sin\pi t
		\end{pmatrix}
		\begin{pmatrix}
			u^{\ast} & 0\\
			0 & I_n
		\end{pmatrix}
		\begin{pmatrix}
			\sin\pi t & \cos\pi t\\
			-\cos\pi t & \sin\pi t
		\end{pmatrix}
		\begin{pmatrix}
			u & 0\\
			0 & I_n
		\end{pmatrix}             &\quad\text{for $t\in[\frac{1}{2}, 1]$},\\
	\end{cases}$$
	then $(u_t)_{t\in[0,1]}$ is a $(3\ep, 2r)$-homotopy connecting $I_{2n}$ and $u\oplus u^{\ast}$.
\end{proof}

For a non-unital filtered $C^{\ast}$-algebra $A$, set $A^{+}=A\times \C$ equipped with the multiplication $(a,x)\cdot(b,y)=(ab+xb+ya,xy)$ for any $a,b\in A$ and $x,y\in \C$, and the norm on $A^{+}$ is defined to be $\|(a,x)\|=\sup_{b\in A}\|ab+xb\|/\|b\|$, then $A^+$ is a filtered $C^{\ast}$-algebra equipped with a filtration $(A_r\times \C)_{r>0}$. Define $\pi: A^{+}\rightarrow \C$ by $(a,x)\mapsto x$, then $\pi$ is a $\ast$-homomorphism which is contractible, and $\pi$ can be extended to a $\ast$-homomorphism from $M_n(A^{+})$ to $M_n(\C)$ for any positive integer $n$. Define
$$\Tilde{A}=
\begin{cases}
	A  &\quad\text{if $A$ unital,}\\
	A^{+} &\quad\text{if $A$ non-unital.}\\
\end{cases}$$

Now, we give a definition of the quantitative $K$-theory for filtered $C^{\ast}$-algebras.

\begin{definition}\label{Def-quan-K}
	Let $A$ be a filtered $C^{\ast}$-algebra and $0<\ep<1/4$, $r>0$.
	\begin{enumerate}
		\item If $A$ is unital, define
		$$K^{\ep, r}_0(A)=\{(p)_{\ep, r}-(q)_{\ep, r}: p, q\in P^{\ep, r}_{\infty}(A)\}/\sim;$$
		and if $A$ is non-unital, define
		$$K^{\ep, r}_0(A)=\{(p)_{\ep, r}-(q)_{\ep, r}: p, q\in P^{\ep, r}_{\infty}(A^{+}),\:\:\textrm{dim}\:\chi_0(\pi(p))=\textrm{dim}\:\chi_0(\pi(q))\}/\sim,$$
		where $(p)_{\ep,r}-(q)_{\ep, r}\sim (p')_{\ep,r}-(q')_{\ep, r}$ if and only if there exists a positive integer $k$ such that $p\oplus q'\oplus I_k$ is $(\ep, r)$-homotopic to $p'\oplus q\oplus I_k$. Denote by $[p]_{\ep, r}-[q]_{\ep, r}$ the equivalence class of $(p)_{\ep, r}-(q)_{\ep, r}$ modulo $\sim$. $[p]_{\ep, r}$ and $-[q]_{\ep, r}$ are the abbreviations of $[p]_{\ep, r}-[0]_{\ep, r}$ and $[0]_{\ep, r}-[q]_{\ep, r}$, respectively.
		\item Define
		$$K^{\ep, r}_1(A)=U^{\ep, r}_{\infty}(\Tilde{A})/\sim;$$
		where two $(\ep, r)$-unitaries $u\sim u'$ if and only if $u$ is $(3\ep, 2r)$-homotopic to $u'$. Denote by $[u]_{\ep, r}$ the equivalence class of $u$ modulo $\sim$.
	\end{enumerate}
\end{definition}

\begin{remark}
	For a non-unital filtered $C^{\ast}$-algebra $A$, we note that $\textrm{dim}\:\chi_0(\pi(p))=\textrm{dim}\:\chi_0(\pi(p'))$ if $p$ is $(\ep, r)$-homotopic to $p'$ for any two elements $p, p'\in P^{\ep, r}_{\infty}(A^{+})$. 
\end{remark}

\begin{lemma}\label{quan-K-abel}
	Let $A$ be a filtered $C^{\ast}$-algebra. Then $K^{\ep, r}_0(A)$ is an abelian group equipped with the addition operation defined by $([p]_{\ep, r}-[q]_{\ep, r})+([p']_{\ep, r}-[q']_{\ep, r})=[p\oplus p']_{\ep, r}-[q\oplus q']_{\ep, r}$ and with the zero element $[0]_{\ep, r}$. Furthermore, $K^{\ep, r}_1(A)$ is an abelian group equipped with the addition operation defined by $[u]_{\ep, r}+[u']_{\ep, r}=[u\oplus u']_{\ep, r}$ and with the zero element $[I]_{\ep, r}$.
\end{lemma}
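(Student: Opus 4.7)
The plan is to verify the group axioms for $K_0^{\ep,r}(A)$ and $K_1^{\ep,r}(A)$ separately, leaning on the two homotopy lemmas proved just above and on conjugation by scalar rotation matrices (which lie in every filtration level and are exact unitaries, so they do not degrade the $(\ep,r)$-parameters).

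First I would check that the direct-sum operation descends to equivalence classes. If $(p_t)_{t\in[0,1]}$ is an $(\ep,r)$-homotopy connecting $p$ to $p'$ and $(\tilde p_t)$ connects $\tilde p$ to $\tilde p'$, then $(p_t\oplus \tilde p_t)$ is an $(\ep,r)$-homotopy between $p\oplus\tilde p$ and $p'\oplus\tilde p'$, and identically for unitaries with parameters $(3\ep,2r)$. In the non-unital case I would observe that $\dim\chi_0(\pi(p\oplus q))=\dim\chi_0(\pi(p))+\dim\chi_0(\pi(q))$, so the dimension constraint in the definition of $K_0^{\ep,r}$ is preserved by $\oplus$. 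Associativity is immediate since $\oplus$ is associative up to the obvious scalar unitary reshuffling, and the zero element is $[0]_{\ep,r}$ for $K_0^{\ep,r}$ and $[I]_{\ep,r}$ for $K_1^{\ep,r}$, because $p\oplus 0=p$ and $u\oplus I=u$ after the stabilizing inclusions used in the definition of $P_\infty^{\ep,r}$ and $U_\infty^{\ep,r}$.

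For commutativity I would use the standard rotation trick. For $v_\theta=\bigl(\begin{smallmatrix}\cos\theta\,I_n & -\sin\theta\,I_n\\ \sin\theta\,I_n & \cos\theta\,I_n\end{smallmatrix}\bigr)\in M_{2n}(\tilde A)$, the path $\theta\mapsto v_\theta(p\oplus 0)v_\theta^{\ast}$ (resp.\ $v_\theta(u\oplus I)v_\theta^{\ast}$) moves along genuine $(\ep,r)$-projections (resp.\ $(\ep,r)$-unitaries) because $v_\theta$ is an exact unitary with scalar entries and therefore has filtration zero; concatenating two such rotations gives $(\ep,r)$-homotopies
\[
p\oplus p' \;\sim\; p'\oplus p,\qquad u\oplus u'\;\sim\; u'\oplus u.
\]
From this, for $K_0^{\ep,r}$ the inverse of $[p]_{\ep,r}-[q]_{\ep,r}$ is $[q]_{\ep,r}-[p]_{\ep,r}$, since $p\oplus q$ and $q\oplus p$ are $(\ep,r)$-homotopic (no extra $I_k$ needed). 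For $K_1^{\ep,r}$ the inverse of $[u]_{\ep,r}$ is $[u^{\ast}]_{\ep,r}$, which is precisely the content of Lemma \ref{uni-homotopy}: $u\oplus u^{\ast}$ is $(3\ep,2r)$-homotopic to $I_{2n}$, so $[u]_{\ep,r}+[u^{\ast}]_{\ep,r}=[I]_{\ep,r}$.

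The only genuinely delicate point, and the step I would treat most carefully, is bookkeeping of the $(\ep,r)$-parameters in all of these homotopies. Each rotation path and each concatenation must stay inside $P_\infty^{\ep,r}(A)$ or $U_\infty^{\ep,r}(\tilde A)$ without inflating $\ep$ or $r$; the scalar rotation matrices lie in filtration $0$ so conjugation by them does not change $r$, and since they are exact unitaries the norm estimates $\|p^2-p\|<\ep$ and $\max\{\|uu^{\ast}-I\|,\|u^{\ast}u-I\|\}<\ep$ are preserved. Likewise the definition of the equivalence relation $\sim$ in $K_1^{\ep,r}$ (via $(3\ep,2r)$-homotopy rather than $(\ep,r)$-homotopy) is exactly what allows Lemma \ref{uni-homotopy} to produce the required inverse, so the parameter choice in Definition \ref{Def-quan-K}(2) is consistent with the group structure. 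Once these bookkeeping points are checked, all group axioms follow formally.
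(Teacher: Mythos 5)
Your proposal is correct and takes essentially the same route as the paper: commutativity of $\oplus$ on $(\ep,r)$-classes via conjugation by scalar rotation matrices (which are exact unitaries of propagation zero and hence do not inflate $\ep$ or $r$), the $K_0$-inverse of $[p]_{\ep,r}-[q]_{\ep,r}$ being $[q]_{\ep,r}-[p]_{\ep,r}$, and the $K_1$-inverse of $[u]_{\ep,r}$ being $[u^{\ast}]_{\ep,r}$ by Lemma \ref{uni-homotopy}. You add some useful bookkeeping the paper leaves implicit (well-definedness of $\oplus$ on $\sim$-classes, preservation of the $\dim\chi_0(\pi(\cdot))$ constraint, and why the $(3\ep,2r)$ relation in Definition \ref{Def-quan-K}(2) is exactly what makes Lemma \ref{uni-homotopy} applicable), but the key steps agree.
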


\begin{proof}
	We give a proof for non-unital case and it is similar for unital case. The key point is to prove two addition operations defined in the lemma are commutative. For any two elements $p, p'$ in $P^{\ep, r}_{\infty}(A^{+})$, then $p\oplus p'$ is $(\ep, r)$-homotopic to $p'\oplus p$ through the following path for $t\in[0,1]$
	$$\begin{pmatrix}
		\cos\frac{\pi t}{2} & \sin\frac{\pi t}{2}\\
		-\sin\frac{\pi t}{2} & \cos\frac{\pi t}{2}
	\end{pmatrix}
	\begin{pmatrix}
		p & 0\\
		0 & p'
	\end{pmatrix}
	\begin{pmatrix}
		\cos\frac{\pi t}{2} & -\sin\frac{\pi t}{2}\\
		\sin\frac{\pi t}{2} & \cos\frac{\pi t}{2}
	\end{pmatrix},
	$$ 
	which implies the addition operation in $K^{\ep, r}_0(A)$ is commutative. Besides, the inverse of $[p]_{\ep, r}-[q]_{\ep, r}$ is $[q]_{\ep, r}-[p]_{\ep, r}$, thus $K^{\ep, r}_0(A)$ is an abelian group. \par
	Similar arguments as above show that the addition operation in $K^{\ep, r}_1(A)$ is commutative. And for any element $u\in U^{\ep, r}_{\infty}(A^{+})$, the inverse of $[u]_{\ep, r}$ is $[u^{\ast}]_{\ep, r}$ by Lemma \ref{uni-homotopy}. Thus $K^{\ep, r}_1(A)$ is an abelian group.
\end{proof}

\begin{remark}
	For any $0<\ep\leq\ep'<1/4$ and $0<r\leq r'$, we have the following natural group homomorphisms for $\ast=0, 1$:
	$$\iota^{\ep, \ep', r, r'}_{\ast}: K^{\ep, r}_{\ast}(A)\rightarrow K^{\ep', r'}_{\ast}(A), \:\: [p]_{\ep, r}-[q]_{\ep, r}\mapsto [p]_{\ep', r'}-[q]_{\ep', r'}.$$
\end{remark}

In \cite{OyonoYu2015}, Oyono-Oyono and Yu gave a definition for the quantitative $K$-theory which is the same for $K^{\ep, r}_1(A)$ but not for $K^{\ep, r}_0(A)$. Next, we will show our definition for $K^{\ep, r}_0(A)$ is isomorphic to Oyono-Oyono and Yu's definition.


\begin{definition}(\cite[Definition 1.13]{OyonoYu2015})\label{OY-Quan-K}
	Let $A$ be a filtered $C^{\ast}$-algebra and $0<\ep<1/4$, $r>0$. Define
	$$\Bar{K}^{\ep,r}_0(A)=\{(p, l)_{\ep, r}\in P^{\ep, r}_{\infty}(A)\times \N\}/\sim$$
	for $A$ unital and 
	$$\Bar{K}^{\ep,r}_0(A)=\{(p, l)_{\ep, r}\in P^{\ep, r}_{\infty}(A^+)\times\N: \:\: \dim\chi_0(\pi(p))=l\}/\sim$$
	for $A$ non-unital,
	where $(p, l)\sim (p', l')$ if and only if there exists a positive integer $k$ such that $p\oplus I_{l'+k}$ is $(\ep, r)$-homotopic to $p'\oplus I_{l+k}$. Denoted by $[p, l]_{\ep, r}$ the equivalence class of $(p, l)_{\ep, r}$ modulo $\sim$.
\end{definition}

\begin{remark}
	$\Bar{K}^{\ep, r}_0(A)$ is an abelian group equipped with the addition operation defined by $[p,l]_{\ep, r}+[p',l']_{\ep, r}=[p\oplus p', l+l']_{\ep, r}$ and if $p$ is an $(\ep, r)$-projection in $M_n(\Tilde{A})$ for some positive integer $n$, then the inverse of $[p,l]_{\ep, r}$ is $[I_n-p, n-l]_{\ep, r}$ for any $l\leq n$.
\end{remark}

Now, we define a homomorphism between abelian groups:
$$\rho^{\ep, r}: \Bar{K}^{\ep, r}_0(A)\rightarrow K^{\ep, r}_0(A),\:\: [p,l]_{\ep, r}\mapsto [p]_{\ep, r}-[I_l]_{\ep, r}.$$

\begin{lemma}
	Let $A$ be a filtered $C^{\ast}$-algebra. For any $0<\ep<1/4$, $r>0$, the above homomorphism $\rho^{\ep, r}$ is an isomorphism from $\Bar{K}^{\ep, r}_0(A)$ to $K^{\ep, r}_0(A)$.
\end{lemma}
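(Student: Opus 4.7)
The plan is to show $\rho^{\ep,r}$ is a well-defined homomorphism, then establish surjectivity by writing every class in $K^{\ep,r}_0(A)$ in a ``$[p]-[I_l]$'' normal form, and finally establish injectivity by reading off the defining relation of $\Bar{K}^{\ep,r}_0(A)$ directly from the equivalence relation on $K^{\ep,r}_0(A)$.

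First I would check well-definedness and the homomorphism property. If $(p,l)\sim (p',l')$ in $\Bar{K}^{\ep,r}_0(A)$, there is an integer $k$ with $p\oplus I_{l'+k}$ being $(\ep,r)$-homotopic to $p'\oplus I_{l+k}$; this is precisely the relation defining $[p]_{\ep,r}-[I_l]_{\ep,r}=[p']_{\ep,r}-[I_{l'}]_{\ep,r}$ in $K^{\ep,r}_0(A)$, so $\rho^{\ep,r}$ descends to classes. Additivity is immediate from $\rho^{\ep,r}([p,l]+[p',l'])=[p\oplus p']_{\ep,r}-[I_{l+l'}]_{\ep,r}=\rho^{\ep,r}([p,l])+\rho^{\ep,r}([p',l'])$. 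For the non-unital case, the dimension condition $\dim\chi_0(\pi(p))=l$ is preserved under the operation $(p,l)\mapsto (p\oplus p',l+l')$, so there is nothing extra to check.

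For surjectivity, take any generator $[p]_{\ep,r}-[q]_{\ep,r}$ of $K^{\ep,r}_0(A)$ with $p,q\in P^{\ep,r}_n(\Tilde{A})$ (after padding with zeros to a common size). By Lemma \ref{orth-homotopy}, $q\oplus (I_n-q)$ is $(\ep,r)$-homotopic to $I_n\oplus 0$, so in $K^{\ep,r}_0(A)$ we have $[q]_{\ep,r}+[I_n-q]_{\ep,r}=[I_n]_{\ep,r}$, i.e.\ $-[q]_{\ep,r}=[I_n-q]_{\ep,r}-[I_n]_{\ep,r}$. Therefore
\[
[p]_{\ep,r}-[q]_{\ep,r}=[p\oplus (I_n-q)]_{\ep,r}-[I_n]_{\ep,r}=\rho^{\ep,r}([p\oplus (I_n-q),\,n]_{\ep,r}).
\]
In the non-unital case one must verify the dimension constraint $\dim\chi_0(\pi(p\oplus(I_n-q)))=n$; since $\dim\chi_0(\pi(p))=\dim\chi_0(\pi(q))=:k$ by the definition of $K^{\ep,r}_0(A)$, and $\dim\chi_0(\pi(I_n-q))=n-k$, the total dimension is exactly $n$, as required.

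For injectivity, suppose $\rho^{\ep,r}([p,l]_{\ep,r})=0$ in $K^{\ep,r}_0(A)$, i.e.\ $[p]_{\ep,r}=[I_l]_{\ep,r}$. By the definition of $\sim$ on $K^{\ep,r}_0(A)$, there exists $k\in\N$ such that $p\oplus 0\oplus I_k$ is $(\ep,r)$-homotopic to $0\oplus I_l\oplus I_k=I_{l+k}\oplus 0$ (in suitable matrix size). After absorbing the zero summand, this reads as: $p\oplus I_k$ is $(\ep,r)$-homotopic to $I_{l+k}$, which is precisely the relation $(p,l)\sim (0,0)$ in $\Bar{K}^{\ep,r}_0(A)$. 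Hence $[p,l]_{\ep,r}=0$ in $\Bar{K}^{\ep,r}_0(A)$, and $\rho^{\ep,r}$ is injective. The main obstacle in writing this out carefully is the bookkeeping of matrix sizes and the dimension condition in the non-unital case; once these are handled, the argument is a direct translation between the two equivalence relations.
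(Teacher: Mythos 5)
Your proof is correct and follows essentially the same route as the paper's: you check well-definedness, obtain surjectivity via $[p]-[q]=[p\oplus(I_n-q)]-[I_n]$ using Lemma \ref{orth-homotopy}, and read injectivity off by unwinding the two equivalence relations. You add slightly more detail on the dimension bookkeeping in the non-unital case and the zero-block absorption in the injectivity step, but the structure and key observations are identical to the paper's argument.
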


\begin{proof}
	Firstly, $\rho^{\ep, r}$ is well-defined. And if $\rho^{\ep, r}([p,l]_{\ep, r})=[p]_{\ep, r}-[I_l]_{\ep, r}=0$ in $K^{\ep, r}_0(A)$, then there exists an positive integer $k$ such that $p\oplus I_k$ is $(\ep, r)$-homotopic to $I_{l+k}$ which implies that $[p,l]_{\ep, r}=0$ in $\Bar{K}^{\ep, r}_0(A)$. Thus, $\rho^{\ep, r}$ is injective. Next, we will show $\rho^{\ep, r}$ is surjective. For any element $[p]_{\ep, r}-[q]_{\ep, r}$ in $K^{\ep, r}_0(A)$, assume $p$ and $q$ are in $P^{\ep, r}_n(\Tilde{A})$ for some positive integer $n$. Then $[p\oplus (I_n-q), n]_{\ep, r}$ is in $\Bar{K}^{\ep, r}_0(A)$ and $\rho^{\ep, r}([p\oplus (I_n-q), n]_{\ep, r})=[p\oplus (I_n-q)]_{\ep, r}-[I_n]_{\ep, r}$. By Lemma \ref{orth-homotopy}, we know that $(I_n-q)\oplus q$ is $(\ep, r)$-homotopic to $I_n$, which implies that $[p\oplus (I_n-q)]_{\ep, r}-[I_n]_{\ep, r}=[p]_{\ep, r}-[q]_{\ep, r}$ in $K^{\ep, r}_0(A)$. Thus, we completed the proof. 
\end{proof}

The quantitative $K$-theory is a refinement of the $K$-theory. On the other hand, the $K$-theory can be approached by quantitative $K$-theory as $r\rightarrow \infty$. More precisely, for a filtered $C^{\ast}$-algebra $A$ and $0<\ep<1/4$, $r>0$, define (see Remark \ref{Rmek-almost-proj-proj})
$$\iota^{\ep, r}_0: K^{\ep, r}_0(A)\rightarrow K_0(A)\:\: [p]_{\ep, r}-[q]_{\ep, r}\mapsto [\chi_0(p)]-[\chi_0(q)],$$
and 
$$\iota^{\ep, r}_1: K^{\ep, r}_1(A)\rightarrow K_1(A)\:\: [u]_{\ep, r}\mapsto [\chi_1(u)].$$                                                                               
Then we have the following lemma.

\begin{lemma}\label{quan-K-and-K}
	Let $A$ be a filtered $C^{\ast}$-algebra equipped with a filtration $(A_r)_{r>0}$.
	\begin{enumerate}
		\item For any $0<\ep<1/4$ and any $y\in K_{\ast}(A)$, there exists a positive number $r$ and $x\in K^{\ep, r}_{\ast}(A)$ such that $\iota^{\ep, r}_{\ast}(x)=y$.
		\item For any $0<\ep<1/64$ and $r>0$. If an element $x\in K^{\ep, r}_{\ast}(A)$ satisfies $\iota^{\ep, r}_{\ast}(x)=0$, then there exists $r'\geq r$ such that $\iota^{\ep, 16\ep, r, r'}_{\ast}(x)=0$ in $K^{16\ep, r'}_{\ast}(A)$.
	\end{enumerate}
\end{lemma}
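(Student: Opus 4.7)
The plan is to handle (1) and (2) separately. Part (1) follows from density of $\bigcup_{r>0}\Tilde{A}_r$ in $\Tilde{A}$ together with functional calculus, while part (2) requires approximating a norm-continuous homotopy of genuine projections (resp.\ unitaries) by a quantitative $(16\ep, r')$-homotopy.

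For (1), I would treat the $K_0$ case first. A class $y\in K_0(A)$ has the form $[P]-[Q]$ for projections $P,Q\in M_n(\Tilde{A})$. By density, for any $\delta>0$ there is a self-adjoint $p\in M_n(\Tilde{A}_r)$ with $\|p-P\|<\delta$, and then
\[
\|p^2-p\| \leq \|p^2-P^2\|+\|P-p\| \leq (2\|P\|+\delta+1)\delta,
\]
so $p\in P^{\ep,r}_n(\Tilde{A})$ once $\delta$ is small enough. By Remark \ref{Rmek-almost-proj-proj}, $\|\chi_0(p)-P\|<2\ep+\delta<1$, hence $\chi_0(p)$ and $P$ are joined by a norm-continuous path of projections (the standard straight-line trick) and define the same $K_0$-class; approximating $Q$ by some $q$ likewise, the element $x=[p]_{\ep,r}-[q]_{\ep,r}$ satisfies $\iota^{\ep,r}_0(x)=y$. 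The $K_1$ case is analogous: approximate a unitary representative $U$ by an $(\ep,r)$-unitary $u$ and use $\|\chi_1(u)-u\|<\ep$ to conclude that $\chi_1(u)$ and $U$ represent the same $K_1$-class.

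For (2), suppose $x=[p]_{\ep,r}-[q]_{\ep,r}$ with $p,q\in P^{\ep,r}_n(\Tilde{A})$ and $\iota^{\ep,r}_0(x)=[\chi_0(p)]-[\chi_0(q)]=0$ in $K_0(A)$. After stabilization there is a norm-continuous path $(P_t)_{t\in[0,1]}$ of genuine projections in some $M_N(\Tilde{A})$ connecting $\chi_0(p)\oplus I_k$ and $\chi_0(q)\oplus I_k$. I would partition $[0,1]$ into subintervals $[t_i,t_{i+1}]$ fine enough that $\|P_{t_i}-P_{t_{i+1}}\|$ is arbitrarily small, approximate each $P_{t_i}$ by a self-adjoint element $\pi_i\in M_N(\Tilde{A}_{r'})$ with a single common $r'$ and with defect $\|\pi_i^2-\pi_i\|$ as small as required, and on each $[t_i,t_{i+1}]$ use the linear interpolation from $\pi_i$ to $\pi_{i+1}$, whose defect is controlled by the individual defects plus the cross-term $s(1-s)\|\pi_i-\pi_{i+1}\|^2$. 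Concatenating these pieces with the two short $(\ep_1,r)$-homotopies from $p$ to $\chi_0(p)$ and from $\chi_0(q)$ to $q$ (provided by Remark \ref{Rmek-almost-proj-proj}) produces a single $(16\ep,r')$-homotopy realising $\iota^{\ep,16\ep,r,r'}_0(x)=0$. The $K_1$ case is identical, with projections and $\chi_0$ replaced by unitaries and $\chi_1$, and using Lemma \ref{uni-homotopy} to handle concatenation of approximating paths of unitaries.

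The main obstacle will be the quantitative bookkeeping in (2): the cumulative defect contributed by the joints of the approximation and by the two end adjustments from $p,q$ to $\chi_0(p),\chi_0(q)$ must be estimated to at most $16\ep$, and the hypothesis $\ep<1/64$ is imposed precisely so that $16\ep<1/4$ stays inside the admissible range of Definition \ref{Def-quan-K}. In practice, the cleanest route is to follow the argument of Oyono-Oyono and Yu in \cite{OyonoYu2015} for the analogous statement in their framework and then transport along the isomorphism $\rho^{\ep,r}:\Bar{K}^{\ep,r}_0(A)\to K^{\ep,r}_0(A)$ established just above.
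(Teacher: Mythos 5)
Your proposal is correct and follows essentially the same route as the paper: for (1) use density of $\cup_{r>0}M_n(\Tilde{A}_r)$ to produce $(\ep,r)$-representatives and functional calculus to compare $K$-classes, and for (2) sample a norm-continuous homotopy at a finite grid, approximate the nodes by elements of a common $M_n(\Tilde{A}_{r'})$, and concatenate linear interpolations. One small imprecision: when you speak of short homotopies ``from $p$ to $\chi_0(p)$'', note that $\chi_0(p)$ itself need not lie in any $M_n(\Tilde{A}_{r'})$; the paper (and what you in fact need) interpolates from $p$ to an $M_n(\Tilde{A}_{r'})$-\emph{approximant} of $\chi_0(p)$, treating $\chi_0(p)$ only as a sample point to be approximated, not as a genuine node of the quantitative homotopy.
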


\begin{proof}
	(1) For $\ast=0$, let $y=[p]-[q]$ for some projections $p, q\in M_n(\Tilde{A})$. Since that $\cup_{r>0}M_n(\Tilde{A}_r)$ is dense in $M_n(\Tilde{A})$, thus there exists $r>0$ and two self-adjoint elements $p', q'\in M_n(\Tilde{A}_r)$ such that $\max\{\|p'-p\|,\|q'-q\|\}<\ep/4$ which implies that $p', q'$ are two $(\ep, r)$-projections. Take $x=[p']_{\ep, r}-[q']_{\ep, r}$ which is in $K^{\ep, r}_0(A)$ and $\iota^{\ep, r}_0(x)=[\chi_0(p')]-[\chi_0(q')]$. For any $t\in [0,1]$, let $p_t=tp'+(1-t)p$, then $\|p_t-p\|<\ep/4<1/4$. By the continuous functional calculus, $(\chi_0(p_t))_{t\in[0,1]}$ is a homotopy of projections connecting $\chi_0(p')$ and $\chi_0(p)=p$. Thus $[\chi_0(p')]=[p]$. Similarly, we have $[\chi_0(q')]=[q]$. Consequently, $\iota^{\ep, r}_0(x)=y$. \par
	For $\ast=1$, let $y=[u]$ for some unitary $u\in M_n(\Tilde(A))$. Then there exists $r>0$ and an element $u'\in M_n(\Tilde{A}_r)$ such that $\|u'-u\|<\ep/3$ which implies that $u'$ is an $(\ep, r)$-unitary. Take $x=[u']_{\ep, r}$ which is in $K^{\ep, r}_1(A)$ and $\iota^{\ep, r}_1(x)=[\chi_1(u')]$. Let $u_t=tu'+(1-t)u$ for $t\in[0,1]$, then $\|u_t-u\|<\ep/3<1$ which implies that $u_t$ is invertible. Thus $(\chi_1(u_t))_{t\in[0,1]}$ is a homotopy of unitaries connecting $\chi_1(u')$ and $u$. Consequently, $\iota^{\ep, r}_1(x)=y$. \par
	(2) For $\ast=0$, it is sufficient to prove that if $\chi_0(p)$ is homotopic to $0$ for an $(\ep, r)$-projection $p\in M_n(\Tilde{A})$, then there exists $r'>r$ such that $p$ is $(16\ep, r')$-homotopic to $0$. Let $(p_t)_{t\in[0,1]}$ is a homotopy of projections connecting $\chi_0(p)$ and $0$. Then we can choose $p_1, p_2, \cdots, p_m$ on $(p_t)_{t\in[0,1]}$ such that $p_1=\chi_0(p)$, $p_m=0$ and $\|p_i-p_{i+1}\|<\ep/6$ for $i=1, 2, \cdots, m-1$. Since that $\cup_{r>0}M_n(\Tilde{A}_r)$ is dense in $M_n(\Tilde{A})$, thus there exists $r'>r$ and a sequence of self-adjoint elements $p'_1, p'_2, \cdots, p'_m$ in $M_n(\Tilde{A}_{r'})$ such that $\|p'_i-p_i\|<\ep/16$ for $i=1, 2, \cdots, m$. Take $p'_0=p$, connecting all linear homotopies from $p'_i$ to $p'_{i+1}$ for $i=0, 1, \cdots, m-1$, we obtain a $(16\ep, r')$-homotopy connecting $p$ and $0$.\par
	For $\ast=1$, the proof is similar as above.
\end{proof}

\begin{remark}
	For $0<\ep \leq \ep'<1/4$ and $0<r \leq r'$, we have the following commutative diagram:
	$$\xymatrix{
		K^{\ep, r}_{\ast}(A) \ar[d]_{\iota^{\ep, r}} \ar[r]^{\iota^{\ep, \ep', r, r'}} & K^{\ep', r'}_{\ast}(A) \ar[dl]^{\iota^{\ep', r'}} \\
		K_{\ast}(A) &
	}$$
\end{remark}

In the last of this subsection, we will discuss something about Lipschitz homotopy.

\begin{definition}\label{Def-Lip}
	Let $B$ be a $C^{\ast}$-algebra and $c$ be a positive number, a map $h:[0,1]\rightarrow A$ is called \textit{c-Lipschitz}, if $\|h(t)-h(s)\|\leq c|t-s|$ for any $t, s\in [0,1]$.
\end{definition}

The following lemma is coming from \cite[Proposition 1.30]{OyonoYu2015}. For the convenience of readers, we migrate the proof from \cite{OyonoYu2015} to here.

\begin{lemma}\label{Lip-Homotopy}
	Let $A$ be a unital filtered $C^{\ast}$-algebra and $0<\ep<1/4$, $r>0$.
	\begin{enumerate}
		\item If $p$ and $p'$ are $(\ep, r)$-homotopic in $P^{\ep, r}_n(A)$, then there exist two integers $k$, $l$ and a $2$-Lipschitz $(\ep, r)$-homotopy connecting $p\oplus I_k\oplus 0_l$ and $p'\oplus I_k\oplus 0_l$ in $P^{\ep, r}_{n+k+l}(A)$, where $k$ and $l$ depend on the homotopy connecting $p$ and $p'$.
		\item If $u$ and $u'$ are $(\ep, r)$-homotopic in $U^{\ep, r}_n(A)$, then there exists an integer $k'$ and a $4$-Lipschitz $(3\ep, 2r)$-homotopy connecting $u\oplus I_{k'}$ and $u'\oplus I_{k'}$ in $U^{3\ep, 2r}_{n+k}(A)$, where $k$ depends on the homotopy connecting $u$ and $u'$. 
	\end{enumerate} 
\end{lemma}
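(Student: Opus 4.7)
My plan is to reduce both parts to a \emph{short-range} case by discretizing the given homotopy via uniform continuity, and then to weave the resulting short pieces in parallel across orthogonal summands so that the overall Lipschitz constant stays bounded by a fixed universal constant ($2$ for projections, $4$ for unitaries) independent of the discretization. Given the $(\ep, r)$-homotopy $(p_t)_{t\in[0,1]}$ connecting $p$ and $p'$, I would first choose an integer $N\geq 1$, using uniform continuity on the compact interval $[0,1]$, such that $\|p_{i/N}-p_{(i-1)/N}\|<\delta$ for all $1\leq i\leq N$, where $\delta$ is a small constant depending only on $\ep$. Set $p^{(i)}:=p_{i/N}$, so that $p^{(0)}=p$ and $p^{(N)}=p'$.

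For step (1), the core technical ingredient is a canonical short-range Lipschitz homotopy: given two $(\ep, r)$-projections $a, b \in M_n(\Tilde{A})$ with $\|a-b\|<\delta$, the straight-line interpolation $h(t)=(1-t)a+tb$ still lies in $M_n(\Tilde{A}_r)$, is norm-Lipschitz with constant $\|a-b\|<\delta\leq 2$, and has projection-defect $\|h(t)^{2}-h(t)\|$ estimable in terms of $\|a^{2}-a\|$, $\|b^{2}-b\|$ and $\|a-b\|$; for $\delta$ small enough the estimate stays $<\ep$. The second step is to \emph{parallelize} the $N$ short interpolations so their Lipschitz constants do not accumulate: after stabilizing to $p \oplus I_{nN} \oplus 0_{nN}$, on each subinterval $[(i-1)/N, i/N]$ I would use the $i$-th copy of $I_n$ as a workspace, combining a constant-speed block-rotation in $M_{2n}(A)$ of Lipschitz constant $\leq 2$ with the short straight-line path above, so as to replace the active occurrence of $p^{(i-1)}$ by $p^{(i)}$ while depositing the exhausted slot into a $0_n$-summand. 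Because distinct subintervals act on orthogonal supports, the global Lipschitz constant equals that of a single rotation step, not the sum over $i$, yielding a $2$-Lipschitz $(\ep, r)$-homotopy from $p \oplus I_k \oplus 0_l$ to $p' \oplus I_k \oplus 0_l$ with $k=l=nN$.

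For step (2) the strategy is parallel, with almost-unitaries in place of almost-projections. After the same discretization, I would build $4$-Lipschitz $(3\ep, 2r)$-paths between nearby $(\ep, r)$-unitaries by a rotation-and-conjugation construction in $M_{2n}(A)$ modelled on the path used in the proof of Lemma \ref{uni-homotopy}, which intrinsically doubles the filtration radius (products of two filtered elements, hence $2r$) and inflates the defect from $\ep$ to $3\ep$. The same parallelization through $k'$ orthogonal $I_n$-workspaces then assembles these short paths into a global $4$-Lipschitz $(3\ep, 2r)$-homotopy between $u\oplus I_{k'}$ and $u'\oplus I_{k'}$. The main obstacle in both parts is the quantitative verification: one must check that the chosen $\delta$ keeps every short path inside $P^{\ep, r}$ (resp.\ $U^{3\ep, 2r}$) throughout, and that the parallel weaving indeed decouples the blocks so that the Lipschitz constants do not add. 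Once these estimates are secured, the constants $2$ and $4$ fall out directly from the explicit rotation matrices, and the auxiliary dimensions $k, l, k'$ are dictated by $N$, hence by the modulus of continuity of the original homotopy.
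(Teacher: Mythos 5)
Your high-level strategy — discretize the given homotopy by uniform continuity, stabilize by adjoining $nN$ copies of $I_n$ and $0_n$, and then realize the resulting small steps on orthogonal blocks via rotations and straight-line paths — is the same one the paper uses, and your bookkeeping $k=l=nN$ matches the paper's. But the step where you claim a universal Lipschitz constant has a genuine gap. You propose to perform the $i$-th replacement during the time window $[(i-1)/N,\,i/N]$, using the $i$-th block as workspace, and you argue that ``because distinct subintervals act on orthogonal supports, the global Lipschitz constant equals that of a single rotation step, not the sum over $i$.'' Orthogonality of the supports only prevents speeds from \emph{adding across blocks that are moving at the same time}; it does nothing against the rescaling produced by \emph{time compression}. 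A rotation through angle $\pi/2$ (or, more to the point, a transfer $I_n\oplus 0_n \leadsto (I_n-p)\oplus p$) carried out inside a window of length $1/N$ has Lipschitz constant of order $N$ with respect to the global parameter on $[0,1]$, not $\leq 2$. Since your scheme allocates a length-$1/N$ window to each of the $N$ blocks in turn, the resulting homotopy has Lipschitz constant comparable to $N$, i.e.\ it depends on the modulus of continuity of the original path, which is exactly what the lemma forbids.

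The paper (following Oyono-Oyono and Yu) avoids this by reorganizing the construction into a \emph{bounded} number of phases that is independent of the number $k$ of discretization points. A first rotation simultaneously reorders $p_0\oplus I_{nk}\oplus 0_{nk}$ into $p_0\oplus(I_n\oplus 0_n)\oplus\cdots\oplus(I_n\oplus 0_n)$; a second rotation, again applied to all $k$ pairs at once, turns each $I_n\oplus 0_n$ into $(I_n-p_i)\oplus p_i$; then \emph{all} $k$ straight-line interpolations $(I_n-p_i)\leadsto(I_n-p_{i-1})$ are run \emph{at the same time}, each on its own orthogonal block, so that this middle phase has Lipschitz constant bounded by $\sup_i\|p_i-p_{i-1}\|<\ep/4<1$ rather than by a sum over $i$; finally two rotations undo the setup, landing on $p_k\oplus I_{nk}\oplus 0_{nk}$. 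That gives five phases of uniformly bounded speed regardless of $k$, hence a universal Lipschitz constant. (The same re-sequencing is needed in part (2), where the paper conjugates $I_n\oplus u_1^{\ast}\oplus\cdots\oplus u_k^{\ast}\oplus I_{nk}$ against $u_0\oplus\cdots\oplus u_k\oplus I_{nk}$ and then runs all the linear paths $u_i^{\ast}u_{i-1}\leadsto u_{i-1}^{\ast}u_{i-1}\leadsto I_n$ in parallel.) Your proposal needs this ``parallel in time, not merely parallel in space'' re-sequencing before the constants $2$ and $4$ in the statement can be achieved.
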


\begin{proof}
	(1) Firstly, let us recall two facts. The first one is that $I\oplus 0$ is $(\ep, r)$-homotopic to $0\oplus I$ through the following $2$-Lipschitz homotopy:
	$$\begin{pmatrix}
		\cos\frac{\pi t}{2} & \sin\frac{\pi t}{2}\\
		-\sin\frac{\pi t}{2} & \cos\frac{\pi t}{2}
	\end{pmatrix}
	\begin{pmatrix}
		I & 0\\
		0 & 0
	\end{pmatrix}
	\begin{pmatrix}
		\cos\frac{\pi t}{2} & -\sin\frac{\pi t}{2}\\
		\sin\frac{\pi t}{2} & \cos\frac{\pi t}{2}
	\end{pmatrix}.
	$$
	The other one is that $I_n\oplus 0$ is $(\ep, r)$-homotopic to $p\oplus (I_n-p)$ through the following $2$-Lipschitz homotopy:
	$$\begin{pmatrix}
		p & 0\\
		0 & 0
	\end{pmatrix}
	+
	\begin{pmatrix}
		(\cos\frac{\pi t}{2})^2(I_n-p)                & \cos\frac{\pi t}{2}\sin\frac{\pi t}{2}(I_n-p) \\
		\cos\frac{\pi t}{2}\sin\frac{\pi t}{2}(I_n-p) & (\sin\frac{\pi t}{2})^2(I_n-p) 
	\end{pmatrix}.
	$$
	Let $(p_t)_{t\in[0,1]}$ be an $(\ep, r)$-homotopy connecting $p$ and $p'$. Then we choose a sequence of points $p_0, p_1,\cdots, p_k$ such that $p_0=p$, $p_k=p'$ and $\|p_i-p_{i-1}\|<\inf_{t\in[0,1]}(\ep-\|p^2_t-p_t\|)/4$ for $i=1,\cdots, k$. Thus $p_i$ is $(\ep, r)$-homotopic to $p_{i-1}$ through the linear homotopy which is $1$-Lipschitz. Therefore, $p_0\oplus I_{nk}\oplus 0$ is $2$-Lipschitz $(\ep, r)$-homotopic to $p_0\oplus I_n\oplus0\oplus \cdots \oplus I_n\oplus 0$ which is $2$-Lipschitz $(\ep, r)$-homotopic to $p_0\oplus (I_n-p_1)\oplus p_1\oplus \cdots \oplus (I_n-p_k)\oplus p_k$ which is $1$-Lipschitz $(\ep, r)$-homotopic to $p_0\oplus (I_n-p_0)\oplus p_1\oplus \cdots \oplus p_{k-1}\oplus (I_n-p_{k-1})\oplus p_k$ which is $2$-Lipschitz $(\ep, r)$-homotopic to $I_n\oplus 0\oplus\cdots\oplus I_n\oplus 0\oplus p_k$ which is $2$-Lipschitz $(\ep, r)$-homotopic to $p_k\oplus I_{nk}\oplus 0$. As a result, $p\oplus I_{nk}\oplus 0$ is $(\ep, r)$-homotopic to $p'\oplus I_{nk}\oplus 0$ by a $2$-Lipschitz homotopy in $P^{\ep, r}_{n+2kn}(A)$.\par
	(2) Let $(u_t)_{t\in[0,1]}$ be a homotopy connecting $u$ and $u'$, then we can choose a sequence of points $u_0,u_1,\cdots, u_k$ such that $u_0=u$, $u_k=u'$ and $\|u_i-u_{i-1}\|<\inf_{t\in[0,1]}(\ep-\|u^{\ast}_t u_t-I_n\|)/3$ for $i=1,\cdots, k$. Thus $u_i$ is $(\ep, r)$-homotopic to $u_{i-1}$ through the linear homotopy which is $1$-Lipschitz. Firstly, $u_0\oplus I_{nk}$ is $(3\ep, 2r)$-homotopic to $u_0\oplus u^{\ast}_1u_1 \oplus\cdots \oplus u^{\ast}_k u_k$ by linear homotopy which is $1$-Lipschitz. Moreover, $u_0\oplus u^{\ast}_1u_1 \oplus\cdots \oplus u^{\ast}_k u_k\oplus I_{nk}=(I_n\oplus u^{\ast}_1\oplus\cdots\oplus u^{\ast}_k\oplus I_{nk})(u_0\oplus u_1\oplus\cdots\oplus u_k\oplus I_{nk})$ is $4$-Lipschitz $(3\ep, 2r)$-homotopic to $(u^{\ast}_1\oplus\cdots\oplus u^{\ast}_k\oplus I_{nk+n})(u_0\oplus u_1\oplus\cdots\oplus u_k\oplus I_{nk})=u^{\ast}_1u_0 \oplus\cdots \oplus u^{\ast}_k u_{k-1}\oplus u_k\oplus I_{nk}$ which is $(3\ep, 2r)$-homotopic to $u^{\ast}_0u_0 \oplus\cdots \oplus u^{\ast}_{k-1} u_{k-1}\oplus u_k\oplus I_{nk}$ through the linear $1$-Lipschitz homotopy which is $(3\ep, 2r)$-homotopic to $I_n \oplus\cdots \oplus I_n\oplus u_k\oplus I_{nk}$ through the linear $1$-Lipschitz homotopy again which is $2$-Lipschitz $(\ep, r)$-homotopic to $u_k\oplus I_{2nk}$. Consequently, $u\oplus I_{2nk}$ is $(3\ep, 2r)$-homotopic to $u'\oplus I_{2nk}$ by a $4$-Lipschitz homotopy in $U^{3\ep, 2r}_{n+2nk}(A)$.
\end{proof}

\subsection{Properties of the quantitative $K$-theory}\label{Prop-quan-K}
In this subsection, we shall discuss some properties of the quantitative $K$-theory inspired by the similar phenomena in the $K$-theory.

Firstly, we consider the functorial property of the quantitative $K$-theory.

\begin{definition}\label{fil-homo}
	Let $A$, $B$ be two filtered $C^{\ast}$-algebras and $\phi: A\rightarrow B$ be a $\ast$-homomorphism. $\phi$ is called a \textit{controlled homomorphism}, if there exists a non-decreasing function $c_{\phi}: [0, \infty)\rightarrow [0, \infty)$ such that $\phi(A_r)\subseteq B_{c_{\phi}(r)}$. Where $c_{\phi}$ is called a \textit{control function} for $\phi$. Furthermore, if $\phi(A_r)\subseteq B_{r}$ for any $r>0$, then $\phi$ is called a \textit{filtered homomorphism}.
\end{definition}

For any $0<\ep<1/4$, $r>0$ and $\ast=0, 1$, a controlled homomorphism $\phi: A\rightarrow B$ induces a family of homomorphisms $\phi^{\ep, r}_{\ast}: K^{\ep, r}_{\ast}(A)\rightarrow K^{\ep, c_{\phi}(r)}_{\ast}(B)$ and $\phi_{\ast}: K_{\ast}(A) \rightarrow K_{\ast}(B)$. Moreover, we have the following commutative diagram:
$$\xymatrix{
	K^{\ep, r}_{\ast}(A) \ar[d]_{\iota^{\ep, r}_{\ast}} \ar[r]^{\phi^{\ep, r}_{\ast}} & K^{\ep, c_{\phi}(r)}_{\ast}(B) \ar[d]^{\iota^{\ep, c_{\phi}(r)}_{\ast}} \\
	K_{\ast}(A) \ar[r]^{\phi_{\ast}} & K_{\ast}(B).
}$$

\begin{definition}\label{control-multiplier}
	Let $A$ be a filtered $C^{\ast}$-algebra equipped with a filtration $(A_r)_{r>0}$ and $\mathcal{M}(A)$ be the multiplier algebra of $A$. An element $w$ in $\mathcal{M}(A)$ is called to be \textit{controlled}, if there exists a non-decreasing function $c_w: [0,\infty)\rightarrow [0,\infty)$ such that $wa$ and $aw$ are in $A_{c_{w}(r)}$ for any $a\in A_r$. Where $c_w$ is called a \textit{control function} for $w$. In particular, if $wa, aw\in A_{r}$ for any $a\in A_r$ and $r>0$, then $w$ is called to be \textit{filtered}.
\end{definition}

\begin{remark}
	A controlled isometry $w\in \mathcal{M}(A)$ induces a controlled homomorphism $\phi_w: A\rightarrow A$ defined by $\phi_w(a)=waw^{\ast}$.
\end{remark}

\begin{lemma}\label{quan-key-lemma}
	Let $\phi: A\rightarrow B$ be a controlled homomorphism with a control function $c_{\phi}$ and $w$ be a controlled isometry in $\mathcal{M}(B)$ with a control function $c_w$. Then 
	$$\phi^{\ep, r'}_{w,\ast}\circ \phi^{\ep, r}_{\ast}=\iota^{\ep, \ep, r', r''}\circ \phi^{\ep, r}_{\ast}$$ 
	as homomorphisms from $K^{\ep, r}_{\ast}(A)$ to $K^{\ep, r''}_{\ast}(B)$ for any $0<\ep<1/4$, $r>0$ and $\ast=0, 1$, where $r'=c_{\phi}(r)$ and $r''=c^8_w(r')$.
\end{lemma}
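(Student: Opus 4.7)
The plan is to reduce the lemma, via the homomorphism property of all maps in sight, to the pointwise claim that for any $(\ep, r')$-projection $q = \phi(p)$ (resp.\ $(\ep, r')$-unitary $u$) in a matrix stabilization of $\tilde B$, the class of $\tilde\phi_w(q)$ (resp.\ $\tilde\phi_w(u)$) coincides with the class of $q$ (resp.\ $u$) in $K_\ast^{\ep, r''}(B)$ after enlarging the propagation to $r'' = c_w^8(r')$. Here $\tilde\phi_w : \tilde B\to \tilde B$ denotes the unital extension of $\phi_w$, defined on $b + \mu I \in B + \mathbb{C} I$ by $\tilde\phi_w(b+\mu I) = wbw^{\ast} + \mu I = w(b+\mu I)w^{\ast} + \mu(I - ww^{\ast})$.

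For $\ast = 0$, given such an $(\ep,r')$-projection $q$, I introduce in $M_{2n}(\tilde B)$ the explicit path
$$q_t := \begin{pmatrix} \cos^2(\pi t/2)\, wqw^{\ast} & -\cos(\pi t/2)\sin(\pi t/2)\, wq \\ -\cos(\pi t/2)\sin(\pi t/2)\, qw^{\ast} & \sin^2(\pi t/2)\, q \end{pmatrix},\quad t\in[0,1].$$
Using $w^{\ast}w = I$, a direct block computation factors the defect as
$$q_t^2 - q_t = \begin{pmatrix} \cos(\pi t/2)\, w \\ -\sin(\pi t/2)\, I \end{pmatrix} (q^2 - q) \begin{pmatrix} \cos(\pi t/2)\, w^{\ast} & -\sin(\pi t/2)\, I \end{pmatrix},$$
a sandwich of $q^2 - q$ between a column and a row of norm $\le 1$, so $\|q_t^2 - q_t\| \le \|q^2 - q\| < \ep$, and $q_t$ is an $(\ep, c_w^2(r'))$-projection throughout. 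Since $q_0 = wqw^{\ast}\oplus 0$ and $q_1 = 0\oplus q$, concatenation with the scalar rotation homotopy from $0\oplus q$ to $q\oplus 0$ (propagation $r'$) produces an $(\ep, c_w^2(r'))$-homotopy from $wqw^{\ast}\oplus 0$ to $q\oplus 0$; composing with $\iota^{\ep,\ep, c_w^2(r'), r''}_0$ yields $[wqw^{\ast}]_{\ep, r''} = [q]_{\ep, r''}$ in $K_0^{\ep, r''}(B)$.

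For $\ast = 1$, I first reduce (via a scalar path of unitaries in $M_n(\mathbb{C})$, which has trivial propagation) to the case $\pi(u) = I_n$, so that $\tilde\phi_w(u) = wuw^{\ast} + (I - ww^{\ast})$. The matrix
$$V := \begin{pmatrix} w & I - ww^{\ast} \\ 0 & w^{\ast} \end{pmatrix} \in M_2(\mathcal{M}(B))$$
is an exact unitary (check: $V^{\ast}V = VV^{\ast} = I$ using $w^{\ast}w = I$ and $(I-ww^{\ast})^2 = I-ww^{\ast}$), and satisfies $V(u\oplus I)V^{\ast} = \tilde\phi_w(u)\oplus I$. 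Apply the rotation-and-doubling construction of Lemma \ref{uni-homotopy} to $V \oplus V^{\ast}$ in $M_4(\mathcal{M}(B))$ to obtain a path of unitaries from $I_4$ to $V\oplus V^{\ast}$; conjugating $u\oplus I\oplus I\oplus I$ along this path gives a $(3\ep, 2c_w^8(r'))$-homotopy in $U^{3\ep, 2r''}_\infty(\tilde B)$ from $\tilde\phi_w(u)\oplus I_3$ to $u\oplus I_3$, hence $[\tilde\phi_w(u)]_{\ep, r''} = [u]_{\ep, r''}$ in $K_1^{\ep, r''}(B)$.

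The principal technical issue is the bookkeeping of propagations. Every single multiplication by $w$ or $w^{\ast}$ contributes a factor of $c_w$, each conjugation $\tilde\phi_w$ contributes $c_w^2$, and the rotation-doubling of Lemma \ref{uni-homotopy} doubles the propagation while relaxing to the $(3\ep, 2r)$-regime. The exponent $8 = 2^3$ in $r'' = c_w^8(r')$ is chosen loosely enough to absorb the worst compounded propagation encountered along the explicit homotopies (conjugation by $V$, then the four-fold rotation path of Lemma \ref{uni-homotopy}, then a final stabilization-rotation), while the almost-projection/unitary norm bounds persist because all deviations reduce to the original $\|q^2 - q\|$ or $\|u^{\ast}u - I\|$ sandwiched between norm-$\le 1$ matrix blocks built from $w, w^{\ast}$ and scalar rotations.
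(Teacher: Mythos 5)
Your $K_1$ argument follows essentially the same route as the paper: dilate the isometry $w$ to the unitary $V=\begin{pmatrix}w & I-ww^{\ast}\\ 0 & w^{\ast}\end{pmatrix}$ and apply the rotation-and-doubling of Lemma~\ref{uni-homotopy}. The paper runs this dilation argument uniformly for $\ast=0$ and $\ast=1$: since $V$ is a controlled \emph{unitary} in $\mathcal{M}(M_2(B))$, conjugation by a path of unitaries in $\mathcal{M}(M_2(B))$ sends $M_n(\widetilde{M_2(B)})$ into itself (because $VxV^{\ast}+\mu I = V(x+\mu I)V^{\ast}$ for a unitary $V$), and all the homotopies stay inside the unitization of the filtered algebra they should.

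Your $K_0$ argument, by contrast, is a genuinely different and more economical computation (it only costs $c_w^2$ instead of $c_w^8$), but it has a gap in exactly the place where the paper's unitary dilation is doing invisible work: the unitization. You take an $(\ep,r')$-projection $q\in M_n(\tilde B)$ and conjugate it by the rectangular isometry $v_t=\begin{pmatrix}\cos(\pi t/2)\,w\\ -\sin(\pi t/2)\,I\end{pmatrix}$, writing $q_t=v_tqv_t^{\ast}$. Since $v_t$ is an isometry with $v_tv_t^{\ast}\neq I$, the conjugation does \emph{not} send a scalar multiple of the identity to a scalar multiple of the identity; concretely, if $q=q_0+\Lambda I_n$ with $q_0\in M_n(B)$ and $\Lambda=\pi(q)\in M_n(\mathbb{C})$ nonzero, the entries $wqw^{\ast}=wq_0w^{\ast}+\Lambda ww^{\ast}$, $wq$, $qw^{\ast}$ contain terms like $\Lambda ww^{\ast}$ and $\Lambda w$ that in general fail to lie in $M_n(\tilde B)$ (nothing forces $ww^{\ast}-I$ or $w$ to be in $B$). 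So the path $q_t$ is not a path in $M_{2n}(\tilde B)$ at all. Compounding this, the endpoint you aim for is $wqw^{\ast}\oplus 0$, whereas the thing that actually has to be homotoped to $q\oplus 0$ is $\tilde\phi_w(q)\oplus 0=\bigl(wqw^{\ast}+\Lambda(I-ww^{\ast})\bigr)\oplus 0$, and these differ by $\Lambda(I-ww^{\ast})\oplus 0$. You do carry out the needed scalar-part reduction for $K_1$ (bringing $\pi(u)$ to $I_n$), but you omit it for $K_0$ --- and for $K_0$ it is less automatic, because a class is a \emph{difference} $[p]-[q]$ with a rank-matching constraint on $\pi(p),\pi(q)$ rather than a single class that can be normalized by right-multiplication with a scalar unitary. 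The clean fix is precisely the paper's: first pass to $M_2(B)$ via the corner embedding $\iota$, use $\tilde\iota\circ\tilde\phi_w = \widetilde{\mathrm{Ad}_V}\circ\tilde\iota$, and then perform the rotation homotopy inside the multiplier unitaries of $M_2(B)$, where the unitization is preserved throughout. If you want to keep your explicit $K_0$ path (which is pleasant and has a sharper propagation bound), you need to either (i) argue separately that every class is represented by a difference $[p]-[q]$ with $\pi(p)=\pi(q)$ a fixed scalar projection and then carry the extra $\Lambda(I-ww^{\ast})$ term along a compatible scalar rotation, or (ii) replace $v_t$ by a unitary extension of it in $M_3(\mathcal{M}(B))$ (of Halmos type) so that the homotopy becomes a genuine conjugation by multiplier unitaries.
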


\begin{proof}
	Firstly, if $w$ is a controlled unitary, then for any $a\in A_r$, we have that $w\phi(a) w^{\ast}\oplus 0=(w\oplus w^{\ast})(\phi(a)\oplus 0)(w^{\ast}\oplus w)$. By the similar proof of Lemma \ref{uni-homotopy}, $w\oplus w^{\ast}$ is homotopic to $I\oplus I$. Thus $(w\phi(a) w^{\ast})\oplus 0$ is $(\ep, c^4_w(r'))$-homotopic to $\phi(a)\oplus 0$.\par
	Secondly, for general controlled partial isometry $w$. Take
	$$u=\begin{pmatrix}
		w & I-ww^{\ast}\\
		0 & w^{\ast}
	\end{pmatrix},$$
	then $u$ is a controlled unitary in $\mathcal{M}(M_2(B))$ with $c_u=c^2_w$. Thus $w\phi(a) w^{\ast}\oplus 0=u(\phi(a))u^{\ast}$ which is $(\ep, c^4_u(r'))$-homotopic to $\phi(a)\oplus 0$. Consequently, $\phi^{\ep, r'}_{w,\ast}\circ \phi^{\ep, r}_{\ast}=\iota^{\ep, \ep, r', r''}\circ \phi^{\ep, r}_{\ast}$, where $r'=c_{\phi}(r)$ and $r''=c^8_w(r')$.
\end{proof}

\begin{corollary}\label{isom-quan-K}
	If $\phi: A\rightarrow B$ is a filtered homomorphism and $w\in \mathcal{M}(B)$ is a filtered isometry. Then 
	$$\phi^{\ep, r}_{w,\ast}\circ \phi^{\ep, r}_{\ast}=\phi^{\ep, r}_{\ast}$$ 
	for any $0<\ep<1/4$, $r>0$ and $\ast=0, 1$.
\end{corollary}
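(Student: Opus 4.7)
The plan is to read off this statement as an immediate specialization of Lemma \ref{quan-key-lemma}, so essentially no new work is required. Unpacking Definition \ref{fil-homo}, the hypothesis that $\phi$ is a filtered homomorphism means that one may take the control function $c_\phi$ to be the identity function on $[0,\infty)$. Similarly, by Definition \ref{control-multiplier}, saying that $w\in\mathcal{M}(B)$ is a filtered isometry amounts to being able to choose $c_w$ to be the identity function. Consequently, in the notation of Lemma \ref{quan-key-lemma}, we have $r'=c_\phi(r)=r$ and $r''=c_w^8(r')=r$.

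With these identifications the map $\iota^{\ep,\ep,r',r''}=\iota^{\ep,\ep,r,r}$ appearing on the right-hand side of Lemma \ref{quan-key-lemma} is simply the identity homomorphism of $K^{\ep,r}_\ast(B)$ (for both $\ast=0,1$), since an $(\ep,r)$-projection or $(\ep,r)$-unitary representing a class is sent to itself and the equivalence relation is unchanged. Substituting into the conclusion
\[
\phi^{\ep, r'}_{w,\ast}\circ \phi^{\ep, r}_{\ast}=\iota^{\ep, \ep, r', r''}\circ \phi^{\ep, r}_{\ast}
\]
of the lemma therefore yields $\phi^{\ep,r}_{w,\ast}\circ\phi^{\ep,r}_\ast=\phi^{\ep,r}_\ast$, as desired. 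There is no genuine obstacle here: the only thing to check carefully is that the filtered assumption really does force the control functions to be the identity, and hence no propagation in the filtration parameter $r$ occurs when passing from $A$ to $B$ under $\phi$ or under conjugation by $w$.
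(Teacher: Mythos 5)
Your proof is correct and matches the paper's intent exactly: the corollary is stated without proof as an immediate consequence of Lemma \ref{quan-key-lemma}, and the only observation needed is that filtered $\phi$ and filtered $w$ allow $c_\phi$ and $c_w$ to be the identity, so $r'=r''=r$ and $\iota^{\ep,\ep,r,r}$ is the identity map.
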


Next, we refine the notion of quasi-stable (cf. Definition \ref{Def-quasi-stable}) for a family of filtered $C^{\ast}$-algebras.
\begin{definition}\label{uni-quasi-stable}
	A family of filtered $C^{\ast}$-algebras $(A_i)_{i\in \mathcal{I}}$ equipped with filtrations $(A_{i,r})_{i\in \mathcal{I}, r>0}$ is called \textit{uniformly quasi-stable}, if there exists a function $c:[0,\infty)\rightarrow [0,\infty)$ such that for any positive integer $n$, there exists a family of isometries $(w_i)_{i\in \mathcal{I}}\in \mathcal{M}(M_n(A_i))_{i\in \mathcal{I}}$ satisfying that
	\begin{enumerate}
		\item $w_iw^{\ast}_i=e_{11}=I\oplus 0_{n-1}$ for any $i\in \mathcal{I}$;
		\item $(w_ia_i)_{i\in \mathcal{I}}$ and $(a_iw_i)_{i\in \mathcal{I}}$ are in $M_n(A_{i,c(r)})_{i\in \mathcal{I}}$ for any element $(a_i)_{i\in \mathcal{I}}\in M_n(A_{i,r})_{i\in \mathcal{I}}$ and $r>0$.
	\end{enumerate}
	Here $c$ is called a \textit{uniformly control function} for $(A_i)_{i\in \mathcal{I}}$.
\end{definition}

\begin{definition}\label{uniformproduct}\cite[Definition 5.14]{OyonoYu2019}
	Let $(A_i)_{i\in \mathcal{I}}$ be a family of filtered $C^{\ast}$-algebras equipped with filtrations $(A_{i,r})_{i\in \mathcal{I}, r>0}$. The \textit{uniform product} of $(A_i)_{i\in \mathcal{I}}$, denoted by $\prod^{uf}_{i\in \mathcal{I}} A_i$, is defined to be the norm closure of $\cup_{r>0}(\prod_{i\in \mathcal{I}} A_{i, r})$ in $\prod_{i\in \mathcal{I}} A_i$.
\end{definition}

\begin{remark}
	The uniform product $\prod^{uf}_{i\in \mathcal{I}} A_i$ is a filtered $C^{\ast}$-algebra equipped with a filtration $(\prod_{i\in \mathcal{I}} A_{i, r})_{r>0}$.
\end{remark}

\begin{lemma}\label{quan-K-prod}
	Let $(A_i)_{i\in \mathcal{I}}$ be a family of uniformly quasi-stable filtered $C^{\ast}$-algebras, the natural filtered homomorphisms 
	$$\pi_j:\prod^{uf}_{i\in \mathcal{I}} A_i\rightarrow A_j, \:\: (a_i)_{i\in \mathcal{I}}\mapsto a_j,$$
	induce homomorphisms between abelian groups
	$$\Pi \pi_{i, \ast}: K^{\ep, r}_{\ast}(\prod^{uf}_{i\in \mathcal{I}} A_i)\rightarrow \prod_{i\in \mathcal{I}}K^{\ep, r}_{\ast}(A_i).$$
	Then for any $0<\ep<1/4$, $r>0$ and $\ast=0,1$, the following statements hold:
	\begin{enumerate}
		\item \label{quan-K-prod-1} if $\Pi \pi_{i, \ast}(x)=0$, then $\iota^{\ep, \ep, r, c^2(r)}_{\ast}(x)=0$ in $K^{\ep, c^2(r)}_{\ast}(\prod^{uf}_{i\in \mathcal{I}} A_i)$;
		\item \label{quan-K-prod-2} for any $y\in \prod_{i\in \mathcal{I}}K^{\ep, r}_{\ast}(A_i)$, there exists an element $x\in K^{\ep, c^2(r)}_{\ast}(\prod^{uf}_{i\in \mathcal{I}} A_i)$ such that $\iota^{\ep, \ep, c^2(r), c^{10}(r)}_{\ast}\circ\Pi \pi_{i, \ast}(x)=\iota^{\ep, \ep, r, c^{10}(r)}_{\ast}(y)$,
	\end{enumerate}
	where $c$ is a uniformly control function appeared in Definition \ref{uni-quasi-stable}.
\end{lemma}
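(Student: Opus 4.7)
The strategy for both parts is to use the uniformly quasi-stable isometries $(w^{(N)}_i)_i$ from Definition~\ref{uni-quasi-stable} to replace $(\ep,r)$-projections (and homotopies) in matrix algebras $M_N(A_i)$ of varying size $N$ by $(\ep,c^2(r))$-projections (and homotopies) in the $(1,1)$-corner $e_{11} M_N(A_i) e_{11} \cong A_i$, uniformly in $i$. The main technical input is Lemma~\ref{quan-key-lemma}, which says that conjugation by a controlled isometry agrees with the inclusion map $\iota$ on quantitative $K$-theory modulo a parameter shift by $c^8$.

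For part (\ref{quan-K-prod-2}): write each coordinate of $y$ as $y_i = [p_i]_{\ep,r} - [q_i]_{\ep,r}$ with $p_i, q_i \in P_{n_i}^{\ep,r}(A_i)$, where the matrix size $n_i$ may depend on $i$. Apply the isometry $w_i := w^{(n_i)}_i$ to form $\tilde p_i := w_i p_i w_i^*$ and $\tilde q_i := w_i q_i w_i^*$; each is an $(\ep, c^2(r))$-projection in the corner identified with $A_i$. Uniformity of the control constant $c$ guarantees that $(\tilde p_i)_i$ and $(\tilde q_i)_i$ lie in $\prod_i A_{i, c^2(r)} \subset \prod^{uf}_{i\in\mathcal{I}} A_i$ and define $(\ep, c^2(r))$-projections there, so that
\begin{equation*}
x := \bigl[(\tilde p_i)_i\bigr]_{\ep,c^2(r)} - \bigl[(\tilde q_i)_i\bigr]_{\ep,c^2(r)} \in K^{\ep,c^2(r)}_{\ast}\bigl(\textstyle\prod^{uf}_{i\in\mathcal{I}} A_i\bigr)
\end{equation*}
is well-defined. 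Since each $\pi_i$ is filtered, $\pi_{i,\ast}(x) = [\tilde p_i]_{\ep,c^2(r)} - [\tilde q_i]_{\ep,c^2(r)}$ in $K^{\ep, c^2(r)}_{\ast}(A_i)$. Applying Lemma~\ref{quan-key-lemma} coordinate-wise (with $\phi = \mathrm{id}_{A_i}$ and $w = w_i$, so that $r'' = c^8(c^2(r)) = c^{10}(r)$) identifies $[\tilde p_i]_{\ep,c^2(r)}$ with $\iota^{\ep,\ep, r, c^{10}(r)}_{\ast}([p_i]_{\ep,r})$ in $K^{\ep, c^{10}(r)}_{\ast}(A_i)$, giving the desired identity.

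For part (\ref{quan-K-prod-1}): write $x = [p]_{\ep,r} - [q]_{\ep,r}$ with $p = (p_i)_i, q = (q_i)_i \in P^{\ep,r}_n$ over the ambient algebra, at a common matrix size $n$. The hypothesis produces, for each $i$, integers $k_i, \ell_i$ and an $(\ep,r)$-homotopy between $p_i\oplus I_{k_i}\oplus 0_{\ell_i}$ and $q_i\oplus I_{k_i}\oplus 0_{\ell_i}$ in $P^{\ep,r}_{N_i}(A_i)$; by Lemma~\ref{Lip-Homotopy} we may take this $2$-Lipschitz after further stabilisation. Conjugating by $w_i := w^{(N_i)}_i$ compresses it into a $2$-Lipschitz $(\ep,c^2(r))$-homotopy in $A_i$ connecting $\bar p_i := w_i(p_i\oplus I_{k_i}\oplus 0_{\ell_i})w_i^*$ and $\bar q_i := w_i(q_i\oplus I_{k_i}\oplus 0_{\ell_i})w_i^*$. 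The common Lipschitz constant $2$ and uniform control $c$ paste these into a single $2$-Lipschitz $(\ep,c^2(r))$-homotopy in $\prod^{uf}_{i\in\mathcal{I}} A_i$, showing $[(\bar p_i)_i]_{\ep,c^2(r)} = [(\bar q_i)_i]_{\ep,c^2(r)}$. One then identifies this with $\iota^{\ep,\ep,r,c^2(r)}_{\ast}(x) = 0$ by applying Lemma~\ref{quan-key-lemma} in $\prod^{uf}_{i\in\mathcal{I}} A_i$ itself, using the fact that $(w^{(n)}_i)_i$ is a genuine controlled isometry in $\mathcal{M}(M_n(\prod^{uf}_{i\in\mathcal{I}} A_i))$, and exploiting the orthogonal decomposition $\bar p_i = w_i(p_i \oplus 0 \oplus 0)w_i^* + w_i(0 \oplus I_{k_i} \oplus 0)w_i^*$, whose second summand coincides with that of $\bar q_i$ and cancels in the $K$-theoretic difference.

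The main obstacle is precisely this last identification in part (\ref{quan-K-prod-1}): while per-coordinate one has $[\bar p_i] - [\bar q_i] = [p_i] - [q_i]$ routinely in $K^{\ep, ?}_{\ast}(A_i)$, the conjugating isometries $w^{(N_i)}_i$ depend on $N_i = n + k_i + \ell_i$, which varies with $i$, so there is no single ambient isometry implementing the comparison for all $i$ simultaneously. Uniform quasi-stability---rather than just term-wise quasi-stability---is precisely what saves us here: at the fixed ambient matrix size $n$ it provides $(w^{(n)}_i)_i$ as a single controlled isometry in $\mathcal{M}(M_n(\prod^{uf}_{i\in\mathcal{I}} A_i))$ to which Lemma~\ref{quan-key-lemma} applies globally, turning a family of $i$-dependent conjugations into one conjugation in the ambient algebra.
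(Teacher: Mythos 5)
Your proof of part~(\ref{quan-K-prod-2}) is essentially identical to the paper's: compress each $p_i, q_i \in P^{\ep,r}_{n_i}(\tilde A_i)$ to the $e_{11}$-corner via $w^{(n_i)}_i$, observe the resulting family lies in the product, and recover the original classes coordinate-wise via Lemma~\ref{quan-key-lemma} at the cost of a $c^8$ loss, landing at $c^{10}(r)$.

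For part~(\ref{quan-K-prod-1}), however, your argument has a genuine gap that the paper's proof avoids by a different choice of conjugating isometry. You compress the \emph{entire} stabilized matrix $p_i \oplus I_{k_i} \oplus 0_{\ell_i} \in M_{N_i}(\tilde A_i)$ into the $e_{11}$-corner via $w^{(N_i)}_i$, producing $\bar p_i = w^{(N_i)}_i(p_i\oplus 0)(w^{(N_i)}_i)^* + \rho_i$ with $\rho_i := w^{(N_i)}_i(0\oplus I_{k_i}\oplus 0)(w^{(N_i)}_i)^*$. Cancelling $(\rho_i)_i$ is fine, but you are then left needing $[(\alpha_i)_i] - [(\beta_i)_i] = \iota^{\ep,\ep,r,c^2(r)}_*(x)$ where $\alpha_i := w^{(N_i)}_i(p_i\oplus 0)(w^{(N_i)}_i)^*$, and this is where the argument breaks. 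You propose to invoke Lemma~\ref{quan-key-lemma} in the product algebra using the size-$n$ family $(w^{(n)}_i)_i$. But that lemma only identifies $[(w^{(n)}_i p_i (w^{(n)}_i)^*)_i]$ with $[(p_i)_i]$; it says nothing about $\alpha_i$, because $\alpha_i$ is produced by $w^{(N_i)}_i$ at the $i$-dependent size $N_i = n+k_i+\ell_i$, and Definition~\ref{uni-quasi-stable} imposes no compatibility between $w^{(n)}_i$ and $w^{(N_i)}_i$. There is no single controlled isometry in $\mathcal{M}(M_m(\prod^{uf}_i A_i))$, for any fixed $m$, that implements $p_i \mapsto \alpha_i$ simultaneously for all $i$, so the identification you need is precisely the $i$-dependent conjugation problem you correctly flag as the obstacle --- it is not resolved by passing to $(w^{(n)}_i)_i$.

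The paper sidesteps this entirely by conjugating only the \emph{stabilizing block}: it uses $I_n \oplus w^{(2k_i)}_i$ rather than $w^{(N_i)}_i$. This leaves $p_i$ and $q_i$ untouched in the first $n$ coordinates and compresses only $I_{k_i}\oplus 0_{k_i}$ into a single corner entry $\rho_i \in \tilde A_i$, yielding a $2$-Lipschitz $(\ep, c^2(r))$-homotopy in $P^{\ep, c^2(r)}_{n+1}(\tilde A_i)$ between $p_i \oplus \rho_i$ and $q_i \oplus \rho_i$ that is uniform in $i$. Pasting these gives $[(p_i\oplus\rho_i)_i]_{\ep,c^2(r)} = [(q_i\oplus\rho_i)_i]_{\ep,c^2(r)}$, and the common summand $[(\rho_i)_i]_{\ep,c^2(r)}$ cancels in the abelian group $K^{\ep,c^2(r)}_0(\prod^{uf}_i \tilde A_i)$ with no further parameter loss. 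This is both what makes the argument close and what produces the sharp bound $c^2(r)$; if you tried to patch your version with an application of Lemma~\ref{quan-key-lemma} you would in any case land at $c^{10}(r)$, not the stated $c^2(r)$.

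One small caveat, separate from the above: the paper's own statement that the conjugated homotopy connects $p_i\oplus I\oplus 0_{2k_i-1}$ to $q_i\oplus I\oplus 0_{2k_i-1}$ is slightly imprecise --- the corner projection is $\rho_i = w^{(2k_i)}_i(I_{k_i}\oplus 0_{k_i})(w^{(2k_i)}_i)^*$, not the unit --- but the cancellation argument above is unaffected because $\rho_i$ is the same in both endpoints.
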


\begin{proof}
	Let $(w_i)_{i\in \mathcal{I}}$ be a family of isometries in the definition of uniform quasi-stability (cf. Definition \ref{uni-quasi-stable}).\par
	(1) Assume $\Pi \pi_{i, 0}([(p_i)_{i\in \mathcal{I}}]_{\ep, r}-[(q_i)_{i\in \mathcal{I}}]_{\ep, r})=([p_i]_{\ep, r}-[q_i]_{\ep, r})_{i\in \mathcal{I}}=0$, where $(p_i)_{i\in \mathcal{I}}, (q_i)_{i\in \mathcal{I}}\in P^{\ep, r}_{n}(\prod^{uf}_{i\in \mathcal{I}} \Tilde{A}_i)$. Then by Lemma \ref{Lip-Homotopy}, there exists a family of positive integers $(k_i)_{i\in \mathcal{I}}$ such that $p_i\oplus I_{k_i}\oplus 0_{k_i}$ is $(\ep, r)$-homotopic to $q_i\oplus I_{k_i}\oplus 0_{k_i}$ by a $2$-Lipschitz homotopy $p_{i, t}$ in $P^{\ep, r}_{n+2k_i}(\Tilde{A}_i)$ for every $i\in \mathcal{I}$. Thus, $(I_n\oplus w_i)p_{i, t}(I_n\oplus w^{\ast}_i)$ is a $2$-Lipschitz $(\ep, c^2(r))$-homotopy connecting $p_i\oplus I\oplus 0_{2k_i-1}$ and $q_i\oplus I\oplus 0_{2k_i-1}$ in $P^{\ep, c^2(r)}_{n+2k_i}(\Tilde{A}_i)$ for every $i\in \mathcal{I}$. Therefore, $(p_i\oplus I)_{i\in \mathcal{I}}$ is $2$-Lipschitz $(\ep, c^2(r))$-homotopic to $(q_i\oplus I)_{i\in \mathcal{I}}$ in $P^{\ep, c^2(r)}_{\infty}(\prod^{uf}_{i\in \mathcal{I}} \Tilde{A}_i)$ since that the homotopy $(I_n\oplus w_i)p_{i, t}(I_n\oplus w^{\ast}_i)$ is in $P^{\ep, c^2(r)}_{n+1}(\Tilde{A}_i)$ for every $i\in \mathcal{I}$. As a result, $[(p_i)_{i\in \mathcal{I}}]_{\ep, c^2(r)}-[(q_i)_{i\in \mathcal{I}}]_{\ep, c^2(r)}=0$ which implies that (\ref{quan-K-prod-1}) holds for $\ast=0$. \par
	Assume $\Pi \pi_{i, 0}([(u_i)_{i\in \mathcal{I}}]_{\ep, r})=([u_i]_{\ep, r})_{i\in \mathcal{I}}=0$, where $(u_i)_{i\in \mathcal{I}}\in U^{\ep, r}_{n}(\prod^{uf}_{i\in \mathcal{I}} \Tilde{A}_i)$. Then by Lemma \ref{Lip-Homotopy}, there exists a family of $(l_i)_{i\in \mathcal{I}}$ such that $u_i\oplus I_{l_i}$ is $(3\ep, 2r)$-homotopic to $I_n\oplus I_{l_i}$ by a $4$-Lipschitz homotopy $u_{i, t}$. Thus $(u_{i, t})_{i\in \mathcal{I}}$ is a $4$-Lipschitz $(3\ep, 2r)$-homotopy that connecting $(u_i\oplus I_{l_i})_{i\in \mathcal{I}}$ and $(I_{n+l_i})_{i\in \mathcal{I}}$ which implies that $[(u_i)_{i\in \mathcal{I}}]_{\ep, r}=0$ in $K^{\ep, r}_{1}(\prod^{uf}_{i\in \mathcal{I}} A_i)$. That means (\ref{quan-K-prod-1}) holds for $\ast=1$.\par
	(2) For any element $([p_i]_{\ep, r}-[q_i]_{\ep, r})_{i\in \mathcal{I}}\in \prod_{i\in \mathcal{I}}K^{\ep, r}_{0}(A_i)$, where $p_i, q_i\in P^{\ep, r}_{n_i}(\Tilde{A}_i)$, since $w_i p_i w^{\ast}_i,\: w_i q_i w^{\ast}_i \in P^{\ep, c^2(r)}_{1}(\Tilde{A}_i)$ for every $i\in \mathcal{I}$, then $[(w_i p_i w^{\ast}_i)_{i\in \mathcal{I}}]_{\ep, c^2(r)}-[(w_i q_i w^{\ast}_i)_{i\in \mathcal{I}}]_{\ep, c^2(r)}$ is in $K^{\ep, c^2(r)}_0(\prod^{uf}_{i\in \mathcal{I}} A_i)$ and we have 
	$$\Pi \pi_{i, 0}([(w_i p_i w^{\ast}_i)_{i\in \mathcal{I}}]_{\ep, c^2(r)}-[(w_i q_i w^{\ast}_i)_{i\in \mathcal{I}}]_{\ep, c^2(r)})=([w_i p_i w^{\ast}_i]_{\ep, c^2(r)}-[w_i q_i w^{\ast}_i]_{\ep, c^2(r)})_{i\in \mathcal{I}}.$$ By Lemma \ref{quan-key-lemma}, $[w_i p_i w^{\ast}_i]_{\ep, c^{10}(r)}-[(w_i q_i w^{\ast}_i)]_{\ep, c^{10}(r)}=[p_i]_{\ep, c^{10}(r)}-[q_i]_{\ep, c^{10}(r)}$ in $K^{\ep, c^{10}(r)}_0(A_i)$ for every $i\in \mathcal{I}$ which implies that \ref{quan-K-prod-2} holds for $\ast=0$.  \par
	For any element $([u_i]_{\ep, r})_{i\in \mathcal{I}}\in \prod_{i\in \mathcal{I}}K^{\ep, r}_{1}(A_i)$. Then $w_i(u_i-I)w^{\ast}_i+I$ is in $U^{\ep, c^2(r)}_{1}(\Tilde{A}_i)$ for every $i\in \mathcal{I}$. Thus $[(w_i(u_i-I)w^{\ast}_i+I)_{i\in \mathcal{I}}]_{\ep, c^2(r)}$ is in $K^{\ep, c^2(r)}_1(\prod^{uf}_{i\in \mathcal{I}} A_i)$ and by Lemma \ref{quan-key-lemma}, $\Pi \pi_{i, 1}([(w_i(u_i-I)w^{\ast}_i+I)_{i\in \mathcal{I}}]_{\ep, c^{10}(r)})=([u_i]_{\ep, c^{10}(r)})_{i\in \mathcal{I}}$ which implies that \ref{quan-K-prod-2} holds for $\ast=1$. 
\end{proof}

\begin{corollary}\label{Cor-quan-K-prod}
	If $(A_i)_{i\in \mathcal{I}}$ is a family of uniformly quasi-stable filtered $C^{\ast}$-algebras with a uniformly control function $c(r)=r$, then
	$$\Pi \pi_{i, \ast}: K^{\ep, r}_{\ast}(\prod^{uf}_{i\in \mathcal{I}} A_i)\rightarrow \prod_{i\in \mathcal{I}}K^{\ep, r}_{\ast}(A_i)$$
	are isomorphic between abelian groups for any $0<\ep<1/4$, $r>0$ and $\ast=0,1$.
\end{corollary}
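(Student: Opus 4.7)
The plan is to deduce the corollary directly from Lemma \ref{quan-K-prod} by specializing to the case where the uniformly control function $c$ is the identity. The point is that when $c(r)=r$, every iterate $c^n(r)$ also equals $r$, so the relaxation maps $\iota^{\ep,\ep,r,c^n(r)}_{\ast}$ appearing in the lemma become the identity maps on $K^{\ep,r}_{\ast}$. This collapses the two approximate statements of the lemma into exact injectivity and surjectivity of $\Pi\pi_{i,\ast}$ at a fixed scale $(\ep,r)$.

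More concretely, I would argue as follows. For injectivity, suppose $x\in K^{\ep,r}_{\ast}(\prod^{uf}_{i\in\mathcal{I}} A_i)$ satisfies $\Pi\pi_{i,\ast}(x)=0$. By part (\ref{quan-K-prod-1}) of Lemma \ref{quan-K-prod}, we have $\iota^{\ep,\ep,r,c^2(r)}_{\ast}(x)=0$ in $K^{\ep,c^2(r)}_{\ast}(\prod^{uf}_{i\in\mathcal{I}} A_i)$. Since $c(r)=r$ gives $c^2(r)=r$, the map $\iota^{\ep,\ep,r,c^2(r)}_{\ast}$ is the identity on $K^{\ep,r}_{\ast}(\prod^{uf}_{i\in\mathcal{I}} A_i)$, hence $x=0$. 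For surjectivity, given any $y\in \prod_{i\in\mathcal{I}} K^{\ep,r}_{\ast}(A_i)$, part (\ref{quan-K-prod-2}) of Lemma \ref{quan-K-prod} produces an element $x\in K^{\ep,c^2(r)}_{\ast}(\prod^{uf}_{i\in\mathcal{I}} A_i)=K^{\ep,r}_{\ast}(\prod^{uf}_{i\in\mathcal{I}} A_i)$ such that $\iota^{\ep,\ep,c^2(r),c^{10}(r)}_{\ast}\circ \Pi\pi_{i,\ast}(x)=\iota^{\ep,\ep,r,c^{10}(r)}_{\ast}(y)$. Since $c^2(r)=c^{10}(r)=r$, both relaxation maps are identities, and we conclude $\Pi\pi_{i,\ast}(x)=y$.

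There is essentially no obstacle here: once one notices that the iterates of the control function collapse to $r$ under the hypothesis $c(r)=r$, the two halves of Lemma \ref{quan-K-prod} become exactly the injectivity and surjectivity statements needed. The only thing to verify is that the homomorphism property of $\Pi\pi_{i,\ast}$ (which is immediate from functoriality and already implicitly used in the statement of Lemma \ref{quan-K-prod}) carries over, so that the bijection $\Pi\pi_{i,\ast}$ is in fact an isomorphism of abelian groups for both $\ast=0$ and $\ast=1$.
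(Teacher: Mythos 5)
Your proof is correct and takes exactly the approach the paper intends: the corollary follows by specializing Lemma \ref{quan-K-prod} to the case $c(r)=r$, under which all the relaxation maps $\iota^{\ep,\ep,r,c^n(r)}_{\ast}$ become identities, so the two approximate statements collapse to exact injectivity and surjectivity.
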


    Before showing exact sequences of quantitative $K$-theory, let us review a series of notions.

\begin{definition}\cite{OyonoYu2015}\label{Def-control-pair}
	A \textit{control pair} is a pair $(\lambda, h)$, where $\lambda\geq 1$ and $h: (0, 1/(4\lambda))\rightarrow [1,\infty)$ is a map satisfying that there exists a non-decreasing map $g: (0, 1/(4\lambda))\rightarrow [1,\infty)$ such that $h\leq g$.
\end{definition}

\begin{remark}
	The set of all control pair is a partial order set equipped with $(\lambda, h)\leq (\lambda', h')$ if and only if $\lambda\leq \lambda'$ and $h_{\ep}\leq h'_{\ep}$ for all $\ep\in (0, 1/(4\lambda'))$.
\end{remark}

\begin{notation}
	For a filtered $C^{\ast}$-algebra $A$ and $\ast=0,1$, let 
	$$\mathcal{K}_{\ast}(A)=(K^{\ep, r}_{\ast}(A))_{0<\ep<1/4, r>0}.$$
\end{notation}

\begin{definition}\cite[Definition 2.1]{OyonoYu2015}\label{Def-controlled-mor}
	Let $A$, $B$ be two filtered $C^{\ast}$-algebras and $(\lambda, h)$ be a control pair, $i, j\in\{0, 1\}$. A \textit{$(\lambda, h)$-controlled morphism}
	$$\mathcal{F}: \mathcal{K}_i(A)\rightarrow \mathcal{K}_j(B)$$
	is a family of group homomorphisms for $0<\ep<1/(4\lambda)$, $r>0$
	$$F^{\ep, r}: K^{\ep, r}_i(A)\rightarrow K^{\lambda\ep, h_{\ep}r}_j(B)$$
	such that 
	$$F^{\ep', r'}\circ\iota^{\ep, \ep', r, r'}_i=\iota^{\lambda\ep, \lambda\ep', h_{\ep}r, h_{\ep'}r'}_j\circ F^{\ep, r}$$
	for any $0<\ep\leq\ep'<1/(4\lambda)$, $0<r\leq r'$ and $h_{\ep}r\leq h_{\ep'}r'$.
\end{definition}

\begin{definition}\label{Def-contolled-iso}
	A $(\lambda, h)$-controlled morphism $\mathcal{F}$ is called to be a \textit{$(\lambda, h)$-controlled isomorphism}, if there exists a control pair $(\lambda', h')$ such that 
	\begin{itemize}
	\item it $F^{\ep, r}([u]_{\ep, r})=0$, then $[u]_{\lambda'\ep, h'_{\ep}r}=0$ in $K^{\lambda'\ep, h'_{\ep}r}_{i}(A)$;
	\item for any $[u]_{\ep, r}\in K^{\ep, r}_{j}(B)$, there exists $[v]_{\lambda'\ep, h'_{\ep}r}\in K^{\lambda'\ep, h'_{\ep}r}_{i}(A)$ such that $F^{\lambda'\ep, h'_{\ep}r}([v]_{\lambda'\ep, h'_{\ep}r})=[u]_{\lambda\lambda'\ep, h_{\lambda'\ep}h'_{\ep}r}$.  
	\end{itemize}
\end{definition}

\begin{remark}
	By Lemma \ref{quan-K-and-K}, a $(\lambda, h)$-controlled morphism $\mathcal{F}: \mathcal{K}_i(A)\rightarrow \mathcal{K}_j(B)$ induces a group homomorphism
	$$F: K_i(A)\rightarrow K_j(B),$$
	defined by the relations $F\circ\iota^{\ep, r}_i=\iota^{\lambda\ep, h_{\ep}r}_j\circ F^{\ep, r}$ for any $0<\ep<1/(4\lambda)$, $r>0$.  
\end{remark}

\begin{definition}\cite[Definition 2.9]{OyonoYu2015}
	Let $(\lambda, h)$ be a control pair and $i, j, l\in \{0, 1\}$. Let $\mathcal{F}=(F^{\ep, r})_{0<\ep<1/(4\lambda'), r>0}: \mathcal{K}_i(A_1)\rightarrow \mathcal{K}_j(A_2)$ be a $(\lambda', h')$-controlled morphism and $\mathcal{G}=(G^{\ep, r})_{0<\ep<1/(4\lambda''), r>0}: \mathcal{K}_j(A_2)\rightarrow \mathcal{K}_l(A_3)$ be a $(\lambda'', h'')$-controlled morphism. Then the composition
	$$\mathcal{K}_i(A_1)\xrightarrow{\mathcal{F}}\mathcal{K}_j(A_2)\xrightarrow{\mathcal{G}}\mathcal{K}_l(A_3)$$
	is called to be \textit{$(\lambda, h)$-exact} at $\mathcal{K}_j(A_2)$ if
	\begin{itemize}
		\item $\mathcal{G}\circ \mathcal{F}=0$;
		\item for any $0<\ep<1/(4\max\{\lambda \lambda', \lambda''\})$, $r>0$ and $y\in K^{\ep, r}_j(A_2)$ with $G^{\ep, r}(y)=0$, there exists $x\in K^{\lambda\ep, h_{\ep}r}_i(A_1)$ such that
		$$F^{\lambda\ep, h_{\ep}r}(x)=\iota^{\ep, \lambda\lambda'\ep, r, h'_{\lambda\ep}h_{\ep}r}_j(y).$$
	\end{itemize}
	A sequence of controlled morphisms 
	$$\cdots\rightarrow \mathcal{K}_{i_{k-1}}(A_{k-1})\xrightarrow{\mathcal{F}_{k-1}} \mathcal{K}_{i_k}(A_k)\xrightarrow{\mathcal{F}_k} \mathcal{K}_{i_{k+1}}(A_{k+1})\xrightarrow{\mathcal{F}_{k+1}} \mathcal{K}_{i_{k+1}}(A_{k+1})\rightarrow \cdots$$
	is called \textit{$(\lambda, h)$-exact}, if the composition is $(\lambda, h)$-exact at $\mathcal{K}_{i_k}(A_k)$ for every $k$.
\end{definition}

\begin{definition}\cite[Definition 3.1]{OyonoYu2015}
	Let $A$ be a filtered $C^{\ast}$-algebra with a filtration $(A_r)_{r>0}$ and $J$ be an ideal of $A$. Take $J_r=J\cap A_r$. The extension of $C^{\ast}$-algebras
	$$0\rightarrow J \rightarrow A \rightarrow A/J \rightarrow 0$$
	is called a \textit{completely filtered extension}, if the bijective linear map
	$$A_r/J_r\rightarrow (A_r+J)/J$$
	is a complete isometry for any $r>0$, i.e. $\inf_{y\in M_n(J_r)}\|x+y\|=\inf_{y\in M_n(J)}\|x+y\|$ for any $x\in M_n(A_r)$ and any positive integer $n$.
\end{definition}

\begin{remark}
	For a completely filtered extension and any positive integer $n$, $M_n(J)$ and $M_n(A)/M_n(J)$ are also filtered $C^{\ast}$-algebras equipped with filtrations $(M_n(J)\cap M_n(A_r))_{r>0}$ and $(M_n(A_r+J)/M_n(J))_{r>0}$, respectively.
\end{remark}

Oyono-Oyono and Yu give a quantitative six-term exact sequence in \cite{OyonoYu2015}.
\begin{lemma}\cite[Theorem 4.7]{OyonoYu2015}\label{six-term-seq}
	There exists a control pair $(\lambda, h)$ such that for any completely filtered extension 
	$$0 \rightarrow J \xrightarrow{i} A \xrightarrow{q} A/J \rightarrow 0, $$
	the following six-term sequence is $(\lambda, h)$-exact
	$$\xymatrix{
		\mathcal{K}_0(J) \ar[r]^{\mathcal{I}} & \mathcal{K}_0(A) \ar[r]^{\mathcal{Q}} & \mathcal{K}_0(A/J) \ar[d]^{\mathcal{D}_{J, A}} \\
		\mathcal{K}_1(A/J) \ar[u]^{\mathcal{D}_{J, A}} & \mathcal{K}_1(A) \ar[l]^{\mathcal{Q}} & \mathcal{K}_1(J) \ar[l]^{\mathcal{I}} 
	}$$
	where $\mathcal{I}=(i^{\ep, r}_{\ast})_{0<\ep<1/4, r>0}$ as well as $\mathcal{Q}=(q^{\ep, r}_{\ast})_{0<\ep<1/4, r>0}$ are $(1, 1)$-controlled morphisms and $\mathcal{D}_{J, A}=(D^{\ep, r}_{J,K})_{0<\ep<1/4, r>0}$ is the $(\lambda_{\mathcal{D}}, h_{\mathcal{D}})$-controlled boundary map defined in \cite[Section 3.2]{OyonoYu2015} which is natural for controlled homomorphisms between extensions. 
\end{lemma}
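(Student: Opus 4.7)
Since the statement is a controlled refinement of the classical six term exact sequence in $K$-theory, my plan is to reproduce the classical construction while carefully tracking filtration degrees, using the completely filtered hypothesis as the only mechanism for lifting elements of $A/J$ back to $A$ with controlled propagation. The morphisms $\mathcal{I}$ and $\mathcal{Q}$ are manifestly $(1,1)$-controlled since $i$ and $q$ are filtered $\ast$-homomorphisms, so the real work is (i) constructing $\mathcal{D}_{J,A}$ as a controlled morphism and (ii) verifying $(\lambda,h)$-exactness at each of the six positions.

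First I would build $\mathcal{D}_{J,A}$. Given an $(\ep,r)$-unitary $u\in M_n(\widetilde{A/J})$, the completely filtered hypothesis provides a self-adjoint lift $\tilde v\in M_{2n}(\widetilde A_{r})$ of $\mathrm{diag}(u,u^{\ast})$ with $\|\tilde v\|\lesssim 1$, and then $p:=\chi(\tilde v)$, obtained from a fixed smooth cut-off $\chi$, is an almost-projection whose image in $M_{2n}(\widetilde{A/J})$ is close to $\mathrm{diag}(I_n,0)$; setting
\[
\mathcal{D}^{\ep,r}_1([u]_{\ep,r}) := [p]_{\lambda\ep,\,h_{\ep}r}-[\mathrm{diag}(I_n,0)]_{\lambda\ep,\,h_{\ep}r}
\]
in $K_0^{\lambda\ep,h_{\ep}r}(J)$ gives a well-defined class, independent of the lift up to a controlled homotopy obtained by linearly interpolating two lifts (the difference lies in $J$ and can be absorbed after enlarging the filtration). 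The boundary map $\mathcal{D}^{\ep,r}_0\colon K_0^{\ep,r}(A/J)\to K_1^{\lambda\ep,h_{\ep}r}(J)$ is defined dually, using the exponential $\exp(2\pi i\tilde p)$ of a self-adjoint lift $\tilde p$ of an almost-projection. In both cases the filtration and $\ep$-degradation are tracked by elementary functional calculus estimates and yield a control pair $(\lambda_{\mathcal{D}},h_{\mathcal{D}})$. Naturality under morphisms of extensions is immediate because lifts can be transported.

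Next I would verify exactness. At $\mathcal{K}_\ast(A)$ the argument is a direct controlled lift of the standard proof: if $q^{\ep,r}_\ast(x)=0$ then one obtains an $(\lambda\ep,h_\ep r)$-homotopy in $\widetilde{A/J}$ which the completely filtered hypothesis lifts, entry by entry along a finite subdivision, to an $(\lambda'\ep,h'_\ep r)$-homotopy in $\widetilde A$ whose defect lies in $M_n(J)$ and therefore represents the required preimage in $\mathcal{K}_\ast(J)$. At $\mathcal{K}_0(A/J)$ and $\mathcal{K}_1(A/J)$, the vanishing $\mathcal{D}\circ\mathcal{Q}=0$ follows because an almost-projection in $M_n(\widetilde{A/J})$ that comes from $M_n(\widetilde A)$ admits itself as a self-adjoint lift, so $\chi(\tilde v)$ lies (up to controlled error) in $M_n(\widetilde A)$ and exactness of the sequence is witnessed by the classical argument applied to lifts. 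For the converse implications (if $\mathcal{D}(y)=0$ then $y$ is in the image of $\mathcal{Q}$), one uses Lemmas \ref{orth-homotopy}, \ref{uni-homotopy} and \ref{Lip-Homotopy} to upgrade controlled homotopies in $J$ to controlled homotopies in $A$ by conjugating with explicit rotation paths, which again only increases the filtration by a fixed factor. Finally exactness at $\mathcal{K}_\ast(J)$ follows by a symmetric argument.

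The main obstacle I anticipate is the independence of $\mathcal{D}^{\ep,r}$ from the choice of lift with uniform control pair: two self-adjoint lifts of the same almost-unitary differ by an element of $J$ whose filtration is not controlled a priori, so one must pass through a linear path inside $M_{2n}(\widetilde A_r)$ and check, by functional calculus on a neighbourhood of the spectrum determined by $\chi$, that the resulting path of almost-projections stays in a uniformly controlled $(\lambda\ep,h_\ep r)$-regime. This is exactly where the completely filtered assumption (as opposed to merely filtered) is used in its full strength: it guarantees matrix-level isometric identification $M_n(A_r)/M_n(J_r)=M_n(A_r+J)/M_n(J)$, allowing one to choose lifts of matrices with norm bounds that depend only on $r$ and not on $n$, and hence to produce the required control pair $(\lambda,h)$ uniformly in the extension.
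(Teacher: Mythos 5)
The paper does not prove this lemma: it is imported wholesale from Oyono-Oyono and Yu's work, cited as \cite[Theorem 4.7]{OyonoYu2015}, and the control pair $(\lambda,h)$ together with the boundary map $\mathcal{D}_{J,A}$ are taken directly from their Section 3.2. Your sketch is therefore an attempt to reprove a quoted theorem rather than to reproduce any argument present in this paper.

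As a reconstruction of the cited proof, your broad strategy is in the right spirit (run the classical index and exponential maps while tracking filtration degrees, with the completely filtered hypothesis as the lifting mechanism), but it has two concrete problems. First, $\mathrm{diag}(u,u^{\ast})$ is not self-adjoint, so asking for a ``self-adjoint lift'' of it is not meaningful; the classical index construction lifts $u$ alone and then builds a self-adjoint or almost-unitary $2\times 2$ block via the Bott/rotation trick, and it is the square-root-free polynomial variant of that trick that one must use to keep propagation controlled. Second, and more seriously, independence of the choice of lift is not settled by linear interpolation. The completely filtered hypothesis lets you pick lifts inside $M_n(\widetilde A_r)$ with norm controlled by the quotient norm, so the difference of two lifts of the same almost-unitary lies in $M_n(J_r)$, but its \emph{norm} can be of order $2$, which is far from small. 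The straight-line path between two cutoff projections $\chi(\tilde v_1)$ and $\chi(\tilde v_2)$ built from such lifts has no reason to stay in the almost-projection regime: the spectral gap that $\chi$ depends on can close along the path. So ``passing through a linear path inside $M_{2n}(\widetilde A_r)$'' is not enough, and the claimed control pair would not follow. Oyono-Oyono and Yu avoid this by a longer chain of auxiliary controlled lemmas (in particular a careful use of quasi-central approximate units adapted to the filtration, and rotation homotopies in place of linear ones) before establishing $(\lambda,h)$-exactness at all six spots simultaneously; this machinery is exactly why the present paper treats the lemma as a black box rather than reproving it.
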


Now, we review the controlled Mayer-Vietoris sequence introduced by Oyono-Oyono and Yu in \cite{OyonoYu2019} for the quantitative K-theory.

\begin{definition}\cite[Section 2]{OyonoYu2019}\label{Def-for-MV}
	Let $A$ be a filtered $C^{\ast}$-algebra with a filtration $(A_r)_{r>0}$ and $s$, $c$ be two positive numbers.
	\begin{enumerate}
		\item Let $\Delta_1$ and $\Delta_2$ be two closed linear subspaces of $A_s$. The pair $(\Delta_1, \Delta_2)$ is called a \textit{completely coercive decomposition pair with coercitivity $c$} for $A$ of order $s$, if for any $0<r\leq s$, any positive integer $n$ and $x\in M_n(A_r)$, there exist $x_1\in M_n(\Delta_1\cap A_r)$ and $x_2\in M_n(\Delta_2\cap A_r)$ with $\max\{\|x_1\|, \|x_2\|\}\leq c\|x\|$ such that $x=x_1+x_2$.
		\item Let $\Delta$ be a closed linear subspace of $A_s$. An \textit{$s$-controlled $\Delta$-neighborhood $C^{\ast}$-algebra} is a $C^{\ast}$-subalgebra $A_{\Delta}$ of $A$ satisfying that 
		\begin{itemize}
			\item $A_{\Delta}$ is a filtered $C^{\ast}$-algebra with a filtration $(A_{\Delta}\cap A_r)_{r>0}$;
			\item $\Delta+A_{5s}\cdot\Delta+\Delta\cdot A_{5s}+A_{5s}\cdot\Delta\cdot A_{5s}\subseteq A_{\Delta}$.
		\end{itemize}
		\item Let $S_1$ and $S_2$ be two subsets of $A$. The pair $(S_1, S_2)$ is called to have \textit{complete intersection approximation property with coercitivity $c$}, if for any $\ep>0$, any positive integer $n$ and $x\in M_n(S_1)$, $y\in M_n(S_2)$ with $\|x-y\|<\ep$, there exists $z\in M_n(S_1\cap S_2)$ such that 
		$$\|x-z\|<c\ep \:\:\textrm{and}\:\: \|y-z\|<c\ep.$$
	\end{enumerate}
\end{definition}

\begin{definition}\cite[Definition 2.17]{OyonoYu2019}\label{Def-control-MV-pair}
	Let $A$ be a filtered $C^{\ast}$-algebra with a filtration $(A_r)_{r>0}$ and $s, c>0$. An \textit{$s$-controlled weak Mayer-Vietoris pair with coercitivity $c$} for $A$ is a quadruple $(\Delta_1, \Delta_2, A_{\Delta_1}, A_{\Delta_2})$ such that the followings hold.
	\begin{itemize}
		\item $\Delta_1$ and $\Delta_2$ are two closed linear subspaces of $A_s$ and the pair $(\Delta_1, \Delta_2)$ is a completely coercive decomposition pair with coercitivity $c$ for $A$ of order $s$.
		\item $A_{\Delta_i}$ is an $s$-controlled $\Delta_i$-neighborhood $C^{\ast}$-algebra for $i=1,2$.
		\item The pair $(A_{\Delta_1,r}, A_{\Delta_2,r})$ has the complete intersection approximation property with coercitivity $c$ for any positive number $r$ with $r\leq s$, where $A_{\Delta_i, r}=A_{\Delta_i}\cap A_r$ for $i=1, 2$.
	\end{itemize}
\end{definition}

\begin{lemma}\cite[Theorem 3.10]{OyonoYu2019}\label{Control-MV}
	For every $c>0$, there exists a control pair $(\lambda, h)$ such that for any filtered $C^{\ast}$-algebra $A$, any $s>0$ and any $s$-controlled weak Mayer-Vietoris pair $(\Delta_1, \Delta_2, A_{\Delta_1}, A_{\Delta_2})$ with coercitivity $c$ for $A$, we have a $(\lambda, h)$-exact six-term exact sequence at order $s$, i.e. it is $(\lambda, h)$-exact at $K^{\ep, r}$ for all $r\leq s$:
	$$\xymatrix{
		\mathcal{K}_0(A_{\Delta_1} \cap A_{\Delta_2}) \ar[r]^{(i_0, i'_0)\:\:\:\:\:\:} & \mathcal{K}_0(A_{\Delta_1})\oplus \mathcal{K}_0(A_{\Delta_2}) \ar[r]^{\:\:\:\:\:\:\:\:\:\:\:\:\:\:i_0-i'_0} & \mathcal{K}_0(A) \ar[d]^{\mathcal{D}_{\Delta_1, \Delta_2}} \\
		\mathcal{K}_1(A) \ar[u]_{\mathcal{D}_{\Delta_1, \Delta_2}} & \mathcal{K}_1(A_{\Delta_1})\oplus \mathcal{K}_1(A_{\Delta_2}) \ar[l]^{i_1-i'_1\:\:\:\:\:\:\:\:\:\:\:\:\:\:} & \mathcal{K}_1(A_{\Delta_1} \cap A_{\Delta_2}) \ar[l]^{\:\:\:\:\:\:(i_1, i'_1)} 
	}$$
	where $i_{\ast}$ and $i'_{\ast}$ are induced by inclusion maps for $\ast=0, 1$ and $\mathcal{D}_{\Delta_1, \Delta_2}$ is the controlled boundary map constructed in \cite[Section 3.2]{OyonoYu2019}.
\end{lemma}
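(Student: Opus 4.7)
The plan is to derive the controlled Mayer-Vietoris sequence by applying the quantitative six-term exact sequence of Lemma \ref{six-term-seq} to two carefully chosen completely filtered extensions and splicing the results along a controlled excision isomorphism.

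First, I would introduce the two extensions
\begin{align*}
0 &\to A_{\Delta_1}\cap A_{\Delta_2} \to A_{\Delta_1} \to A_{\Delta_1}/(A_{\Delta_1}\cap A_{\Delta_2}) \to 0,\\
0 &\to A_{\Delta_2} \to A_{\Delta_1}+A_{\Delta_2} \to (A_{\Delta_1}+A_{\Delta_2})/A_{\Delta_2} \to 0,
\end{align*}
and verify that each is completely filtered at orders $r\leq s$, up to a distortion depending only on the coercitivity $c$. For the first extension, this is exactly the complete intersection approximation property of Definition \ref{Def-for-MV}(3). For the second, the completely coercive decomposition property of Definition \ref{Def-for-MV}(1) ensures that $A_{\Delta_1}+A_{\Delta_2}$ matches the filtration of $A$ at order $s$ with a constant factor $c$. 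Applying Lemma \ref{six-term-seq} to each extension then produces two quantitative six-term sequences with a uniform control pair determined by $c$.

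Second, I would construct a controlled excision isomorphism
\[
\mathcal{K}_\ast\bigl(A_{\Delta_1}/(A_{\Delta_1}\cap A_{\Delta_2})\bigr)\xrightarrow{\;\cong\;}\mathcal{K}_\ast\bigl((A_{\Delta_1}+A_{\Delta_2})/A_{\Delta_2}\bigr)
\]
induced by the natural inclusion. Both controlled surjectivity and injectivity would follow from lifting approximate $(\ep,r)$-projections and $(\ep,r)$-unitaries from the right-hand quotient back into $A_{\Delta_1}$ through the intersection approximation property, with the resulting $\ep$-inflation governed by $c$. Then I would splice the two six-term sequences along this excision isomorphism to obtain the Mayer-Vietoris sequence, defining $\mathcal{D}_{\Delta_1,\Delta_2}$ as the boundary map of the second extension postcomposed with the inverse of the excision isomorphism and the boundary of the first. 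Naturality of the quantitative six-term sequence with respect to controlled morphisms between extensions makes this well-defined.

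The main obstacle will be the bookkeeping of control parameters. Each lift via the intersection approximation property multiplies $\ep$ by a factor involving $c$, each use of the decomposition property does the same, and the boundary map constructions in Lemma \ref{six-term-seq} themselves (which pass through functional calculus and stabilization) further inflate the filtration degree. Verifying that the composite control pair $(\lambda,h)$ depends only on $c$—and not on the ambient algebra $A$, the order $s$, or the initial $(\ep,r)$—requires estimating every intermediate constant uniformly. Moreover, one must check that exactness at each of the six nodes closes up with compatible control parameters after splicing, which amounts to a careful diagram chase in the category of $(\lambda,h)$-controlled morphisms. A pessimistic accounting shows that constants multiply at every composition, so disciplined tracking is essential to keep the final control pair uniform in $c$ alone.
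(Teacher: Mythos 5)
This lemma is not proved in the paper at all — it is cited verbatim from Oyono-Oyono and Yu's ``Quantitative $K$-theory and the K\"unneth formula for operator algebras'' (\cite[Theorem 3.10]{OyonoYu2019}), so there is no internal argument to compare against. Assessing your proposal on its own merits, there is a genuine gap at the very first step that prevents the strategy from getting off the ground.

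The two short exact sequences you write down do not exist in the generality covered by the hypotheses. In a controlled weak Mayer-Vietoris pair (Definition \ref{Def-control-MV-pair}), $A_{\Delta_1}$ and $A_{\Delta_2}$ are required only to be $C^{\ast}$-\emph{subalgebras} of $A$ with neighborhood and intersection-approximation properties; they are \emph{not} required to be ideals, and in the intended applications (Roe-type algebras over a cover of a metric space, exactly as in Section \ref{Sec-Reduction} of this paper) they are emphatically not ideals. Consequently $A_{\Delta_1}\cap A_{\Delta_2}$ is in general not an ideal of $A_{\Delta_1}$, so the quotient $A_{\Delta_1}/(A_{\Delta_1}\cap A_{\Delta_2})$ is not a $C^{\ast}$-algebra; similarly, $A_{\Delta_1}+A_{\Delta_2}$ need not be closed under multiplication (it is only a linear subspace), so it is typically not a $C^{\ast}$-algebra and $A_{\Delta_2}$ is typically not an ideal of it. Your plan to apply Lemma \ref{six-term-seq} to each piece and splice them along a controlled excision isomorphism therefore cannot be set up: neither extension makes sense, and the excision map has neither source nor target. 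This is not a matter of controlling $\ep$-inflation — the objects themselves are absent. The entire point of the ``weak'' Mayer-Vietoris framework, with its coercive decomposition and complete intersection approximation hypotheses, is precisely to replace the missing ideal structure; Oyono-Oyono and Yu's actual proof builds the boundary map by hand from approximate factorizations of almost-unitaries via the coercive decomposition, and verifies exactness directly through the intersection approximation property, never passing through an exact sequence of $C^{\ast}$-algebras. If you want an argument in the spirit of your proposal, you would first have to restrict to the very special case where the $A_{\Delta_i}$ are genuine ideals — but then the result reduces to the classical controlled long exact sequence and the whole ``weak'' apparatus is unnecessary.
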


\section{the quantitative coarse Baum-Connes conjecture}\label{Sec-quanCBC}

In this section, let $(X,d)$ be a proper metric space (the properness means every closed ball in $X$ is compact). Since that proper metric spaces are separable, thus we can choose a countable dense subset $Z_X$ in $X$. And let $A$ be a $C^{\ast}$-algebra acting on a Hilbert space $H_A$. Fixed $H$ as a separable Hilbert space. 

\subsection{Roe algebras with coefficients}
Roe algebras are introduced by J. Roe in \cite{Roe1993} motivated by index theory on open manifolds \cite{Roe1988}. In this subsection, we introduce a notion of Roe algebra with coefficients in a $C^{\ast}$-algebra.

\begin{definition}\label{Def-propagation}
	Let $X$, $Y$ be two proper metric spaces and $Z_X$, $Z_Y$ be two countable dense subsets of $X$, $Y$, respectively. Let $\chi$ be the characteristic function. 
	\begin{enumerate}
		\item For a bounded operator $T: \XA\rightarrow \YA$, the \textit{support} of $T$, denoted by $\supp(T)$, is defined to be $\{(x, y)\in X\times Y: (\chi_V\otimes I \otimes I) T (\chi_U\otimes I \otimes I) \neq 0\:\:\textrm{for any open neighborhood $U$, $V$ of $x$, $y$, respectively}\}$.
		\item For a bounded operator $T$ on $\XA$, the \textit{propagation} of $T$, denoted by $\prop(T)$, is defined to be $\sup\{d(x, y): (x,y)\in \supp(T)\}$.
		\item A bounded operator $T$ on $\XA$ is called to be \textit{$A$-locally compact}, if for any compact subset $K\subset X$, the operators $\chi_K T$ and $T\chi_K$ are in $\K(\ell^2(Z_X)\otimes H)\otimes A$, where $\K(\ell^2(Z_X)\otimes H)$ is the $C^{\ast}$-algebra of compact operators on $\ell^2(Z_X)\otimes H$.
	\end{enumerate}
\end{definition}

\begin{definition}\label{Def-Roe-algebra}
	The \textit{Roe algebra of $X$ with coefficients in $A$}, denoted by $C^{\ast}(X,A)$, is defined to be the norm closure of the $\ast$-algebra consisting of all $A$-locally compact operators on $\XA$ with finite propagation.
\end{definition}

\begin{remark}
	For different choices of the dense subset $Z_{X}$ of $X$, Roe algebras with coefficients are non-canonically isomorphic and their $K$-theory are canonically isomorphic. 
\end{remark}

\begin{lemma}\label{filtration-Roe-algebra}
	The Roe algebra $C^{\ast}(X, A)$ of $X$ with coefficients in $A$ is a filtered $C^{\ast}$-algebra equipped with the following filtration
	$$C^{\ast}(X, A)_r=\{T\in C^{\ast}(X, A): \prop(T)\leq r\}$$
	for any $r>0$.
\end{lemma}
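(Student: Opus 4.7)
The statement requires verifying the four conditions in Definition \ref{Def-filtration} for the family $(C^{\ast}(X,A)_r)_{r>0}$. The overall strategy is to reduce each condition to a basic property of the support of an operator, and then confirm that the resulting subsets are closed linear subspaces of $C^{\ast}(X,A)$. The inclusion $C^{\ast}(X,A)_r\subseteq C^{\ast}(X,A)_{r'}$ for $r\leq r'$ is immediate from the definition of propagation. Linearity follows from $\supp(T+S)\subseteq \supp(T)\cup\supp(S)$ and $\prop(\lambda T)=\prop(T)$ for $\lambda\neq 0$.

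\textbf{Key algebraic properties.} Closure under involution comes from the identification $\supp(T^{\ast})=\{(y,x):(x,y)\in\supp(T)\}$, so $\prop(T^{\ast})=\prop(T)$. For the multiplicative property $C^{\ast}(X,A)_r\cdot C^{\ast}(X,A)_{r'}\subseteq C^{\ast}(X,A)_{r+r'}$, I would show the standard fact $\supp(ST)\subseteq\{(x,z):\exists y\text{ with }(x,y)\in\supp(S),\ (y,z)\in\supp(T)\}$; combined with the triangle inequality this gives $\prop(ST)\leq \prop(S)+\prop(T)$. $A$-local compactness is preserved under sums, involutions, and products (the latter because $\K(\ell^2(Z_X)\otimes H)\otimes A$ is an ideal in the appropriate multiplier algebra, which handles the one-sided cutoffs defining $A$-local compactness).

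\textbf{The main obstacle: closedness of $C^{\ast}(X,A)_r$.} This is the step that requires care. Suppose $(T_n)$ is a sequence in $C^{\ast}(X,A)_r$ converging in norm to $T\in C^{\ast}(X,A)$; I need $\prop(T)\leq r$. Arguing by contradiction, if there were $(x,y)\in\supp(T)$ with $d(x,y)>r$, then by the properness of $X$ I can choose open neighborhoods $U$ of $x$ and $V$ of $y$ such that $d(u,v)>r$ for all $u\in U$, $v\in V$. For each $n$, the condition $\prop(T_n)\leq r$ forces $(\chi_V\otimes I\otimes I)T_n(\chi_U\otimes I\otimes I)=0$. Passing to the norm limit gives $(\chi_V\otimes I\otimes I)T(\chi_U\otimes I\otimes I)=0$, contradicting $(x,y)\in\supp(T)$. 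Hence $\prop(T)\leq r$, and a parallel argument shows $A$-local compactness is preserved under norm limits since $\K(\ell^2(Z_X)\otimes H)\otimes A$ is closed.

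\textbf{Density.} Let $\mathcal{A}$ denote the $\ast$-algebra of $A$-locally compact operators of finite propagation, so that by definition $C^{\ast}(X,A)=\overline{\mathcal{A}}$. Every $T\in\mathcal{A}$ has some finite propagation $r_T$, hence lies in $C^{\ast}(X,A)_{r_T}$. Therefore $\mathcal{A}\subseteq \bigcup_{r>0}C^{\ast}(X,A)_r$, and taking the norm closure yields $C^{\ast}(X,A)=\overline{\bigcup_{r>0}C^{\ast}(X,A)_r}$. This verifies the density axiom and completes the proof.
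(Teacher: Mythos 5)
Your proof is correct and takes essentially the same approach as the paper: the non-trivial point in both is the containment of $\supp$ of a product through an intermediate point, giving $\prop(TS)\leq\prop(T)+\prop(S)$; the paper states only this and leaves the remaining axioms implicit, while you spell out all four conditions of Definition~\ref{Def-filtration} together with closedness. One small slip worth fixing: in your formula for $\supp(ST)$, since $T$ acts first, the factors should read $(x,y)\in\supp(T)$ and $(y,z)\in\supp(S)$ (and the inclusion should be into the closure of that set, as in the paper's statement for $\supp(TS)$); this does not affect the resulting propagation inequality or the rest of the argument.
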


\begin{proof}
	For any two operators $T, S\in C^{\ast}(X, A)$, we have that $\supp(TS)$ is contained in the closure of the set 
	$$\{(x,y)\in X\times X: \:\: \textrm{there exists $z\in X$ such that $(x,z)\in \supp(S)$, $(z,y)\in \supp(T)$}\}$$
	which implies 
	$$\prop(TS)\leq \prop(T)+\prop(S).$$
	Thus $(C^{\ast}(X, A)_r)_{r>0}$ is a filtration of $C^{\ast}(X, A)$.
\end{proof}

\begin{lemma}\label{Roequasi-stable}
	Let $(X_i)_{i\in \mathcal{I}}$ be a family of proper metric spaces and $(A_i)_{i\in \mathcal{I}}$ be a family of $C^{\ast}$-algebras acting on $H_{A_i}$, respectively. Then a family of filtered $C^{\ast}$-algebras $(C^{\ast}(X_{i}, A_i))_{i\in \mathcal{I}}$ is uniformly quasi-stable with a uniformly control function $c(r)=r$ (see Definition \ref{uni-quasi-stable}).
\end{lemma}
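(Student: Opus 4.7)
The plan is to exploit the separability of $H$ to build universal isometries on the $H$-factor that work simultaneously for every index $i$, so that the control function can be taken to be the identity.

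\textbf{Step 1 (universal building blocks on $H$).} First I would fix, once and for all and independently of $i$, a family of isometries $V'_1,\ldots,V'_n\in B(H)$ satisfying $V'_j{}^{\ast}V'_k=\delta_{jk}I_H$ and $\sum_{l=1}^n V'_l V'_l{}^{\ast}=I_H$. These exist because $H$ is separable and infinite-dimensional, hence $H\cong H^{\oplus n}$ via any orthogonal decomposition into $n$ isomorphic infinite-dimensional summands. Crucially, the $V'_l$ depend only on $n$ and $H$, not on $X_i$ or $A_i$.

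\textbf{Step 2 (lifting to the Roe-algebra level).} For each $i\in\mathcal{I}$ and each $l=1,\ldots,n$, I would define
\[
V_{l,i}\;=\;I_{\ell^2(Z_{X_i})}\otimes V'_l\otimes I_{H_{A_i}}
\]
as a bounded operator on $\ell^2(Z_{X_i})\otimes H\otimes H_{A_i}$. By construction $V_{l,i}$ has propagation zero (it acts as the identity on the $\ell^2(Z_{X_i})$ factor, so it commutes with every spatial cut-off $\chi_K$). Then I would assemble
\[
w_i\;=\;\sum_{l=1}^{n} e_{1l}\otimes V_{l,i}\;\in\; M_n\bigl(B(\ell^2(Z_{X_i})\otimes H\otimes H_{A_i})\bigr),
\]
i.e.\ the matrix whose first row is $(V_{1,i},\ldots,V_{n,i})$ and whose other rows vanish. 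A direct matrix computation using $V'_j{}^{\ast}V'_k=\delta_{jk}I_H$ and $\sum_l V'_l V'_l{}^{\ast}=I_H$ shows $w_i^{\ast}w_i=I_{M_n}$ and $w_iw_i^{\ast}=e_{11}=I\oplus 0_{n-1}$, which is condition (1) of Definition \ref{uni-quasi-stable}.

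\textbf{Step 3 (multiplier property).} The main thing to check is that each $w_i$ belongs to $\mathcal{M}(M_n(C^{\ast}(X_i,A_i)))$ and is compatible with the filtration. Given $a=(a_{jk})\in M_n(C^{\ast}(X_i,A_i)_r)$, the only non-zero entries of $w_i a$ lie in the first row and have the form $\sum_l V_{l,i}\,a_{lk}$. Since $V_{l,i}$ has propagation $0$, the product $V_{l,i}a_{lk}$ has propagation $\leq r$. For $A_i$-local compactness I would observe that for a compact $K\subset X_i$ the cut-off $\chi_K$ commutes with $V_{l,i}$, and then use that $I_{\ell^2(Z_{X_i})}\otimes V'_l$ is a multiplier of $\mathcal{K}(\ell^2(Z_{X_i})\otimes H)$ (since bounded operators multiply compact operators to compact operators). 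Consequently $V_{l,i}(\chi_K a_{lk})\in\mathcal{K}(\ell^2(Z_{X_i})\otimes H)\otimes A_i$, and similarly on the right. This gives $w_i a,\, a w_i\in M_n(C^{\ast}(X_i,A_i)_r)$, verifying condition (2) of Definition \ref{uni-quasi-stable} with $c(r)=r$.

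\textbf{Step 4 (uniformity).} Because the operators $V'_l$ in Step~1 were chosen once and for all, independent of $i$, the same control function $c(r)=r$ works simultaneously across the whole family $(C^{\ast}(X_i,A_i))_{i\in\mathcal{I}}$, proving uniform quasi-stability. The only subtle point in the entire argument is the verification in Step~3 that $V_{l,i}$ really is a multiplier of the Roe algebra with coefficients — everything else is a direct matrix computation — but this is essentially the standard fact that multiplying an $A$-locally compact operator by a bounded operator acting only on the auxiliary $H$-factor preserves $A$-local compactness.
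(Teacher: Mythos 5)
Your proposal is correct and follows essentially the same route as the paper: the paper simply packages your $n$ Cuntz-type isometries $V'_1,\dots,V'_n$ into a single isometry $V\colon H^n\to H\oplus H^{n-1}$ with range in the first summand and sets $w_i=I\otimes V\otimes I$, which coincides with your $w_i=\sum_l e_{1l}\otimes(I\otimes V'_l\otimes I)$. The two key observations — that the isometry is chosen independently of $i$ and has propagation zero, hence preserves the filtration exactly ($c(r)=r$) — are identical in both arguments; your Step~3 just spells out the multiplier verification that the paper states without proof.
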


\begin{proof}
	Let $Z_{X_{i}}$ be a countable dense subset of $X_i$ and $H$ be a separable Hilbert space. For any positive integer $n$, define an isometry $V: H^{n} \rightarrow H \oplus H^{n-1}$ with the first coordinate $H$ as its image. Then $I \otimes V \otimes I: \ell^2(Z_{X_{i}}) \otimes H^{n} \otimes H_{A_i} \rightarrow \ell^2(Z_{X_{i}}) \otimes H^{n} \otimes H_{A_i}$ is an isometry in the multiplier algebra of $M_n(C^{\ast}(X_i, A_i))$ with the propagation zero. For any $i\in \mathcal{I}$, we have that $(I \otimes V \otimes I)(I \otimes V \otimes I)^{\ast}=I \otimes VV^{\ast} \otimes I=e_{11}$ and $(I \otimes V \otimes I)T,\: T(I\otimes V \otimes I) \in M_n(C^{\ast}(X_i, A_i)_{r})$ for any element $T \in M_n(C^{\ast}(X_i, A_i)_{r})$ and $r>0$. Thus $(C^{\ast}(X_{i}, A_i))_{i\in \mathcal{I}}$ is uniformly quasi-stable with a uniformly control function $c(r)=r$.
\end{proof}

Combining the above lemma with Corollary \ref{Cor-quan-K-prod}, we have the following formula to compute the quantitative $K$-theory for the uniform product of a family of Roe algebras with coefficients.

\begin{corollary}\label{uniform-product-Roealg}
	Let $(X_i)_{i\in \mathcal{I}}$ and $(A_i)_{i\in \mathcal{I}}$ be as above, then the natural filtered homomorphisms 
	$$\pi_i:\prod^{uf}_{i\in \mathcal{I}} C^{\ast}(X_i, A_i)\rightarrow C^{\ast}(X_i, A_i), \:\: (T_i)_{i\in \mathcal{I}}\mapsto T_i,$$
	induce a isomorphism
	$$\Pi \pi_{i, \ast}: K^{\ep, r}_{\ast}(\prod^{uf}_{i\in \mathcal{I}} C^{\ast}(X_i, A_i)) \rightarrow \prod_{i\in \mathcal{I}}K^{\ep, r}_{\ast}(C^{\ast}(X_i, A_i)),$$
	for any $0<\ep<1/4$ and $r>0$.
\end{corollary}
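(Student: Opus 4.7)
The plan is to prove this corollary as a direct combination of the two results that immediately precede it, with essentially no extra work required. The key observation is that Corollary \ref{Cor-quan-K-prod} provides precisely the conclusion we want, but under the hypothesis that the family $(C^{\ast}(X_i, A_i))_{i\in \mathcal{I}}$ is uniformly quasi-stable with uniformly control function $c(r) = r$, and Lemma \ref{Roequasi-stable} supplies exactly that hypothesis.

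Concretely, I would first invoke Lemma \ref{Roequasi-stable} to obtain that $(C^{\ast}(X_i, A_i))_{i\in \mathcal{I}}$ is uniformly quasi-stable with uniformly control function $c(r) = r$. Note the important feature: the isometries produced in that lemma (of the form $I \otimes V \otimes I$) have propagation zero, so they preserve filtration exactly rather than merely within some coarser control function. This is what makes $c(r) = r$ work, as opposed to some strictly larger control function. Then I would apply Corollary \ref{Cor-quan-K-prod} to this family: since $c(r) = r$ implies $c^2(r) = r$ and $c^{10}(r) = r$, both conclusions of Lemma \ref{quan-K-prod} collapse into the statement that $\Pi \pi_{i,\ast}$ is injective and surjective at each $(\varepsilon, r)$ without any shift of filtration parameter, giving the isomorphism on $K^{\varepsilon, r}_{\ast}$.

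Since the proof is a one-line citation of the two preceding results, there is no real obstacle. The only thing worth being careful about is verifying that the target $\prod_{i\in \mathcal{I}} K^{\varepsilon, r}_{\ast}(C^{\ast}(X_i, A_i))$ makes sense as a product of abelian groups (it does by Lemma \ref{quan-K-abel}) and that $\pi_i$ as defined genuinely is a filtered $\ast$-homomorphism, which is immediate from the definition of the uniform product and the filtration on each Roe algebra given in Lemma \ref{filtration-Roe-algebra}. Thus the proof should consist essentially of the sentence: ``This follows immediately by combining Lemma \ref{Roequasi-stable} with Corollary \ref{Cor-quan-K-prod}.''
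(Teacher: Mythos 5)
Your proposal is correct and matches the paper's own argument exactly: the paper introduces this corollary with the phrase ``Combining the above lemma with Corollary \ref{Cor-quan-K-prod}, we have the following formula,'' which is precisely the combination of Lemma \ref{Roequasi-stable} (uniform quasi-stability with $c(r)=r$) and Corollary \ref{Cor-quan-K-prod} that you cite. Your supplementary remarks about propagation-zero isometries and the collapse of $c^2(r)$, $c^{10}(r)$ to $r$ are accurate and explain why the isomorphism holds without any shift in the filtration parameter.
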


\begin{definition}\label{Def-coarse-map}
	Let $X$ and $Y$ be two proper metric spaces. A map $f: X\rightarrow Y$ is called a \textit{coarse map}, if 
	\begin{itemize}
		\item the inverse image of $f$ of any compact subset is pre-compact;
		\item for any $R>0$, there exists $S>0$ such that $\sup\{d(f(x),f(y)):\:\: d(x,y)\leq R\}\leq S$.  
	\end{itemize}
	For a coarse map $f$, define a function $w_f: \R^{+}\rightarrow \R^{+}$ by $R \mapsto \sup\{d(f(x),f(y)):\:\: d(x,y)\leq R\}$. 
\end{definition}

\begin{definition}\label{Def-coarse-equi}
	Two proper metric spaces $X$ and $Y$ are called to be \textit{coarsely equivalent}, if there exists two coarse maps $f: X\rightarrow Y$, $g:Y\rightarrow X$ and a constant $c>0$ such that 
	$$\max\{d(gf(x), x), d(fg(y), y)\}\leq c,$$
	for any $x\in X$ and $y\in Y$.
\end{definition}

For a coarse map $f: X\rightarrow Y$ and $\delta>0$. Choose a disjoint Borel cover $(U_i)_{i\in \mathcal{I}}$ of $Y$ with diameter less than $\delta$ and every $U_i$ has non-empty interior, then there exists a family of isometries $V_i: \ell^2(f^{-1}(U_i)\cap Z_X)\otimes H \rightarrow \ell^2(U_i\cap Z_Y)\otimes H$. Let 
$$V_f=\bigoplus_{i\in \mathcal{I}}V_i \otimes I: \XA \rightarrow \YA.$$
Then $V_f$ is an isometry and 
\begin{equation}\label{*}
	\supp(V_f)\subseteq \{(x, y)\in X\times Y: \:\: d(f(x),y)<\delta\}.
\end{equation}
Thus we have the following lemma.

\begin{lemma}\label{covering-isometry}
	For a coarse map $f: X\rightarrow Y$ and $\delta>0$, the above isometry $V_f$ induces a $\ast$-homomorphism
	$$ad_f: C^{\ast}(X, A)\rightarrow C^{\ast}(Y, A),$$
	defined by 
	$$ad_f(T)=V_f T V^{\ast}_f,$$
	that satisfies 
	$$\prop(ad_f(T))\leq w_f(\prop(T))+2\delta.$$
	Moreover, another $\ast$-homomorphism $ad'_f$ defined by another isometry $V'_f$ that satisfies \eqref{*} induces the same homomorphism as $ad_f$ on $K$-theory.     
\end{lemma}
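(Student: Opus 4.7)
The plan is to verify the three assertions in turn. The $\ast$-homomorphism property is immediate from $V_f^\ast V_f=I$, so the real work lies in (i) checking that $ad_f$ maps $C^{\ast}(X,A)$ into $C^{\ast}(Y,A)$ with the claimed propagation bound, and (ii) establishing independence of the induced $K$-theory map from the choice of covering isometry.

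For (i), I will rely only on the support estimate \eqref{*}. If $T$ has propagation at most $R$, then any $(y_1,y_2)\in\supp(V_f T V_f^{\ast})$ is witnessed by points $x_1,x_2\in X$ with $d(y_i,f(x_i))<\delta$ and $d(x_1,x_2)\leq R$, yielding $d(y_1,y_2)<2\delta+w_f(R)$. The more delicate half is $A$-local compactness: for compact $K\subseteq Y$, I will observe that $\chi_K V_f$ is supported in $K\times L$ with $L=\overline{f^{-1}(N_{\delta}(K))}$ compact (using that $Y$ is proper and that preimages of precompact sets under a coarse map are precompact), so $\chi_K V_f T V_f^{\ast}=(\chi_K V_f)(\chi_L T) V_f^{\ast}$, and $\chi_L T\in \K(\ell^2(Z_X)\otimes H)\otimes A$ by hypothesis.

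For (ii), my plan is to use a rotation homotopy of isometries. Given two choices $V_f$ and $V_f'$, I form the norm-continuous path
$$W_t=(V_f\oplus V_f')\begin{pmatrix}\cos(\pi t/2)I & \sin(\pi t/2)I\\ -\sin(\pi t/2)I & \cos(\pi t/2)I\end{pmatrix}\colon \XA\oplus\XA\longrightarrow \YA\oplus\YA,$$
which satisfies $W_0(p\oplus 0)W_0^{\ast}=ad_f(p)\oplus 0$ and $W_1(p\oplus 0)W_1^{\ast}=0\oplus ad_{f'}(p)$ for any projection $p$ over $C^{\ast}(X,A)$. The intermediate values produce matrix entries among $V_f p V_f^{\ast}$, $V_f p V_f'^{\ast}$, $V_f' p V_f^{\ast}$, and $V_f' p V_f'^{\ast}$, each admitting the same support analysis as in (i) and therefore sitting in $C^{\ast}(Y,A)$ with propagation bounded by $w_f(\prop(p))+2\delta$. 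Hence $(W_t(p\oplus 0)W_t^{\ast})_{t\in[0,1]}$ is a genuine homotopy of projections in $M_2(C^{\ast}(Y,A))$ connecting $ad_f(p)\oplus 0$ and, after a harmless swap, $ad_{f'}(p)\oplus 0$, so $[ad_f(p)]=[ad_{f'}(p)]$ in $K_0$. The $K_1$ case is handled identically after writing a unitary $u$ as $I+a$ and replacing $p\oplus 0$ by $a\oplus 0$.

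The hard part will be the $A$-local compactness check in (i): since $V_f$ itself is not locally compact, the argument has to combine properness of $Y$ with the coarseness of $f$ to confine the spatial support of $\chi_K V_f T V_f^{\ast}$ to a product of compact sets before invoking local compactness of $T$. Everything else is essentially bookkeeping of supports together with the propagation-preserving rotation homotopy.
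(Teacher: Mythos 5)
Parts (i) of your argument --- the propagation estimate via \eqref{*} and the local-compactness check by confining supports to a compact rectangle $K\times L$ --- match the paper's proof essentially verbatim (the paper uses $K'\subseteq f^{-1}(\overline{B_\delta(K)})$ where you write $L$). For part (ii) you take a genuinely different route. The paper avoids any homotopy by producing a single self-adjoint unitary
$$u=\begin{pmatrix} I-V_fV_f^{\ast} & V_fV_f'^{\ast}\\ V_f'V_f^{\ast} & I-V_f'V_f'^{\ast} \end{pmatrix}$$
in $\mathcal{M}(M_2(C^{\ast}(Y,A)))$ that conjugates $ad_f(T)\oplus 0$ to $0\oplus ad_f'(T)$, after which the standard fact about unitary multipliers closes the argument in one line. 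Your rotation homotopy $W_t=(V_f\oplus V_f')R_t$ also works, and the propagation/local-compactness check on all four entries $V_f^{(\prime)} p V_f^{(\prime)\ast}$ is correct, but the way you phrase the conclusion needs care: you speak of applying this to ``any projection $p$ over $C^{\ast}(X,A)$,'' which for $K_0$ of a non-unital algebra means $p\in M_n(C^{\ast}(X,A)^+)$ with a nonzero scalar part $\lambda$. Since $W_t$ is merely an isometry, $W_t(\lambda\oplus 0)W_t^{\ast}\neq\lambda\oplus 0$, so the path $W_t(p\oplus 0)W_t^{\ast}$ does not connect $\widetilde{ad_f}(p)\oplus 0$ to $0\oplus\widetilde{ad_f'}(p)$ once the scalar part is present; the $\lambda$ gets stuck in the wrong corner. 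The cleanest repair is to read your construction as a point-norm continuous homotopy of $\ast$-homomorphisms $\phi_t:C^{\ast}(X,A)\to M_2(C^{\ast}(Y,A))$, $\phi_t(a)=W_t(a\oplus 0)W_t^{\ast}$ (multiplicativity follows from $W_t^{\ast}W_t=I$), connecting the two corner embeddings of $ad_f$ and $ad_f'$; homotopic $\ast$-homomorphisms induce equal $K$-theory maps, and this handles $K_0$ and $K_1$ simultaneously without touching unitizations. With that reformulation your argument is sound; as written the $K_1$ aside (``replace $p\oplus 0$ by $a\oplus 0$'') also needs the same reframing, since $I+W_t(a\oplus 0)W_t^{\ast}$ is a unitary precisely because $\phi_t$ is a homomorphism. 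Net comparison: the paper's conjugation is shorter and sidesteps unitization entirely; yours is a more geometric homotopy argument that, once stated at the level of homomorphisms, is equally valid.
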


\begin{proof}
	Firstly, if $(y, y')\in \prop(V_f T V^{\ast}_f)$, then there exist $x, x'\in X$ such that $(y, x)\in \supp(V^{\ast}_f)$, $(x, x')\in \supp(T)$ and $(x', y')\in \supp(V_f)$. Thus $d(y, y')\leq d(y, f(x))+d(f(x), f(x'))+d(f(x'), y')\leq w_f(\prop(T))+2\delta$ which implies 
	$$\prop(ad_f(T))\leq w_f(\prop(T))+2\delta.$$ 
	
	Secondly, if $T\in C^{\ast}(X, A)$ is an $A$-locally compact operator. For any compact subset $K$ of $Y$, let $K'=\{x\in X:\:\:\textrm{there exists $y\in K$ such that $(y, x)\in \supp(V^{\ast}_f)$}\}$ that is closed. Since $d(f(x),y)<\delta$ for any $(y, x)\in \supp(V^{\ast}_f)$, thus $K'\subseteq f^{-1}(\overline{B_{\delta}(K)})$ where $\overline{B_{\delta}(K)}=\{y\in Y: d(y,K)\leq \delta\}$ which is compact by the properness of $Y$. Thus $K'$ is a compact subset of $X$ by the properness of $f$. Therefore, we have that $V_f T V^{\ast}_f \chi_K=V_f T \chi_{K'} V^{\ast}_f \chi_K$ is in $\K(\ell^2(Z_X)\otimes H) \otimes A$ because $T$ is $A$-locally compact. Similarly, we can show that $\chi_K V_f T V^{\ast}_f$ is in $\K(\ell^2(Z_X)\otimes H) \otimes A$.
	
	Finally, if there exists another isometry $V'_f: \XA \rightarrow \YA$ satisfying the relation \eqref{*}. Then by similar arguments as above, $V_fV^{\ast}_f$, $V_f'V'^{\ast}_f$, $V'_fV^{\ast}_f$ are all multipliers of $C^{\ast}(Y, A)$. And
	$$\begin{pmatrix}
		0 & 0\\
		0 & V'_f T V'^{\ast}_f 
	\end{pmatrix}
	=
	\begin{pmatrix}
		I-V_fV^{\ast}_f & V_fV'^{\ast}_f\\
		V'_fV^{\ast}_f  & I-V'_fV'^{\ast}_f 
	\end{pmatrix}
	\begin{pmatrix}
		V_f T V^{\ast}_f & 0\\
		0                  & 0 
	\end{pmatrix}
	\begin{pmatrix}
		I-V_fV^{\ast}_f & V_fV'^{\ast}_f\\
		V'_fV^{\ast}_f  & I-V'_fV'^{\ast}_f 
	\end{pmatrix}
	$$
	which implies that $ad'_f$ and $ad_f$ induce the same homomorphism on $K$-theory since that their images are unitarily equivalent by a unitary multiplier. 
\end{proof}

Combining Lemma \ref{filtration-Roe-algebra} and Lemma \ref{quan-key-lemma} with the above lemma, we have the following corollary.
\begin{corollary}\label{quan-cover-isometry}
	Let $f: X\rightarrow Y$ be a coarse map and $\delta>0$, the above $\ast$-homomorphism $ad_f$ is a controlled homomorphism with a control function $c(r)=w_f(r)+2\delta$. Thus $ad_f$ induces a family of homomorphisms 
	$$ad^{\ep, r, c(r)}_{f,\ast}: K^{\ep, r}_{\ast}(C^{\ast}(X, A))\rightarrow K^{\ep, c(r)}_{\ast}(C^{\ast}(Y, A))$$
	for any $0<\ep<1/4$, $r>0$.\\
	Moreover, if another isometry $V'_f$ that satisfies \eqref{*} define a controlled homomorphism $ad'_f$, then
	$$\iota_{\ast}^{\ep, \ep, c(r), c(r)+16\delta}\circ ad'^{\ep, r, c(r)}_{f,\ast}=\iota_{\ast}^{\ep, \ep, c(r), c(r)+16\delta}\circ ad^{\ep, r, c(r)}_{f,\ast}$$
	as homomorphisms from $K^{\ep, r}_{\ast}(C^{\ast}(X, A))$ to $K^{\ep, c(r)+16\delta}_{\ast}(C^{\ast}(Y, A))$ for any $0<\ep<1/4$, $r>0$.  
\end{corollary}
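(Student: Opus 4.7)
The first assertion is immediate from Lemma~\ref{covering-isometry}: the bound $\prop(ad_f(T)) \leq w_f(\prop(T)) + 2\delta$ says exactly that $ad_f$ is a controlled $\ast$-homomorphism in the sense of Definition~\ref{fil-homo} with control function $c(r) = w_f(r) + 2\delta$, so the family $ad^{\ep, r, c(r)}_{f,\ast}$ is produced by the standard functoriality of quantitative $K$-theory recalled just after Definition~\ref{fil-homo}.

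For the comparison of $ad_f$ and $ad'_f$, the plan is to lift the block-matrix argument at the end of the proof of Lemma~\ref{covering-isometry} to the quantitative level via Lemma~\ref{quan-key-lemma}. Set
$$W = \begin{pmatrix} I - V_f V^{\ast}_f & V_f V'^{\ast}_f \\ V'_f V^{\ast}_f & I - V'_f V'^{\ast}_f \end{pmatrix} \in \mathcal{M}(M_2(C^{\ast}(Y, A))),$$
which a direct computation shows to be a self-adjoint unitary. From \eqref{*}, each of the operators $V_f V^{\ast}_f$, $V'_f V'^{\ast}_f$, $V_f V'^{\ast}_f$, $V'_f V^{\ast}_f$ has support contained in $\{(y, y'): d(y, y') < 2\delta\}$, so for any $T \in M_2(C^{\ast}(Y, A))$ with $\prop(T) \leq r$ both $WT$ and $TW$ have propagation at most $r + 2\delta$. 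Hence $W$ is a controlled unitary multiplier in the sense of Definition~\ref{control-multiplier} with control function $c_W(r) = r + 2\delta$.

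Now let $\phi: C^{\ast}(X, A) \to M_2(C^{\ast}(Y, A))$ be the controlled homomorphism $T \mapsto \mathrm{diag}(ad_f(T), 0)$, whose control function is $c$. The matrix identity at the end of the proof of Lemma~\ref{covering-isometry} reads $W \, \mathrm{diag}(ad_f(T), 0) \, W^{\ast} = \mathrm{diag}(0, ad'_f(T))$, so $\phi_W \circ \phi$ equals the $(2,2)$-corner embedding of $ad'_f$. Feeding $(\phi, W)$ into Lemma~\ref{quan-key-lemma} and using $c_W^8(c(r)) = c(r) + 16\delta$ yields
$$\phi^{\ep, c(r)}_{W,\ast} \circ \phi^{\ep, r}_{\ast} = \iota^{\ep, \ep, c(r), c(r)+16\delta}_{\ast} \circ \phi^{\ep, r}_{\ast}.$$
Translating both sides across the canonical stabilization isomorphism $K^{\ep, s}_{\ast}(C^{\ast}(Y, A)) \cong K^{\ep, s}_{\ast}(M_2(C^{\ast}(Y, A)))$ built into Definition~\ref{Def-quan-K}, and invoking Lemma~\ref{orth-homotopy} to interchange the $(2,2)$- and $(1,1)$-corner embeddings, gives the required equality.

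The only real work in this plan is the propagation bookkeeping for $W$ and verifying that it is indeed a controlled multiplier; once $c_W(r) = r + 2\delta$ is identified, Lemma~\ref{quan-key-lemma} does all the K-theoretic heavy lifting, and the factor $16\delta$ in the statement is simply the eightfold compounding prescribed by that lemma.
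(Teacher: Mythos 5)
Your proposal is correct and follows exactly the route the paper intends, since the paper's own proof is merely the one-line instruction to combine Lemma~\ref{filtration-Roe-algebra}, Lemma~\ref{quan-key-lemma}, and Lemma~\ref{covering-isometry}; you supply all the missing propagation bookkeeping (in particular, that $W$ has propagation at most $2\delta$, hence control function $r\mapsto r+2\delta$, and that $c_W^8(c(r))=c(r)+16\delta$). One small slip: to identify the $(1,1)$- and $(2,2)$-corner embeddings of $C^{\ast}(Y,A)$ into $M_2(C^{\ast}(Y,A))$ on quantitative $K$-theory you should not cite Lemma~\ref{orth-homotopy} (which is about $p\oplus(I_n-p)\sim I_n\oplus 0$); instead use the rotation homotopy from the proof of Lemma~\ref{quan-K-abel} showing $p\oplus p'\sim p'\oplus p$, or equivalently note that the two corner embeddings differ by conjugation by the permutation matrix $\begin{pmatrix}0&I\\I&0\end{pmatrix}$, a filtered (zero-propagation) unitary, and invoke Corollary~\ref{isom-quan-K}.
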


If $f: X\rightarrow Y$ is a coarsely equivalent map, then we can choose $V_f$ as above to be a unitary, thus we have the following corollary.

\begin{corollary}\label{Cor-coarse-equi-quanK}
	If $f:X\rightarrow Y$ is a coarsely equivalent map, then $ad_f$ induces a $(1, c)$-controlled isomorphism from $\mathcal{K}_{\ast}(C^{\ast}(X, A))$ to $\mathcal{K}_{\ast}(C^{\ast}(Y, A))$, where $c$ is a function defined as the above corollary.  
\end{corollary}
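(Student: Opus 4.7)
The plan is to construct a controlled inverse for $ad_{f,\ast}$ using a coarse inverse $g:Y\to X$ of $f$, and then to recognise both compositions as identity maps by invoking the uniqueness clause of Corollary~\ref{quan-cover-isometry}.

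First, because $f$ is a coarse equivalence, in the block decomposition defining $V_f$ each summand $\ell^2(f^{-1}(U_i)\cap Z_X)\otimes H$ and $\ell^2(U_i\cap Z_Y)\otimes H$ is either jointly trivial or separable and infinite-dimensional (the tensor factor $H$ absorbs any discrepancy between the two dense subsets), so one may choose every $V_i$ to be a unitary and thereby upgrade $V_f$ itself to a unitary $\XA\to\YA$. Then $ad_f$ is a filtered $\ast$-isomorphism from $C^{\ast}(X,A)$ onto $C^{\ast}(Y,A)$ with propagation control $c(r)=w_f(r)+2\delta$. Analogously I fix a unitary $V_g:\YA\to\XA$ covering $g$ with parameter $\delta$, giving $ad_g$ with propagation control $c'(r)=w_g(r)+2\delta$.

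Next I examine the composition $V_gV_f$, which is a unitary on $\XA$. From $(*)$ applied to both $V_f$ and $V_g$, together with the coarse-equivalence estimate $d(gf(x),x)\leq c_0$, a short support chase yields
$$\supp(V_gV_f)\subseteq\{(x,x')\in X\times X:d(x,x')\leq c_0+\delta+w_g(\delta)\},$$
so $V_gV_f$ is a valid covering isometry for $\mathrm{id}_X$ in the sense of $(*)$. The identity operator on $\XA$ is an equally valid such covering isometry. Applying the uniqueness statement in Corollary~\ref{quan-cover-isometry} to these two choices therefore shows that $ad_{g,\ast}^{\ep,c(r),c'(c(r))}\circ ad_{f,\ast}^{\ep,r,c(r)}$ agrees with the canonical relaxation $\iota_{\ast}^{\ep,\ep,r,r''}$ for a suitable $r''$ depending only on $r$, $w_f$, $w_g$, $c_0$, and $\delta$. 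By the symmetric argument (swapping $X\leftrightarrow Y$ and $f\leftrightarrow g$), $ad_{f,\ast}\circ ad_{g,\ast}$ is likewise the identity on quantitative $K$-theory up to a further relaxation in the propagation parameter.

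These two identities are exactly the injectivity and surjectivity conditions of Definition~\ref{Def-contolled-iso} taken with $\lambda'=1$, so $ad_{f,\ast}$ is a $(1,c)$-controlled isomorphism. The only nontrivial work is bookkeeping: one chains together $w_f$, $w_g$, the parameter $\delta$, and the additive $16\delta$ relaxation from the uniqueness clause of Corollary~\ref{quan-cover-isometry} to extract an explicit $h'$. Crucially, no $\ep$ inflation ever enters because every ambient operator is a genuine unitary, which is precisely why the control pair is $(1,c)$ rather than something larger.
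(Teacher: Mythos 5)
The paper does not present a written proof for this corollary: it simply records, in the sentence preceding the statement, that for a coarse equivalence one may choose the covering isometry $V_f$ to be a \emph{unitary}, after which $ad_f(T)=V_fTV_f^{\ast}$ is a genuine $\ast$-isomorphism $C^{\ast}(X,A)\to C^{\ast}(Y,A)$; its inverse $T\mapsto V_f^{\ast}TV_f$ is again a controlled homomorphism (its control function involves $w_g$ and the coarse-equivalence constant), so $ad_f$ is immediately a $(1,c)$-controlled isomorphism with no relaxation at all. Your argument reaches the same conclusion but takes a longer route: you build a separate covering unitary $V_g$ for a coarse inverse $g$ and then identify $ad_g\circ ad_f$ and $ad_f\circ ad_g$ with the identity up to relaxation via the uniqueness clause of Corollary~\ref{quan-cover-isometry}, incurring the $16\delta$-type slack that the direct argument avoids. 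This is correct, and in fact your strategy is exactly what one would do if $V_f$ were merely an isometry rather than a unitary; but once $V_f$ is unitary you could simply take $V_g:=V_f^{\ast}$, in which case $V_gV_f=I$ on the nose and the uniqueness clause becomes unnecessary.

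One small point you should tighten: the claim that each $V_i$ may be chosen unitary is not automatic, because the target $\ell^2(U_i\cap Z_Y)\otimes H$ is always infinite-dimensional while the source $\ell^2(f^{-1}(U_i)\cap Z_X)\otimes H$ is trivial whenever $f^{-1}(U_i)=\emptyset$, so the two are not ``jointly trivial'' in that case. This does happen if the cover $\mathcal{U}$ is too fine relative to the coarse-surjectivity constant $c_0$ of $f$. The fix is to choose the disjoint Borel cover of $Y$ so that each $U_i$ meets $f(X)$ (e.g.\ with pieces of diameter comparable to, but larger than, $c_0$); then every $f^{-1}(U_i)\cap Z_X$ is nonempty and the source is also separable infinite-dimensional, so a unitary $V_i$ exists. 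The paper glosses over the same point, but since you explicitly offer a justification you should replace the ``jointly trivial'' phrase with this choice-of-cover argument.
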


\subsection{Localization algebras with coefficients}

\begin{definition}\label{Def-localization-algebra}
	The \textit{localization algebra} of $X$ with coefficients in $A$, denoted by $C^{\ast}_L(X, A)$, is defined to be the norm closure of the $\ast$-algebra consisting of all bounded and uniformly continuous functions $u: [0, \infty)\rightarrow C^{\ast}(X, A)$ satisfying that
	$$\lim_{t\rightarrow \infty}\prop(u(t))=0.$$
	The \textit{propagation} of $u\in C^{\ast}_L(X, A)$, denoted by $\prop(u)$, is defined to be 
	 $$\sup_{t\in [0,\infty)}\prop(u(t)).$$
\end{definition}

Similar to Lemma \ref{filtration-Roe-algebra}, we have the following lemma. 

\begin{lemma}\label{filtration-localization-algebra}
	The localization algebra $C^{\ast}_L(X, A)$ is a filtered $C^{\ast}$-algebra equipped with a filtration
	$$C^{\ast}_L(X, A)_r=\{u\in C^{\ast}_L(X, A): \prop(u)\leq r\}.$$
\end{lemma}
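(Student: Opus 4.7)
The plan is to verify the four axioms of Definition \ref{Def-filtration} by direct analogy with the proof of Lemma \ref{filtration-Roe-algebra}. I would first observe that the inclusion $C^{\ast}_L(X,A)_r \subseteq C^{\ast}_L(X,A)_{r'}$ for $r \leq r'$ and closure under involution (since $\prop(u(t)^{\ast}) = \prop(u(t))$ for each $t$) are immediate from the definitions. The product relation $C^{\ast}_L(X,A)_r \cdot C^{\ast}_L(X,A)_{r'} \subseteq C^{\ast}_L(X,A)_{r+r'}$ reduces pointwise in $t$ to the estimate $\prop(u(t)v(t)) \leq \prop(u(t)) + \prop(v(t))$ already established in Lemma \ref{filtration-Roe-algebra}. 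I would also note that each $C^{\ast}_L(X,A)_r$ is norm-closed: if $u_n \to u$ in norm with $\prop(u_n) \leq r$, then $u_n(t) \to u(t)$ in $C^{\ast}(X,A)$ for every $t$, and the set $\{T \in C^{\ast}(X,A) : \prop(T) \leq r\}$ is norm closed, since the propagation constraint is the vanishing of certain cut-off products of the form $(\chi_V \otimes I \otimes I)T(\chi_U \otimes I \otimes I)$.

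The substantive step is the density of $\bigcup_{r>0} C^{\ast}_L(X,A)_r$ in $C^{\ast}_L(X,A)$. Here I would take $u$ in the defining dense $\ast$-subalgebra, so $\prop(u(t)) \to 0$ as $t \to \infty$, and $\epsilon > 0$. Choose $T$ so that $\prop(u(t)) \leq 1$ for all $t \geq T$; the remaining problem is to approximate $u$ uniformly on the compact interval $[0, T+1]$ by a continuous $C^{\ast}(X,A)$-valued function of uniformly bounded propagation. Using the uniform continuity of $u$, I would pick a partition $0 = t_0 < t_1 < \cdots < t_n = T+1$ of mesh smaller than the modulus of continuity at $\epsilon/3$. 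For each $i < n$, approximate $u(t_i)$ by $w_i \in C^{\ast}(X,A)$ of finite propagation with $\|u(t_i) - w_i\| < \epsilon/3$, and set $w_n = u(T+1)$ (whose propagation is at most $1$). Let $R = \max_i \prop(w_i)$, and define $v$ by piecewise linear interpolation of the $w_i$ on $[0, T+1]$ and by $v(t) = u(t)$ for $t \geq T+1$. Continuity is automatic (including at $t = T+1$, since $v(T+1) = w_n = u(T+1)$), $v$ is bounded and uniformly continuous, and $\prop(v(t)) \leq R$ for all $t$ since convex combinations preserve the propagation bound.

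What remains is a triangle inequality check: for $t \in [t_i, t_{i+1}]$, writing $v(t) = \lambda w_i + (1-\lambda) w_{i+1}$ with $\lambda \in [0,1]$, the bound
$$\|u(t) - v(t)\| \leq \lambda\bigl(\|u(t)-u(t_i)\| + \|u(t_i)-w_i\|\bigr) + (1-\lambda)\bigl(\|u(t)-u(t_{i+1})\| + \|u(t_{i+1})-w_{i+1}\|\bigr) < \tfrac{2\epsilon}{3}$$
gives the required uniform estimate, and for $t \geq T+1$ we have $v(t) = u(t)$ exactly. Thus $v \in C^{\ast}_L(X,A)_R$ and $\|u-v\| < \epsilon$. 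The one subtlety I would flag as the main obstacle is the need to interpolate continuously while preserving the propagation bound on the whole half-line: this is the reason for running the partition through to $T+1$ and enforcing the explicit matching $w_n = u(T+1)$ at the transition, so that the ``head'' (of bounded propagation by construction) and the ``tail'' (where $\prop \leq 1$ automatically) glue together without a jump.
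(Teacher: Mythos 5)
Your proof is correct, and it supplies substantially more detail than the paper, which simply says ``Similar to Lemma~\ref{filtration-Roe-algebra}'' and offers no argument at all. The routine axioms (nesting, involution-closedness, the submultiplicativity $\prop(uv)\leq\prop(u)+\prop(v)$ inherited pointwise in $t$, and norm-closedness of each $C^*_L(X,A)_r$) really are literal translations of the Roe-algebra case, and you handle them correctly. The one place where the analogy is not literal is the density of $\bigcup_{r>0}C^*_L(X,A)_r$: for the Roe algebra this is built into the definition (the algebra is the closure of finite-propagation operators), but for the localization algebra the defining dense $\ast$-algebra consists of functions $u$ with $\lim_{t\to\infty}\prop(u(t))=0$, which does not impose a uniform bound on $\prop(u(t))$ over all $t$ — indeed $u(t)$ may even have infinite propagation for small $t$. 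Your argument correctly isolates this as the substantive point and resolves it by the piecewise-linear interpolation trick: truncate the tail where $\prop(u(t))\leq 1$, approximate the finitely many sample values $u(t_i)$ by finite-propagation operators, interpolate on $[0,T+1]$, and glue to the unmodified tail at $t=T+1$ by taking $w_n=u(T+1)$. The triangle-inequality estimate and the propagation bound $\prop(v(t))\leq R=\max_i\prop(w_i)$ (using that the support of a convex combination lies in the union of the supports) are both verified correctly. This is a genuine gap in the paper's exposition that your proof fills; the approach is a natural and correct one.
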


Let $f: X\rightarrow Y$ be a uniformly continuous coarse map and $\{\delta_k\}_{k\in \N}$ be a
deceasing sequence of positive numbers with limit zero. Choose a sequence of Borel covers $\{\U_k\}_{k\in \N}$ of $Y$ such that 
\begin{itemize}
	\item every cover $\U_k$ is disjoint, i.e. $U_{k,i}\cap U_{k,j}=\emptyset$ for different $i, j$;
	\item every $U_{k,i}\in \U_k$ has non-empty interior;
	\item the diameter of $\U_k$ is less than $\delta_k$ for any $k\in \N$;
	\item $\U_{k+1}$ refines $\U_k$, i.e. every set in $\U_{k+1}$ is contained in some set in $\U_k$ for all $k\in \N$.
\end{itemize}

Then $\ell^2(U_{k,i}\cap Z_Y)\otimes H$ is an infinitely dimensional separable Hilbert space for every set $U_{k,i}\in \U_{k}$. Thus we can define an isometry $V_{k,i}: \ell^2(f^{-1}(U_{k,i})\cap Z_X)\otimes H \rightarrow \ell^2(U_{k,i}\cap Z_Y)\otimes H$ with infinitely dimensional cokernel. By the fourth condition as above, there exists $\{U_{k+1,ij}\}_{1\leq j \leq n}\subseteq \U_{k+1}$ such that $\cup^n_{j=1}U_{k+1,ij}=U_{k,i}$. Then we can choose $n$ number of isometries $V_{k+1,ij}: \ell^2(f^{-1}(U_{k+1,ij})\cap Z_X)\otimes H \rightarrow \ell^2(U_{k+1,ij}\cap Z_Y)\otimes H$ with infinitely dimensional cokernel. Let $V_{k+1,i}=\bigoplus^n_{j=1}V_{k+1,ij}: \ell^2(f^{-1}(U_{k,i})\cap Z_X)\otimes H \rightarrow \ell^2(U_{k,i}\cap Z_Y)\otimes H$ that is an isometry with infinitely dimensional cokernel. Let $W_{k, i}$ be a unitary from the cokernel of $V_{k,i}$ to the cokernel of $V_{k+1,i}$ and let $W'_{k,i}=V_{k+1,i}V^{\ast}_{k,i}+0\oplus W$ which is a unitary on $\ell^2(U_{k,i}\cap Z_Y)$. Let $f: S^1\rightarrow [0,1)$ be the Borel inverse function of the continuous function $t\mapsto e^{2\pi i t}$. Define $\gamma_{k,i}$ to be a path of isometries by $\gamma_{k,i}(t)=e^{2\pi itf(W'_{k, i})}V_{k,i}$. Then $\gamma_{k,i}$ is a $2\pi$-Lipschitz path connecting $V_{k, i}$ with $V_{k+1,i}$ and $\supp(\gamma_{k,i}(t))\subseteq f^{-1}(U_{k,i}) \times U_{k,i}$. Define 
$$\gamma_k(t)=\bigoplus_i \gamma_{k,i}(t): \XH \rightarrow \YH$$
for $t\in [0,1]$. Then for $s\in [0, \infty)$, define
$$V_f(s)=\gamma_k(s-k) \otimes I: \ell^2(Z_X)\otimes H \otimes H_A \rightarrow \ell^2(Z_X)\otimes H \otimes H_A,\:\:\textrm{if $s\in [k,k+1]$}.$$
Then $V_f$ is a bounded and uniformly continuous function satisfying that
\begin{equation}\label{**}
	\supp(V_f(s))\subseteq \{(x,y)\in X\times Y: d(f(x), y)<\delta_k\},\:\:\textrm{if $s\in[k,k+1]$}.
\end{equation}
Thus by Lemma \ref{covering-isometry}, we have the following lemma.

\begin{lemma}\label{continuous-covering-isometry}
	Let $f: X\rightarrow Y$ be a uniformly continuous coarse map and $\{\delta_k\}_{k\in \N}$ be a deceasing sequence of positive numbers with limit zero, then $V_f$ as defined above induces a $\ast$-homomorphism 
	$$Ad_f: C^{\ast}_L(X, A) \rightarrow C^{\ast}_L(Y, A)$$
	defined by 
	$$Ad_f(u)(t)=V_f(t)u(t)V^{\ast}_f(t),$$
	that satisfies 
	$$\prop(Ad_f(u)(t))\leq w_f(\prop(u(t)))+2\delta_k\:\:\textrm{for $t\in [k,k+1]$}.$$
	Moreover, if another bounded and uniformly continuous function $V'_f$ that satisfies $V'_f(t)$ is an isometry and \eqref{**}, then $Ad'_f$ defined by $V'_f$ induces the same homomorphism as $Ad_f$ on $K$-theory.
\end{lemma}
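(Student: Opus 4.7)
The plan is to verify the three claims in order: that $Ad_f(u)$ lies in $C^{\ast}_L(Y, A)$ whenever $u \in C^{\ast}_L(X, A)$, the propagation bound, and the independence of the induced map on $K$-theory on the choice of $V_f$. The propagation bound comes for free by applying Lemma \ref{covering-isometry} pointwise: for $s \in [k, k+1]$, the isometry $V_f(s)$ satisfies the support condition \eqref{**} with parameter $\delta_k$, so $\prop(V_f(s) u(s) V_f(s)^\ast) \leq w_f(\prop(u(s))) + 2\delta_k$. The same lemma also supplies that $Ad_f(u)(t) \in C^\ast(Y, A)$ for each $t$.

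For well-definedness as a map into $C^\ast_L(Y, A)$, I would check the remaining conditions. Boundedness is immediate from $\|V_f(s)\| = 1$. Uniform continuity of $t \mapsto Ad_f(u)(t)$ follows from the standard estimate $\|V u V^\ast - V' u' V'^\ast\| \leq \|u - u'\| + 2\|u'\| \cdot \|V - V'\|$, applied with $V = V_f(t)$, $V' = V_f(s)$, using the given uniform continuity and boundedness of $V_f$ and $u$. Finally, the decay $\prop(Ad_f(u)(t)) \to 0$ combines the propagation bound with $\delta_k \to 0$ and the fact that uniform continuity of $f$ forces $w_f(R) \to 0$ as $R \to 0$.

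For the moreover statement, I would adapt the $2 \times 2$ matrix trick used in the proof of Lemma \ref{covering-isometry}. Set
$$U(t) = \begin{pmatrix} I - V_f(t) V_f(t)^\ast & V_f(t) V'_f(t)^\ast \\ V'_f(t) V_f(t)^\ast & I - V'_f(t) V'_f(t)^\ast \end{pmatrix},$$
a self-adjoint unitary in $M_2(\mathcal{M}(C^\ast(Y, A)))$ for each $t$. A direct computation shows
$$U(t) \begin{pmatrix} Ad_f(u)(t) & 0 \\ 0 & 0 \end{pmatrix} U(t) = \begin{pmatrix} 0 & 0 \\ 0 & Ad'_f(u)(t) \end{pmatrix},$$
so that the two corner compositions $x \mapsto \mathrm{diag}(Ad_f(x), 0)$ and $x \mapsto \mathrm{diag}(0, Ad'_f(x))$ from $C^\ast_L(X, A)$ into $M_2(C^\ast_L(Y, A))$ are conjugate by the unitary multiplier $U$. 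Since the upper-left and lower-right corner embeddings induce the same canonical isomorphism on $K$-theory, the conclusion $(Ad_f)_\ast = (Ad'_f)_\ast$ will follow.

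The main technical obstacle I expect is promoting the pointwise unitaries $U(t)$ to a genuine unitary multiplier of $M_2(C^\ast_L(Y, A))$. The nontrivial input is the propagation estimate $\prop(U(t)) \leq 2\delta_k$ for $t \in [k, k+1]$ — which holds because both $V_f(t)$ and $V'_f(t)$ satisfy \eqref{**}, so that the off-diagonal entries $V_f(t) V'_f(t)^\ast$ and $V'_f(t) V_f(t)^\ast$ are supported within $2\delta_k$ of the diagonal of $Y \times Y$ — together with uniform continuity of $U$ inherited from $V_f$ and $V'_f$. This ensures that left and right multiplication by $U$ preserves both the uniform continuity and the propagation-decay condition defining $C^\ast_L(Y, A)$.
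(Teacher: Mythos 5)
Your proposal is correct and follows the approach the paper intends: the paper gives no explicit argument for this lemma, merely pointing to Lemma \ref{covering-isometry}, and your proof fills in exactly the right details. Applying Lemma \ref{covering-isometry} pointwise gives the propagation bound and $A$-local compactness; boundedness, uniform continuity, and the decay $\prop(Ad_f(u)(t))\to 0$ (via $\delta_k\to 0$ and $w_f(R)\to 0$ as $R\to 0$, using uniform continuity of $f$) establish $Ad_f(u)\in C^*_L(Y,A)$; and the moreover statement is handled by exactly the same $2\times 2$ conjugation unitary as in the paper's proof of Lemma \ref{covering-isometry}, now promoted to a multiplier of $M_2(C^*_L(Y,A))$ by the propagation estimate $\prop(U(t))\leq 2\delta_k$ and uniform continuity of $t\mapsto U(t)$. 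You correctly identified this promotion as the one nontrivial step beyond the pointwise lemma.
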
  

Combining Lemma \ref{filtration-localization-algebra} and Lemma \ref{quan-key-lemma} with the above lemma, we have the following corollary.
\begin{corollary}\label{quan-continuous-cover-isometry}
	Let $f$ and $\{\delta_k\}_{k\in \N}$ be as above. Then the above $\ast$-homomorphism $Ad_f$ is a controlled homomorphism with a control function $c(r)=w_f(r)+2\delta_1$. Thus $Ad_f$ induces a family of homomorphisms 
	$$Ad^{\ep, r, c(r)}_{f,\ast}: K^{\ep, r}_{\ast}(C^{\ast}_L(X, A))\rightarrow K^{\ep, c(r)}_{\ast}(C^{\ast}_L(Y, A))$$
	for any $0<\ep<1/4$, $r>0$.\\
	Moreover, if another bounded and uniformly continuous function $V'_f$ that satisfies $V'_f(t)$ is an isometry and \eqref{**} define a controlled homomorphism $Ad'_f$, then
	$$\iota_{\ast}^{\ep, \ep, c(r), c(r)+16\delta_1}\circ Ad'^{\ep, r, c(r)}_{f, \ast}=\iota_{\ast}^{\ep, \ep, c(r), c(r)+16\delta_1}\circ Ad^{\ep, r, c(r)}_{f, \ast}$$
	as homomorphisms from $K^{\ep, r}_{\ast}(C^{\ast}_L(X, A))$ to $K^{\ep, c(r)+16\delta}_{\ast}(C^{\ast}_L(Y, A))$ for any $0<\ep<1/4$, $r>0$.  
\end{corollary}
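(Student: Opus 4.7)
The plan is to package together three prior lemmas — Lemma \ref{filtration-localization-algebra}, Lemma \ref{continuous-covering-isometry}, and Lemma \ref{quan-key-lemma} — mirroring the proof strategy of the Roe-algebra analogue Corollary \ref{quan-cover-isometry}.

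For the first assertion, Lemma \ref{continuous-covering-isometry} gives
$$\prop(Ad_f(u)(t)) \leq w_f(\prop(u(t))) + 2\delta_k \leq w_f(r) + 2\delta_1$$
whenever $u \in C^{\ast}_L(X, A)_r$ and $t \in [k, k+1]$, using that $(\delta_k)$ is decreasing. Taking the supremum over $t$ shows $Ad_f(u) \in C^{\ast}_L(Y, A)_{c(r)}$ with $c(r) = w_f(r) + 2\delta_1$, so $Ad_f$ is a controlled $\ast$-homomorphism with control function $c$. Functoriality of quantitative $K$-theory, via the commutative diagram following Definition \ref{fil-homo}, then provides the family $Ad^{\ep, r, c(r)}_{f, \ast}$.

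For the independence claim, I would imitate the block-matrix trick used at the end of the proof of Lemma \ref{covering-isometry}, but carried out in $M_2(C^{\ast}_L(Y, A))$. Set
$$U(t) = \begin{pmatrix} I - V_f(t)V_f(t)^{\ast} & V_f(t)V'_f(t)^{\ast} \\ V'_f(t)V_f(t)^{\ast} & I - V'_f(t)V'_f(t)^{\ast} \end{pmatrix};$$
a direct check shows $U(t)$ is a self-adjoint involution, hence a unitary, in $M_2(\mathcal{M}(C^{\ast}_L(Y, A)))$. Because $V_f$ and $V'_f$ both satisfy \eqref{**}, each off-diagonal entry has propagation at most $2\delta_k \leq 2\delta_1$ on $[k, k+1]$, while the diagonal entries have propagation zero; thus $U$ is a controlled unitary multiplier with control function $c_U(r) = r + 2\delta_1$. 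The same block-matrix identity as in the proof of Lemma \ref{covering-isometry} gives
$$U \cdot \bigl(Ad_f(u) \oplus 0\bigr) \cdot U = 0 \oplus Ad'_f(u).$$
Applying Lemma \ref{quan-key-lemma} to $U$ and the filtered embedding $u \mapsto Ad_f(u) \oplus 0$, and noting that $c_U^8(c(r)) = c(r) + 16\delta_1$, yields the claimed identity of induced maps modulo the relaxation $\iota_{\ast}^{\ep, \ep, c(r), c(r) + 16\delta_1}$, after identifying $K^{\ep, r'}_{\ast}(M_2(C^{\ast}_L(Y, A)))$ with $K^{\ep, r'}_{\ast}(C^{\ast}_L(Y, A))$.

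The one genuinely delicate point, and the main obstacle I anticipate, is verifying that $U$ — defined only pointwise in $t$ — is really a multiplier of $M_2(C^{\ast}_L(Y, A))$ and not merely a pointwise multiplier of $M_2(C^{\ast}(Y, A))$. That is, one must check that $U(\cdot)\, v(\cdot)$ remains bounded and uniformly continuous with the proper propagation-decay behaviour for every $v \in M_2(C^{\ast}_L(Y, A))$. The uniform continuity of $V_f, V'_f$ in $t$ and the uniform-in-$t$ bound on $\prop(U(t))$ make this routine, but it is the only step that genuinely engages the localization-algebra framework rather than a purely time-wise Roe-algebra computation.
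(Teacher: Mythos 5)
Your proposal is correct and matches the approach the paper intends (the paper leaves the proof implicit, merely citing Lemma \ref{filtration-localization-algebra}, Lemma \ref{quan-key-lemma}, and Lemma \ref{continuous-covering-isometry}): the first claim follows by taking a supremum of the pointwise propagation bound of Lemma \ref{continuous-covering-isometry} using that $(\delta_k)$ decreases, and the independence claim is precisely the block-matrix conjugation from the ``Moreover'' part of Lemma \ref{continuous-covering-isometry} packaged through Lemma \ref{quan-key-lemma} with the controlled unitary multiplier $U$ of $M_2(C^*_L(Y,A))$, giving $c_U^8(c(r))=c(r)+16\delta_1$ as required. The care you take in checking that $U$ is a genuine multiplier of $M_2(C^*_L(Y,A))$ --- bounded, uniformly continuous, with $\prop(U(t))\leq 2\delta_k\to 0$ --- is exactly the localization-specific point that distinguishes this from the Roe-algebra Corollary \ref{quan-cover-isometry}, and your verification is sound.
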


Actually, the quantitative $K$-theory of $C^{\ast}_L(X, A)$ is isomorphic to its $K$-theory in the quantitative sense. 

\begin{lemma}\label{quan-K-local-alg}
	For any $0<\ep<1/64$, $r>0$ and $\ast=0,1$, the homomorphism
	$$\iota^{\ep, r}_{\ast}: K^{\ep, r}_{\ast}(C^{\ast}_L(X, A))\rightarrow K_{\ast}(C^{\ast}_L(X, A))$$
	is surjective and if $\iota^{\ep, r}_{\ast}(x)=0$ for $x\in K^{\ep, r}_{\ast}(C^{\ast}_L(X, A))$, then $\iota^{\ep, 16\ep, r, r}_{\ast}(x)=0$ in $K^{16\ep, r}_{\ast}(C^{\ast}_L(X, A))$.
\end{lemma}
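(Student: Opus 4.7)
My plan is to exploit the distinctive structural feature of $C^{\ast}_L(X,A)$: its defining dense $\ast$-subalgebra $\mathcal{A}_L$, consisting of bounded uniformly continuous $u\colon[0,\infty)\to C^{\ast}(X,A)$ with $\lim_{t\to\infty}\prop(u(t))=0$, admits a ``time-shift'' degree of freedom. For any $u\in\mathcal{A}_L$ and $T>0$, I would consider the path $s\mapsto u(\cdot+sT)$, which is a continuous family in $C^{\ast}_L(X,A)$ whose members all have propagation $\leq\prop(u)$, while its endpoint $u(\cdot+T)$ has propagation $\sup_{t\geq T}\prop(u(t))$, which can be driven below any prescribed threshold by choosing $T$ large. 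This allows me to reduce propagation within a $K^{\ep,r}$-class without enlarging $r$, which is the extra strength of this lemma over the general Lemma~\ref{quan-K-and-K}.

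\textbf{Surjectivity.} For $\ast=0$, given $y\in K_0(C^{\ast}_L(X,A))$ represented as $[P]-[Q]$ with honest projections $P,Q\in M_n((C^{\ast}_L(X,A))^+)$, I would approximate $P,Q$ in norm within $\ep/16$ by self-adjoint $P_0,Q_0\in M_n(\mathcal{A}_L^+)$ (which are $(\ep,r_0)$-projections for some $r_0$), then time-shift by sufficiently large $T$ to obtain $\tilde P_0,\tilde Q_0\in M_n((C^{\ast}_L(X,A))^+_r)$ whose propagations are below $r$. The class $[\tilde P_0]_{\ep,r}-[\tilde Q_0]_{\ep,r}$ lifts $y$, since the linear homotopy from $P_0$ to $P$ combined with the time-shift path from $\tilde P_0$ to $P_0$ shows $[\chi_0(\tilde P_0)]=[P]$ in $K_0$, and likewise for $\tilde Q_0$ and $Q$. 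The $\ast=1$ case proceeds identically, with nearly-unitary approximations in $\mathcal{A}_L$ and $\chi_1$ replacing $\chi_0$.

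\textbf{Kernel claim and the main obstacle.} For the kernel claim, given $x=[p]_{\ep,r}-[q]_{\ep,r}$ with $\iota^{\ep,r}_0(x)=0$, choose $k$ and an honest-projection homotopy $R\colon[0,1]\to\mathrm{Proj}\,M_{n+k}((C^{\ast}_L(X,A))^+)$ from $\chi_0(p)\oplus I_k$ to $\chi_0(q)\oplus I_k$ (after the reductions of Lemma~\ref{orth-homotopy}). My first step would be to reparametrize $p,q$ within their $K^{\ep,r}$-classes by a time-shift so that $p,q,\chi_0(p),\chi_0(q)$ all have propagation $\leq r/4$: for a nice approximation $p_0\approx p$ in $\mathcal{A}_L$, the element $\chi_0(p_0)=f_\ep(p_0)$ is a fixed-degree polynomial in $p_0$ on the spectrum gap, hence itself lies in $\mathcal{A}_L$ with propagation decaying in $t$. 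I would then assemble the $(16\ep,r)$-homotopy from $p\oplus I_k$ to $q\oplus I_k$ in three stages: (a) linear from $p\oplus I_k$ to $\chi_0(p)\oplus I_k$; (b) a reparametrized, norm-approximated version of $R$; (c) linear from $\chi_0(q)\oplus I_k$ back to $q\oplus I_k$. Stages (a) and (c) stay in $C^{\ast}_L(X,A)^+_r$ as short almost-projection paths because both endpoints have propagation $\leq r/4$. The hard part will be stage (b): although each $R_s\in C^{\ast}_L(X,A)$, the decay $\prop(R_s(t))\to 0$ need not be uniform in $s$. I plan to overcome this via compactness of $[0,1]$: cover the image of $R$ by a finite $\delta$-net $R_{s_0},\ldots,R_{s_N}$ (with $s_0=0$, $s_N=1$ and the two endpoints already in $\mathcal{A}_L$ by the previous step), approximate each $R_{s_i}$ in norm by $R'_{s_i}\in\mathcal{A}_L$, choose a common threshold $T^{\ast}$ past which every $R'_{s_i}$ has propagation below $\delta$, interpolate piecewise-linearly in $s$ (which preserves the uniform propagation bound since propagation is subadditive under convex combinations), and time-shift the resulting nice path $R'$ by $T^{\ast}$. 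The shifted family $\tilde R'_s$ then lies in $M_{n+k}((C^{\ast}_L(X,A))^+_r)$ and consists of $(O(\ep),r)$-projections; concatenating with (a), (c) and small linear joins at the gluings produces the desired $(16\ep,r)$-homotopy, absorbing $I_m$'s as required. The case $\ast=1$ follows the same template using unitaries, $\chi_1$, and the rotation of Lemma~\ref{uni-homotopy}.
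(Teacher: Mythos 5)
Your surjectivity argument coincides with the paper's: approximate honest projections/unitaries by self-adjoint/almost-unitary elements in the dense finite-propagation subalgebra, then apply the time-shift $u\mapsto u(\cdot+T)$ to drive the propagation below $r$ without enlarging $r$, using the shift path $s\mapsto u(\cdot+sT)$ as the linking $(\ep,r)$-homotopy. This is exactly the extra leverage the lemma extracts from the decay condition in the definition of $C^{\ast}_L$.

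The kernel argument, however, contains a genuine gap at the very first reduction. You claim that ``$\chi_0(p_0)=f_\ep(p_0)$ is a fixed-degree polynomial in $p_0$ on the spectrum gap, hence itself lies in $\mathcal{A}_L$ with propagation decaying in $t$.'' This is false. The function $\chi_0$ of Remark~\ref{Rmek-almost-proj-proj} is a continuous step function that vanishes on one spectral interval and equals $1$ on the other; no polynomial has this property, so $\chi_0(p_0)$ cannot be realized as $f_\ep(p_0)$ for any polynomial $f_\ep$. In particular, even when $p_0(t)$ has propagation tending to $0$, the functional-calculus element $\chi_0(p_0)(t)=\chi_0(p_0(t))$ need not have finite propagation at all (propagation is only subadditive under multiplication, not under limits in operator norm), so $\chi_0(p_0)$ lies in $C^{\ast}_L(X,A)^+$ but generally not in the filtration $C^{\ast}_L(X,A)^+_r$ for any $r$, and not in $\mathcal{A}_L$. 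This breaks stages (a) and (c) of your assembly as written: the linear paths from $p\oplus I_k$ to $\chi_0(p)\oplus I_k$ (and symmetrically at $q$) leave the filtered subspace the moment you enter.

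The fix, which is essentially what the paper does, is to never use $\chi_0(p)\oplus I_k$ itself as a node of the discretized homotopy. Instead, take the finite $\delta$-net $h_1=\chi_0(p)\oplus I_n,\ h_2,\ldots,h_m=\chi_0(q)\oplus I_n$ on the honest projection homotopy, replace \emph{each} $h_i$ (including the endpoints) by a nearby self-adjoint $h_i'$ of finite propagation, and then prepend $h_0'=p\oplus I_n$ and append $h_{m+1}'=q\oplus I_n$. The crucial point is that the extra gap $\|h_0'-h_1'\|\leq\|p-\chi_0(p)\|+\|h_1-h_1'\|<2\ep+\ep/16$ is still of order $\ep$ by Remark~\ref{Rmek-almost-proj-proj}, so the linear join from $p\oplus I_n$ to $h_1'$ remains a small almost-projection path and the total error after all the linear interpolations stays within the $16\ep$ budget. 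After this modification your compactness/time-shift machinery for stage~(b) is sound and produces the required $(16\ep,r)$-homotopy, and the analogous repair applies in the unitary case. In short: the strategy is the right one, but you must replace $\chi_0(p)$ and $\chi_0(q)$ by explicit finite-propagation approximants before any propagation estimate can be made.
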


\begin{proof}
	Firstly, we show that $\iota^{\ep, r}_{\ast}$ is surjective. For $\ast=0$ and any $[p]-[q]\in K_0(C^{\ast}_L(X, A))$ with $p, q\in M_n(\widetilde{C^{\ast}_L(X, A)})$, then there exist $r'>r$ and $p', q'\in M_n(\widetilde{C^{\ast}_L(X, A)})$ such that $\max\{\|p'-p\|, \|q'-q\|\}<\ep/4$ and $\max\{\prop(p'), \prop(q')\}\leq r'$ which implies $p', q'$ are two $(\ep, r')$-projections and $\iota^{\ep, r'}([p']_{\ep, r'}-[q']_{\ep, r'})=[p]-[q]$. Since that $\lim_{t\rightarrow \infty}\prop(p'(t))=\lim_{t\rightarrow \infty}\prop(q'(t))=0$, thus there exists $t_1>0$ such that $\max\{\prop(p'(t)), \prop(q'(t))\}\leq r$ for all $t\geq t_1$. Let $p''(t)=p'(t+t_1)$ and $q''(t)=q'(t+t_1)$ for $t\geq 0$, then $p'', q''$ are two $(\ep, r)$-projections in $ M_n(\widetilde{C^{\ast}_L(X, A)})$ and $\iota^{\ep, ep, r, r'}_0([p'']_{\ep, r}-[q'']_{\ep, r})=[p']_{\ep, r'}-[q']_{\ep, r'}$. Thus $\iota^{\ep, r}_0([p'']_{\ep, r}-[q'']_{\ep, r})=[p]-[q]$. For $\ast=1$, we can similarly prove.\par
	Secondly, we prove $\iota^{\ep, r}_{0}$ is quantitatively injective. And one can similarly prove for $\iota^{\ep, r}_{1}$. For $[p]_{\ep,r}-[q]_{\ep,r}\in K^{\ep,r}_{0}(C^{\ast}_L(X, A))$ with $\iota^{\ep, r}_{0}([p]_{\ep, r}-[q]_{\ep, r})=[\chi_0(p)]-[\chi_0(q)]=0$, then there exists $n\in \N$ such that $\chi_0(p)\oplus I_n$ is homotopic to $\chi_0(q)\oplus I_n$ by a homotopy $(h_s)_{s\in [0, 1]}$. Choose $h_1, \cdots, h_m$ on $(h_s)_{s\in [0, 1]}$ such that $h_1=\chi_0(p)\oplus I_n$, $h_m=\chi_0(q)\oplus I_n$ and $\|h_i-h_{i+1}\|<\ep/8$ for $i=1, \cdots, m-1$. Then there exist $r'>r$ and a sequence of self-adjoint elements $h'_1, \cdots, h'_m$ such that $\max\{\|h'_i-h_i\|: i=1,\cdots, m\}<\ep/16$ and $\max\{\prop(h'_i): i=1, \cdots, m\}\leq r'$. Take $h'_0=p\oplus I_n$ and $h'_{m+1}=q\oplus I_n$, then $h'_i$ is an $(\ep, r')$-projection for each $i=0, \cdots, m+1$. Since that $\lim_{t\rightarrow \infty}\max\{\prop(h'_i(t)): i=0, \cdots, m+1\}=0$, thus there exists $t_1>0$ such that $\max\{\prop(h'_i(t)): i=0,\cdots, m+1\}\leq r$ for all $t\geq t_1$. Let $h''_i(t)=h'_i(t+t_1)$ for $t\geq 0$ and $i=0, \cdots, m+1$, then every $h''_i$ is an $(\ep, r)$-projection and $h''_0, h''_{m+1}$ are $(\ep, r)$-homotopic to $p\oplus I, q\oplus I_n$, respectively. By linear homotopies between $h''_i$ and $h''_{i+1}$, we obtain that $p\oplus I_n$ is $(16\ep, r)$-homotopic to $q\oplus I_n$ which implies that $\iota^{\ep, 16\ep, r, r}_0([p]_{\ep, r}-[q]_{\ep, r})=0$ in $K^{16\ep, r}_{0}(C^{\ast}_L(X, A))$.      
\end{proof}

\subsection{the quantitative coarse Baum-Connes conjecture with coefficients}

Let $(N, d_N)$ be a locally finite metric space, namely, any ball in $N$ just contains finite elements. For any $k>0$, the \textit{Rips complex} of $N$ at scale $k$, denoted by $P_k(N)$, is a simplicial complex whose set of vertices is $N$ and a subset $\{z_0, \cdots, z_n\}\subseteq N$ spans an $n$-simplex in $P_k(N)$ if and only if $d_N(z_i, z_j)\leq k$ for any $i,j=0, \cdots, n$. 

Firstly, define a metric $d_{S_k}$ on $P_{k}(N)$ to be the path metric whose restriction to each simplex is the standard spherical metric on the unit sphere by mapping $\sum_{i=0}^{n} t_{i}z_{i}$ to 
$$\left(\frac{t_0}{\sqrt{\sum_{i=0}^{n}t^{2}_{i}}}, \cdots, \frac{t_n}{\sqrt{\sum_{i=0}^{n}t^{2}_{i}}}\right),$$
namely, 
$$d_{S_k}\left(\sum_{i=0}^{n} t_{i}z_{i}, \sum_{i=0}^{n} t'_{i}z_{i}\right)=\frac{2}{\pi} \arccos\left(\frac{\sum_{i=0}^{n} t_i t'_i}{\sqrt{(\sum_{i=0}^{n}t^{2}_{i})(\sum_{i=0}^{n}t'^{2}_{i})}}\right),$$
where $t_i, t'_i\in [0,1]$ and $\sum_{i=0}^n t_i=\sum_{i=0}^n t'_i=1$, and the metric $d_{S_k}$ between different connected components is defined to be infinity.
Then, define a metric $d_{P_k}$ on $P_k(N)$ to be
$$d_{P_k}(x, y)=\inf\left\{\sum_{i=0}^{n} d_{S_k}(x_i, y_i)+\sum_{i=0}^{n-1} d_N(y_i, x_{i+1}) \right\},$$
for all $x, y\in P_k(N)$, here the infimum is taken over all sequences of the form $x=x_0, y_0, x_1, y_1,\cdots, x_n, y_n=y$, where $x_1,\cdots, x_n, y_0, \cdots, y_{n-1} \in N$. The advantage of the metric $d_{P_k}$ is that the metric spaces $(N, d_N)$ and $(P_k(N), d_{P_k})$ are coarsely equivalent for any $k\geq 0$ by Proposition 7.2.11 in \cite{WillettYu-Book}.

For any $0<\ep<1/4$ and $r>0$, the evaluation at zero map $e_A$ from $C^{\ast}_L(X, A)$ to $C^{\ast}(X, A)$ induces the following homomorphism:
$$e^{\ep, r}_{A, \ast}: K^{\ep, r}_{\ast}(C^{\ast}_L(X, A))\rightarrow K^{\ep, r}_{\ast}(C^{\ast}(X, A)).$$
For any $x\in K_{\ast}(C^{\ast}_L(X, A))$, there exists $x'\in K^{\ep/16, r}_{\ast}(C^{\ast}_L(X, A))$ such that $\iota^{\ep/16, r}(x')=x$ and $x'$ is unique in $K^{\ep, r}_{\ast}(C^{\ast}_L(X, A))$ by Lemma \ref{quan-K-local-alg}. Thus we obtain a unique element $e^{\ep, r}_{A, \ast}(x')$ in $K^{\ep, r}_{\ast}(C^{\ast}(X, A))$ that satisfies $\iota^{\ep, r}_{\ast}(e^{\ep, r}_{A, \ast}(x'))=e_{A, \ast}(x)$. This gives a well-defined homomorphism:
$$\mu^{\ep, r}_{A,\ast}: K_{\ast}(C^{\ast}_L(X, A))\rightarrow K^{\ep, r}_{\ast}(C^{\ast}(X, A)),\:\: x \mapsto e^{\ep, r}_{A, \ast}(x').$$
and we have $\iota^{\ep, r}_{\ast}(\mu^{\ep, r}_{A, \ast}(x))=e_{A, \ast}(x)$.

By the above definition, we have the following lemma.

\begin{lemma}\label{quan-CBC-and-CBC}
	For any $0<\ep\leq \ep'<1/4$ and $0<r\leq r'$, the following diagrams are commutative
	$$\xymatrix{
		K^{\ep/16, r}_{\ast}(C^{\ast}_L(X,A)) \ar[d]_{\iota^{\ep, r}_{\ast}} \ar[r]^{e^{\ep, r}_{A, \ast}} & K^{\ep, r}_{\ast}(C^{\ast}(X, A)) \ar[d]^{\iota^{\ep, r}_{\ast}}  \\
		K_{\ast}(C^{\ast}_L(X, A)) \ar[ur]^{\mu^{\ep, r}_{A, \ast}} \ar[r]^{e_{A, \ast}} & K_{\ast}(C^{\ast}(X, A)).
	}$$
	and
	$$\xymatrix{
		K_{\ast}(C^{\ast}_L(X, A)) \ar[r]^{\mu^{\ep, r}_{A, \ast}} \ar[dr]_{\mu^{\ep', r'}_{A, \ast}} & K^{\ep, r}_{\ast}(C^{\ast}(X, A)) \ar[d]^{\iota^{\ep, \ep', r, r'}_{\ast}} \\
		&  K^{\ep', r'}_{\ast}(C^{\ast}(X, A)).
	}$$
\end{lemma}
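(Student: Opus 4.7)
The plan is to verify both diagrams directly from the construction of $\mu^{\ep,r}_{A,\ast}$ given just before the lemma, using only two ingredients: the naturality of the inclusion maps $\iota^{\ep,\ep',r,r'}_{\ast}$ with respect to the $\ast$-homomorphism $e_A\colon C^{\ast}_L(X,A)\to C^{\ast}(X,A)$ (which is filtered, so serves as a controlled morphism with control function $c_{e_A}=\mathrm{id}$), and the quantitative injectivity/surjectivity of $\iota^{\ep,r}_{\ast}$ on localization algebras (Lemma \ref{quan-K-local-alg}). Note that the left-hand vertical arrow in the first diagram is really $\iota^{\ep/16,r}_{\ast}$ (an obvious typo), and the top arrow $e^{\ep,r}_{A,\ast}$, whose domain is $K^{\ep,r}_{\ast}$ proper, is tacitly precomposed with $\iota^{\ep/16,\ep,r,r}_{\ast}$. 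I would record this conventions at the start of the proof.

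For the first diagram I would split it into the lower triangle, the upper triangle, and the outer square. The lower triangle $\iota^{\ep,r}_{\ast}\circ\mu^{\ep,r}_{A,\ast}=e_{A,\ast}$ is nothing but the final identity in the paragraph defining $\mu^{\ep,r}_{A,\ast}$. For the upper triangle I take $y\in K^{\ep/16,r}_{\ast}(C^{\ast}_L(X,A))$ and set $x=\iota^{\ep/16,r}_{\ast}(y)$; then $y$ itself is a legitimate lift of $x$ in the recipe for $\mu^{\ep,r}_{A,\ast}$, so $\mu^{\ep,r}_{A,\ast}(x)=e^{\ep,r}_{A,\ast}(y)$ (under the convention above), and independence of the lift is exactly the injectivity half of Lemma \ref{quan-K-local-alg}. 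The outer square is naturality of $\iota$ with respect to the filtered $\ast$-homomorphism $e_A$, as recorded in the remark following Definition \ref{fil-homo}.

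For the second diagram, given $x\in K_{\ast}(C^{\ast}_L(X,A))$ I would pick a lift $x'\in K^{\ep/16,r}_{\ast}(C^{\ast}_L(X,A))$ of $x$ (Lemma \ref{quan-K-local-alg}) and push it through $\iota^{\ep/16,\ep'/16,r,r'}_{\ast}$; the identity $\iota^{\ep'/16,r'}_{\ast}\circ\iota^{\ep/16,\ep'/16,r,r'}_{\ast}=\iota^{\ep/16,r}_{\ast}$ shows it is a lift of $x$ at parameters $(\ep'/16,r')$. Applying the definition of $\mu^{\ep',r'}_{A,\ast}$ with this distinguished lift and comparing with $\iota^{\ep,\ep',r,r'}_{\ast}\circ\mu^{\ep,r}_{A,\ast}(x)$, both expressions reduce to $e^{\ep',r'}_{A,\ast}(\iota^{\ep/16,\ep',r,r'}_{\ast}(x'))$ after one use of $\iota^{\ep,\ep',r,r'}_{\ast}\circ e^{\ep,r}_{A,\ast}=e^{\ep',r'}_{A,\ast}\circ\iota^{\ep,\ep',r,r'}_{\ast}$; well-definedness on $x$ is again guaranteed by Lemma \ref{quan-K-local-alg}. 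The main obstacle is not conceptual but bookkeeping: the arrows mix four different $(\ep,r)$-levels together with the $K$-theoretic limit, and one must be disciplined about which suppressed $\iota^{\ep,\ep',r,r'}_{\ast}$ appears in which arrow; once that is written out explicitly both diagrams collapse to instances of naturality plus the uniqueness clause in Lemma \ref{quan-K-local-alg}.
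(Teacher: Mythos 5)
Your proof is correct, and it matches the paper's intent: the paper supplies no explicit proof, prefacing the lemma only with ``By the above definition, we have the following lemma.'' You unpack exactly that assertion, splitting the first diagram into its two triangles (the lower one being the identity $\iota^{\ep,r}_{\ast}\circ\mu^{\ep,r}_{A,\ast}=e_{A,\ast}$ recorded at the end of the defining paragraph, the upper one following from choosing $y$ as the lift and invoking the uniqueness clause of Lemma~\ref{quan-K-local-alg}) and checking the second diagram by pushing a fixed lift through $\iota^{\ep/16,\ep'/16,r,r'}_{\ast}$ and using that $e_A$ is filtered, hence commutes with all the $\iota$-maps; your care about the implicit precomposition with $\iota^{\ep/16,\ep,r,r}_{\ast}$ in the top arrow and the $\iota^{\ep/16,r}_{\ast}$ typo in the left arrow is exactly the bookkeeping the paper elides.
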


Recall that a \textit{net} in $X$ is a subset $N_{X}$ satisfying that there exists $C\geq 1$ such that
\begin{enumerate}
	\item $d(z, z')\geq C$ for any $z,z'\in N_{X}$;
	\item for any $x\in X$, there exists $z\in N_{X}$ such that $d(z, x)\leq C$.
\end{enumerate}    
By Zorn's lemma, any proper metric space admits a locally finite net.

Let $N_{X}$ be a locally finite net in $X$. As a matter of convenience, we abbreviate the Rips complex $P_{k}(N_X)$ to $P_{k}$ in the rest of this section. For two non-negative integers $k\leq k'$, there is an inclusion map $i_{kk'}: P_{k}\rightarrow P_{k'}$ which is a contractible coarse map. Thus we have the following homomorphisms by Lemma \ref{continuous-covering-isometry} and Lemma \ref{quan-cover-isometry}:
$$Ad_{i_{kk'}, \ast}: K_{\ast}(C^{\ast}_L(P_{k}, A) \rightarrow K_{\ast}(C^{\ast}_L(P_{k'}, A)$$
and 
$$ad^{\ep, r}_{i_{kk'},\ast}: K^{\ep, r}_{\ast}(C^{\ast}(P_{k}, A) \rightarrow K^{\ep, r_{k'}}_{\ast}(C^{\ast}(P_{k'}, A),$$
where $r_{k'}=r+\frac{1}{2^{k'}}$. In order to reflect the change of the propagation, we denote $ad^{\ep, r}_{i_{kk'},\ast}$ by $ad^{\ep, r, r_{k'}}_{i_{kk'},\ast}$ in the rest of the paper.
Consider the following homomorphism:
$$\mu^{\ep, r}_{k,A,\ast}: K_{\ast}(C^{\ast}_L(P_k, A)) \rightarrow K^{\ep, r}_{\ast}(C^{\ast}(P_{k}, A)).$$

\begin{definition}\label{Def-quan-statements}
	Let $0<\ep \leq \ep'<1/4$, $r+1\leq r'$ and $k\leq k'$, we define the following quantitative statements:
	\begin{enumerate}
		\item $QI_{A}(k, k', \ep, r)$: for $x\in K_{\ast}(C^{\ast}_L(P_{k}, A))$, if $\mu^{\ep, r}_{k,A, \ast}(x)=0$, then $Ad_{i_{kk'}, \ast}(x)=0$.
		\item $QS_{A}(k, k', \ep, \ep', r, r')$: for any $y\in K^{\ep, r}_{\ast}(C^{\ast}(P_{k}, A))$, there exists an element $x\in K_{\ast}(C^{\ast}_L(P_{k'}, A))$ such that $\mu^{\ep', r'}_{k',A, \ast}(x)=\iota^{\ep, \ep', r_{k'}, r'}_{\ast} \circ ad^{\ep, r, r_{k'}}_{i_{kk'},\ast}(y)$ in $K^{\ep', r'}_{\ast}(C^{\ast}(P_{k'}, A))$.
	\end{enumerate}
\end{definition}

Now, we introduce the \textit{quantitative coarse Baum-Connes conjecture with coefficients}.

\begin{conjecture}\label{quan-CBC}
	Let $X$ be a proper metric space. Then the followings are true.
	\begin{enumerate}
		\item\label{quan-CBC1} For any $k\in \N$, there exists $0<\ep<1/4$ satisfying that for any $r>0$, there exists $k'\geq k$ such that $QI_A(k, k', \ep, r)$ holds for $X$.
		\item\label{quan-CBC2} For any $k\in \N$, there exists $0<\ep<1/4$ satisfying that for any $r>0$, there exist $k'\geq k$, $r'\geq r+1$ and $1/4>\ep'\geq \ep$ such that $QS_A(k, k', \ep, \ep', r, r')$ holds for $X$.
	\end{enumerate}
\end{conjecture}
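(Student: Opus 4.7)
Since Conjecture \ref{quan-CBC} is a quantitative refinement of the coarse Baum--Connes conjecture with coefficients, the natural strategy is to derive it from the classical CBC with coefficients, using the two-way comparison in Lemma \ref{quan-K-and-K} together with the quantitative approximation of $K_{\ast}(C^{\ast}_L(X,A))$ from Lemma \ref{quan-K-local-alg}. Recall that the CBC asserts that the inductive limit of evaluation-at-zero maps $\lim_{k} e_{A,\ast}$ is an isomorphism; the plan is to exploit this input at each fixed Rips scale $P_k$ and then account for the quantitative loss produced when translating between classical and quantitative $K$-theory.

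For the injectivity statement $QI_A(k, k', \ep, r)$, I would fix any $0 < \ep < 1/64$ and take $x \in K_{\ast}(C^{\ast}_L(P_k, A))$ with $\mu^{\ep, r}_{k, A, \ast}(x) = 0$. By the first commutative square in Lemma \ref{quan-CBC-and-CBC}, applying $\iota^{\ep, r}_{\ast}$ forces $e_{A,\ast}(x) = 0$ in $K_{\ast}(C^{\ast}(P_k, A))$. Injectivity of the CBC assembly map, after passing to the inductive limit in $k$, then produces $k' \geq k$ such that $Ad_{i_{kk'},\ast}(x) = 0$, as required.

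For the surjectivity statement $QS_A(k, k', \ep, \ep', r, r')$, given $y \in K^{\ep, r}_{\ast}(C^{\ast}(P_k, A))$, I would consider its classical image $\iota^{\ep, r}_{\ast}(y)$. Surjectivity of the CBC assembly map provides $k' \geq k$ and $x \in K_{\ast}(C^{\ast}_L(P_{k'}, A))$ with $e_{A,\ast}(x) = ad_{i_{kk'},\ast} \circ \iota^{\ep, r}_{\ast}(y)$. One then promotes this classical equality to a quantitative one: Lemma \ref{quan-K-and-K}(2) together with the commutativity of Lemma \ref{quan-CBC-and-CBC} implies that after enlarging $\ep$ to some $\ep' \geq 16\ep$ and $r$ to some sufficiently large $r'$, the classes $\mu^{\ep', r'}_{k', A, \ast}(x)$ and $\iota^{\ep, \ep', r_{k'}, r'}_{\ast} \circ ad^{\ep, r, r_{k'}}_{i_{kk'},\ast}(y)$ coincide in $K^{\ep', r'}_{\ast}(C^{\ast}(P_{k'}, A))$.

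The principal obstacle I anticipate is not the pointwise argument above but the \emph{uniformity of $\ep$ in $r$} required by the conjecture: a single $\ep$ must work for every $r > 0$, with $k'$, $r'$, $\ep'$ depending only on $(k, r)$. The classical CBC does not obviously package such uniformity, because a naive argument would let $\ep$ drift with $r$ and give no control. This is precisely why Corollary \ref{main-Cor} strengthens the coefficient algebra to $\ell^{\infty}(\N, A \otimes \K(H))$---sequences of $r$-values can be encoded as a single $K$-theory class over a coarse disjoint union, and the uniform quantitative statement then falls out of one CBC isomorphism with the richer coefficients---and why Theorem \ref{main-thm} takes the equivalent route through a uniform CBC for $\sqcup_{\N} X$. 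Extracting the quantitative conjecture directly from the bare classical CBC, without such a reformulation of the coefficients, would be the heart of the matter.
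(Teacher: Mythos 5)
The statement you were given is Conjecture \ref{quan-CBC}, which the paper does not prove: its contribution (Theorem \ref{quan-CBC-CBC}, Theorem \ref{uniformCBC=quanCBC}, Corollary \ref{Cor-CBCFC-QCBC}) is to locate the conjecture relative to the classical coarse Baum--Connes conjecture, not to establish it unconditionally. You recognize this in your final paragraph, so there is no paper proof to compare against; I will instead comment on the substance of your sketch and on your diagnosis of why it does not close.

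Your diagnosis is slightly misaimed. You attribute the failure to ``the uniformity of $\ep$ in $r$,'' but in your own injectivity sketch you fix $\ep<1/64$ at the outset, independently of $r$, so $\ep$ is not the bottleneck. What actually fails is that $QI_A(k,k',\ep,r)$ demands a \emph{single} $k'$ that kills \emph{every} $x\in K_{\ast}(C^{\ast}_L(P_k,A))$ with $\mu^{\ep,r}_{k,A,\ast}(x)=0$; your argument applies the classical CBC one class at a time and produces a $k'$ depending on the individual $x$, with no control over its growth. The same problem appears in the surjectivity step, where $k',r',\ep'$ would need to serve all $y\in K^{\ep,r}_{\ast}(C^{\ast}(P_k,A))$ simultaneously. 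This uniformity over the whole $K$-theory group at each scale is exactly what the separated coarse union mechanism in Theorem \ref{uniformCBC=quanCBC} supplies: by Proposition \ref{directproduct} and Corollary \ref{uniform-product-Roealg}, $K_{\ast}(C^{\ast}_L(\sqcup_{\N}X,A))$ and $K^{\ep,r}_{\ast}(C^{\ast}(\sqcup_{\N}X,A))$ identify with the full direct products over $\N$, so a hypothetical sequence of failing classes $(x_n)$ is encoded as one class and killed by one application of the CBC isomorphism for $\sqcup_{\N}X$. Corollary \ref{Cor-CBCFC-QCBC} then converts this into a hypothesis on $X$ itself but with the enlarged coefficient algebra $\ell^{\infty}(\N,A\otimes\K(H))$. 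So your closing intuition about the ``richer coefficients'' is correct, but the obstruction it must overcome is the class-by-class dependence of $k'$, not the $r$-dependence of $\ep$.
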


\begin{remark}\label{quan-CBC-coa-equi}
	The quantitative coarse Baum-Connes conjecture with coefficients is independent of the choice of locally finite nets in $X$. 
\end{remark}

If $f: X\rightarrow Y$ is a coarsely equivalent map and $N_X$ is a locally finite net in $X$, then $f(N_X)$ is a locally finite net in $Y$, thus by Corollary \ref{Cor-coarse-equi-quanK}, we have the following lemma.

\begin{lemma}\label{Lem-coarse-equi-QCBC}
	Assume that $X$ is coarsely equivalent to $Y$. If $X$ satisfies the quantitative coarse Baum-Connes conjecture with coefficients in $A$, then $Y$ also satisfies the quantitative coarse Baum-Connes conjecture with coefficients in $A$.
\end{lemma}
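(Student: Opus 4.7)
My plan is to transport the quantitative coarse Baum-Connes conjecture across the coarse equivalence by combining the controlled isomorphism from Corollary \ref{Cor-coarse-equi-quanK} with the analogous naturality for localization algebras, and then to translate the quantifiers in $QI_A$ and $QS_A$ for $X$ into those required for $Y$, absorbing the resulting parameter shifts into the existentials of Conjecture \ref{quan-CBC}.

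First I would fix a coarse equivalence $f\colon X\to Y$ with coarse inverse $g\colon Y\to X$, a locally finite net $N_X\subset X$, and set $N_Y=f(N_X)$, which is a locally finite net in $Y$. By Remark \ref{quan-CBC-coa-equi}, QCBC for $Y$ may be verified with respect to $N_Y$. Barycentric extensions of $f|_{N_X}$ and $g|_{N_Y}$ give simplicial Lipschitz coarse maps $F_k\colon P_k(N_X)\to P_{w_f(k)}(N_Y)$ and $G_k\colon P_k(N_Y)\to P_{w_g(k)}(N_X)$ for every $k\geq 0$. Corollary \ref{Cor-coarse-equi-quanK} realizes $F_k,G_k$ as mutually inverse $(1,c)$-controlled isomorphisms on the quantitative $K$-theory of the Roe algebras, and Lemma \ref{continuous-covering-isometry} together with the standard coarse invariance for localization algebras does the same at the level of $K_{\ast}(C^{\ast}_L(-,A))$. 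These transports are compatible with the inclusion-induced maps $ad^{\ep,r,r_{k'}}_{i_{kk'},\ast}$ and $Ad_{i_{kk'},\ast}$ and with the quantitative assembly maps $\mu^{\ep,r}_{k,A,\ast}$, up to the parameter shifts predicted by Corollaries \ref{quan-cover-isometry} and \ref{quan-continuous-cover-isometry}.

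Given this machinery, $QI_A$ for $Y$ is deduced as follows. Pick the $\ep$ granted by $QI_A$ at scale $w_g(k)$ for $X$; given $r>0$ and $y\in K_{\ast}(C^{\ast}_L(P_k(N_Y),A))$ with $\mu^{\ep,r}_{k,A,\ast}(y)=0$, transport to $x:=Ad_{G_k,\ast}(y)$, observe that $\mu^{\ep,r'}_{w_g(k),A,\ast}(x)=0$ for some $r'$ depending only on $r$, $w_g$ and the covering isometry, apply $QI_A$ for $X$ to produce $k_0\geq w_g(k)$ killing $x$ under $Ad_{i_{w_g(k),k_0},\ast}$, and push forward through $F_{k_0}$, using that $F_{k_0}\circ G_k$ realizes $i_{k,w_f(k_0)}$ up to a controlled homotopy, to conclude $Ad_{i_{k,k''},\ast}(y)=0$ for $k''=w_f(k_0)$ (possibly enlarged by the coarse-equivalence constant). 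Statement $QS_A$ is handled dually: lift $z\in K^{\ep,r}_{\ast}(C^{\ast}(P_k(N_Y),A))$ through $ad_{G_k,\ast}$, apply $QS_A$ for $X$ at $w_g(k)$, and push back through $Ad_{F_{k'},\ast}$ with the matching parameter shift on $r'$ and $\ep'$.

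The main difficulty, and essentially the only obstacle, is the bookkeeping of controlled distortions: the composites with $w_f,w_g$ together with the shifts $2\delta,2\delta_k,16\delta$ of Corollaries \ref{quan-cover-isometry} and \ref{quan-continuous-cover-isometry}, and the fact that $F\circ G$ and $G\circ F$ equal the identity only after a finite increase in the Rips scale. Because all these shifts depend only on $f,g,N_X$ and the chosen isometries, and the quantifiers in Conjecture \ref{quan-CBC} are ordered so that $\ep$ is chosen after $k$ while $k',r',\ep'$ are chosen after $r$, the shifts can be absorbed inside the existentials without disturbing the universal quantifiers, which is exactly what is needed to close the argument.
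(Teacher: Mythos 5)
Your proposal is correct and takes essentially the same approach as the paper: push the net forward to $N_Y=f(N_X)$, invoke Remark \ref{quan-CBC-coa-equi} to verify QCBC for $Y$ with respect to $N_Y$, and transport the quantitative statements $QI_A$ and $QS_A$ along the controlled isomorphisms supplied by Corollaries \ref{Cor-coarse-equi-quanK}, \ref{quan-cover-isometry}, and \ref{quan-continuous-cover-isometry}; the paper compresses this into one sentence while you spell out the quantifier bookkeeping. One small correction: $g$ need not send $N_Y$ into $N_X$, so your $G_k$ should be the barycentric extension of a ``snap-to-net'' map $\tilde g\colon N_Y\to N_X$ with $d(\tilde g(y),g(y))$ uniformly bounded (which only shifts the distortion constant and does not affect the argument), and indeed the paper sidesteps introducing $g$ altogether by using the unitary covering isometry $V_f$ and its adjoint.
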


In the end of this section, we give an equivalent condition for the quantitative coarse Baum-Connes conjecture with coefficients by using the kernel of the evaluation at zero map $e_A$.

 Let
 $$C^{\ast}_{L,0}(X, A)=\{u\in C^{\ast}_L(X, A): u(0)=0\},$$ 
which is a closed ideal of $C^{\ast}_L(X, A)$. And let $C^{\ast}_{L, 0}(X, A)_r=C^{\ast}_{L,0}(X, A) \cap C^{\ast}_{L}(X, A)_r=\{u\in C^{\ast}_{L,0}(X, A): \prop(u)\leq r\}$. Moreover, both $C^{\ast}_{L}(X, A)_r/C^{\ast}_{L, 0}(X, A)_r$ and $(C^{\ast}_{L}(X, A)_r+C^{\ast}_{L, 0}(X, A))/C^{\ast}_{L, 0}(X, A)$ are isometrically isomorphic to $C^{\ast}(X, A)_r$. Thus we obtain a completely filtered extension
$$0\rightarrow C^{\ast}_{L, 0}(X, A)\xrightarrow{j} C^{\ast}_{L}(X, A)\xrightarrow{e} C^{\ast}(X, A)\rightarrow 0.$$
Then by Lemma \ref{six-term-seq}, there exists a control pair $(\lambda, h)$ such that 
\begin{equation}\label{***}
	\xymatrix{
		\mathcal{K}_0(C^{\ast}_{L,0}(P_k, A)) \ar[r]^{\mathcal{J}_{k}} & \mathcal{K}_0(C^{\ast}_{L}(P_k, A)) \ar[r]^{\mathcal{E}_{k}} & \mathcal{K}_0(C^{\ast}(P_k, A)) \ar[d]^{\mathcal{D}_{k}} \\
		\mathcal{K}_1(C^{\ast}(P_k, A)) \ar[u]^{\mathcal{D}_{k}} & \mathcal{K}_1(C^{\ast}_{L}(P_k, A)) \ar[l]^{\mathcal{E}_{k}} & \mathcal{K}_1(C^{\ast}_{L,0}(P_k, A)) \ar[l]^{\mathcal{J}_{k}} 
	}
\end{equation}
is $(\lambda, h)$-exact for all $k\in \N$, where $\mathcal{K}_{\ast}(\cdot)=(K^{\ep, r}_{\ast}(\cdot))_{0<\ep<1/4, r>0}$ and $\mathcal{J}_{k}=(j^{\ep, r}_{k})_{0<\ep<1/4, r>0}$ as well as $\mathcal{E}_{k}=(e^{\ep, r}_{k})_{0<\ep<1/4, r>0}$ are $(1,1)$-controlled morphisms, $\mathcal{D}_{k}=(D^{\ep, r}_{k})_{0<\ep<1/4, r>0}$ is a $(\lambda_{\mathcal{D}},h_{\mathcal{D}})$-controlled morphism which is natural for maps $Ad^{\ep, r, 2r}_{i_{kk'}}$ and $ad^{\ep, r, 2r}_{i_{kk'}}$.

\begin{lemma}\label{equa-quan-CBC}
	Let $\lambda, \lambda_{\mathcal{D}}\geq 1$ be as above. The quantitative coarse Baum-Connes conjecture with coefficients in $A$ holds for $X$ if and only if for any $k\in \N$, there exists $0<\ep_k<1/(4\max\{\lambda, \lambda_{\mathcal{D}}\})$ satisfying that for any $r>0$, there exist $k'\geq k$, $1/(4\max\{\lambda, \lambda_{\mathcal{D}}\})>\ep'\geq \ep_k$ and $r'\geq r+1$ such that 
	$$\iota^{\ep_{k}, \ep', r_{k'}, r'}_{\ast} \circ Ad^{\ep_{k}, r, r_{k'}}_{i_{kk'},\ast}:K^{\ep_{k}, r}_{\ast}(C^{\ast}_{L, 0}(P_k, A))\rightarrow K^{\ep', r'}_{\ast}(C^{\ast}_{L,0}(P_{k'}, A))$$
	is a zero map, where $r_{k'}=r+\frac{1}{2^{k'}}$.
\end{lemma}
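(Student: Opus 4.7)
The plan is to run a diagram chase through the $(\lambda,h)$-exact six-term sequence \ref{***} attached to the completely filtered extension $0\to C^{\ast}_{L,0}(X,A)\to C^{\ast}_L(X,A)\to C^{\ast}(X,A)\to 0$, coupling it with Lemma \ref{quan-K-local-alg}, which identifies the quantitative $K$-theory of $C^{\ast}_L(P_k,A)$ with its ordinary $K$-theory in a controlled (no change of propagation) way.

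For the $(\Leftarrow)$ direction I deduce both parts of Conjecture \ref{quan-CBC} separately from the vanishing hypothesis. Given $k$, pick $\ep_k^{QI}$ and $\ep_k^{QS}$ so that $\lambda\ep_k^{QI}\le\ep_k$ and $\lambda_{\mathcal{D}}\ep_k^{QS}\le\ep_k$. For QI: if $x\in K_{\ast}(C^{\ast}_L(P_k,A))$ satisfies $\mu^{\ep_k^{QI},r}_{k,A,\ast}(x)=0$, a lift $x'\in K^{\ep_k^{QI}/16,r}_{\ast}$ gives $e^{\ep_k^{QI},r}_{k,\ast}(\iota(x'))=0$; the $(\lambda,h)$-exactness at $\mathcal{K}_{\ast}(C^{\ast}_L(P_k,A))$ produces $u\in K^{\lambda\ep_k^{QI},h\cdot r}_{\ast}(C^{\ast}_{L,0}(P_k,A))$ with $j^{\lambda\ep_k^{QI},h\cdot r}_{k,\ast}(u)$ equal to a push-forward of $\iota(x')$. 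The hypothesis applied to $u$ (after pushing to level $\ep_k$ via $\iota$) kills $\iota\circ Ad_{i_{kk'},\ast}(u)$; applying $j_{k',\ast}$, invoking the naturalities $j\circ Ad=Ad\circ j$ and $\iota\circ Ad=Ad\circ\iota$, and pushing to $K$-theory yields $Ad_{i_{kk'},\ast}(x)=0$. For QS: given $y\in K^{\ep_k^{QS},r}_{\ast}(C^{\ast}(P_k,A))$, the class $D^{\ep_k^{QS},r}_k(y)\in K^{\lambda_{\mathcal{D}}\ep_k^{QS},h_{\mathcal{D}}r}_{\ast}(C^{\ast}_{L,0}(P_k,A))$ is killed by $\iota\circ Ad_{i_{kk'},\ast}$; by the naturality of $\mathcal{D}$ under $ad_{i_{kk'}}$ and $Ad_{i_{kk'}}$ this reads $D(\iota\circ ad_{i_{kk'},\ast}(y))=0$ at controlled parameters, and the $(\lambda,h)$-exactness at $\mathcal{K}_{\ast}(C^{\ast}(P_{k'},A))$ writes $\iota\circ ad(y)=e^{\ep'',r''}_{k',\ast}(z)$. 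Pushing $z$ to $K$-theory furnishes the required $\xi\in K_{\ast}(C^{\ast}_L(P_{k'},A))$ with $\mu^{\ep'',r''}_{k',A,\ast}(\xi)=\iota\circ ad_{i_{kk'},\ast}(y)$.

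For the $(\Rightarrow)$ direction, given $k$ I pick $\ep_k$ so small that $16\lambda\ep_k$ stays below every $\ep$-threshold supplied by the QI of $k$ and the QS applied at the Rips scales the chase will visit. Starting from $x\in K^{\ep_k,r}_{\ast}(C^{\ast}_{L,0}(P_k,A))$, set $\tilde{y}:=\iota\circ j_{k,\ast}(x)\in K_{\ast}(C^{\ast}_L(P_k,A))$; because $e\circ j=0$ we get $\mu^{\ep_k^{QI},r}_{k,A,\ast}(\tilde{y})=0$, and QI supplies $k_1\ge k$ with $Ad_{i_{kk_1},\ast}(\tilde{y})=0$ in $K_{\ast}(C^{\ast}_L(P_{k_1},A))$. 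Using the commutations $j\circ Ad=Ad\circ j$ and Lemma \ref{quan-K-local-alg}, the element $v:=\iota^{\ep_k,16\ep_k,r_{k_1},r_{k_1}}_{\ast}\bigl(Ad^{\ep_k,r,r_{k_1}}_{i_{kk_1},\ast}(x)\bigr)$ in $K^{16\ep_k,r_{k_1}}_{\ast}(C^{\ast}_{L,0}(P_{k_1},A))$ satisfies $j_{k_1,\ast}(v)=0$. The $(\lambda,h)$-exactness at $\mathcal{K}_{\ast}(C^{\ast}_{L,0}(P_{k_1},A))$ yields $w\in K^{16\lambda\ep_k,\,h\cdot r_{k_1}}_{\ast+1}(C^{\ast}(P_{k_1},A))$ with $D^{16\lambda\ep_k,\cdot}_{k_1}(w)$ equal to a push-forward of $v$. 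Now apply QS at level $k_1$ to $w$ (this is where the smallness of $\ep_k$ is used) to obtain $\xi\in K_{\ast+1}(C^{\ast}_L(P_{k_2},A))$ with $\mu^{\ep'',r''}_{k_2,A,\ast}(\xi)=\iota\circ ad_{i_{k_1k_2},\ast}(w)$. Since $D\circ e=0$ we get $D(\mu(\xi))=0$, and the naturality of $\mathcal{D}$ under $ad_{i_{k_1k_2}}$ and $Ad_{i_{k_1k_2}}$ turns this into $\iota\circ Ad_{i_{kk_2},\ast}(x)=0$ at some $(\ep',r')$ with $\ep'\ge\ep_k$ and $r'\ge r+1$. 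Taking $k'=k_2$ closes the argument.

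The main obstacle throughout is parameter bookkeeping: each invocation of the six-term exactness enlarges $\ep$ by $\lambda$ or $\lambda_{\mathcal{D}}$, each application of Lemma \ref{quan-K-local-alg} enlarges it by a factor $16$, and each use of $ad$/$Ad$ enlarges the propagation from $r$ to $r_{k'}=r+1/2^{k'}$. In the $(\Rightarrow)$ direction one must choose $\ep_k$ small enough in advance to be compatible with $\ep^{QS}_{k_1}$ for every $k_1$ that QI can produce as $r$ varies, so the hardest step is confirming that the expansions incurred in the chase still land below the $\ep$-thresholds supplied by QCBC at the (possibly large) Rips scales $k_1$ and $k_2$; in the $(\Leftarrow)$ direction the analogous task is to chain the control pairs from the six-term with those from Lemma \ref{quan-K-local-alg} so as to absorb them into the single $\ep_k$ of the hypothesis.
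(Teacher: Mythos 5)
Your proposal mirrors the paper's proof: in both directions you run the same diagram chase through the $(\lambda,h)$-exact six-term sequence attached to $0\to C^{\ast}_{L,0}\to C^{\ast}_L\to C^{\ast}\to 0$, coupled with Lemma \ref{quan-K-local-alg} and the naturality of $\mathcal{D}$ under $ad_{i_{kk'}}$ and $Ad_{i_{kk'}}$, exactly as the paper does. The parameter bookkeeping you flag as the hardest step of the forward direction (choosing $\ep_k$ in advance so that $16\lambda\ep_k$ stays below the QS thresholds at the scales $k_1,k_2$ produced later) is also precisely the step the paper leaves loose, concluding only that ``we can choose suitable $0<\ep<1/4$''.
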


\begin{proof}
	Let $(\lambda, h)$ and $(\lambda_{\mathcal{D}}, h_{\mathcal{D}})$ be two control pairs as above. \par
	Firstly, we assume that the quantitative coarse Baum-Connes conjecture with coefficients in $A$ is true for $X$. For $k\in \N$, let $0<\ep<1/(64\lambda\lambda_{\mathcal{D}})$ be an indetermined number. For any $r>0$, consider an element $x\in K^{\ep/16, r}_{\ast}(C^{\ast}_{L, 0}(P_k, A))$. By the above six-term $(\lambda, h)$-exact sequence \eqref{***}, we have that 
	$$e^{\ep, r}_k \circ j^{\ep/16, r}_k(x)=0.$$
	Then by Lemma \ref{quan-CBC-and-CBC}, we have that $\mu^{\ep, r}_{k,\ast} \circ \iota^{\ep, r}_{\ast}(j^{\ep, r}_k(x))=0$. Thus by the first part (\ref{quan-CBC1}) of the quantitative coarse Baum-Connes conjecture with coefficients, there exists $k'\geq k$ such that $Ad_{i_{kk'},\ast} \circ \iota^{\ep, r}_{\ast}(j^{\ep, r}_k(x))=0$ which implies that $\iota^{\ep, r_{k'}}_{\ast} \circ Ad^{\ep, r, r_{k'}}_{i_{kk'},\ast}(j^{\ep, r}_k(x))=0$. Then $\iota^{\ep, 16\ep, r_{k'}, r_{k'}}_{\ast}\circ Ad^{\ep, r, r_{k'}}_{i_{kk'},\ast}(j^{\ep, r}_k(x))=0$ in $K^{16\ep, r_{k'}}_{\ast}(C^{\ast}_L(P_{k'}, A))$ by Lemma \ref{quan-K-local-alg}. Thus $$j^{16\ep, r_{k'}}_{k'}(Ad^{16\ep, r, r_{k'}}_{i_{kk'},\ast}(x))=0,$$ 
	where $Ad^{16\ep, r, r_{k'}}_{i_{kk'},\ast}(x)\in K^{16\ep, r_{k'}}_{\ast}(C^{\ast}_{L,0}(P_{k'}, A))$. Then by six-term $(\lambda, h)$-exact sequence \eqref{***} again, there exists $y\in K^{16\lambda\ep, h_{16\ep}r_{k'}}_{\ast-1}(C^{\ast}(P_{k'}, A))$ such that 
	\begin{equation} \label{eq1}       
		D^{16\lambda\ep,h_{16\ep}r_{k'}}_{k'}(y)=\iota^{16\ep,16\lambda\lambda_{\mathcal{D}}\ep, r_{k'}, h_{\mathcal{D}, 16\lambda\ep}\cdot h_{16\ep}r_{k'}}_{\ast}(Ad^{16\ep, r, r_{k'}}_{i_{kk'},\ast}(x)),
	\end{equation}
	in $K^{16\lambda\lambda_{\mathcal{D}}\ep, h_{\mathcal{D}, 16\lambda\ep}\cdot h_{16\ep}r_{k'}}_{\ast}(C^{\ast}_{L,0}(P_{k'}, A))$. Thus by the second part (\ref{quan-CBC2}) of the quantitative coarse Baum-Connes conjecture, there exist $k''\geq k'$, $r'\geq h_{\mathcal{D}, 16\lambda\ep}\cdot h_{16\ep}r+2$ and $1/4>\ep'\geq 16\lambda\lambda_{\mathcal{D}}\ep>0$ satisfying that there exists $x'\in K_{\ast-1}(C^{\ast}_L(P_{k''}, A))$ such that $\mu^{\ep', r'}_{k'', \ast-1}(x')=\iota^{16\lambda\ep, \ep', h_{16\ep}r_{k'}, r'}\circ ad^{16\lambda\ep, h_{16\ep}r_{k'}, h_{16\ep}r_{k'}}_{i_{k'k''},\ast-1}(y)$ in $K^{\ep', r'}_{\ast-1}(C^{\ast}(P_{k''}, A))$. Since that $\mu^{\ep', r'}_{k'', \ast-1}(x')=e^{\ep', r'}_{k'', \ast-1}(x'')$ for a unique element $x''\in K^{\ep'/16, r'}_{\ast-1}(C^{\ast}_L(P_{k''}, A))$, thus $e^{\ep', r'}_{k'', \ast-1}(x'')=\iota^{16\lambda\ep, \ep', h_{16\ep}r_{k'}, r'}\circ ad^{16\lambda\ep, h_{16\ep}r_{k'}, h_{16\ep}r_{k'}}_{i_{k'k''},\ast-1}(y)$. Then by \eqref{***} again, 
	\begin{equation} \label{eq2}
		D^{\ep', r'}_{k''} \circ e^{\ep', r'}_{k'', \ast-1}(x'')=D^{\ep', r'}_{k''} \circ \iota^{16\lambda\ep, \ep', h_{16\ep}r_{k'}, r'}\circ ad^{16\lambda\ep, h_{16\ep}r_{k'}, h_{16\ep}r_{k'}}_{i_{k'k''},\ast-1}(y)=0.
	\end{equation}   
	Combining \eqref{eq1} with \eqref{eq2} and by the quantitative coarse Baum-Connes conjecture \ref{quan-CBC}, we can choose suitable $0<\ep<1/4$ such that $\iota^{\ep, \ep', r_{k''}, r'}_{\ast} \circ Ad^{\ep, r, r_{k''}}_{i_{kk''},\ast}(x)=0$. 
	
	Secondly, Assume that for any $k\in \N$ there exists $0<\ep_k<1/4$ satisfying that for any $r>0$, there exist $k'\geq k$, $1/4>\ep'\geq \ep_k>0$ and $r'\geq r+1$ such that $\iota^{\ep_k, \ep', r_{k'}, r'}_{\ast}\circ Ad^{\ep_k, r, r_{k'}}_{i_{kk'},\ast}$ is a zero map from $K^{\ep_k, r}_{\ast}(C^{\ast}_{L, 0}(P_k, A))$ to $K^{\ep', r'}_{\ast}(C^{\ast}_{L, 0}(P_{k'}, A))$. Let $0<\ep<\ep_k/\lambda$. For any $r>0$, if $\mu^{\ep, r}_{k, \ast}(x)=0$ for an element $x\in K_{\ast}(C^{\ast}_L(P_k, A))$. Then there exists $x'\in K^{\ep/16, r}_{\ast}(C^{\ast}_L(P_{k}, A))$ with $\iota^{\ep/16, r}(x')=x$ such that 
	$$\mu^{\ep, r}_{k, \ast}(x)=e^{\ep, r}_{\ast}(x')=0.$$
	Thus by the above six-term $(\lambda, h)$-exact sequence \eqref{***}, there exists an element $x''$ in $K^{\lambda\ep, h_{\ep}r}_{\ast}(C^{\ast}_{L, 0}(P_{k}, A))$ such that 
	\begin{equation}\label{eq3}
		j^{\lambda\ep, h_{\ep}r}_{\ast}(x'')=\iota^{\ep, \lambda\ep, r, h_{\ep}r}_{\ast}(x').
	\end{equation}
	By the assumption, there exist $k'\geq k$, $1/4>\ep'\geq \ep_k\geq \lambda\ep$ and $r'\geq h_{\ep}(r+1)$ such that 
	\begin{equation}\label{eq4}
		\iota^{\lambda\ep, \ep', h_{\ep}r_{k'}, r'}_{\ast} \circ Ad^{\lambda\ep, h_{\ep}r, h_{\ep}r_{k'}}_{i_{kk'},\ast}(x'')=0.
	\end{equation}
	Combining \eqref{eq3} \eqref{eq4} with the fact $\iota^{\ep/16, r}(x')=x$, we obtain that $Ad_{i_{kk'},\ast}(x)=0$ which implies that the first part (\ref{quan-CBC1}) of the quantitative coarse Baum-Connes conjecture is true. \par
	Let $k\in \N$ and $0<\ep\leq \ep_k/(\max\{\lambda, \lambda_{\mathcal{D}}\})$. For any $r>0$ and $y\in K^{\ep, r}_{\ast}(C^{\ast}(P_{k}, A))$, consider $D^{\ep, r}_{k}(y)\in K^{\lambda_{\mathcal{D}}, h_{\mathcal{D},\ep}r}_{\ast-1}(C^{\ast}_{L, 0}(P_{k}, A))$. By the assumption, there exist $k'\geq k$, $1/4>\ep'\geq \ep_k \geq \lambda_{\mathcal{D}}\ep$ and $r'\geq h_{\mathcal{D},\ep}(r+1)$ such that 
	$$\iota^{\lambda_{\mathcal{D}}\ep, \ep', h_{\mathcal{D}, \ep}r_{k'}, r'}_{\ast} \circ Ad^{\lambda_{\mathcal{D}}\ep, h_{\mathcal{D}, \ep}r, h_{\mathcal{D}, \ep}r_{k'}}_{i_{kk'}, \ast-1}(D^{\ep, r}_{k}(y))=0,$$
	which implies that 
	$$D^{\ep', r'_{k'}}_{k'}(ad^{\ep', r', r'_{k'}}_{i_{kk'}, \ast}(y))=0.$$
	By the above six-term $(\lambda, h)$-exact sequence, there exists $x\in K^{\lambda\ep', h_{\ep'}r'_{k'}}_{\ast}(C^{\ast}_L(P_{k'}, A))$ such that 
	$$e^{\lambda\ep', h_{\ep'}r'_{k'}}_{k'}=\iota^{\ep', \lambda\ep', r'_{k'}, h_{\ep'}r'_{k'}}_{\ast}(ad^{\ep', r', r'_{k'}}_{i_{kk'}, \ast}(y)).$$
	Then by Lemma \ref{quan-CBC-and-CBC}, we have 
	$$\mu^{\lambda\ep', h_{\ep'}r'_{k'}}_{k', \ast}(\iota^{\lambda\ep', h_{\ep'}r'_{k'}}_{\ast}(x))=\iota^{\ep, \lambda\ep', r, h_{\ep'}r'_{k'}}_{\ast}(ad^{\ep, r, r_{k'}}_{i_{kk'}, \ast}(y)),$$
	which implies that the second part (\ref{quan-CBC2}) of the quantitative coarse Baum-Connes conjecture is true.
\end{proof}

\section{Relations between the quantitative coarse Baum-Connes conjecture with coefficients and the original conjecture}\label{Sec-Relations}

In this section, we will show that the quantitative coarse Baum-Connes conjecture with coefficients is a refinement of the original conjecture and is equivalent to a uniform version of the original conjecture. Firstly, we need consider the left-hand side of the conjecture for a sequence of metric spaces.

\subsection{$K$-theory for localization algebras with coefficients for a sequence of metric spaces} \label{subsection-K-homology-direct-product}

Let $(X_i, d_i)_{i\in \N}$ be a sequence of metric spaces. Define the \textit{separated coarse union} of $(X_i)_{i\in \N}$ to be a metric space $(X=\sqcup X_i, d)$ by
\begin{equation}\label{productmetric}
	d(x,x')=\left \{ 
	\begin{array}{lcl}
		d_i(x,x'), & \mbox{for} & x,x'\in X_i\\
		\infty,    & \mbox{for} & x\in X_i, x'\in X_{i'}, i\neq i'. 
	\end{array}
	\right.
\end{equation}
Then for any $C^{\ast}$-algebra $A$, we have 
$$C^{\ast}(X, A)=\prod^{uf}_{i\in \N}C^{\ast}(X_i, A),$$
where $\prod^{uf}_{i\in \N}C^{\ast}(X_i, A)$ (see also Definition \ref{uniformproduct}) is defined to be the norm closure in $\prod_{i\in N} C^{\ast}(X_i, A)$ of the set
$$\bigcup_{r>0}\left(\prod_{i\in \N} \{T\in C^{\ast}(X_i, A): \prop(T)\leq r\}\right).$$ 
Thus the localization algebra $C^{\ast}_L(X, A)$ is the norm closure of the $\ast$-algebra consisting of all bounded and uniformly continuous functions $u:[0,\infty) \rightarrow \prod^{uf}_{i\in \N}C^{\ast}(X_i, A)$ satisfying that 
$$\lim_{t\rightarrow \infty} \sup_{i\in \N}(\prop(u_i(t)))=0.$$
where $u_i(t)$ is the $i$th coordinate of $u(t)$ in $C^{\ast}(X_i, A)$.

Next, We will show that the $K$-theory for the localization algebra of the separated coarse union of a sequence of metric spaces $(X_i)_{i\in \N}$ can be computed by the $K$-theory for the localization algebra of $X_i$. Firstly, let us recall a series of lemmas in $K$-theory.

\begin{lemma}\label{smallhomotopy}
	Let $B$ be a $C^{\ast}$-algebra and $p, q$ be two projections in $B$. If $\|p-q\|<1$, then there exists a homotopy $(p_t)_{t\in [0,1]}$ of projections in $M_2(B)$ connecting $p\oplus 0$ and $q\oplus 0$ such that $\|p_t-p_{t'}\|\leq 4\pi |t-t'|$ for any $t,t' \in [0,1]$.
\end{lemma}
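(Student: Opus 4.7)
My plan is the standard $K$-theoretic argument: first produce a unitary $u \in \Tilde{B}$ intertwining $p$ with $q$, then construct an explicit Lipschitz path of unitaries in $M_2(\Tilde{B})$ from the identity to $u \oplus u^*$, and finally conjugate $p \oplus 0$ along this path. Setting $v = qp + (1-q)(1-p) \in \Tilde{B}$, a direct multiplication that uses $q(1-q)=0$ and $p(1-p)=0$ gives $v^*v = pqp + (1-p)(1-q)(1-p)$, and expanding yields $1 - v^*v = p + q - pq - qp = (p-q)^2$. Hence $\|1-v^*v\| = \|p-q\|^2 < 1$, so $v^*v$ is invertible in $\Tilde{B}$ and $u := v(v^*v)^{-1/2}$ is a unitary. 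The intertwining relation $vp = qv$ together with the fact that $p$ commutes with $v^*v$ (and hence with $(v^*v)^{-1/2}$) then gives $up = qu$, i.e.\ $upu^* = q$.

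Next, let $R(t)$ denote the rotation with angle $\tfrac{\pi(1-t)}{2}$, and define $u_t := (u \oplus 1)\, R(t)\, (1 \oplus u^*)\, R(t)^* \in M_2(\Tilde{B})$ for $t \in [0,1]$. At $t=0$ the rotation $R(0) = \left(\begin{smallmatrix} 0 & -1 \\ 1 & 0 \end{smallmatrix}\right)$ sends $1 \oplus u^*$ to $u^* \oplus 1$ by conjugation, which cancels with $u \oplus 1$, so $u_0 = 1_{M_2}$; at $t=1$ we have $R(1) = I$, giving $u_1 = (u \oplus 1)(1 \oplus u^*) = u \oplus u^*$. Define the candidate path by $p_t := u_t(p \oplus 0)u_t^*$. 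Each $p_t$ is visibly a projection; it lies in $M_2(B)$ because $M_2(B)$ is an ideal of $M_2(\Tilde{B})$; and $p_0 = p \oplus 0$, $p_1 = (upu^*) \oplus 0 = q \oplus 0$.

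For the Lipschitz estimate, $\|R'(t)\| = \pi/2$, so the Leibniz rule gives $\|u_t'\| \leq \|R'(t)\| + \|R'(t)\| = \pi$, and therefore $\|p_t'\| \leq 2\|u_t'\|\cdot\|p \oplus 0\| \leq 2\pi$. Integration produces $\|p_t - p_{t'}\| \leq 2\pi|t-t'| \leq 4\pi|t-t'|$, as required, with room to spare.

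I do not anticipate any significant obstacle: every step is a textbook manipulation. The two minor care-points are the algebraic simplification $1 - v^*v = (p-q)^2$ (immediate from $p(1-p) = q(1-q) = 0$) and the verification that each $p_t$ remains in $M_2(B)$ rather than only $M_2(\Tilde{B})$, which follows at once from the ideal property of $M_2(B) \triangleleft M_2(\Tilde{B})$.
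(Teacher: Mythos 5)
Your proof is correct and follows essentially the same route as the paper's: both proofs construct $u = v(v^\ast v)^{-1/2}$ intertwining $p$ and $q$ (the paper uses $v = pq + (1-p)(1-q)$ and bounds $\|v-1\|$ via $(p-q)(2q-1)$, while you use the adjoint $v$ and bound $\|1 - v^\ast v\| = \|p-q\|^2$, which is immaterial), and then both conjugate $p\oplus 0$ (resp.\ $q\oplus 0$) along the standard rotation path of unitaries from $I_2$ to $u\oplus u^\ast$ in $M_2(\Tilde{B})$ and read off the Lipschitz bound from the derivative of the rotation.
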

\begin{proof}
	Firstly, we show that $p$ and $q$ are unitarily equivalent by following the proof of \cite[Proposition 4.1.7]{HigsonRoe-Book}. Let $v=pq+(1-p)(1-q)$, then
	$$v-1=2pq-p-q=(p-q)(2q-1),$$
	thus $\|v-1\|<1$ which implies that $v$ is invertible. Let $u=v(v^{\ast}v)^{-1/2}$, then $u$ is a unitary. Since $pv=vq$ and $v^{\ast}vq=qv^{\ast}v$, thus 
	$$p=uqu^{\ast}.$$ 
	For $t\in [0,1]$, let 
	$$u_t=
	\begin{pmatrix}
		u  &  0\\
		0  &  I
	\end{pmatrix}
	\begin{pmatrix}
		cos\frac{\pi t}{2}   &  sin\frac{\pi t}{2}\\
		-sin\frac{\pi t}{2}  &  cos\frac{\pi t}{2}
	\end{pmatrix}
	\begin{pmatrix}
		u^{\ast}  &  0\\
		0         &  I
	\end{pmatrix}
	\begin{pmatrix}
		cos\frac{\pi t}{2}   &  -sin\frac{\pi t}{2}\\
		sin\frac{\pi t}{2}  &  cos\frac{\pi t}{2}
	\end{pmatrix}.
	$$
	Then $p_t=u_t (q\oplus 0) u^{\ast}_t$ is a homotopy of projections connecting $q\oplus 0$ and $p\oplus 0$. Moreover, since 
	$$\max\{|cos\frac{\pi t}{2}-cos\frac{\pi t'}{2}|, |sin\frac{\pi t}{2}-sin\frac{\pi t'}{2}|\} \leq \frac{\pi}{2}|t-t'|,$$
	thus $\|p_t-p_{t'}\|\leq 4\pi |t-t'|$ for any $t,t'\in [0,1]$.
\end{proof}

\begin{lemma}\cite[Proposition 2.7.5]{WillettYu-Book}\label{isometryequivalent}
	Let $\alpha: C\rightarrow D$ be a $\ast$-homomorphism and $v$ be a partial isometry in the multiplier algebra of $D$ satisfying that $\alpha(c)v^{\ast}v=\alpha(c)$ for all $c\in C$. Define $\beta: C\rightarrow D$ by $\beta(c)=v \alpha(c) v^{\ast}$. Then $\alpha$ and $\beta$ induce the same morphisms on $K$-theory. 
\end{lemma}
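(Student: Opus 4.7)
The plan is to prove the lemma by the standard $2{\times}2$ unitary dilation trick, reducing the equality $\alpha_*=\beta_*$ to the fact that inner automorphisms implemented by unitaries in the multiplier algebra act trivially on $K$-theory.

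First I would upgrade the one-sided hypothesis to a two-sided one. Setting $p=v^{\ast}v\in\mathcal{M}(D)$, the relation $\alpha(c)p=\alpha(c)$ is given. Taking adjoints in $c$ yields $p\alpha(c^{\ast})=\alpha(c^{\ast})$, and since $c$ is arbitrary we also get $p\alpha(c)=\alpha(c)$ for every $c\in C$. So $p$ acts as the identity on $\alpha(C)$ from both sides, which is the key cancellation we will use when computing cross terms.

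Next I would form the explicit unitary
$$u=\begin{pmatrix} v & 1-vv^{\ast} \\ 1-v^{\ast}v & v^{\ast} \end{pmatrix}\in M_2(\mathcal{M}(D)),$$
and verify $uu^{\ast}=u^{\ast}u=I_2$ using the partial-isometry identities $vv^{\ast}v=v$, $v^{\ast}vv^{\ast}=v^{\ast}$, $(vv^{\ast})^2=vv^{\ast}$ and $(v^{\ast}v)^2=v^{\ast}v$. Then a direct block computation, in which every cross term contains a factor of the form $\alpha(c)(1-v^{\ast}v)$ or $(1-v^{\ast}v)\alpha(c)$ and therefore vanishes by the previous paragraph, gives
$$u\begin{pmatrix}\alpha(c) & 0\\ 0 & 0\end{pmatrix}u^{\ast}=\begin{pmatrix}v\alpha(c)v^{\ast} & 0\\ 0 & 0\end{pmatrix}=\begin{pmatrix}\beta(c) & 0\\ 0 & 0\end{pmatrix}.$$

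Finally, let $\iota\colon D\hookrightarrow M_2(D)$ be the upper-left corner embedding, which induces an isomorphism on $K$-theory. The previous identity says $\iota\circ\beta=\mathrm{Ad}_{u}\circ(\iota\circ\alpha)$ as $\ast$-homomorphisms $C\to M_2(D)$. Since $u$ is a unitary in $\mathcal{M}(M_2(D))$, the inner automorphism $\mathrm{Ad}_u$ is the identity on $K_{\ast}(M_2(D))$; this is the standard consequence of the rotation homotopy showing that $\mathrm{diag}(u,u^{\ast})$ is connected to $I$ in the unitary group of $M_4(\widetilde{\mathcal{M}(D)})$. Passing to $K$-theory and cancelling the isomorphism $\iota_{\ast}$ gives $\alpha_{\ast}=\beta_{\ast}$. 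The only delicate point is the two-sided absorption in the first paragraph, without which the cross terms in the block computation would not vanish; everything else is a routine book-keeping of partial-isometry identities.
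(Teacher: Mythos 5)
Your proof is correct and is exactly the standard unitary-dilation argument for this fact; the paper does not reproduce a proof but cites Willett--Yu's Proposition 2.7.5, and your argument (upgrading the one-sided absorption to two-sided via adjoints, dilating $v$ to the unitary $u=\begin{pmatrix} v & 1-vv^{\ast} \\ 1-v^{\ast}v & v^{\ast} \end{pmatrix}$, checking the cross terms vanish, and invoking triviality of $\mathrm{Ad}_u$ on $K$-theory together with the corner-embedding isomorphism) is precisely the expected one.
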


Let $\pi_C: C \rightarrow Q$ be a surjective $\ast$-homomorphism and $\pi_D: D \rightarrow Q$ be a $\ast$-homomorphism. Let $P=\{(c,d)\in C\oplus D: \pi_C(c)=\pi_D(d)\}$. Take $\rho_C:P\rightarrow C$ and $\rho_D:P\rightarrow D$ be the restriction maps to the summands. Then we have the following \textit{pullback diagram} of $C^{\ast}$-algebras:
$$\xymatrix{
	P \ar[r]^{\rho_C}\ar[d]_{\rho_D} & C \ar[d]^{\pi_C} \\
	D \ar[r]_{\pi_D}                   & Q,
}$$

\begin{lemma}\cite[Proposition 2.7.15]{WillettYu-Book}\label{Kpullback}
	For a pullback diagram of $C^{\ast}$-algebras as above, there is a Mayer-Vietoris six-term exact sequence
	$$\xymatrix{
		K_0(P) \ar[r] & 
		K_0(C)\oplus K_0(D) \ar[r] & 
		K_0(Q)\ar[d] \\
		K_1(Q)\ar[u] &
		K_1(C)\oplus K_1(D)\ar[l] &
		K_1(P),\ar[l] 
	}$$
	which is natural for commutative maps between pullback diagram of $C^{\ast}$-algebras.
\end{lemma}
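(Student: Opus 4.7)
The plan is to deduce the Mayer--Vietoris sequence from a single well-chosen short exact sequence of $C^{\ast}$-algebras together with the standard six-term exact sequence in $K$-theory. Concretely, I would consider the $\ast$-homomorphism
\[
\sigma : C\oplus D \longrightarrow Q,\qquad (c,d)\longmapsto \pi_C(c)-\pi_D(d).
\]
By definition of the pullback, $\ker \sigma = P$ (with inclusion $(\rho_C,\rho_D) : P\hookrightarrow C\oplus D$). Surjectivity of $\sigma$ is where the hypothesis that $\pi_C$ is surjective enters: given $q\in Q$, lift it to some $c\in C$ with $\pi_C(c)=q$, and then $\sigma(c,0)=q$. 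Hence
\[
0\longrightarrow P\xrightarrow{\;(\rho_C,\rho_D)\;} C\oplus D\xrightarrow{\;\sigma\;} Q\longrightarrow 0
\]
is a short exact sequence of $C^{\ast}$-algebras.

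The second step is to apply the standard six-term exact sequence in $K$-theory to this extension and use the natural identification $K_{\ast}(C\oplus D)\cong K_{\ast}(C)\oplus K_{\ast}(D)$ under which the induced maps become $(x)\mapsto (\rho_{C,\ast}(x),\rho_{D,\ast}(x))$ on the left and $(x,y)\mapsto \pi_{C,\ast}(x)-\pi_{D,\ast}(y)$ on the right. This yields precisely the Mayer--Vietoris hexagon in the statement, with the boundary map inherited from the connecting homomorphism of the above extension.

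For naturality, suppose a second pullback square with algebras $P',C',D',Q'$ and surjection $\pi_{C'}$ is given, together with $\ast$-homomorphisms $C\to C'$, $D\to D'$, $Q\to Q'$ making all the pullback data commute; these induce $P\to P'$. The point is that my chosen extension is functorial: the maps $C\oplus D\to C'\oplus D'$ and $Q\to Q'$ intertwine the two quotient maps $\sigma$, $\sigma'$, and restrict to the induced map $P\to P'$ on kernels. Naturality of the six-term exact sequence then gives naturality of the Mayer--Vietoris sequence.

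The main thing to be careful about, rather than a genuine obstacle, is the identification of the maps and the sign of $\pi_{C,\ast}-\pi_{D,\ast}$ appearing in the middle of the hexagon; this is dictated by the choice $\sigma(c,d)=\pi_C(c)-\pi_D(d)$ and a routine check that the composition $K_{\ast}(P)\to K_{\ast}(C)\oplus K_{\ast}(D)\to K_{\ast}(Q)$ vanishes because $\pi_C\circ\rho_C=\pi_D\circ\rho_D$ on $P$. No additional ideas beyond the six-term exact sequence for $C^{\ast}$-extensions are needed.
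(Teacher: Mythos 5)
Your argument breaks down at the very first step: the map
\[
\sigma:C\oplus D\longrightarrow Q,\qquad (c,d)\longmapsto \pi_C(c)-\pi_D(d),
\]
is \emph{not} a $\ast$-homomorphism of $C^{\ast}$-algebras, so the sequence $0\to P\to C\oplus D\xrightarrow{\sigma} Q\to 0$ is not a $C^{\ast}$-extension and the six-term exact sequence of $K$-theory cannot be applied to it. The map $\sigma$ is linear and $\ast$-preserving, but it is not multiplicative: for instance with $c=c'=0$ one has $\sigma\bigl((0,d)(0,d')\bigr)=\sigma(0,dd')=-\pi_D(d)\pi_D(d')$, whereas $\sigma(0,d)\,\sigma(0,d')=\bigl(-\pi_D(d)\bigr)\bigl(-\pi_D(d')\bigr)=+\pi_D(d)\pi_D(d')$, and these disagree whenever $\pi_D(d)\pi_D(d')\neq 0$. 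The difference of two $\ast$-homomorphisms is essentially never a $\ast$-homomorphism, and the cross terms $-\pi_C(c)\pi_D(d')-\pi_D(d)\pi_C(c')$ in $\sigma(c,d)\sigma(c',d')$ have no counterpart in $\sigma\bigl((c,d)(c',d')\bigr)$. So what you have is only a short exact sequence of Banach spaces (or operator systems), from which you cannot extract a boundary map in $K$-theory directly. The identification $\ker\sigma=P$ and the observation that $P$ is a $C^{\ast}$-subalgebra of $C\oplus D$ are correct, but they do not by themselves produce the hexagon.

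A correct route, and the standard one, is to compare two genuine $C^{\ast}$-extensions with the same ideal. Set $J=\ker\pi_C$. Surjectivity of $\pi_C$ makes $\rho_D:P\to D$ surjective with kernel $\{(j,0):j\in J\}\cong J$, giving a commutative ladder of short exact sequences
\[
\begin{array}{ccccccccc}
0 & \to & J & \to & P & \xrightarrow{\;\rho_D\;} & D & \to & 0 \\
  &     & \| &    & \downarrow\scriptstyle{\rho_C} &    & \downarrow\scriptstyle{\pi_D} & & \\
0 & \to & J & \to & C & \xrightarrow{\;\pi_C\;} & Q & \to & 0.
\end{array}
\]
Applying the six-term exact sequence to each row and chasing the resulting commutative diagram (equivalently, running the mapping-cone/Puppe formalism) yields the Mayer--Vietoris hexagon; the boundary map $K_{\ast}(Q)\to K_{\ast+1}(P)$ is obtained from the index/exponential map of the bottom extension lifted through the ladder. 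Naturality then comes for free because $J$, $\rho_D$, $\rho_C$, and $\pi_D$ all transform functorially under a morphism of pullback squares. Your instinct that everything should reduce to a single six-term sequence and a naturality argument is reasonable in spirit, but the reduction must go through bona fide $C^{\ast}$-homomorphisms rather than the linear map $\sigma$.
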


\begin{definition}\cite[Definition 2.7.11]{WillettYu-Book}\label{Def-quasi-stable}
	A $C^{\ast}$-algebra $B$ is called to be \textit{quasi-stable} if for all positive integer $n$ there exists an isometry $v$ in the multiplier algebra of $M_n(A)$ such that $vv^{\ast}$ is the matrix unit $e_{11}$.
\end{definition}

For quasi-stable $C^{\ast}$-algebras, we have the following advantage for the computation of the $K$-theory.

\begin{lemma}\cite[Proposition 2.7.12]{WillettYu-Book}\label{K-prod}
	Let $(B_i)_{i\in \mathcal{I}}$ be a family of quasi-stable $C^{\ast}$-algebras. Then the natural quotient maps
	$$\pi_j: \prod_{i\in \mathcal{I}}B_i \rightarrow B_j$$
	induce an isomorphism
	$$\prod_{i\in \mathcal{I}} \pi_{i,\ast}: K_{\ast}(\prod_{i\in \mathcal{I}}B_i) \rightarrow \prod_{i\in \mathcal{I}} K_{\ast}(B_i).$$
\end{lemma}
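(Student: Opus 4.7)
The plan is to prove surjectivity and injectivity directly, using quasi-stability to reduce to projections (or unitaries) in $\tilde{B_i}$ itself and Lemma \ref{smallhomotopy} to uniformize path lengths. Write $B:=\prod_{i\in\mathcal{I}} B_i$; I focus on $K_0$, with $K_1$ being entirely analogous.

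For surjectivity, given $([p_i]-[q_i])_{i\in\mathcal{I}}\in\prod K_0(B_i)$ with $p_i,q_i\in M_{n_i}(\tilde{B_i})$, I would first use quasi-stability to pick, for each $i$, an isometry $v_i\in\mathcal{M}(M_{n_i}(B_i))$ with $v_iv_i^*=e_{11}$, and replace $(p_i,q_i)$ by $(v_ip_iv_i^*, v_iq_iv_i^*)$. By Lemma \ref{isometryequivalent} applied to the inclusion $B_i\hookrightarrow M_{n_i}(B_i)$, this preserves $K_0$-classes, and the compressed projections now live in the corner $e_{11}M_{n_i}(\mathcal{M}(B_i))e_{11}$. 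After subtracting matching scalar parts, the family $(v_ip_iv_i^*)_{i\in\mathcal{I}}$ assembles into a projection in $\tilde{B}$, and its formal difference with $(v_iq_iv_i^*)_{i\in\mathcal{I}}$ represents an element of $K_0(B)$ mapping to the prescribed data under $\Pi\pi_{i,*}$.

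For injectivity, suppose $x=[(p_i)_i]-[(q_i)_i]\in K_0(B)$ satisfies $\Pi\pi_{i,*}(x)=0$, so for each $i$ there exist $k_i,N_i\in\N$ and a path of projections in $M_{N_i}(\tilde{B_i})$ joining $p_i\oplus I_{k_i}$ to $q_i\oplus I_{k_i}$. The goal is to assemble these into a single continuous, hence uniformly continuous, path in a matrix algebra over $\tilde{B}$. This is the heart of the argument: I would apply quasi-stability once more to compress the trivial summand $I_{k_i}$ into a fixed matrix algebra (say $M_2(\tilde{B_i})$), then subdivide each individual path into finitely many steps with successive norm distance less than $1$, and replace each step by the canonical $4\pi$-Lipschitz homotopy of Lemma \ref{smallhomotopy}. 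Absorbing the varying numbers of steps into a common stabilization—again via quasi-stability—produces a family of uniformly $C$-Lipschitz homotopies for a single constant $C$, which combines into a continuous path of projections in the product, witnessing $x=0$. The $K_1$ case runs in parallel: quasi-stability compresses unitaries via $u\mapsto vuv^*+(1-vv^*)$, and homotopies of unitaries are uniformized by the same discretization plus Lipschitz replacement.

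The hard part will be the uniformization in the injectivity step: the individual homotopies on each $B_i$ have no a priori common modulus of continuity, and without quasi-stability there would be no mechanism to force the number of discretization steps to be uniformly bounded. The crucial input is that quasi-stability allows arbitrary trivial projections to be absorbed into a fixed matrix algebra, which both makes the discretization uniform and produces equicontinuous homotopies—this is essentially the unquantified analogue of the Lipschitz arguments already developed in Lemma \ref{Lip-Homotopy} and Lemma \ref{quan-K-prod} earlier in the paper.
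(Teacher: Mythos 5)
The paper does not re-prove this lemma (it is cited from WillettYu), but your plan is closely aligned with the proof the paper gives for its quantitative analogue, Lemma \ref{quan-K-prod}, which uses exactly Lemma \ref{Lip-Homotopy} plus the quasi-stability isometries. The gap is in the injectivity step, precisely where you flag ``the hard part.'' The mechanism you describe --- subdivide the $i$-th homotopy into $m_i$ small steps, replace each step by the $4\pi$-Lipschitz homotopy of Lemma \ref{smallhomotopy}, then ``absorb the varying numbers of steps into a common stabilization via quasi-stability'' --- does not produce uniformly Lipschitz homotopies on a fixed interval. Concatenating $m_i$ homotopies, each on $[0,1]$ and $4\pi$-Lipschitz, gives a $4\pi$-Lipschitz path on $[0,m_i]$; rescaling to $[0,1]$ inflates the Lipschitz constant to $4\pi m_i$, which is unbounded in $i$. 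Quasi-stability compresses matrix \emph{size}, not path \emph{length}, so stabilizing afterwards cannot repair this. The device that actually achieves uniformity is the one in the proof of Lemma \ref{Lip-Homotopy}, which you only gesture at in your final sentence: run the $m_i$ small linear homotopies \emph{in parallel} in $m_i$ separate matrix blocks, bracketed by a bounded number of rotation homotopies, which yields an $O(1)$-Lipschitz homotopy on $[0,1]$ connecting $p_i\oplus I_{nm_i}\oplus 0_{nm_i}$ and $q_i\oplus I_{nm_i}\oplus 0_{nm_i}$; only \emph{then} does conjugation by $I_n\oplus w_i$ (with $w_i$ a quasi-stability isometry) compress the large stabilization into a fixed corner. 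So the route through Lemma \ref{smallhomotopy} plus sequential concatenation should be replaced by this parallel construction; Lemma \ref{smallhomotopy} alone does not provide it.

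A smaller imprecision sits in the surjectivity step. The compressed element $v_ip_iv_i^*$ lives in $e_{11}M_{n_i}(\mathcal{M}(B_i))e_{11}\cong\mathcal{M}(B_i)$, and its ``scalar part'' $v_i\pi(p_i)v_i^*$ is in general a multiplier rather than a scalar, so $v_ip_iv_i^*$ is not in $\widetilde{B_i}$ as written, and ``subtracting matching scalar parts'' of a projection does not yield a projection. The standard fix, consistent with what Lemma \ref{quan-K-prod} does implicitly, is to first normalize the scalar parts (e.g.\ pass from $[p_i]-[q_i]$ to $[p_i\oplus(I_{n_i}-q_i)]-[I_{n_i}\oplus 0_{n_i}]$ and reorder so that $\pi$ of the new projection is $I_{n_i}\oplus 0_{n_i}$), and then compress block-diagonally by $w_i\oplus w_i$, so that the scalar part is carried to the fixed matrix unit $e_{11}\in M_2(\C)$ and the compressed projection genuinely lies in $M_2(\widetilde{B_i})$. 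With these two repairs the outline is correct and agrees with the cited reference.
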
 

Now, we begin to state the main result of this subsection.
\begin{proposition}\label{directproduct}
	Let $(X_i)_{i\in \N}$ be a sequence of metric spaces, $X=\sqcup X_i$ be the separated coarse union and $A$ be a $C^{\ast}$-algebra. Then the inclusion map $\tau$ from $C^{\ast}_L(X, A)$ to the uniform product $\prod_{i\in \N}C^{\ast}_L(X_i, A)$ induces an isomorphism 
	$$\tau_{\ast}:K_{\ast}(C^{\ast}_L(X, A)) \rightarrow K_{\ast}(\prod_{i\in \N}C^{\ast}_L(X_i, A)).$$
\end{proposition}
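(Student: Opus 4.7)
The plan is to identify the $K$-theory of the uniform product with the product of the individual $K$-theories of the factors, and then to realise $\tau_{\ast}$, composed with this identification, as the coordinate-wise restriction map, which I would show is an isomorphism via a time-reparameterization argument. First I would verify that the family $(C^{\ast}_L(X_i,A))_{i\in\N}$ is uniformly quasi-stable with uniformly control function $c(r)=r$: the covering isometries used in the proof of Lemma \ref{Roequasi-stable} are constant in the time parameter $t$, hence lie in the multiplier algebras of the localization algebras with propagation zero, and the hypotheses of Definition \ref{uni-quasi-stable} are satisfied with $c(r)=r$. Corollary \ref{Cor-quan-K-prod} then yields
$$K^{\ep,r}_{\ast}\!\Bigl(\prod^{uf}_{i\in\N} C^{\ast}_L(X_i,A)\Bigr)\;\cong\;\prod_{i\in\N} K^{\ep,r}_{\ast}(C^{\ast}_L(X_i,A))$$
for all $0<\ep<1/4$ and $r>0$, and Lemma \ref{quan-K-local-alg} promotes this to an isomorphism of ordinary $K$-theory groups. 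This reduces the statement to showing that the map $K_{\ast}(C^{\ast}_L(X,A))\to \prod_i K_{\ast}(C^{\ast}_L(X_i,A))$ assembled from the coordinate projections $\pi_i\circ \tau$ is an isomorphism.

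For surjectivity, given classes $([p_i])_i$ (working with $K_0$ for concreteness, represented by projections $p_i\in M_n(\widetilde{C^{\ast}_L(X_i,A)})$), I would time-reparameterize each factor: choose $T_i^{(n)}\nearrow\infty$ with $\prop(p_i(t))\leq 1/n$ for $t\geq T_i^{(n)}$, and pick Lipschitz non-decreasing surjections $f_i:[0,\infty)\to[0,\infty)$ such that $f_i(n)\geq T_i^{(n)}$ for every $n$. The reparameterized elements $\tilde p_i(t):=p_i(f_i(t))$ are again projections in $M_n(\widetilde{C^{\ast}_L(X_i,A)})$, and the uniform estimate $\prop(\tilde p_i(t))\leq 1/n$ for $t\geq n$ shows that $(\tilde p_i)_i$ defines a projection in $M_n(\widetilde{C^{\ast}_L(X,A)})$. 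Injectivity is handled by the same principle: given $[p]$ with $[\pi_i\tau(p)]=0$ for every $i$, one produces coordinate-wise nullhomotopies and reparameterizes them so that their assembly lies in $C([0,1],C^{\ast}_L(X,A))$.

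The main obstacle is establishing reparameterization invariance of $K_{\ast}(C^{\ast}_L(X_i,A))$, namely that $[\tilde p_i]=[p_i]$ for each $i$. The naive straight-line path $h_s(t)=p_i((1-s)t+sf_i(t))$ fails to be uniformly continuous in $s$ whenever $|f_i(t)-t|$ is unbounded in $t$, which is the typical situation here. My intended workaround is to decompose the reparameterization into a concatenation of shift homotopies of the form $u\mapsto u(\,\cdot\,+s)$ on successive compact intervals $[k,k+1]$, using that each shift is uniformly continuous in its parameter $s$ and that the decay of propagation lets one telescope these partial homotopies inside $C^{\ast}_L(X_i,A)$. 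Once this reparameterization lemma is in place, both surjectivity and injectivity follow with uniform control over $i$, and the proposition is proved. This shift-telescope construction is the technical heart of the argument.
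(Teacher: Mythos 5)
There is a genuine gap in the surjectivity argument, and it concerns exactly the point you dismiss as automatic.

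Recall that for $X=\sqcup X_i$ the paper identifies $C^{\ast}(X,A)=\prod^{uf}_{i}C^{\ast}(X_i,A)$, so an element $u\in C^{\ast}_L(X,A)$ is by definition a \emph{uniformly continuous} function $[0,\infty)\to\prod^{uf}_{i}C^{\ast}(X_i,A)$ with $\lim_{t\to\infty}\sup_i\prop(u_i(t))=0$. Uniform continuity with respect to the sup norm over $i$ means precisely that the family $(u_i)_{i}$ is \emph{uniformly equicontinuous}, not just that each coordinate is uniformly continuous and that the propagation decays uniformly. In your surjectivity step you verify only the second condition: you claim that ``the uniform estimate $\prop(\tilde p_i(t))\leq 1/n$ for $t\geq n$ shows that $(\tilde p_i)_i$ defines a projection in $M_n(\widetilde{C^{\ast}_L(X,A)})$,'' but you never check that $(\tilde p_i)_i$ is uniformly equicontinuous, and in fact it generically is not. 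The moduli of continuity of the $p_i$ may degenerate arbitrarily badly as $i\to\infty$, and your reparameterizations $f_i$ are forced to have \emph{large} Lipschitz constants (since $f_i$ must be a surjection of $[0,\infty)$ with $f_i(n)\geq T^{(n)}_i$ while the thresholds $T^{(n)}_i$ are unbounded in $i$), which only makes the composites $\tilde p_i=p_i\circ f_i$ worse, not better. The ``shift-telescope'' you call the technical heart addresses the wrong obstacle: it would show $[\tilde p_i]=[p_i]$ in each $K_{\ast}(C^{\ast}_L(X_i,A))$ separately, but it does nothing to put the assembled sequence $(\tilde p_i)_i$ inside $C^{\ast}_L(X,A)$. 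The same issue undermines the injectivity step: coordinate-wise nullhomotopies have $i$-dependent moduli of continuity and will not assemble to a homotopy in $C([0,1],C^{\ast}_L(X,A))$ after reparameterization.

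This equicontinuity mismatch is precisely what the paper's proof is organized around. The paper inserts the intermediate algebra $\prod^{ec}_{i}C^{\ast}_L(X_i,A)$ of uniformly equicontinuous sequences and proves the proposition in two stages (Lemma \ref{locprodlemma1} and Lemma \ref{locprodlemma2}). Passing from $C^{\ast}_L(X,A)$ to $\prod^{ec}$ is done via a quotient by an ideal $\mathfrak{I}$ of sequences vanishing coordinate-wise at infinity, together with an Eilenberg swindle showing $K_{\ast}(\mathfrak{I})=0$; the surjectivity onto the quotient is achieved not by reparameterizing time, but by \emph{cutting} each $u_i$ on a bounded initial time interval using a partition of unity at scale $1/n$ and then leaving it unchanged afterwards, which fixes the uniform propagation decay while changing the equicontinuity modulus only by a bounded amount. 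Passing from $\prod^{ec}$ to the full product $\prod$ is a separate and nontrivial step, handled by a Mayer--Vietoris argument over the interlocking intervals $E=\sqcup[2n,2n+1]$ and $F=\sqcup[2n+1,2n+2]$; the key observation is that over the discrete intersection $\mathbb{N}^+$ the equicontinuity condition is vacuous, so $\prod^{ec}$ and $\prod$ agree there, and the remaining pieces are killed by a further swindle combined with Lemma \ref{smallhomotopy}. Your outline skips both of these mechanisms, so as written the surjectivity and injectivity claims do not go through.

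A secondary and more cosmetic issue: the proposition concerns the \emph{full} product $\prod_{i}C^{\ast}_L(X_i,A)$, so the correct tool for identifying its $K$-theory with $\prod_iK_{\ast}(C^{\ast}_L(X_i,A))$ is Lemma \ref{K-prod} (quasi-stability), not the quantitative Corollary \ref{Cor-quan-K-prod}, which lives on the uniform product $\prod^{uf}$. Routing through $\prod^{uf}$ would require an additional argument that $\prod^{uf}\hookrightarrow\prod$ is a $K$-theory isomorphism, and this is not the approach you need; Lemma \ref{K-prod} gives the desired reduction directly.
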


By the similar proof of Lemma \ref{Roequasi-stable}, we can show that localization algebra $C^{\ast}_L(X_i, A)$ is quasi-stable for any $C^{\ast}$-algebra $A$. Thus by Lemma \ref{K-prod}, we have the following corollary.

\begin{corollary}\label{K-homology-directproduct}
	Let $X_i$, $X$ and $A$ be as above, then the composition $(\prod_{i\in \N}\pi_{i,\ast})\circ\tau_{\ast}$ from $K_{\ast}(C^{\ast}_L(X, A))$ to $\prod_{i\in \N} K_{\ast}(C^{\ast}_L(X_i, A))$ is an isomorphism.
\end{corollary}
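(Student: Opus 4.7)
The plan is to exhibit the composition as a composition of two maps each already known, or nearly known, to be an isomorphism. By Proposition \ref{directproduct} the inclusion $\tau_{\ast}$ is an isomorphism onto the $K$-theory of the product, so the only remaining ingredient is that $\prod_{i\in\N}\pi_{i,\ast}$ is an isomorphism, which I would deduce from Lemma \ref{K-prod} once I verify that each localization algebra $C^{\ast}_L(X_i,A)$ is quasi-stable in the sense of Definition \ref{Def-quasi-stable}.

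First I would mirror the argument of Lemma \ref{Roequasi-stable} to check quasi-stability. For a fixed positive integer $n$, pick an isometry $V: H^n \to H \oplus H^{n-1}$ whose image is the first summand. Then $I\otimes V\otimes I$ is an isometry on $\ell^2(Z_{X_i})\otimes H^n\otimes H_A$ with propagation zero, satisfying $(I\otimes V\otimes I)(I\otimes V\otimes I)^{\ast}=e_{11}$. View this as the constant-in-$t$ function $W(t):=I\otimes V\otimes I$; it is uniformly continuous (being constant) and has propagation $0$ for every $t$, so conjugation by $W$ preserves $C^{\ast}_L(X_i,A)$ and the filtration. Hence $W$ defines an isometry in the multiplier algebra of $M_n(C^{\ast}_L(X_i,A))$ with the required properties, establishing quasi-stability.

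Next I would apply Lemma \ref{K-prod} to the family $(C^{\ast}_L(X_i,A))_{i\in\N}$, obtaining that
\[
\prod_{i\in\N}\pi_{i,\ast}:\ K_{\ast}\Bigl(\prod_{i\in\N}C^{\ast}_L(X_i,A)\Bigr)\ \longrightarrow\ \prod_{i\in\N}K_{\ast}(C^{\ast}_L(X_i,A))
\]
is an isomorphism. Composing with the isomorphism $\tau_{\ast}$ from Proposition \ref{directproduct} yields the corollary. There is essentially no obstacle here beyond the bookkeeping of quasi-stability; the only point requiring mild care is to ensure that the constant-in-$t$ multiplier used in Step 1 genuinely lives in the multiplier algebra of the localization algebra (not merely of the Roe algebra pointwise), which follows because conjugation by a zero-propagation isometry preserves both the bound on propagation as $t\to\infty$ and uniform continuity in $t$.
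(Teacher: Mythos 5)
Your proposal is correct and follows exactly the route the paper takes: verify quasi-stability of each $C^{\ast}_L(X_i,A)$ by the constant-in-$t$, zero-propagation isometry as in Lemma \ref{Roequasi-stable}, apply Lemma \ref{K-prod} to identify $K_{\ast}(\prod_i C^{\ast}_L(X_i,A))$ with $\prod_i K_{\ast}(C^{\ast}_L(X_i,A))$, and compose with the isomorphism $\tau_{\ast}$ of Proposition \ref{directproduct}. Your extra remark that the constant multiplier preserves uniform continuity and the propagation condition is precisely the detail the paper leaves implicit.
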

\begin{remark}
	The localization algebra was introduced by Yu in \cite{Yu-Localizationalg} whose $K$-theory provide a model for $K$-homology (see also \cite{QiaoRoe}\cite{DadarlatWillettWu}). Thus the above corollary actually implies the cluster axiom of $K$-homology.
\end{remark}

Before proving Proposition \ref{directproduct}, we introduce a uniformly equicontinuous product 
$$\prod^{ec}_{i\in \N}C^{\ast}_L(X_i, A)=\{(u_i)_{i\in \N} \in \prod_{i\in \N}C^{\ast}_L(X_i, A): \:(u_i)_{i\in \N} \:\:\mbox{is uniformly equicontinuous}\}.$$
And we fix the following notations. Let $A$ be a $C^{\ast}$-algebra acting on a Hilbert space $H_A$. Let $H_{X_i}=\ell^2(Z_{X_i})\otimes H_A\otimes H$, recall that $Z_{X_i}$ is a countable dense subset of $X_i$ and $H$ is a separable Hilbert space. And let $H^{\infty}_{X_i}=\ell^2(Z_{X_i})\otimes H_A\otimes (\oplus^{\infty}_{n=0}H)$. Choose a unitary $U':\oplus^{\infty}_{n=0}H \rightarrow H$ and define
$$U_i=I\otimes U': H^{\infty}_{X_i} \rightarrow H_{X_i}.$$
Define two isometries 
$$V_i: H_{X_i} \rightarrow H^{\infty}_{X_i},\:\:\:\: h \mapsto (h, 0, \cdots),$$
$$W_i: H^{\infty}_{X_i} \rightarrow H^{\infty}_{X_i},\:\:\:\: (h_0, h_1, \cdots) \mapsto (0, h_0, h_1, \cdots).$$

Then the proof of Proposition \ref{directproduct} is divided into the following two lemmas that are inspired by the proofs of Proposition 6.6.2 in \cite{WillettYu-Book} and Theorem 3.4 in \cite{WillettYu-ControlledKK}, respectively.

\begin{lemma}\label{locprodlemma1}
	The inclusion map $\tau: C^{\ast}_L(X, A)\rightarrow \prod^{ec}_{i\in \N}C^{\ast}_L(X_i, A)$ induces an isomorphism on the $K$-theory.
\end{lemma}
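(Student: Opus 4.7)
The plan is to prove $\tau_{\ast}$ is an isomorphism via a combination of a diagonal reparametrization and an Eilenberg swindle based on the mutually orthogonal zero-propagation isometries $V_i^{(n)} := U_i W_i^n V_i \in \mathcal{M}(C^{\ast}_L(X_i, A))$, which satisfy $(V_i^{(n)})^{\ast} V_i^{(m)} = \delta_{nm}I$ and $\sum_{n \ge 0} V_i^{(n)}(V_i^{(n)})^{\ast} = I$ strongly. The crucial feature is that these isometries have zero propagation (they act only on the auxiliary $H$-factor), so conjugation by them preserves all the filtration and uniform equicontinuity structures.

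For surjectivity of $\tau_{\ast}$, given $(u_i) \in \prod^{ec}_{i\in \N} C^{\ast}_L(X_i, A)$, choose $T_1 < T_2 < \cdots$ with $\prop(u_i(s)) < 2^{-i}$ for $s \ge T_i$ and set $\tilde u_i(t) := u_i(t+T_i)$. A standard diagonal argument shows $\tilde u := (\tilde u_i) \in C^{\ast}_L(X, A)$, so $\tau(\tilde u)$ represents a class in $K_{\ast}(\prod^{ec}_{i\in \N} C^{\ast}_L(X_i, A))$. To identify $[\tau(\tilde u)] = [u]$, I would introduce the truncated reparametrizations $u^{(m)}_i(t) := u_i(t + \min(T_i, m))$ for $m \in \N$. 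Each $u^{(m)}$ still belongs to $\prod^{ec}$, and the path $s \mapsto (u_i(t + s \cdot \min(T_i, m)))_i$, $s \in [0,1]$, is a norm-continuous homotopy from $u$ to $u^{(m)}$: the time-shift factor is bounded by $m$, so uniform equicontinuity of $(u_i)$ yields uniform norm-continuity in $s$. Hence $[u] = [u^{(m)}]$ in $K_{\ast}(\prod^{ec})$ for every $m$. Since $u^{(m)}$ coincides with $\tilde u$ on the initial segment $\{i : T_i \le m\}$, the Eilenberg swindle applied to the infinite tail --- block-diagonalizing via the $V_i^{(n)}$ and using multiplier-unitary conjugation by the shift $W_i$ --- identifies $[u^{(m)}]$ with $[\tau(\tilde u)]$, giving surjectivity.

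Injectivity follows by applying the same reparametrization to a representative of an element of $\ker \tau_{\ast}$ together with a parallel tail-absorption swindle inside $C^{\ast}_L(X, A)$: the forward time-shift preserves uniform propagation decay, so a null-homotopy in $\prod^{ec}_{i\in \N} C^{\ast}_L(X_i, A)$ can be pushed back into $C^{\ast}_L(X, A)$ coordinate-wise. The main obstacle is verifying that the block-diagonal operators $\Sigma(v) := (\sum_{n\ge 0} V_i^{(n)} v_i (V_i^{(n)})^{\ast})_i$ lie in the intended algebra: the infinite sum of $A$-locally compact operators with orthogonal ranges is not automatically $A$-locally compact, so one must use the zero-propagation and orthogonality of the $V_i^{(n)}$ together with the uniform equicontinuity of $(v_i)$ to interpret $\Sigma(v)$ as a norm limit of finite-propagation $A$-locally compact operators. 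Once this technical step is in place --- following the pattern of \cite[Proposition 6.6.2]{WillettYu-Book} and \cite[Theorem 3.4]{Willett-Yu-2024} --- the swindle identity $\Sigma(v) \simeq \Sigma(v) \oplus v$ via shift conjugation closes both directions of the argument.
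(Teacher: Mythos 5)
Your proposal diverges substantially from the paper's proof, and the key step is not justified. The paper introduces the ideal $\mathfrak{I}\subset \prod^{ec}_{i}C^*_L(X_i,A)$ of tuples $(u_i)$ with $\lim_{t\to\infty}u_i(t)=0$ \emph{coordinatewise}, kills $K_*(\mathfrak{I})$ and $K_*(C^*_L(X,A)\cap\mathfrak{I})$ by an Eilenberg swindle, shows that the induced map on quotients is surjective (hence an isomorphism) by a partition-of-unity modification that makes $v_i$ \emph{eventually equal} to $u_i$, and then runs the six-term exact sequence. You instead try to compare $[u]$ directly with $[\tilde u]$, $\tilde u_i(t):=u_i(t+T_i)$, inside $K_*(\prod^{ec})$; this is where the argument breaks down.

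The gap is the passage from $[u^{(m)}]$ to $[\tilde u]$. Your homotopy argument correctly gives $[u]=[u^{(m)}]$ for each fixed $m$, because the truncated shift amounts are bounded by $m$ and uniform equicontinuity controls the path. But for every $m$, $u^{(m)}$ and $\tilde u$ disagree on the infinite tail $\{i:T_i>m\}$, and they do \emph{not} converge to each other in norm as $m\to\infty$ (on the tail, $u^{(m)}_i(t)=u_i(t+m)$ while $\tilde u_i(t)=u_i(t+T_i)$, which can be far apart). There is no limiting procedure in $K$-theory that lets you pass from ``equal modulo a shrinking but always infinite tail'' to equality. The invocation of an ``Eilenberg swindle applied to the infinite tail'' does not fill this: a swindle shows a class is zero, not that two classes agree, and here the ``difference'' is not a class you can swindle away — the tail subalgebra is essentially another copy of $\prod^{ec}$, whose $K$-theory is not zero. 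Note also that $\tilde u - u$ is \emph{not} in the paper's ideal $\mathfrak{I}$ (the time-shifted $\tilde u_i$ does not agree with $u_i$ at $t\to\infty$, it is a translate), so the trick the paper uses — replace $u$ by a representative that coincides with it at infinity — is not available for your $\tilde u$. The injectivity argument has the same problem: reparametrizing a null-homotopy in $\prod^{ec}$ produces a null-homotopy for $\tilde p$, not for $p$, and identifying $[p]$ with $[\tilde p]$ is exactly the unproved step.

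To repair this along your lines you would need either to replace the global time-shift with a modification that agrees with $u_i$ for large $t$ (which is what the paper's $v_i$ does, using a partition of unity on the early time segment), or to reintroduce the ideal $\mathfrak{I}$ and reduce to the quotient as the paper does. As written, the argument does not close.
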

\begin{proof}
	Let $\mathfrak{I}$ be the algebra consisting of all elements $(u_i)_{i\in \N}\in \prod^{ec}_{i\in \N}C^{\ast}_{L}(X_i, A)$ which satisfies that $\lim_{t\rightarrow \infty} u_i(t)=0$ for every $i\in \N$. Then $\mathfrak{I}$ is an ideal of $\prod^{ec}_{i\in \N}C^{\ast}_{L}(X_i, A)$. We have the following two claims.
	
	\textbf{Claim 1}: $K_{\ast}(\mathfrak{I})=0$.
	
	Now, we prove the above claim. For every $i\in \N$, we define three homomorphisms $\alpha_i$ and $\beta_i$ on $C^{\ast}_L(X_i, A)$ by
	$$\alpha_i(u_i)(t)=U_i(u_i(t) \oplus 0 \oplus \cdots \oplus 0 \oplus \cdots)U^{\ast}_i,$$
	$$\beta_i(u_i)(t)=U_i(0\oplus u_i(t+1) \oplus \cdots \oplus u_i(t+n) \oplus \cdots)U^{\ast}_i,$$
	and 
	$$\gamma_i(u_i)(t)=\alpha_i(u_i)(t+1)+\beta_i(u_i)(t+1).$$
	Then we have that
	$$\alpha_i(u_i)=(U_iV_i)u_i(U_iV_i)^{\ast}\:\:\text{and}\:\:\beta(p_i)=(U_iW_iU^{\ast}_i)(\gamma(u_i))(U_iW^{\ast}_iU^{\ast}_i).$$
	Let $\alpha=\prod \alpha_i$, $\beta=\prod \beta_i$ and $\gamma=\prod \gamma_i$, then $\alpha_{\ast}$ is an isomorphism and $\beta_{\ast}=\gamma_{\ast}$ on $K_{\ast}(\mathfrak{I})$ by Lemma \ref{isometryequivalent}. On the other hand, for any $s\in [0,1]$, let $\gamma^{(s)}_i(u_i)(t)=\alpha_i(u_i)(t+s)+\beta_i(u_i)(t+s)$. Then $\prod \gamma^{(s)}_i$ is a continuous path connecting $\alpha+\beta$ and $\gamma$ by the uniform equicontinuity of $(u_i)_{i\in \N}$. Thus $\gamma_{\ast}=(\alpha+\beta)_{\ast}=\alpha_{\ast}+\beta_{\ast}$ on $K_{\ast}(\mathfrak{I})$. In conclusion, $\alpha_{\ast}=0$ which implies that $K_{\ast}(\mathfrak{I})=0$. Similarly, we can show that $K_{\ast}(C^{\ast}_L(X, A) \cap \mathfrak{I})=0$.
	
	\textbf{Claim 2}: the inclusion map $\tau$ from $\frac{C^{\ast}_L (X, A)}{C^{\ast}_L(X, A) \cap \mathfrak{I}}$ to $\frac{\prod^{ec}_{i\in \N}C^{\ast}_L(X_i, A)}{\mathfrak{I}}$ is an isomorphism.
	
	In order to prove this claim, we just need to show that $i$ is surjective. Choosing a sequence of increasing finite subsets $\mathcal{I}_n$ of $\N$ such that $\cup^{\infty}_{n=1} \mathcal{I}_n=\N$ and $\mathcal{I}_0=\emptyset$. For any $(u_i)_{i\in \N} \in \prod^{ec}_{i\in \N}C^{\ast}_{L}(X_i, A)$ and any positive integer $n$, there exists $t_n>0$ such that $\prop(u_i(t))<1/n$ for any $i\in \mathcal{I}_n$ and $t\geq t_n$. Assume $t_0=0$, $t_{n}+1<t_{n+1}$. For $i\in \mathcal{I}_n\setminus \mathcal{I}_{n-1}$, define 
	$$v_i(t_n)=\left \{ 
	\begin{array}{lcl}
		\sum_{k} \sqrt{\phi^{(i)}_{k,n}} u_i(t) \sqrt{\phi^{(i)}_{k,n}}, & \mbox{for} & t<t_{n-1}; \\
		\frac{t_n-t}{t_{n}-t_{n-1}}(\sum_{k} \sqrt{\phi^{(i)}_{k,n}} u_i(t) \sqrt{\phi^{(i)}_{k,n}})+\frac{t-t_{n-1}}{t_{n}-t_{n-1}}u_i(t), & \mbox{for} & t_{n-1}\leq t \leq t_n;\\
		u_i(t),  &  \mbox{for} & t>t_n,
		
	\end{array}
	\right.$$
	where $(\phi^{(i)}_{k,n})_{k}$ is a partition of unity subordinate to an open cover $\{B(x,1/n): \:x\in X_i\}$ of $X_i$. Then $\|v_i\|\leq 2\|u_i\|$ and $\sup_{i\in \N}\prop(v_i(t))<\frac{1}{n}$ when $t\geq t_{n+1}$ which implies $(v_i)_{i\in \N} \in C^{\ast}_{L}(X, A)$. Moreover, $\|v_i(t)-u_i(t)\|=0$ for all $t\geq t_n$ and $i\in \mathcal{I}_n\setminus \mathcal{I}_{n-1}$ which implies $(v_i)_{i\in \N}-(u_i)_{i\in \N}\in \mathfrak{I}$. Thus, $\tau$ is a surjective map.
	
	Combining the above two claims and the six-term exact sequence of $K$-theory, we completed the proof.
\end{proof}

\begin{lemma}\label{locprodlemma2}
	The inclusion map $\tau: \prod^{ec}_{i\in \N}C^{\ast}_L(X_i, A)\rightarrow \prod_{i\in \N}C^{\ast}_L(X_i, A)$ induces an isomorphism on the $K$-theory.
\end{lemma}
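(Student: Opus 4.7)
The plan is to parallel the proof of Lemma \ref{locprodlemma1}. Introduce the ideal
$$\mathfrak{I}' = \left\{(u_i) \in \prod_{i\in\N} C^{\ast}_L(X_i, A) : \lim_{t\to\infty} u_i(t) = 0 \text{ for each } i\in\N\right\}$$
of $\prod_{i\in\N} C^{\ast}_L(X_i, A)$, and note that $\mathfrak{I} = \prod^{ec}_{i\in\N} C^{\ast}_L(X_i, A) \cap \mathfrak{I}'$. This yields a commutative diagram of short exact sequences linked by $\tau$, and the strategy is to apply the five lemma to the induced six-term $K$-theory exact sequences.

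First, I would show $K_{\ast}(\mathfrak{I}') = 0$ by identifying $\mathfrak{I}'$ with the product $\prod_i C^{\ast}_{L,\infty}(X_i, A)$, where $C^{\ast}_{L,\infty}(X_i, A) = \{u \in C^{\ast}_L(X_i, A) : \lim_{t\to\infty} u(t) = 0\}$. Each factor is quasi-stable (the isometry $U_iV_i$ from the proof of Lemma \ref{locprodlemma1} lies in its multiplier algebra) and has vanishing $K$-theory by the Eilenberg swindle of Claim 1 in Lemma \ref{locprodlemma1} applied to a single algebra, since for a single uniformly continuous $u$ the required norm continuity in $s$ of the homotopy $\gamma^{(s)}(u)$ follows automatically from uniform continuity of $u$. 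Lemma \ref{K-prod} then gives $K_{\ast}(\mathfrak{I}') = \prod_i K_{\ast}(C^{\ast}_{L,\infty}(X_i, A)) = 0$. Combined with $K_{\ast}(\mathfrak{I}) = 0$ from Claim 1 of Lemma \ref{locprodlemma1}, the two six-term sequences reduce to isomorphisms $K_{\ast}(\prod^{ec}_i C^{\ast}_L(X_i, A)) \cong K_{\ast}(\prod^{ec}_i C^{\ast}_L(X_i, A)/\mathfrak{I})$ and $K_{\ast}(\prod_i C^{\ast}_L(X_i, A)) \cong K_{\ast}(\prod_i C^{\ast}_L(X_i, A)/\mathfrak{I}')$, so it suffices to show that the induced quotient map $\bar\tau: \prod^{ec}_i/\mathfrak{I} \to \prod_i/\mathfrak{I}'$ is an isomorphism on $K$-theory.

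Injectivity is immediate at the algebra level from $\mathfrak{I} = \prod^{ec}_i \cap \mathfrak{I}'$. For surjectivity on $K$-theory, given a representative $(u_i)$ of a class in $\prod_i/\mathfrak{I}'$, I would construct a uniformly equicontinuous family $(v_i)$ of the same germ via a telescoping spatial smoothing exactly as in Claim 2 of the proof of Lemma \ref{locprodlemma1}: choose exhausting finite subsets $\mathcal{I}_n \subset \N$ and times $t_n \to \infty$, then for $i \in \mathcal{I}_n \setminus \mathcal{I}_{n-1}$ interpolate between $u_i$ and the spatially smoothed version $\sum_k \sqrt{\phi^{(i)}_{k,n}}\, u_i\, \sqrt{\phi^{(i)}_{k,n}}$ for a partition of unity at scale $1/n$. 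The decay $\prop(u_i(t)) \to 0$ ensures the spatial smoothing is norm-close to $u_i$ for large $t$, so $v_i - u_i \in C^{\ast}_{L,\infty}(X_i, A)$, while coordinating the scales across $i$ along the telescope yields uniform equicontinuity of $(v_i)$. The five lemma then yields that $\tau_{\ast}$ is an isomorphism. The main obstacle is this second step: the delicate point is to align the spatial scale $1/n$ of the partitions with the (in general non-uniform) rates at which $\prop(u_i(t))$ decays across different $i$, so that the telescoping simultaneously retains uniform temporal equicontrol and keeps each error $v_i - u_i$ in $C^{\ast}_{L,\infty}(X_i, A)$.
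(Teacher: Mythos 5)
Your first step is fine: $\mathfrak{I}'$ really is the full product $\prod_i C^{\ast}_{L,\infty}(X_i,A)$, each factor is quasi-stable, the one-algebra Eilenberg swindle gives $K_\ast(C^{\ast}_{L,\infty}(X_i,A))=0$ using only the uniform continuity of a single $u$, and Lemma \ref{K-prod} then yields $K_\ast(\mathfrak{I}')=0$. Combined with $K_\ast(\mathfrak{I})=0$ this does reduce the claim to showing that $\bar\tau_\ast$ is an isomorphism.

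The gap is in the second step. The map $\bar\tau\colon \prod^{ec}_i C^{\ast}_L(X_i,A)/\mathfrak{I}\to \prod_i C^{\ast}_L(X_i,A)/\mathfrak{I}'$ is \emph{not} surjective at the algebra level, so there is no hope of producing the lifts $(v_i)$ you describe. Take each $X_i$ to be a single point, so $C^{\ast}_L(X_i,A)=C_{ub}([0,\infty),\K(H)\otimes A)$ with propagation identically zero, and set $u_i(t)=\sin(it)\,b$ for a fixed nonzero $b$. Each $u_i$ is uniformly continuous, so $(u_i)\in\prod_i C^{\ast}_L(X_i,A)$, but the family's oscillation rate blows up with $i$; if $(v_i)$ were uniformly equicontinuous with $v_i-u_i\in C_0([0,\infty))$ for every $i$, then at scales $\delta<2\pi/i$ and times past the threshold where $\|v_i-u_i\|<\|b\|/10$, the oscillation of $u_i$ would force $\|v_i(t)-v_i(t')\|>\|b\|/10$, contradicting a uniform modulus. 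The obstruction here is \emph{temporal}, not spatial: the spatial smoothing $\sum_k\sqrt{\phi_k}\,u_i\,\sqrt{\phi_k}$ from Claim~2 of Lemma \ref{locprodlemma1} only shrinks propagation and leaves the $t$-dependence (hence the non-equicontinuity) untouched. In Claim~2 the input was already uniformly equicontinuous and only the \emph{propagation} decay needed to be made uniform in $i$; here you are starting from the other side, and no partition-of-unity trick can repair a family that is not equicontinuous to begin with. So the ``main obstacle'' you flag is not a technical alignment issue — the construction you propose cannot work.

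The paper avoids this by not comparing quotients by vanishing-at-infinity ideals at all. Instead it cuts the time parameter $[0,\infty)$ into $E=\bigsqcup_{n}[2n,2n+1]$ and $F=\bigsqcup_n[2n+1,2n+2]$ and compares the pullback (Mayer--Vietoris) sequences of Lemma \ref{Kpullback}. The glue piece is restriction to $E\cap F=\mathbb{N}^+$, a uniformly discrete set, over which the equicontinuity constraint is vacuous and the two products literally coincide. It then reduces to the ideals $C^{\ast}_{L,E,0}$ (and $C^{\ast}_{L,F,0}$) of functions vanishing at the endpoints $2\mathbb{N}$; for the equicontinuous product these are contractible, and for the full product the vanishing of $K$-theory is obtained by an Eilenberg swindle in which the shifts are the time compressions $p^{(m)}_i(t)=p_i(2n+(t-2n)\delta_i^m)$ with $\delta_i$ chosen \emph{per $i$}, and the small-homotopy Lemma \ref{smallhomotopy} absorbs the lack of uniformity in $i$. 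If you want to push your ideal-based strategy through, you would need a genuinely different argument for the isomorphism $\bar\tau_\ast$ that does not proceed via algebra-level lifts; as written, this step does not close.
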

\begin{proof}
	We prove the lemma by the following two steps.
	
	\textbf{Step 1}: consider two subspaces $E=\sqcup^{\infty}_{n=0}[2n, 2n+1]$ and $F=\sqcup^{\infty}_{n=0}[2n+1, 2n+2]$ of $[0,\infty)$, then $E\cap F=\mathbb{N}^+$. Define $C^{\ast}_{L,E}(X_i, A)$ to be the norm closure of the $\ast$-algebra consisting of all bounded and uniformly continuous functions $u_i:E\rightarrow C^{\ast}(X_i, A)$ satisfying that 
	$$\sup_{t\in E}(\prop(u_i(t)))<\infty\:\:\text{and}\:\:\ \lim_{t\rightarrow \infty}(\prop(u_i(t)))=0.$$
	Similarly define $C^{\ast}_{L, F}(X_i, A)$ and $C^{\ast}_{L, \mathbb{N}^+}(X_i, A)$. 
	Then by Lemma \ref{Kpullback}, we have the following commutative diagram
	$$\tiny{\xymatrix{
			\:\ar[r]&
			K_1(\prod^{ec}_{i\in \N}C^{\ast}_{L, \mathbb{N}^+}(X_i, A)) \ar[r] \ar[d]^{\tau_1} & 
			K_0(\prod^{ec}_{i\in \N}C^{\ast}_{L}(X_i, A)) \ar[r] \ar[d]^{\tau_0} & 
			\oplus_{j=E, F} K_0(\prod^{ec}_{i\in \N}C^{\ast}_{L, j}(X_i, A))
			\ar[r] \ar[d]^{\tau_0\oplus \tau_0} &
			\: \\
			\:\ar[r]&
			K_1(\prod_{i\in \N}C^{\ast}_{L, \mathbb{N}^+}(X_i, A)) \ar[r] & 
			K_0(\prod_{i\in \N}C^{\ast}_{L}(X_i, A)) \ar[r] & 
			\oplus_{j=E, F} K_0(\prod_{i\in \N}C^{\ast}_{L, j}(X_i, A))
			\ar[r] &
			\:.
	}}$$
	Since we have $\prod^{ec}_{i\in \N}C^{\ast}_{L, \mathbb{N}^+}(X_i, A)=\prod_{i\in \N}C^{\ast}_{L, \mathbb{N}^+}(X_i, A)$, thus the proof is reduced to verifying $\tau_{\ast}: K_{\ast}(\prod^{ec}_{i\in\N}C^{\ast}_{L, J}(X_i, A)) \rightarrow K_{\ast}(\prod_{i\in \N}C^{\ast}_{L, J}(X_i, A))$ are isomorphic for $J=E, F$. 
	
	\textbf{Step 2}: We will show that $\tau_{\ast}: K_{\ast}(\prod^{ec}_{i\in \N}C^{\ast}_{L, J}(X_i, A)) \rightarrow K_{\ast}(\prod_{i\in \N}C^{\ast}_{L, J}(X_i, A))$ is an isomorphism for the case of $J=E$ and the case of $J=F$ is similar. Let $C^{\ast}_{L,E,0}(X_i, A)=\{u_i\in C^{\ast}_{L,E}(X_i, A): u_i(2n)=0, n\in \mathbb{N}\}$. For any element $(u_i)_{i\in \N}\in \prod^{ec}_{i\in \N}C^{\ast}_{L, E, 0}(X_i, A)$ and $s\in [0,1]$, let $u^{(s)}_i(t)=u_i(2n+s(t-2n)),\:\: t\in [2n,2n+1]$. Since that $(u_i)_{i\in \N}$ is uniformly equicontinuous, thus $(u^{(s)}_i)_{i\in \N}$ is a continuous path connecting $(u_i)_{i\in \N}$ and $0$ in $\prod^{ec}_{i\in \N}C^{\ast}_{L,E,0}(X_i, A)$ which implies that $K_{\ast}(\prod^{ec}_{i\in \N}C^{\ast}_{L,E,0}(X_i, A))=0$.
	
	Next we will prove that $K_{0}(\prod_{i\in \N}C^{\ast}_{L,E,0}(X_i, A))=0$. For any projection $(p_i)_{i\in \N} \in \prod_{i\in \N}C^{\ast}_{L, E, 0}(X_i, A)^{+}$, where $C^{\ast}_{L, E, 0}(X_i, A)^{+}$ is the unitization of $C^{\ast}_{L, E, 0}(X_i, A)$. Since that $p_i$ is uniformly continuous for every $i\in \N$, thus there exists a positive number $0<\delta_i<1$ such that $\|p_i(t)-p_i(t')\|<1$ for any $t,t'\in E$ with $|t-t'|\leq 1-\delta_i$. For every $m\in \mathbb{N}$, define 
	$$p^{(m)}_i(t)=p_i(2n+(t-2n)\delta^{m}_{i}),\:\:t\in [2n,2n+1].$$
	Then we define three projections $\alpha_{i}(p_i)$, $\beta_{i}(p_i)$ and $\gamma_{i}(p_i)$ in $C^{\ast}_{L, E, 0}(X_i, A)^{+}$ by
	$$\alpha_{i}(p_i)(t)=U_i(p_i(t) \oplus 0 \oplus \cdots \oplus 0 \oplus \cdots)U^{\ast}_i,$$ 
	$$\beta_{i}(p_i)(t)=U_i(0\oplus p^{(1)}_i(t) \oplus \cdots \oplus p^{(m)}_i(t) \oplus \cdots)U^{\ast}_i.$$
	and 
	$$\gamma_{i}(p_i)(t)=U_i(0\oplus p_i(t) \oplus \cdots \oplus p^{(m-1)}_i(t) \oplus \cdots)U^{\ast}_i.$$
	Then we have that 
	$$\alpha_i(p_i)=(U_iV_i)(p_i)(U_iV_i)^{\ast}\:\:\text{and}\:\: \gamma_{i}(p_i)=(U_iW_iU^{\ast}_i)((\alpha_i+\beta_i)(p_i))(U_iW_iU^{\ast}_i)^{\ast}.$$
	Then $[(p_i)_{i\in \N}]=[(\alpha_i(p_i))_{i\in \N}]$ and $[(\gamma_{i}(p_i))_{i\in \N}]=[(\alpha_i(p_i))_{i\in \N}]+[(\beta_i(p_i))_{i\in \N}]$ in $K_{0}(\prod_{i\in \N}C^{\ast}_{L,E,0}(X_i, A))$ by Lemma \ref{isometryequivalent}. On the other hand, since 
	$$\sup_{i\in \N}\|\gamma_{i}(p_i)-\beta_{i}(p_i)\|<1,$$
	thus $[(\gamma_{i}(p_i))_{i\in \N}]=[(\beta_{i}(p_i))_{i\in \N}]$ in $K_{0}(\prod_{i\in \N}C^{\ast}_{L,E,0}(X_i, A))$ by Lemma \ref{smallhomotopy}. In conclusion, we have that $[(p_i)_{i\in \N}]=0$ which implies that $K_{0}(\prod_{i\in \N}C^{\ast}_{L,E,0}(X_i, A))=0$. Similarly, we can prove that $K_{1}(\prod_{i\in \N}C^{\ast}_{L,E,0}(X_i, A))=0$ by the suspension discussion.
	
	Moreover, since that 
	$$\prod_{i\in \N}C^{\ast}_{L,E}(X_i, A) / \prod_{i\in \N}C^{\ast}_{L,E,0}(X_i, A)=\prod^{ec}_{i\in \N}C^{\ast}_{L,E}(X_i, A) / \prod^{ec}_{i\in \N}C^{\ast}_{L,E,0}(X_i, A),$$ 
	thus by the six-term exact sequence of $K$-theory, $\tau_{\ast}$ induces an isomorphism between $K_{\ast}(\prod^{ec}_{i\in \N}C^{\ast}_{L, E}(X_i, A))$ and $K_{\ast}(\prod_{i\in \N}C^{\ast}_{L, E}(X_i, A))$.    	   	  	
\end{proof}

Combining Lemma \ref{locprodlemma1} with Lemma \ref{locprodlemma2}, we completed the proof of Proposition \ref{directproduct}.

\subsection{Relations}

Let us first recall the coarse Baum-Connes conjecture with coefficients and its uniform version. 

Recall that $e_A$ is an evaluation at zero map from $C^{\ast}_L(X, A)$ to $C^{\ast}(X, A)$. The following conjecture is called to be the \textit{coarse Baum-Connes conjecture with coefficients}.

\begin{conjecture}\label{CBC}
	Let $X$ be a proper metric space, $N_{X}$ be a locally finite net in $X$ and $A$ be a $C^{\ast}$-algebra, then the following homomorphism
	$$e_{A, \ast}: \lim_{k\rightarrow \infty}K_{\ast}(C_L^{\ast}(P_{k}(N_X), A)) \rightarrow \lim_{k\rightarrow \infty}K_{\ast}(C^{\ast}(P_{k}(N_X), A))$$
	is an isomorphism between abelian groups, where $P_k(N_X)$ is the Rips complex. 
\end{conjecture}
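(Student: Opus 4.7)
The plan is to approach the conjecture via the quantitative framework built earlier in the paper, since the CBC with coefficients is a direct limit statement that is amenable to attack by cutting along controlled scales. For the trivial base case of a bounded metric space $X$, both $C^\ast(X,A)$ and $C^\ast_L(X,A)$ are stably isomorphic to $A \otimes \K(H)$ (for $C^\ast_L$ via an Eilenberg-swindle absorbing the time parameter), and $e_{A,\ast}$ is an isomorphism automatically; the whole difficulty is to bootstrap from bounded pieces to the full space.

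First I would reduce the conjecture to its quantitative incarnation. By Theorem \ref{relation-1} the quantitative coarse Baum--Connes conjecture with coefficients implies the CBC with coefficients, so it would suffice to establish QCBC. Combining the controlled six-term exact sequence attached to $0 \to C^\ast_{L,0}(P_k, A) \to C^\ast_L(P_k, A) \to C^\ast(P_k, A) \to 0$ with Lemma \ref{equa-quan-CBC} reformulates QCBC as the statement that elements of $K^{\ep, r}_\ast(C^\ast_{L,0}(P_k, A))$ are killed by the map induced by $i_{kk'}$ after enlarging $k'$ and relaxing $(\ep, r)$. This is the finite-propagation, quantitative analogue of asserting that $K_\ast(C^\ast_{L,0}(P_\bullet, A))$ vanishes in the direct limit.

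The main geometric step would then be a controlled Mayer--Vietoris decomposition of $P_k(N_X)$ into pieces on which the statement is either trivial or inductively known, the two pieces being played against each other via the five lemma applied to the ladder of controlled six-term sequences from Lemma \ref{Control-MV}. Theorem \ref{redu-theorem} further reduces the task to establishing the uniformly quantitative CBC for three sequences of bounded metric subspaces $(X_n)$, which recasts the infinite-dimensional question as a uniform-in-$n$ controlled $K$-theory problem over individually trivial spaces --- the computational engine is Corollary \ref{uniform-product-Roealg} together with Proposition \ref{directproduct}, which identify the $K$-theory of uniform products with the product of $K$-theories.

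The main obstacle is that the CBC with coefficients is false for general $X$: the Higson--Lafforgue--Skandalis counterexamples arising from expanders rule out any approach that does not use non-trivial geometric or analytic hypotheses on $X$, such as property A, finite asymptotic dimension, or a (fibred) coarse embedding into Hilbert space. Even after invoking such hypotheses, the pivotal technical difficulty within the quantitative framework is uniformity: while each bounded $X_n$ is individually harmless, producing $(\ep, r)$-homotopies and preimages with parameters independent of $n$ demands uniform large-scale structure that a generic proper metric space need not possess, and it is precisely this uniformity requirement that forces the coarse-geometric assumptions back into the picture.
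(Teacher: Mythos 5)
This statement is a \emph{conjecture}, not a theorem: the paper does not prove Conjecture~\ref{CBC}, and indeed cannot, because as the paper itself notes (citing Higson--Lafforgue--Skandalis and Khukhro--Li--Vigolo--Zhang) the coarse Baum--Connes conjecture is false for some proper metric spaces. Your proposal correctly flags this in the final paragraph, which is the essential observation; there is no proof to compare against.

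That said, the reductions you sketch run in the wrong direction to constitute a proof strategy even for a restricted class of spaces. You propose to deduce Conjecture~\ref{CBC} for $X$ from QCBC for $X$ via Theorem~\ref{relation-1}, and then to establish QCBC via Theorem~\ref{main-thm}, which identifies QCBC for $X$ with the uniform CBC for $X$, i.e.\ CBC for $\sqcup_{\N}X$. But CBC for $\sqcup_{\N}X$ is a strictly stronger hypothesis than CBC for $X$, so this chain reduces the conjecture to a harder one rather than breaking it into verifiable pieces. Likewise, Theorem~\ref{redu-theorem} does not dissolve the difficulty: the \emph{uniformly} quantitative CBC for the sequences $(X_n)$ encodes exactly the uniformity you identify as the obstruction, so the annulus decomposition is a reformulation, not a reduction to a trivial base case. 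The machinery of the paper (Theorems~\ref{quan-CBC-CBC}, \ref{uniformCBC=quanCBC}, Corollary~\ref{Cor-CBCFC-QCBC}, Theorem~\ref{redution-bounded-case}) is designed to \emph{relate} QCBC, uniform CBC, and CBC with larger coefficient algebras to one another and to import known positive results (via Corollary~\ref{main-Cor}) --- not to prove the conjecture unconditionally. One further small point: for $X$ bounded, $C^\ast(X,A)\cong A\otimes\K(H)$ is correct, but $C^\ast_L(X,A)$ is not stably isomorphic to $A\otimes\K$ --- its $K$-theory computes the $K$-homology of $X$, and the bounded case is handled by a homotopy-invariance/Eilenberg-swindle argument as in the localization-algebra literature, not by a stable isomorphism.
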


Let $X$ be a proper metric space and $\sqcup_{\N} X$ be the separated coarse union of a sequence of metric spaces $X$ (Recall from Subsection \ref{subsection-K-homology-direct-product}).

\begin{definition}\label{Def-uniform-CBC}
	Call $X$ satisfies the \textit{uniform coarse Baum-Connes conjecture with coefficients} in $A$, if the coarse Baum-Connes conjecture with coefficients in $A$ holds for $\sqcup_{\N} X$.
\end{definition}

In \cite{Zhang-CBCFC}, we introduced the coarse Baum-Connes conjecture with filtered coefficients and proved the following lemma (cf. \cite[Corollary 4.12]{Zhang-CBCFC}).

\begin{lemma}\label{Lem-CBCC-UCBC}
	Let $X$ be a proper metric space and $A$ be a $C^{\ast}$-algebra. If the coarse Baum-Connes conjecture with coefficients in $\ell^{\infty}(\N, A\otimes \K(H))$ holds for $X$, then the uniform coarse Baum-Connes conjecture with coefficients in $A$ holds for $X$.
\end{lemma}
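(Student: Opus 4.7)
The plan is to reduce the uniform CBC for $X$ with coefficients in $A$ to the hypothesized CBC for $X$ with coefficients in $\ell^{\infty}(\N, A \otimes \K(H))$ by identifying the relevant Roe and localization algebras up to $K$-theory at each Rips scale. Fix $k$ and write $Y = P_k(N_X)$. Since the copies of $X$ inside $\sqcup_{\N} X$ lie at infinite distance, the net and Rips complex split as disjoint unions, and any operator with finite propagation decomposes into a uniformly bounded family over the components, so that
$$C^{\ast}(\sqcup_{\N} Y, A) \;=\; \prod_{n \in \N}^{uf} C^{\ast}(Y, A).$$
For the localization side, Proposition \ref{directproduct} combined with Lemma \ref{K-prod} gives
$$K_{\ast}(C^{\ast}_L(\sqcup_{\N} Y, A)) \;\cong\; \prod_{n \in \N} K_{\ast}(C^{\ast}_L(Y, A)).$$

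Next I would construct a natural filtered $\ast$-homomorphism
$$\Phi_Y : \prod_{n \in \N}^{uf} C^{\ast}(Y, A) \;\longrightarrow\; C^{\ast}\!\bigl(Y,\; \ell^{\infty}(\N, A \otimes \K(H))\bigr)$$
by viewing a uniformly bounded family $(T_n)_{n\in\N}$ as a diagonal operator whose $n$-th block acts on a dedicated copy of $H$ inside the coefficient Hilbert space $\ell^2(\N) \otimes H_A \otimes H$, with the $\ell^{\infty}(\N, A \otimes \K(H))$ structure acting coordinate-wise. One defines an analogous $\Phi_{L,Y}$ at the localization level. Both maps preserve propagation, commute with the evaluation-at-zero map, and are natural with respect to the Rips inclusions $i_{k,k'}$. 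Consequently they induce a morphism between the assembly diagram for $\sqcup_{\N} X$ with coefficients in $A$ and the assembly diagram for $X$ with coefficients in $\ell^{\infty}(\N, A \otimes \K(H))$.

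The core task is then to show that $\Phi_Y$ and $\Phi_{L,Y}$ induce $K$-theory isomorphisms in the inductive limit over $k$. Once that is established, applying the five lemma to the morphism of assembly diagrams, together with the hypothesis that the right-hand assembly map becomes an isomorphism as $k \to \infty$, transfers the isomorphism to the left-hand side, which is exactly the uniform CBC with coefficients in $A$ for $X$.

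The main obstacle is precisely the $K$-theoretic isomorphism claim for $\Phi_Y$ and $\Phi_{L,Y}$: the target algebra $C^{\ast}(Y, \ell^{\infty}(\N, A \otimes \K(H)))$ is strictly larger than the uniform product, because its coefficient algebra allows truly non-diagonal off-coordinate structure, whereas the source is diagonal. One must show this excess contributes nothing in the limit. The natural tools are a controlled Mayer-Vietoris argument in the spirit of \cite{OyonoYu2019} (cf. Lemma \ref{Control-MV}) applied to the ideal generated by off-diagonal elements, combined with the uniform quasi-stability of the family $C^{\ast}(Y, A)$ (Lemma \ref{Roequasi-stable}) to identify the diagonal piece with the uniform product via Corollary \ref{uniform-product-Roealg}. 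The compactness built into $\K(H)$ on the coefficient side is what forces the off-diagonal part to vanish $K$-theoretically, and making this precise is the technical heart of the argument carried out in \cite{Zhang-CBCFC}.
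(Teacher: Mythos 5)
The paper does not prove this lemma in the text at all: it is quoted verbatim as \cite[Corollary~4.12]{Zhang-CBCFC} and the rest of Section~\ref{Sec-Relations} simply uses it as a black box. Your proposal is therefore a genuine attempt at a proof rather than a comparison, but it contains a significant gap at exactly the spot you flag as ``the technical heart.''

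The concrete problem is that the map
$$\Phi_Y : \prod_{n \in \N}^{uf} C^{\ast}(Y, A) \longrightarrow C^{\ast}\bigl(Y, \ell^{\infty}(\N, A \otimes \K(H))\bigr)$$
that sends a uniformly bounded family $(T_n)_n$ to the block-diagonal operator is \emph{not well-defined}. Membership of the diagonal operator in $C^{\ast}(Y, \ell^{\infty}(\N, A\otimes\K(H)))$ requires that for every compact $K\subset Y$ the family $(\chi_K T_n)_n$ lie in $\K(\ell^2(Z_Y)\otimes H)\otimes\ell^{\infty}(\N, A\otimes\K(H))$; because the tensor factor $\K(\ell^2(Z_Y)\otimes H)$ sits on the \emph{outside} of the $\ell^{\infty}$, this forces the local compactness of the $T_n$ to be \emph{uniform} in $n$. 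An arbitrary element of the uniform product $\prod^{uf}_n C^{\ast}(Y,A)$ only has uniformly controlled \emph{propagation}; each $T_n$ is $A$-locally compact, but the rate of approximation by finite-rank-in-$\K$ operators can degenerate as $n\to\infty$. Such a family is not sent into the target. The natural $\ast$-homomorphism actually goes the \emph{opposite} way: every $B$-locally compact finite-propagation operator with $B=\ell^{\infty}(\N, A\otimes\K(H))$ is block-diagonal over the $\ell^2(\N)$ summand, so one has an inclusion $C^{\ast}(Y,\ell^{\infty}(\N, A\otimes\K(H)))\hookrightarrow\prod^{uf}_n C^{\ast}(Y,A\otimes\K(H))$, and it is \emph{this} inclusion whose effect on $K$-theory has to be controlled.

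Relatedly, your diagnosis of the discrepancy as ``truly non-diagonal off-coordinate structure'' in the coefficient algebra is not correct. Both algebras consist of operators that are diagonal with respect to the $\ell^2(\N)$ factor (since $\ell^{\infty}(\N, A\otimes\K(H))$ acts coordinatewise); the genuine difference between the two is the uniformity of local compactness just described, not any off-diagonal blocks. Because the obstacle is misidentified, the Mayer--Vietoris / quasi-stability strategy you sketch for killing the ``excess'' does not address the actual issue. Once you build in a correct map direction and pinpoint the right obstruction, you are back to needing the isomorphism-on-$K$-theory statement that you, like the paper, simply cite from \cite{Zhang-CBCFC}; so even after repair, the argument does not become independent of that reference. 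The surrounding reductions you set up---$C^{\ast}(\sqcup_\N Y, A)=\prod^{uf}_n C^{\ast}(Y,A)$ and $K_\ast(C^{\ast}_L(\sqcup_\N Y, A))\cong\prod_n K_\ast(C^{\ast}_L(Y,A))$ via Proposition~\ref{directproduct} and Lemma~\ref{K-prod}, naturality with respect to Rips inclusions, and the diagram chase to transfer the isomorphism---are all fine and match the structure one would expect.
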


Now we state and prove the relations between the coarse Baum-Connes conjecture with coefficients and its quantitative version.

\begin{theorem}\label{quan-CBC-CBC}
	The quantitative coarse Baum-Connes conjecture with coefficients in $A$ implies the coarse Baum-Connes conjecture with coefficients in $A$.
\end{theorem}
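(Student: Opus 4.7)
The plan is to derive the classical assembly isomorphism from its quantitative refinement by separating injectivity and surjectivity and matching each to the $QI$ and $QS$ parts of Conjecture~\ref{quan-CBC}. The key bridges are the comparison homomorphism $\mu^{\ep,r}_{k,A,\ast}$ together with the two commutative diagrams of Lemma~\ref{quan-CBC-and-CBC} and the quantitative approximation statements of Lemma~\ref{quan-K-and-K}. Throughout I will also use naturality of $e_A$ with respect to $Ad_{i_{kk'}}$ and $ad_{i_{kk'}}$: since the bounded isometry $V_{i_{kk'}}$ defining $Ad_{i_{kk'}}$ restricts at $t=0$ to a legitimate covering isometry realising $ad_{i_{kk'}}$, Lemma~\ref{covering-isometry} gives $e_{A,\ast}\circ Ad_{i_{kk'},\ast}=ad_{i_{kk'},\ast}\circ e_{A,\ast}$, and analogously in the quantitative setting.

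For injectivity, suppose $[x_k]\in\lim_{k}K_{\ast}(C^{\ast}_L(P_k,A))$ satisfies $e_{A,\ast}([x_k])=0$. Passing to some $k_1\geq k$ we may assume $e_{A,\ast}(y)=0$ in $K_{\ast}(C^{\ast}(P_{k_1},A))$, where $y:=Ad_{i_{kk_1},\ast}(x_k)$. Let $\ep\in(0,1/4)$ be furnished by $QI$ at $k_1$. For any $r>0$ for which $y$ lifts through the quantitative picture via Lemma~\ref{quan-K-local-alg}, the first diagram in Lemma~\ref{quan-CBC-and-CBC} gives $\iota^{\ep/16,r}_{\ast}\mu^{\ep/16,r}_{k_1,A,\ast}(y)=e_{A,\ast}(y)=0$; since $\ep/16<1/64$, Lemma~\ref{quan-K-and-K}(2) then produces $r'\geq r$ with $\iota^{\ep/16,\ep,r,r'}_{\ast}\mu^{\ep/16,r}_{k_1,A,\ast}(y)=0$, and the second diagram of Lemma~\ref{quan-CBC-and-CBC} rewrites this as $\mu^{\ep,r'}_{k_1,A,\ast}(y)=0$. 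Invoking $QI_A(k_1,k',\ep,r')$ for an appropriate $k'\geq k_1$ yields $Ad_{i_{k_1 k'},\ast}(y)=Ad_{i_{kk'},\ast}(x_k)=0$, so $[x_k]=0$ in the inductive limit.

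For surjectivity, let $[y_k]\in\lim_{k}K_{\ast}(C^{\ast}(P_k,A))$ and pick the $\ep$ produced by $QS$ at $k$. Lemma~\ref{quan-K-and-K}(1) gives $r>0$ and $z\in K^{\ep,r}_{\ast}(C^{\ast}(P_k,A))$ with $\iota^{\ep,r}_{\ast}(z)=y_k$. The hypothesis $QS_A$ then furnishes $k'\geq k$, $\ep'\in[\ep,1/4)$, $r'\geq r+1$, and $x\in K_{\ast}(C^{\ast}_L(P_{k'},A))$ with
$$\mu^{\ep',r'}_{k',A,\ast}(x)=\iota^{\ep,\ep',r_{k'},r'}_{\ast}\,ad^{\ep,r,r_{k'}}_{i_{kk'},\ast}(z).$$
Applying $\iota^{\ep',r'}_{\ast}$ and using Lemma~\ref{quan-CBC-and-CBC} turns the left-hand side into $e_{A,\ast}(x)$, while on the right naturality of the classical-quantitative comparison collapses $\iota^{\ep',r'}_{\ast}\iota^{\ep,\ep',r_{k'},r'}_{\ast}ad^{\ep,r,r_{k'}}_{i_{kk'},\ast}(z)=ad_{i_{kk'},\ast}\iota^{\ep,r}_{\ast}(z)=ad_{i_{kk'},\ast}(y_k)$. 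Hence $[x]$ is a preimage of $[y_k]$ in the inductive limit, proving surjectivity. The main obstacle I anticipate is purely organizational: keeping the parameters $\ep,\ep',r,r',r_{k'}$ consistent through the chain of commutative diagrams and verifying the naturality of $e_A$ against the covering-isometry constructions; no genuinely new quantitative input beyond Lemmas~\ref{quan-K-and-K}, \ref{quan-K-local-alg} and \ref{quan-CBC-and-CBC} should be required.
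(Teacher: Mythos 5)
Your proposal is correct and follows essentially the same line of reasoning as the paper's proof: in both, injectivity comes from pushing forward into a Rips complex where the classical image vanishes, transferring this through the two commutative diagrams of Lemma~\ref{quan-CBC-and-CBC} and the approximation statement of Lemma~\ref{quan-K-and-K} to obtain vanishing of $\mu^{\ep,r'}_{k_1,A,\ast}$, and then invoking $QI$; surjectivity comes from lifting a class via Lemma~\ref{quan-K-and-K}(1), applying $QS$, and collapsing the parameters with the diagrams of Lemma~\ref{quan-CBC-and-CBC}. The only cosmetic difference is that you make the naturality $e_{A,\ast}\circ Ad_{i_{kk'},\ast}=ad_{i_{kk'},\ast}\circ e_{A,\ast}$ explicit (with the correct observation about $V_f(0)$), which the paper uses silently.
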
   
\begin{proof}
	Assume a proper metric space $X$ satisfies the quantitative coarse Baum-Connes conjecture with coefficients in $A$. Next we will prove that the homomorphism $e_{A, \ast}$ appeared in the coarse Baum-Connes conjecture with coefficients in $A$ is an isomorphism.
	
	Firstly, for $x\in K_{\ast}(C^{\ast}_L(P_k, A))$, if $e_{A, \ast}(x)=0$ in $\lim_{l\rightarrow \infty} K_{\ast}(C^{\ast}(P_{l}, A))$. Then there exists $k'\geq k$ such that $ad_{i_{kk'},\ast}(e_{A, \ast}(x))=0$ in $K_{\ast}(C^{\ast}(P_{k'}, A))$. Thus, $e_{A, \ast}(Ad_{i_{kk'},\ast}(x))=0$ which implies that $\iota^{\ep/16, r}_{\ast}\circ \mu^{\ep/16, r}_{k',A,\ast}(Ad_{i_{kk'},\ast}(x))=0$ for any $0<\ep<1/4$ and $r>0$ by Lemma \ref{quan-CBC-and-CBC}. And by Lemma \ref{quan-K-and-K}, there exists $r'\geq r$ such that $\iota^{\ep/16, \ep, r, r'}_{\ast}\circ \mu^{\ep/16, r}_{k',A, \ast}(Ad_{i_{kk'},\ast}(x))=0$ in $K^{\ep, r'}_{\ast}(C^{\ast}(P_{k'}, A))$. Then by Lemma \ref{quan-CBC-and-CBC} again, we have $\mu^{\ep, r'}_{k',A, \ast}(Ad_{i_{kk'}, \ast}(x))=0$ for any $0<\ep<1/4$. Thus by the first part (\ref{quan-CBC1}) of Conjecture \ref{quan-CBC}, there exists $k''\geq k'$ such that $Ad_{i_{kk''}, \ast}(x)=0$ which implies that $e_{A, \ast}$ is injective. \par
	Secondly, for any $y\in K_{\ast}(C^{\ast}(P_{k}, A))$. By Lemma \ref{quan-K-and-K}, for any $0<\ep<1/4$ there exists $r>0$ and $y'\in K^{\ep, r}_{\ast}(C^{\ast}(P_{k}, A))$ such that $\iota^{\ep, r}(y')=y$. Then by the second part (\ref{quan-CBC2}) of Conjecture \ref{quan-CBC}, there exist $k'\geq k$, $r'\geq r+1$, $1/4>\ep'\geq \ep$ and $x\in K_{\ast}(C^{\ast}_L(P_{k'}, A))$ such that $\mu^{\ep', r'}_{k',A, \ast}(x)=\iota^{\ep, \ep', r_{k'}, r'}_{\ast}\circ ad^{\ep, r, r_{k'}}_{i_{kk'}, \ast}(y')$, where $r_{k'}=r+\frac{1}{2^{k'}}$. Thus by Lemma \ref{quan-CBC-and-CBC}, we have that $e_{A, \ast}(x)=\iota^{\ep, r_{k'}}_{\ast}\circ ad^{\ep, r, r_{k'}}_{i_{kk'}, \ast}(y')=ad_{i_{kk'},\ast}(\iota^{\ep, r_{k'}}_{\ast}(y'))=ad_{i_{kk'},\ast}(y)$ which implies that $e_{A, \ast}$ is surjective.
\end{proof}

For the reverse direction, we have the following theorem.

\begin{theorem}\label{uniformCBC=quanCBC}
	Let $X$ be a proper metric space and $A$ be a $C^{\ast}$-algebra. Then the followings are equivalent:
	\begin{enumerate}
		\item\label{uniformCBC=quanCBC1} $X$ satisfies the uniform coarse Baum-Connes conjecture with coefficients in $A$;
		\item\label{uniformCBC=quanCBC2} $X$ satisfies the quantitative coarse Baum-Connes conjecture with coefficients in $A$. 
	\end{enumerate}
\end{theorem}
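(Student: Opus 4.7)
The plan is to prove both implications, where (2) $\Rightarrow$ (1) is a direct transfer combined with Theorem \ref{quan-CBC-CBC}, while (1) $\Rightarrow$ (2) uses Lemma \ref{equa-quan-CBC} and proof by contradiction.

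For (2) $\Rightarrow$ (1): Under $P_k(\sqcup_\N X) = \sqcup_\N P_k(X)$ together with the product identifications $K_*(C^*_L(P_k(\sqcup_\N X), A)) \cong \prod_\N K_*(C^*_L(P_k(X), A))$ (Corollary \ref{K-homology-directproduct}) and $K^{\ep, r}_*(C^*(P_k(\sqcup_\N X), A)) \cong \prod_\N K^{\ep, r}_*(C^*(P_k(X), A))$ (Corollary \ref{uniform-product-Roealg}), the quantitative assembly map on $\sqcup_\N X$ acts coordinate-wise as the product of quantitative assembly maps on $X$. Since the parameters $\ep_k, k', \ep', r'$ in QCBC for $X$ are uniform across all copies, QCBC transfers directly to $\sqcup_\N X$, and Theorem \ref{quan-CBC-CBC} then yields CBC for $\sqcup_\N X$, which is by definition uniform CBC for $X$.

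For (1) $\Rightarrow$ (2): Assume uniform CBC. Suppose QCBC fails; by Lemma \ref{equa-quan-CBC}, there exist $k$ and, after fixing a sufficiently small $\ep_0$ (with $16\ep_0 < 1/(4\max\{\lambda,\lambda_{\mathcal{D}}\})$), an associated $r_0$ such that $\iota^{\ep_0, \ep', r_{k'}, r'}_* \circ Ad^{\ep_0, r_0, r_{k'}}_{i_{kk'}, *}$ is nonzero for every $k' \geq k$, $\ep' \geq \ep_0$, $r' \geq r_0 + 1$. For each $n \geq k$, pick $x_n \in K^{\ep_0, r_0}_*(C^*_{L,0}(P_k(X), A))$ whose image under $\iota^{\ep_0, 16\ep_0, r_0 + 1/2^n, r_0 + n}_* \circ Ad^{\ep_0, r_0, r_0 + 1/2^n}_{i_{kn}, *}$ is nonzero in $K^{16\ep_0, r_0 + n}_*(C^*_{L,0}(P_n(X), A))$. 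Via the $C^*_{L,0}$-analogue of Corollary \ref{K-homology-directproduct}, the sequence $\xi := (\iota^{\ep_0, r_0}_*(x_n))_n$ corresponds to a class in $K_*(C^*_{L,0}(P_k(\sqcup_\N X), A))$. By uniform CBC there exists $k''_0 \geq k$ with $Ad_{i_{k, k''_0}, *}(\xi) = 0$, and this vanishing persists for every $k' \geq k''_0$. Lifting $\xi$ to a quantitative class in the big algebra and applying Lemma \ref{quan-K-and-K}(2) produces an $r^*$ witnessing quantitative vanishing at $(16\ep_0, r^*)$ at level $k''_0$; pushing the resulting null-homotopy forward along $Ad_{i_{k''_0, k'}}$ gives a uniform bound $r^*_{k'} \leq r^* + 1$ for all $k' \geq k''_0$. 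Projecting coordinate-wise at quantitative level $(16\ep_0, r^*_{k'})$, we obtain that $\iota^{\ep_0, 16\ep_0, r_0 + 1/2^{k'}, r^*_{k'}}_* \circ Ad^{\ep_0, r_0, r_0 + 1/2^{k'}}_{i_{k, k'}, *}(x_{k'}) = 0$ in $K^{16\ep_0, r^*_{k'}}_*(C^*_{L,0}(P_{k'}(X), A))$. Choosing $k' \geq \max(k''_0, r^* + 1 - r_0)$ forces $r_0 + k' \geq r^*_{k'}$, so further application of $\iota^{16\ep_0, 16\ep_0, r^*_{k'}, r_0 + k'}_*$ propagates the vanishing down to $(16\ep_0, r_0 + k')$, contradicting the choice of $x_{k'}$.

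The principal technical obstacle is establishing the $C^*_{L, 0}$ analogues of Proposition \ref{directproduct} and its corollary, namely $K_*(C^*_{L, 0}(P_k(\sqcup_\N X), A)) \cong \prod_\N K_*(C^*_{L, 0}(P_k(X), A))$, together with the quantitative compatibility required for coordinate-wise projection of the quantitative null-homotopy. These can be obtained by adapting the proof of Proposition \ref{directproduct} to the kernel algebra via the completely filtered extension $0 \to C^*_{L,0} \to C^*_L \to C^* \to 0$ combined with the five lemma on the associated six-term exact sequences, and by verifying that $C^*_{L, 0}(X, A)$ is uniformly quasi-stable (mirroring Lemma \ref{Roequasi-stable}) so that Corollary \ref{uniform-product-Roealg} applies to the uniform product. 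A subsidiary difficulty is bridging the gap between the uniform-in-$n$ propagation decay required for elements of $C^*_{L, 0}(\sqcup_\N X, A)$ and the pointwise-in-$n$ decay allowed in $\prod^{uf}_\N C^*_{L, 0}(X, A)$, which is handled by a truncation/approximation argument exploiting quasi-stability in the spirit of the proof of Lemma \ref{locprodlemma1}.
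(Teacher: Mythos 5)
Your proof of (2) $\Rightarrow$ (1) is essentially the paper's argument, reorganized: the paper directly verifies injectivity and surjectivity of $e_{A,\ast}$ for $\sqcup_{\N}X$ using Corollary \ref{K-homology-directproduct} and Corollary \ref{uniform-product-Roealg}, whereas you first transfer QCBC to $\sqcup_{\N}X$ (using the same two corollaries and the uniformity of the parameters) and then invoke Theorem \ref{quan-CBC-CBC}; the underlying work is identical.

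Your proof of (1) $\Rightarrow$ (2) is correct in outline but takes a genuinely different route. The paper negates Conjecture \ref{quan-CBC}(\ref{quan-CBC1}) directly and works in ordinary $K$-theory of $C^{\ast}_L$: it fixes $\ep, r$, produces a sequence $x_n\in K_{\ast}(C^{\ast}_L(P_k,A))$ with $\mu^{\ep,r}_{k,A,\ast}(x_n)=0$ but $Ad_{i_{kk_n},\ast}(x_n)\neq 0$, bundles these into one class $x$ via Corollary \ref{K-homology-directproduct}, observes $e_{A,\ast}(x)=\iota^{\ep,r}_{\ast}\mu^{\ep,r}_{k,A,\ast}(x)=0$, and then uniform CBC forces $Ad_{i_{kk_m},\ast}(x)=0$, hence $Ad_{i_{kk_m},\ast}(x_m)=0$, a contradiction (and ``similarly'' for the second part). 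You instead route everything through Lemma \ref{equa-quan-CBC} and quantitative $K$-theory of the kernel algebra $C^{\ast}_{L,0}$, picking $x_n$ at quantitative level $(\ep_0,r_0)$, bundling them into $\hat\xi\in K^{\ep_0,r_0}_{\ast}(C^{\ast}_{L,0}(P_k(\sqcup_{\N}X),A))$, applying uniform CBC to $\iota(\hat\xi)$, then tracking $r$-bounds forward to contradict the nonvanishing of $x_{k'}$. This buys you a single contradiction that covers both parts of Conjecture \ref{quan-CBC} at once, at the cost of needing the $C^{\ast}_{L,0}$ analogues of Lemma \ref{Roequasi-stable}, Proposition \ref{directproduct}, Corollary \ref{K-homology-directproduct}, and Corollary \ref{Cor-quan-K-prod}, which you correctly flag. (The $C^{\ast}_{L,0}$ version of Proposition \ref{directproduct} does follow from the five lemma applied to the completely filtered extension $0\to C^{\ast}_{L,0}\to C^{\ast}_L\to C^{\ast}\to 0$; note that the $C^{\ast}_{L,0}$ version of Lemma \ref{quan-K-local-alg} is unavailable because the time-shift $u\mapsto u(\cdot+t_1)$ destroys the condition $u(0)=0$, so you rightly fall back on Lemma \ref{quan-K-and-K}(2) with an uncontrolled $r^{\ast}$.) Your approach is correct but requires considerably more machinery than the paper's direct argument in ordinary $K$-theory, which avoids $C^{\ast}_{L,0}$ and the quantitative bookkeeping entirely.
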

\begin{proof}
	Let $N_X$ be a locally finite net in $X$. The main ingredient of the proof is the following commutative diagram which is obtained by Corollary \ref{K-homology-directproduct} and Corollary \ref{uniform-product-Roealg}:
	$$\xymatrix{
		K_{\ast}(C^{\ast}_{L}(\sqcup_{\N} P_{k}(N_{X}), A)) \ar[d]_{\cong} \ar[r]^{\mu^{\ep, r}_{k, A, \ast}} & \:\:K^{\ep, r}_{\ast}(C^{\ast}(\sqcup_{\N} P_{k}(N_{X}), A)) \ar[d]^{\cong} \\
		\prod_{\N} K_{\ast}(C^{\ast}_{L}(P_{k}(N_{X}), A)) \ar[r]^{\:\:\prod_{\N}\mu^{\ep, r}_{k, A, \ast}\:\:} & \:\:\:\:\prod_{\N} K^{\ep, r}_{\ast}(C^{\ast}(P_{k}(N_{X}), A)).
	}$$
	
	Firstly, we prove (\ref{uniformCBC=quanCBC2}) $\Rightarrow$ (\ref{uniformCBC=quanCBC1}). For the injection, if an element $x\in K_{\ast}(C^{\ast}_{L}(\sqcup_{\N} P_{k}(N_X), A))$ with $e_{A, \ast}(x)=0$ in $K_{\ast}(C^{\ast}(\sqcup_{\N} P_{k}(N_X), A))$. Since we assume (\ref{uniformCBC=quanCBC2}) holds, thus there exists $16\ep<1/4$ corresponding to $k$ in the first part of Conjecture \ref{quan-CBC}. Then by the first diagram in Lemma \ref{quan-CBC-and-CBC},  we have that 
	$$\iota^{\ep, r}\circ \mu^{\ep, r}_{k, A, \ast}(x)=e_{A, \ast}(x)=0$$
	for any $r>0$. Fixed a positive number $r$, by Lemma \ref{quan-K-and-K}, there exists $r'\geq r$ such that $\iota^{\ep, 16\ep, r, r'}_{\ast}\circ \mu^{\ep, r}_{k, A, \ast}(x)=0$. Then by the second commutative diagram in Lemma \ref{quan-CBC-and-CBC}, we obtain that
	$$\mu^{16\ep, r'}_{k, A, \ast}(x)=\iota^{\ep, 16\ep, r, r'}_{\ast}\circ \mu^{\ep, r}_{k, A, \ast}(x)=0.$$
	Let $(x_i)_{i\in \N}\in \prod_{\N} K_{\ast}(C^{\ast}_{L}(P_{k}(N_{X}), A))$ corresponding to $x$ under the left-hand side vertical isomorphism of the above commutative diagram, then $\mu^{16\ep, r'}_{k, A, \ast}(x_i)=0$ in $K^{16\ep, r'}_{\ast}(C^{\ast}(P_{k}(N_X), A))$ for any $i\in \N$ by the above commutative diagram. Because (\ref{uniformCBC=quanCBC2}) holds, thus by the first part of Conjecture \ref{quan-CBC}, there exists $k'\geq k$ such that $Ad_{i_{kk'}, \ast}(x_i)=0$ for all $i\in \N$. Thus, $Ad_{i_{kk'}, \ast}(x)=0$ in $K_{\ast}(C^{\ast}_{L}(\sqcup_{\N} P_{k'}(N_X), A))$ by the naturality of the left-hand side vertical isomorphism of the above diagram.
	
	And we can similarly prove it for the case of surjection.
	
	Secondly, we prove (\ref{uniformCBC=quanCBC1}) $\Rightarrow$ (\ref{uniformCBC=quanCBC2}). Assume the first part of Conjecture \ref{quan-CBC} is not true, then there exists $k\in N$ satisfying that for any $0<\ep<1/4$, there exists $r>0$ such that for any $k'\geq k$, the quantitative statement $QI_{A}(k,k', \ep, r)$ does not holds, namely, there exists $x_n\in K_{\ast}(C^{\ast}_{L}(P_{k}(N_X), A))$ with $\mu^{\ep, r}_{k, A, \ast}(x_n)=0$ but $Ad_{i_{kk_n}, \ast}(x_n)\neq 0$ in $K_{\ast}(C^{\ast}_{L}(P_{k_n}(N_X), A))$ for any $k_n=k+n$, $n=0, 1,2, \cdots$. Let $x\in K_{\ast}(C^{\ast}_{L}(\sqcup_{\N}P_{k}(N_X), A))$ corresponding to $(x_n)_{i\in \N}$ under the left-hand side vertical isomorphism in the above commutative diagram, then $\mu^{\ep, r}_{k, A, \ast}(x)=0$ in $K^{\ep, r}_{\ast}(C^{\ast}(\sqcup_{\N} P_{k}(N_X), A))$ by the above commutative diagram. Thus, by the first commutative diagram in Lemma \ref{quan-CBC-and-CBC}, we have that
	$$e_{A, \ast}(x)=\iota^{\ep, r}_{\ast}\circ \mu^{\ep, r}_{k, A, \ast}(x)=0.$$
	By the assumption (\ref{uniformCBC=quanCBC1}), there exists $k_m=k+m\geq k$ such that $Ad_{i_{kk_m},\ast}(x)=0$. Then by the naturality of the left-hand side vertical isomorphism of the above diagram again, we have that $Ad_{i_{kk_m},\ast}(x_m)=0$ which contradicts with the assumption.
	
	Similarly, we can prove that the second part of Conjecture \ref{quan-CBC} is true providing the statement (\ref{uniformCBC=quanCBC1}) holds.
\end{proof}

  Combining the above theorem with Lemma \ref{Lem-CBCC-UCBC}, we obtain the following corollary.
  
\begin{corollary}\label{Cor-CBCFC-QCBC}
	Let $X$ be a proper metric space and $A$ be a $C^{\ast}$-algebra. If $X$ satisfies the coarse Baum-Connes conjecture with coefficients in $\ell^{\infty}(\N, A\otimes \K(H))$, then $X$ satisfies the quantitative coarse Baum-Connes conjecture with coefficients in $A$.
\end{corollary}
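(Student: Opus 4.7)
The plan is to chain together two results that have already been established in the excerpt, so the proof is essentially a one-line combination. Specifically, I would first invoke Lemma \ref{Lem-CBCC-UCBC}, which states that if $X$ satisfies the coarse Baum-Connes conjecture with coefficients in $\ell^{\infty}(\N, A\otimes \K(H))$, then $X$ satisfies the uniform coarse Baum-Connes conjecture with coefficients in $A$. This is the content of \cite[Corollary 4.12]{Zhang-CBCFC} as recalled above.

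Then I would apply the implication (\ref{uniformCBC=quanCBC1}) $\Rightarrow$ (\ref{uniformCBC=quanCBC2}) of Theorem \ref{uniformCBC=quanCBC} to conclude that $X$ satisfies the quantitative coarse Baum-Connes conjecture with coefficients in $A$. That is the whole argument; there is no extra work to do.

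Since both steps are just citations, there is no genuine obstacle in this corollary. All the real work was carried out earlier: the hard part was the proof of Theorem \ref{uniformCBC=quanCBC}, which required the computation of the $K$-theory of localization algebras for a separated coarse union (Proposition \ref{directproduct} and Corollary \ref{K-homology-directproduct}) together with the analogous quantitative computation for Roe algebras (Corollary \ref{uniform-product-Roealg}), and the translation between these two pictures via the commutative diagram featuring $\mu^{\ep,r}_{k,A,\ast}$. Once that equivalence is in hand, the present corollary is a formal consequence.
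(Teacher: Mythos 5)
Your proposal is correct and follows exactly the paper's own argument: the paper also derives Corollary \ref{Cor-CBCFC-QCBC} by combining Lemma \ref{Lem-CBCC-UCBC} with the implication (\ref{uniformCBC=quanCBC1}) $\Rightarrow$ (\ref{uniformCBC=quanCBC2}) of Theorem \ref{uniformCBC=quanCBC}. There is nothing further to add.
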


\begin{remark}
	The methods to the coarse Baum-Connes conjecture also work for the coarse Baum-Connes conjecture with coefficients (cf. \cite{Zhang-CBCFC}), thus there are many examples satisfying the quantitative coarse Baum-Connes conjecture with coefficients by the above corollary.
\end{remark}

\section{Reducing the conjecture to a sequence of bounded metric spaces}\label{Sec-Reduction}

Inspired by \cite[Section 12.5]{WillettYu-Book}, in this section, we will reduce the quantitative coarse Baum-Connes conjecture with coefficients for a proper metric space to a uniform version of the conjecture for a sequence of bounded subspaces. Firstly, we introduce the \textit{uniformly quantitative coarse Baum-Connes conjecture with coefficients} for a family of metric spaces.

\begin{conjecture}\label{uniform-quan-CBC}
	Let $(X_i)_{i\in \mathcal{I}}$ be a family of proper metric spaces. Then the followings are true.
	\begin{enumerate}
		\item\label{uniform-quan-CBC1} For any $k\in \N$, there exists $0<\ep<1/4$ satisfying that for any $r>0$, there exists $k'\geq k$ such that $QI_A(k, k', \ep, r)$ hold for all $X_i$.
		\item\label{uniform-quan-CBC2} For any $k\in \N$, there exists $0<\ep<1/4$ satisfying that for any $r>0$, there exist $k'\geq k$, $r'\geq r+1$ and $1/4>\ep'\geq \ep$ such that $QS_A(k, k', \ep, \ep', r, r')$ hold for all $X_i$.
	\end{enumerate} 
\end{conjecture}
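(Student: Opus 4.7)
The plan is to establish Conjecture \ref{uniform-quan-CBC} for a family $(X_i)_{i \in \mathcal{I}}$ by reducing it, in direct analogy with Theorem \ref{uniformCBC=quanCBC}, to the single-space quantitative coarse Baum--Connes conjecture (Conjecture \ref{quan-CBC}) for the separated coarse union $Y := \sqcup_{i \in \mathcal{I}} X_i$. Once such an equivalence is in place, the uniform conjecture for a given family follows from any criterion that yields Conjecture \ref{quan-CBC} for $Y$; in particular, by Corollary \ref{Cor-CBCFC-QCBC}, it suffices that $Y$ satisfy the coarse Baum--Connes conjecture with coefficients in $\ell^\infty(\N, A \otimes \K(H))$.

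The first technical step is to upgrade the product-decomposition results of Subsection \ref{subsection-K-homology-direct-product} from $\mathcal{I} = \N$ to an arbitrary index set. On the Roe side, Lemma \ref{Roequasi-stable} already shows that $(C^{\ast}(X_i, A))_{i \in \mathcal{I}}$ is uniformly quasi-stable with control function $c(r) = r$, so Corollary \ref{Cor-quan-K-prod} immediately supplies the natural isomorphism
$$K^{\ep, r}_{\ast}\bigl(C^{\ast}(Y, A)\bigr) \xrightarrow{\;\cong\;} \prod_{i \in \mathcal{I}} K^{\ep, r}_{\ast}\bigl(C^{\ast}(X_i, A)\bigr)$$
for all $0 < \ep < 1/4$ and $r > 0$, and this is manifestly natural in $(\ep, r)$. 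On the localization side, I would extend Proposition \ref{directproduct}: the Eilenberg-swindle moves in Lemma \ref{locprodlemma1} and the evens/odds pullback argument of Lemma \ref{locprodlemma2} go through verbatim once the countable exhaustion $(\mathcal{I}_n)_{n \in \N}$ of $\N$ is replaced by a directed system of finite subsets of $\mathcal{I}$, together with continuity of $K$-theory under filtered colimits.

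Passing these isomorphisms through Rips complexes (since $P_k(N_Y)$ is the separated coarse union of $(P_k(N_{X_i}))_{i \in \mathcal{I}}$, with the inclusions $i_{kk'}$ acting componentwise), the quantitative assembly map $\mu^{\ep, r}_{k, A, \ast}$ for $Y$ becomes the product of the maps $\mu^{\ep, r}_{k, A, \ast}$ for the $X_i$. A diagram chase using Lemma \ref{quan-CBC-and-CBC} then shows that $\mu^{\ep, r}_{k, A, \ast}(x) = 0$ for a class $x$ on $Y$ if and only if this holds for every component $x_i$, with analogous statements for the vanishing of $Ad_{i_{kk'}, \ast}(x)$ and for preimages under $\iota^{\ep, \ep', r_{k'}, r'}_{\ast} \circ ad^{\ep, r, r_{k'}}_{i_{kk'}, \ast}$. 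Consequently, parts (\ref{quan-CBC1}) and (\ref{quan-CBC2}) of Conjecture \ref{quan-CBC} for $Y$ are logically equivalent, with identical parameters $k, k', \ep, \ep', r, r'$, to parts (\ref{uniform-quan-CBC1}) and (\ref{uniform-quan-CBC2}) of Conjecture \ref{uniform-quan-CBC} for $(X_i)_{i \in \mathcal{I}}$.

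The hardest point I expect is the uniform surjectivity statement (\ref{uniform-quan-CBC2}): given $y_i \in K^{\ep, r}_{\ast}(C^{\ast}(P_k(N_{X_i}), A))$ for every $i \in \mathcal{I}$, one must produce a single family $(x_i)_{i \in \mathcal{I}}$ of preimages with uniform bounds on $(\ep', r')$. The proposed scheme accomplishes this by first assembling the $y_i$ into a single class on $Y$ via the Roe-algebra isomorphism above, then applying Conjecture \ref{quan-CBC} for $Y$ once to obtain a single preimage $x \in K_{\ast}(C_L^{\ast}(P_{k'}(N_Y), A))$, and finally projecting $x$ componentwise via the localization-algebra isomorphism. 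The extension of Proposition \ref{directproduct} to uncountable index sets is thus the crucial input; for the intended applications, where $\mathcal{I}$ is countable, this step is already within the scope of the paper's existing machinery.
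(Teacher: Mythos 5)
There is a fundamental mismatch here: the statement you were asked about is Conjecture \ref{uniform-quan-CBC}, i.e.\ the \emph{formulation} of the uniformly quantitative coarse Baum--Connes conjecture for a family of spaces. The paper does not prove it and does not claim to; it is introduced only so that it can serve as a \emph{hypothesis} in Lemma \ref{quan-CBC-union} and Theorem \ref{redution-bounded-case} (``\emph{if} the uniformly quantitative conjecture holds for $(X_i)$, then \dots''). No unconditional proof is possible: by Theorem \ref{quan-CBC-CBC} (equivalently, Theorem \ref{uniformCBC=quanCBC} combined with Conjecture \ref{CBC}), the quantitative statements $QI_A$ and $QS_A$ for a single space already imply the coarse Baum--Connes conjecture for that space, and the latter is known to fail for certain expanders (the counterexamples of \cite{Higson-Lafforgue-Skandalis} and \cite{Khukhro-Li-Vigolo-Zhang} cited in the introduction). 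Taking $\mathcal{I}$ a singleton containing such a space falsifies the statement as written.

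Your proposal implicitly concedes this by routing everything through Corollary \ref{Cor-CBCFC-QCBC}: what you actually sketch is the conditional implication ``if the separated coarse union $Y=\sqcup_{i}X_i$ satisfies the coarse Baum--Connes conjecture with coefficients in $\ell^{\infty}(\N, A\otimes\K(H))$, then the family satisfies Conjecture \ref{uniform-quan-CBC}.'' That is a reasonable reduction, consistent in spirit with Theorem \ref{uniformCBC=quanCBC} and with the converse direction proved in Lemma \ref{quan-CBC-union}, but it is not a proof of the stated conjecture, and the extra hypothesis cannot be discharged. Two further technical caveats if you do want to write up the conditional version: (i) for uncountable $\mathcal{I}$ the separated coarse union is not separable, so the paper's framework (the countable dense subset $Z_X$, Proposition \ref{directproduct}, Lemma \ref{Lem-CBCC-UCBC}) does not apply as stated and your claim that the swindle arguments ``go through verbatim'' would need genuine justification; (ii) even for $\mathcal{I}=\N$, your claimed equivalence ``with identical parameters $k,k',\ep,\ep',r,r'$'' is stronger than what the diagram chase in the proof of Theorem \ref{uniformCBC=quanCBC} actually yields --- passing between $K$-theory of the localization algebra of the union and the product of $K$-theories costs applications of Lemma \ref{quan-K-and-K} and Lemma \ref{quan-CBC-and-CBC}, which degrade $\ep$ (to $16\ep$) and enlarge $r$.
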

Secondly, we consider the quantitative coarse Baum-Connes conjecture with coefficients for coarse unions by using the above conjecture.

\begin{definition}\label{Def-coarse-union}
	A \textit{coarse union} of a sequence of metric spaces $(X_i, d_i)_{i\in \N}$, denoted by $\square X_i$, is defined to be the disjoint union $\sqcup_{\N} X_i$ equipped with a metric $d$ satisfying the following conditions:
	\begin{itemize}
		\item $d$ is finite-valued;
		\item $d(x,x')=d_{i}(x,x')$ for all $x,x'\in X_i$;
		\item $\lim_{n\rightarrow \infty} d(X_n, (\sqcup_{\N} X_i)\setminus X_n)=\infty$.
	\end{itemize}
\end{definition}

\begin{lemma}\label{quan-CBC-union}
	If the uniformly quantitative coarse Baum-Connes conjecture with coefficients in $A$ holds for $(X_i)_{i\in \N}$, then the quantitative coarse Baum-Connes conjecture with coefficients in $A$ holds for the coarse union $\square X_i$.
\end{lemma}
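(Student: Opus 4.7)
The core observation is that the defining property of the coarse union --- namely $\lim_{n \to \infty} d(X_n, \square X_i \setminus X_n) = \infty$ --- forces the Roe and localization algebras of $\square X_i$, at any fixed propagation scale, to decompose as a direct sum of a finite ``head'' and a uniform product over an infinite ``tail''. The plan is to feed the tail into the uniformly quantitative conjecture and absorb the head by a finite iteration.

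I would fix a locally finite net $N_{\square X_i} = \sqcup_i N_{X_i}$ and, for each $k \in \N$ and $r > 0$, choose $N(k,r) \in \N$ so that $d(X_n, \square X_i \setminus X_n) > r + 2/2^k$ for all $n > N(k,r)$. Then every operator in $C^{\ast}(P_k(\square X_i), A)$ of propagation at most $r + 2/2^k$ splits uniquely as $T_{\mathrm{head}} + \sum_{n > N(k,r)} T_n$, with $T_{\mathrm{head}}$ supported on $Y_{k,r} := \sqcup_{n \leq N(k,r)} X_n$ (equipped with the induced metric from $\square X_i$) and each $T_n \in C^{\ast}(P_k(X_n), A)$. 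The same splitting holds pointwise for $C^{\ast}_L(P_k(\square X_i), A)$. Combining this with Corollary \ref{uniform-product-Roealg} and Corollary \ref{K-homology-directproduct} yields natural decompositions
\begin{equation*}
K^{\ep, r}_{\ast}(C^{\ast}(P_k(\square X_i), A)) \;\cong\; K^{\ep, r}_{\ast}(C^{\ast}(Y_{k,r}, A)) \;\oplus\; \prod_{n > N(k,r)} K^{\ep, r}_{\ast}(C^{\ast}(P_k(X_n), A)),
\end{equation*}
together with an analogous decomposition for the $K$-theory of the localization algebras, and these two decompositions are intertwined factor-by-factor by the quantitative assembly map $\mu^{\ep, r}_{k, A, \ast}$.

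For part \eqref{quan-CBC1} of Conjecture \ref{quan-CBC} (QI), I would fix $k$, let $\ep$ be the threshold provided by uniformly QCBC for $(X_i)$, and for $r > 0$ let $k' \geq k$ be the scale from uniform QI at parameter $(\ep, r)$. Given $x \in K_{\ast}(C^{\ast}_L(P_k(\square X_i), A))$ with $\mu^{\ep, r}_{k, A, \ast}(x) = 0$, decompose $x$ via the above isomorphism as $x_{\mathrm{head}} \oplus (x_n)_{n > N(k,r)}$; then $\mu^{\ep, r}_{k, A, \ast}$ vanishes on each summand, so uniform QI kills every $x_n$ simultaneously under $Ad_{i_{kk'}, \ast}$, and an analogous decomposition within the finite head reduces $x_{\mathrm{head}}$ to the individual $X_n$ ($n \leq N(k,r)$), each again handled by uniform QI after a finite further enlargement of $k'$. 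Part \eqref{quan-CBC2} (QS) is dual: decompose $y \in K^{\ep, r}_{\ast}(C^{\ast}(P_k(\square X_i), A))$, lift each tail component via uniform QS, and lift the head by finite iteration. Along the way, Lemma \ref{equa-quan-CBC} may be used to recast the assembly-map conditions as a single controlled vanishing statement for the kernel of the evaluation map, which simplifies the uniformity bookkeeping.

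The main obstacle is the head. Although $Y_{k,r}$ is a finite coarse union of the $X_n$, its quantitative $K$-theory must be controlled with constants independent of $r$, and such a finite coarse union is in general not quasi-isometric to any single $X_n$. I would treat it by induction on $N(k,r)$: at each stage one separates off the last piece $X_{N(k,r)}$ from the remaining head at a sufficiently large propagation scale, applying the quantitative Mayer-Vietoris sequence (Lemma \ref{Control-MV}) with coercitivity constants independent of $r$ because the pairwise distances inside $Y_{k,r}$ are inherited from the fixed ambient metric. In the typical application (Theorem \ref{redu-theorem}) each $X_n$ is bounded, so $Y_{k,r}$ is itself bounded and QCBC holds for it trivially at large propagation; this is the case I would write out in full, noting that the general unbounded case then follows from the same finite induction once stability of QCBC under finite coarse unions is established.
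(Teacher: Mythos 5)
Your head/tail decomposition is the same one the paper uses, and your intuition that the head should be ``trivially handled because it is bounded'' is exactly the right idea; but you propose a detour (iterated quantitative Mayer--Vietoris over the $N(k,r)$ head pieces) that the paper entirely avoids. The paper's trick is to exploit the freedom to enlarge the \emph{Rips scale}, not just the propagation: after using Lemma~\ref{Lem-coarse-equi-QCBC} to assume WLOG that $d(X_n,X_{n'})>\mathrm{diam}(X_0\cup\cdots\cup X_n)+1$ for $n'>n$ (which, note, already requires each $X_n$ to be bounded --- the case that matters for Theorem~\ref{redu-theorem}), one picks $k'''\geq k''\geq k'$ with $\mathrm{diam}(X_0\cup\cdots\cup X_{k''-1})\leq k'''\leq d(X_{k''-1},X_{k''})$. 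At Rips scale $k'''$ the entire head $\sqcup_{i<k''}N_{X_i}$ spans a \emph{single simplex}, and taking $r''$ at least its diameter makes the head contribution to $K^{\ep',r''}_{\ast}(C^{\ast}_{L,0}(P_{k'''}(\cdot),A))$ vanish outright (via Lemma~\ref{equa-quan-CBC}, which you correctly anticipated), while the tail $\prod_{i\geq k''}$ vanishes by the uniform hypothesis and Corollaries~\ref{uniform-product-Roealg} and \ref{K-homology-directproduct}. This one-shot absorption avoids the control-pair bookkeeping that would accumulate through your $N(k,r)$-step MV induction, and it makes precise what you only gesture at when you write that the bounded head ``holds trivially at large propagation.'' So: same decomposition and same key insight, but the paper's use of the Rips parameter to collapse the head to a simplex is the mechanism you were missing, and it renders the Mayer--Vietoris machinery unnecessary here. (Lemma~\ref{Control-MV} \emph{is} used in the paper --- but in Lemma~\ref{Reduction-coarse-union}, for a single even/odd splitting, not iteratively inside this lemma.)
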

\begin{proof}
	By the assumption and Lemma \ref{equa-quan-CBC}, for any $k\in \N$, there exists $0<\ep_k<1/(4\max\{\lambda, \lambda_{\mathcal{D}}\})$ satisfying that for any $r>0$, there exist $k'\geq k$, $1/(4\max\{\lambda, \lambda_{\mathcal{D}}\})>\ep'\geq \ep_k$ and $r'\geq r+1$ such that
	$$\iota^{\ep_k, \ep',r_{k'},r'}_{\ast} \circ Ad^{\ep_k, r, r_{k'}}_{i_{kk'}, \ast}: K^{\ep_k, r}_{\ast}(C^{\ast}_{L, 0}(P_k(N_{X_i}), A))\rightarrow K^{\ep', r'}_{\ast}(C^{\ast}_{L, 0}(P_{k'}(N_{X_i}), A))$$
	is a zero map for all $i\in \N$, where $N_{X_i}$ is a locally finite net in $X_i$, $r_{k'}=r+1/2^{k'}$ and $\lambda, \lambda_{\mathcal{D}}\geq 1$ appeared in Lemma \ref{six-term-seq}. Since the quantitative coarse Baum-Connes conjecture with coefficients is a coarsely equivalent invariant (see Lemma \ref{Lem-coarse-equi-QCBC}), thus we can assume that 
	$$d(X_n, X_{n'})> \text{diam}(X_0\cup X_1 \cup \cdots \cup X_n)+1 \:\:\text{for}\:\: n'>n.$$
	Then for any $k\in \N$ and $r>0$, let $\ep_k$, $\ep'$, $r'$ be as above, choose $k'''\geq k''\geq k'$ such that $d(X_{k''-1}, X_{k''})>r'$ and $\text{diam}(X_0\cup X_1 \cup \cdots \cup X_{k''-1}) \leq k'''\leq d(X_{k''-1}, X_{k''})$, thus by Corollary \ref{uniform-product-Roealg} and Corollary \ref{K-homology-directproduct}, we have that
	\begin{footnotesize}
		\begin{equation*}
			K^{\ep_k, r}_{\ast}(C^{\ast}_{L, 0}(P_k(N_{\square X_i}), A))=K^{\ep_k, r}_{\ast}(C^{\ast}_{L, 0}(P_{k}(\sqcup_{i=0}^{k''-1} N_{X_i}))) \oplus \prod_{i\geq k''} K^{\ep_k, r}_{\ast}(C^{\ast}_{L,0}(P_{k}(N_{X_i}), A)),
		\end{equation*}
	\end{footnotesize}
	and
	\begin{footnotesize}
		\begin{equation*}
			K^{\ep', r'}_{\ast}(C^{\ast}_{L, 0}(P_{k'''}(N_{\square X_i}), A))=K^{\ep', r'}_{\ast}(C^{\ast}_{L, 0}(P_{k'''}(\sqcup_{i=0}^{k''-1} N_{X_i}))) \oplus \prod_{i\geq k''} K^{\ep', r'}_{\ast}(C^{\ast}_{L,0}(P_{k'''}(N_{X_i}), A)).
		\end{equation*}
	\end{footnotesize}
	Let $r''=\max\{r', \text{diam}(P_{k'''}(\sqcup_{i=0}^{k''-1} N_{X_i}))\}$, then for any $k\in \N$ and $r>0$, we have that
	$$\iota^{\ep_k, \ep',r_{k'},r'}_{\ast} \circ Ad^{\ep_k, r, r_{k'}}_{i_{kk'}, \ast}: K^{\ep_k, r}_{\ast}(C^{\ast}_{L, 0}(P_k(N_{\square X_i}), A))\rightarrow K^{\ep', r''}_{\ast}(C^{\ast}_{L, 0}(P_{k'''}(N_{\square X_i}), A))$$
	is a zero map, which completed the proof by Lemma \ref{equa-quan-CBC} again. 
\end{proof}

For a metric space $X$, fixed a point $x_0\in X$, let 
$$X_n=\{x\in X: n^3-n\leq d(x, x_0) \leq (n+1)^3+(n+1)\},$$
which is a bounded subspace of $X$ for every $n\geq 0$. Let 
$Y=\sqcup_{n\: even} X_n$ and $Z=\sqcup_{n\: odd} X_n$ which are two subspaces of $X$, then $X=Y\cup Z$ and $Y$, $Z$, $Y\cap Z$ are three coarse unions of $(X_i)_{i\: even}$, $(X_j)_{j\: odd}$ and $(X_n\cap Z)_{n\: even}$, respectively. For any $s>0$, let 
$$\triangle_{even}=\{u\in C^{\ast}_{L, 0}(X): (x_t, y_t)\in\supp(u(t))\:\text{with}\: x_t\in Y,\: d(x_t, y_t)\leq s\},$$
similar to define $\triangle_{odd}$, then $(\triangle_{even}, \triangle_{odd}, C^{\ast}_{L, 0}(Y), C^{\ast}_{L, 0}(Z))$ is an $s$-controlled weak Mayer-Vietoris pair with coercitivity $1$ and $C^{\ast}_{L, 0}(Y) \cap C^{\ast}_{L, 0}(Z)=C^{\ast}_{L, 0}(Y\cap Z)$. Thus, by Lemma \ref{Control-MV} and Lemma \ref{equa-quan-CBC}, we have the following lemma. 

\begin{lemma}\label{Reduction-coarse-union}
	Let $X$, $Y$ and $Z$ be as above, if the coarse unions $Y$, $Z$ and $Y\cap Z$ satisfy the quantitative coarse Baum-Connes conjecture with coefficients in $A$, then $X$ also satisfies the quantitative coarse Baum-Connes conjecture with coefficients in $A$.
\end{lemma}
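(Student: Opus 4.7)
The plan is to verify the equivalent characterization of the quantitative coarse Baum--Connes conjecture with coefficients from Lemma \ref{equa-quan-CBC} for $X$, using the controlled Mayer--Vietoris six-term exact sequence of Lemma \ref{Control-MV} together with the same characterization applied to the hypothesis on $Y$, $Z$, and $Y \cap Z$.

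First, I would choose a locally finite net $N_X$ of $X$ compatibly, so that $N_Y := N_X \cap Y$ and $N_Z := N_X \cap Z$ are locally finite nets in $Y$ and $Z$, with $N_Y \cap N_Z$ a net for $Y \cap Z$. For each $k \in \N$ and each $s > 0$, one verifies that the quadruple $(\triangle_{even}, \triangle_{odd}, C^{\ast}_{L,0}(P_k(N_Y), A), C^{\ast}_{L,0}(P_k(N_Z), A))$ forms an $s$-controlled weak Mayer--Vietoris pair with coercitivity $1$ for $C^{\ast}_{L,0}(P_k(N_X), A)$, and that the intersection of the two neighborhood algebras is $C^{\ast}_{L,0}(P_k(N_{Y \cap Z}), A)$. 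This is the same verification as in the paragraph preceding the lemma, now performed at each Rips parameter. Lemma \ref{Control-MV} then supplies a universal control pair $(\lambda_M, h_M)$ depending only on coercitivity $1$, yielding a $(\lambda_M, h_M)$-exact controlled Mayer--Vietoris six-term sequence at order $s$.

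Next, Lemma \ref{equa-quan-CBC} applied to each of $Y$, $Z$, and $Y \cap Z$ gives, for each $k$, thresholds $\ep_k^W > 0$ (for $W \in \{Y, Z, Y \cap Z\}$) such that for any $r > 0$ one can select $k' \geq k$, $\ep' \geq \ep_k^W$, and $r' \geq r+1$ making the map
\[
\iota_{\ast}^{\ep_k^W, \ep', r_{k'}, r'} \circ Ad^{\ep_k^W, r, r_{k'}}_{i_{kk'},\ast} \colon K^{\ep_k^W, r}_{\ast}(C^{\ast}_{L,0}(P_k(N_W), A)) \to K^{\ep', r'}_{\ast}(C^{\ast}_{L,0}(P_{k'}(N_W), A))
\]
the zero map. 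Given $k$ and $r$, I would choose $\ep$ small enough compared to $(\lambda_M, h_M)$ and the three $\ep_k^W$, then take $s$ large enough so that the Mayer--Vietoris sequence at $P_k$ is $(\lambda_M, h_M)$-exact at propagation $r$. For an arbitrary class $x \in K^{\ep, r}_{\ast}(C^{\ast}_{L,0}(P_k(N_X), A))$, controlled exactness expresses $x$ (up to a controlled loss of $\ep$ and $r$) as the image of classes in $K_{\ast}$ for $Y$ and $Z$, together with a controlled boundary class in $K_{\ast-1}$ for $Y \cap Z$. Applying the three hypotheses kills each of these pieces after passage to $P_{k'}$, and a second Mayer--Vietoris chase at $P_{k'}(N_X)$ --- with order parameter $s'$ chosen large enough to accommodate $r'$ --- forces $x$ to vanish in $K^{\ep'', r''}_{\ast}(C^{\ast}_{L,0}(P_{k'}(N_X), A))$ for some $\ep'' \geq \ep$ and $r'' \geq r + 1$. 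Lemma \ref{equa-quan-CBC} in the reverse direction then yields the quantitative coarse Baum--Connes conjecture with coefficients in $A$ for $X$.

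The main difficulty is the careful bookkeeping of all the control pairs: not only do the three hypotheses each contribute a threshold, but the Mayer--Vietoris boundary map $\mathcal{D}_{\triangle_{even}, \triangle_{odd}}$ degrades both the precision $\ep$ and the propagation $r$ through its own control pair, and iterating the argument compounds these losses. One must order the choices correctly --- first $\ep$ sufficiently small, then $k'$, $\ep'$, $r'$ extracted from the three hypotheses, then $s$ and $s'$ chosen to make both Mayer--Vietoris sequences exact at the required orders --- so that the final pair $(\ep'', r'')$ still meets the constraints of Lemma \ref{equa-quan-CBC} (in particular $\ep'' < 1/(4\max\{\lambda, \lambda_{\mathcal{D}}\})$). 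An essential conceptual point is that formulating the conjecture through the kernel algebras $C^{\ast}_{L,0}$, as done in Lemma \ref{equa-quan-CBC}, is precisely what makes the Mayer--Vietoris machinery applicable here; a direct approach via $C^{\ast}_{L}$ and the evaluation map would not admit a useful six-term controlled exact sequence in the required form.
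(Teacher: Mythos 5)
Your proposal follows the paper's intended argument: set up the controlled weak Mayer--Vietoris pair $(\triangle_{even}, \triangle_{odd}, C^{\ast}_{L,0}(Y), C^{\ast}_{L,0}(Z))$ on the kernel algebras, invoke Lemma \ref{Control-MV} for the controlled six-term sequence, and discharge the pieces via Lemma \ref{equa-quan-CBC} applied to $Y$, $Z$, and $Y\cap Z$; the paper gives no further detail, so you are filling in exactly the chase it leaves implicit. One minor caution on wording: controlled exactness does not ``express $x$ as the image of classes for $Y$ and $Z$ together with a boundary class'' in the sense of a decomposition --- the actual chase first applies the $Y\cap Z$ hypothesis to kill $\mathcal{D}(x)$, then uses exactness (with a controlled loss) to lift $Ad_{i_{kk'}}(x)$ to $\mathcal{K}_{\ast}(C^{\ast}_{L,0}(Y))\oplus\mathcal{K}_{\ast}(C^{\ast}_{L,0}(Z))$, and only then applies the $Y$ and $Z$ hypotheses to kill that lift, with naturality of $\mathcal{D}$ and the inclusion maps under $Ad_{i_{kk'}}$ transporting each vanishing back to $\mathcal{K}_{\ast}(C^{\ast}_{L,0}(P_{k''}(N_X),A))$; this is what your ``second chase'' and bookkeeping remarks amount to, so the plan is sound.
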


Combining Lemma \ref{quan-CBC-union} and Lemma \ref{Reduction-coarse-union}, we obtain the following theorem.

\begin{theorem}\label{redution-bounded-case}
	Let $X$, $X_n$ and $Z$ be as above. If the uniformly quantitative coarse Baum-Connes conjecture with coefficients in $A$ holds for $(X_n)_{n\:even}$, $(X_n)_{n\:odd}$ and $(X_n\cap Z)_{n\:even}$, then the quantitative coarse Baum-Connes conjecture with coefficients in $A$ holds for $X$.
\end{theorem}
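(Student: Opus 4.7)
The plan is to combine Lemmas \ref{quan-CBC-union} and \ref{Reduction-coarse-union}, the two structural results already established in this section. The argument decomposes into three applications of the former, an identification step, and a single application of the latter.

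First I would apply Lemma \ref{quan-CBC-union} to each of the three families $(X_n)_{n\:even}$, $(X_n)_{n\:odd}$, and $(X_n \cap Z)_{n\:even}$. By hypothesis each of these satisfies the uniformly quantitative coarse Baum-Connes conjecture with coefficients in $A$, so Lemma \ref{quan-CBC-union} yields the quantitative coarse Baum-Connes conjecture with coefficients in $A$ for the coarse unions $\square_{n\:even} X_n$, $\square_{n\:odd} X_n$, and $\square_{n\:even}(X_n \cap Z)$, respectively.

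Next I would identify these three coarse unions with the subspaces $Y$, $Z$, and $Y \cap Z$ of $X$ equipped with the restricted metric, so as to put ourselves in the setting of Lemma \ref{Reduction-coarse-union}. This amounts to verifying the three conditions of Definition \ref{Def-coarse-union}; the first two (finiteness of the metric, and compatibility with the metric on each piece) are immediate for a subspace metric. The third condition, $\lim_{n\to\infty} d(X_n, (\sqcup_i X_i)\setminus X_n) = \infty$, is where the specific choice of radii $[n^3-n,\,(n+1)^3+(n+1)]$ pays off: a direct radial estimate shows that $X_n$ and $X_m$ with $|n-m|\geq 2$ are disjoint with radial gap $(m)^3-(m)-((n+1)^3+(n+1)) = \Theta(n^2)$ when $m=n+2$, while consecutive $X_n$ and $X_{n+1}$ overlap in a thin spherical shell. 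In particular the even-indexed pieces inside $Y$ are pairwise separated with gaps tending to infinity, and the same holds for $Z$; for $Y \cap Z$ one observes that when $n$ is even $X_n \cap Z = (X_n \cap X_{n-1}) \sqcup (X_n \cap X_{n+1})$, two thin annuli, and the analogous radial estimate applies. Hence $Y$, $Z$, and $Y\cap Z$ really are coarse unions of the stated families, and by Lemma \ref{Lem-coarse-equi-QCBC} (coarse invariance of QCBC) the QCBC with coefficients in $A$ established in the previous paragraph transfers to them.

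Finally, with QCBC with coefficients in $A$ in hand for $Y$, $Z$, and $Y \cap Z$, and with $X = Y \cup Z$, the conclusion follows immediately from Lemma \ref{Reduction-coarse-union}. The proof is essentially bookkeeping once one has the two key lemmas of this section, so no fundamentally new obstacle arises; the only point requiring a small amount of care is the coarse-union identification, for which the $n^3$-radii were explicitly designed.
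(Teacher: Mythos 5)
Your proposal is correct and follows essentially the same route as the paper: the paper's own proof of Theorem \ref{redution-bounded-case} is a single sentence citing Lemma \ref{quan-CBC-union} and Lemma \ref{Reduction-coarse-union}, after having asserted in the surrounding text that $Y$, $Z$, $Y\cap Z$ are coarse unions of the three families. You supply the radial-gap verification of Definition \ref{Def-coarse-union}, which the paper leaves implicit; the invocation of Lemma \ref{Lem-coarse-equi-QCBC} is harmless but not strictly needed, since $Y$, $Z$, $Y\cap Z$ with the subspace metric already \emph{are} coarse unions, so Lemma \ref{quan-CBC-union} applies to them directly.
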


By the above theorem, the quantitative coarse Baum-Connes conjecture with coefficients can be reduced to the uniformly quantitative coarse Baum-Connes conjecture with coefficients for a sequence of bounded subspaces.

\bibliographystyle{plain}
\bibliography{quan-CBC}

\end{document}